\title{Spin refinement of moduli spaces of residueless meromorphic differentials and the BKP hierarchy}
\author{David Klompenhouwer, Stijn Velstra}
\date{4 June 2025}
\newtheorem{theorem}{Theorem}[section]
\newtheorem{lemma}[theorem]{Lemma}
\newtheorem{proposition}[theorem]{Proposition}
\theoremstyle{definition}
\newtheorem{definition}[theorem]{Definition}
\newtheorem{remark}[theorem]{Remark}
\renewcommand{\a}{\alpha}
\renewcommand{\b}{\beta}
\newcommand{\g}{\gamma}
\newcommand{\e}{\varepsilon}
\renewcommand{\d}{\partial}
\newcommand{\Z}{\mathbb{Z}}
\newcommand{\C}{\mathbb{C}}
\renewcommand{\P}{\mathbb{P}}
\newcommand{\R}{\mathcal{R}}
\newcommand{\PO}{\mathcal{PO}}
\newcommand{\dd}[2]{\frac{\partial#1}{\partial#2}}
\newcommand{\coef}{\mathrm{Coef}_{a_1^{k_1}\cdots a_n^{k_n}}}
\newcommand{\res}{\operatorname{res}}
\newcommand{\Mg}{\mathcal{M}_g}
\newcommand{\Mgn}{\mathcal{M}_{g,n}}
\newcommand{\Mgnb}{\overline{\mathcal{M}}_{g,n}}
\newcommand{\M}[1]{\mathcal{M}_{#1}}
\newcommand{\Mb}[1]{\overline{\mathcal{M}}_{#1}}
\newcommand{\DR}{\mathrm{DR}}
\newcommand{\dz}[1]{\delta_0^{\{#1\}}}
\newcommand{\Hg}[2]{[\overline{\mathcal{H}}_{#1}{(#2)}]}
\newcommand{\Hres}[2]{[\overline{\mathcal{H}}^\mathrm{res}_{#1}(#2)]}
\newcommand{\Hspin}[2]{[\overline{\mathcal{H}}_{#1}(#2)]^\mathrm{spin}}
\newcommand{\HresSpin}[2]{[\overline{\mathcal{H}}^\mathrm{res}_{#1}(#2)]^\mathrm{spin}}
\newcommand{\Pspin}[1]{P^{\mathrm{spin}}_{#1}}
\newcommand{\Qspin}[1]{Q^{\mathrm{spin}}_{#1}}
\newcommand{\at}[2]{\left.#1\right\vert_{#2}}
\newcommand{\Aspin}[2]{\mathcal{A}_{#1}^\mathrm{spin}(#2)}
\DeclareMathOperator{\Aut}{Aut}
\renewcommand{\H}{\mathcal{H}}
\newcommand{\Pic}{\operatorname{Pic}}
\newcommand{\Q}{\mathbb{Q}}
\begin{document}

\maketitle

\abstract{\noindent We consider strata of curves carrying a residueless meromorphic differential inducing a spin structure on the curve. The cohomology classes of the closures of these strata, weighted by the parity of the spin structures, form a partial cohomological field theory (CohFT) of infinite rank. After applying the DR hierarchy construction to this partial CohFT and reducing to differentials with two zeros and arbitrarily many poles, we show that the resulting system of evolutionary PDEs coincides with the BKP hierarchy up to a coordinate transformation. This is a spin refinement of an analogous result from \cite{brz24}. Our proof relies on a new result regarding the reconstruction of the BKP hierarchy from a limited amount of information in the Lax formalism.

\addcontentsline{toc}{section}{Introduction}
\tableofcontents

\begin{center}\section*{Introduction}\end{center}
\subsection*{Overview} 

The Hodge bundle $\Omega\Mg\rightarrow\Mg$ over the moduli space of smooth curves of genus $g\ge2$ is a rank $g$ vector bundle whose fibre at $[C]\in\mathcal{M}_g$ is the space of holomorphic differentials on $C$. The complement of the zero section is naturally stratified into strata indexed by partitions of $2g-2$, which prescribe the multiplicities of the zeros and poles of the differentials. There is a natural $\C^*$-action by scaling the differentials which preserves the stratification, so it is natural to study the projectivized strata. \newline

One can extend the situation above to meromorphic differentials. Let $g,n\ge0$ be integers satisfying $2g-2+n>0$, and let $\a=(\a_1,\dots,\a_n)\in\Z^n$ satisfy $\sum_{i=1}^n\a_i=2g-2$. The moduli space of projectivized meromorphic differentials $\P\Omega\Mgn(\a)$ parametrizes objects $(C,x_1,\dots,x_n,\omega)$ consisting of an $n$-pointed smooth curve $(C,x_1,\dots,x_n)\in\Mgn$ and a $\C^*$-equivalence class of meromorphic differentials $\omega$ on $C$ satisfying $\operatorname{ord}_{x_i}\omega=\a_i$. There is a natural morphism $\P\Omega\Mgn(\a)\rightarrow\Mgn$ given by forgetting the differential, whose image is denoted by $\mathcal{H}_g(\a)\subset\Mgn$. The closure of $\mathcal{H}_g(\a)$ in the moduli space of stable curves $\Mgnb$ is denoted by $\overline{\mathcal{H}}_g(\a)\subset\Mgnb$. By a slight abuse of language, we also call this the moduli space of projectivized meromorphic differentials. \newline

Much work has been done to understand the geometry and topology of these spaces. The connected components of $\mathcal{H}_g(\a)$ were classified in \cite{kz03} in the holomorphic case, and the meromorphic case was treated in \cite{boi15}. Several compactifications of $\P\Omega\Mgn(\a)$ have appeared in the literature. The incidence variety compactification (IVC) of $\P\Omega\Mgn(\a)$ is its closure inside a suitable twist of the projectivized Hodge bundle $\P\Omega\Mgnb\rightarrow\Mgnb$, and is extensively described in \cite{bcggm18}. A similar construction is presented in \cite{sau19}. In \cite{cmsz20} numerical quantities associated with the geometric and dynamical structures of $\Omega\Mgn(\a)$, the Masur-Veech volumes and Siegel-Veech constants, are computed in terms of intersection numbers on the IVC. On the other hand, Farkas-Pandharipande \cite{fp18} construct a proper moduli space of twisted canonical divisors $\widetilde{\mathcal{H}}_g(\a)\subset\Mgnb$ which contains the closure $\overline{\mathcal{H}}_g(\a)\subset\widetilde{\mathcal{H}}_g(\a)$. In the strictly meromorphic case (when there exists a $\a_i<0$), the moduli space $\widetilde{\mathcal{H}}_g(\a)$ is of pure codimension $g$ in $\Mgnb$ and the closure $\overline{\mathcal{H}}_g(\a)\subset\widetilde{\mathcal{H}}_g(\a)$ is a union of irreducible components. In \cite{bcggm19b} a smooth complex orbifold with normal crossing boundary $\P\Xi\Mgnb(\a)$ compactifying $\P\Omega\Mgn(\a)$ was constructed, called the moduli space of multi-scale differentials. Its properties, of which we give more details in \hyperref[subsec:multi]{Section \ref*{subsec:multi}}, make it similar to the Deligne-Mumford compactification $\Mgnb$ of $\Mgn$. \newline

The inspiration for this paper is a theorem of Buryak-Rossi-Zvonkine \cite{brz24}. The authors showed that the cohomology classes $\Hres{g}{\a_1,\dots,\a_n}\in H^*\bigl(\Mgnb\bigr)$ of residueless meromorphic differentials form a partial cohomological field theory (pCohFT). They applied the DR hierarchy construction of \cite{bur15} to this pCohFT to produce a Hamiltonian system of evolutionary PDEs. After reducing to the case where the differentials have exactly two zeros and any number of poles, they showed that the resulting system of PDEs coincides with the KP hierarchy, up to a change of variables. The strategy is to prove that the KP hierarchy can be uniquely reconstructed from a few properties and that the reduced DR hierarchy satisfies these properties. \newline

In this paper we prove a spin refinement of the above result \hyperref[thm:main]{(Theorem \ref*{thm:main})}. The spin refinement involves considering strata $\mathcal{H}_g^\mathrm{res}(2\a_1,\dots,2\a_n)\subset\Mgn$ of smooth curves $C$ carrying a meromorphic differential with even orders at zeros and poles. Such a differential induces a spin structure on $C$, i.e. a line bundle $L\rightarrow C$ such that $L^{\otimes2}\simeq\omega_C$. The parity of $h^0(C,L)$ is invariant in families of spin structures on curves, so it splits $\mathcal{H}_g^\mathrm{res}(2\a_1,\dots,2\a_n)$ into an even and odd component. This leads us to consider the cohomology classes
\begin{equation*}
    \HresSpin{g}{2\a_1,\dots,2\a_n} := [\overline{\mathcal{H}}^\mathrm{res}_g(2\a_1,\dots,2\a_n)^\mathrm{even}] - [\overline{\mathcal{H}}^\mathrm{res}_g(2\a_1,\dots,2\a_n)^\mathrm{odd}] \in H^*(\Mgnb),
\end{equation*}
which form a partial CohFT. Our result establishes an equivalence between a reduction of the DR hierarchy associated to this partial CohFT and the BKP hierarchy, a well-known reduction of the KP hierarchy \cite{djkm82}. \newline

Our strategy is similar to the one in \cite{brz24}. We show that the BKP hierarchy (in its normal coordinates) can be uniquely reconstructed from the linear term in the dispersionless limit of all the flows and only the first non-trivial potential, using the properties of commutativity of the flows, homogeneity, tau-symmetry, and compatibility with spatial translations (see \hyperref[thm:bkpr]{Theorem \ref*{thm:bkpr}}). Note that our reconstruction can be adapted to the KP hierarchy and requires less information than \cite[Theorem 3.12]{brz24}. This result is expected to be adaptable to other reductions of the KP hierarchy. \newline

Furthermore, since the BKP hierarchy is concerned with only the odd flows of the KP hierarchy, corresponding to the even order-profile of the differentials, the determination of the first non-trivial potential on the geometric side requires intersection-theoretic computations on moduli spaces of genus $g=2$ stable curves (see \hyperref[lem:g2int]{Lemma \ref*{lem:g2int}}). This gives rise to novel computations, which call for the need of a variety of results concerning double ramification cycles and strata of differentials in the literature. \newline

Lately there has been an interest in spin refinements of previously obtained results, see \cite{is22} for example. Our result is in line with the general idea that if a geometric problem is governed by the KP hierarchy, its spin version is governed by the BKP hierarchy. A well-studied example of this phenomenon is in the context of Hurwitz numbers enumerating branched covers of the Riemann sphere. It is known that the generating function of ordinary Hurwitz numbers is a $\tau$-function of the KP hierarchy \cite{kl07}. Spin Hurwitz numbers were first introduced in \cite{eop08}, and their generating function is a $\tau$-function of the BKP hierarchy \cite{mmn20,mmno21}. Further results involving more general spin Hurwitz numbers can be found in \cite{lee20,glk21,as23,wy23}. Another expression of this phenomenon is found in \cite{gkls25}, to be compared with \cite{op06}, stating that the equivariant potential of the spin Gromov-Witten theory of $\P^1$ is a $\tau$-function of the 2-component BKP hierarchy.

\subsection*{Organization of the paper}

In \hyperref[sec:spin]{Section \ref*{sec:spin}} we introduce some preliminary notions about multi-scale differentials from \cite{cmz22} and of spin structures on curves carrying meromorphic differentials. We then give the definition, for $g\ge0$ and $\a_1,\dots,\a_n\in\Z$ satisfying $\a_1+\cdots+\a_n=g-1$, of the cohomology classes
\begin{equation*}
    \HresSpin{g}{2\a_1,\dots,2\a_n}=[\overline{\mathcal{H}}^\mathrm{res}(2\a_1,\dots,2\a_n)^\mathrm{even}]-[\overline{\mathcal{H}}^\mathrm{res}(2\a_1,\dots,2\a_n)^\mathrm{odd}]\in H^*(\Mgnb)
\end{equation*} 
parametrizing curves carrying residueless meromorphic differentials with zeros/poles at the marked points of orders $2\a_1,\dots,2\a_n$ and a spin structure of even or odd parity induced by the differential. Our first result (\hyperref[prop:cohft]{Proposition \ref*{prop:cohft}}) is that these classes form a homogeneous partial cohomological field theory (pCohFT) of infinite rank. \newline

In \hyperref[sec:dr]{Section \ref*{sec:dr}} we first give an overview of the DR hierarchy construction of \cite{bur15}. We then apply this construction to the partial CohFT above, and reduce to the case where the differentials have exactly two zeros. The resulting system of evolutionary PDEs is
\begin{equation*}
    \dd{u_\a}{t^\b}=\d_x\Pspin{\a\b},\qquad\a,\b\ge1,
\end{equation*}
where the differential polynomials $\Pspin{\a\b}$ depend on intersection-theoretic properties of \newline $\HresSpin{g}{2\a_1,\dots,2\a_n}$. We perform a coordinate transformation to bring the system to a more symmetric form. \newline

In \hyperref[sec:comp]{Section \ref*{sec:comp}} we compute the first three nontrivial polynomials $\Pspin{1,2},\Pspin{1,3}$ and $\Pspin{2,2}$. This requires performing intersection-theoretic computations on moduli spaces of stable curves of genera $g=0,1,2$, involving $\psi$-classes, $\lambda$-classes, double ramification cycles and spin strata of residueless meromorphic differentials. We use results from \cite{brz24}, \cite{hai13}, \cite{css21} and \cite{bhs22}, among others. \newline

In \hyperref[sec:kp]{Section \ref*{sec:kp}} we outline the construction of the Kadomtsev-Petviashvili (KP) hierarchy from the Lax operator viewpoint, and describe how one obtains the BKP hierarchy as a reduction of the KP hierarchy by imposing a constraint on the Lax operator. We then exhibit some key properties of the BKP hierarchy in its normal coordinates. \newline

In \hyperref[sec:rec]{Section \ref*{sec:rec}} we state the main result of this paper (\hyperref[thm:main]{Theorem \ref*{thm:main}}), which states that the DR hierarchy associated to the spin refinement of moduli spaces of residueless meromorphic differentials with exactly two zeros corresponds to the BKP hierarchy, up to a coordinate transformation. The main content of this section is the proof of a reconstruction result for the BKP hierarchy (\hyperref[thm:bkpr]{Theorem \ref*{thm:bkpr}}) which, together with the results from the previous sections, immediately implies our main result.

\subsection*{Further developments}

The main result of this paper and \cite{brz24} deals with a reduction of the DR hierarchy to the case where the differentials have exactly two zeros and any number of poles. A natural goal for the future is to identify which integrable hierarchy corresponds to the non-reduced DR hierarchy, in both the spin and non-spin case. \newline

The Dubrovin-Zhang (DZ) hierarchy \cite{dz01} gives another procedure to obtain an integrable hierarchy from a semi-simple CohFT. The DR/DZ equivalence conjecture \cite{bur15,bdgr18}, the proof of which was completed in \cite{bls24}, states that the DR and DZ hierarchies associated to finite rank semi-simple CohFTs are normal Miura equivalent. It would be interesting to investigate if this equivalence extends to partial CohFTs of infinite rank, such as the one we consider here. \newline

Many of the constructions and results mentioned at the start have been generalized to the case of $k$-differentials ($k\ge1$) \cite{bcggm19a,sch18,cmz24}. In particular, in \cite{css21} the intersection theoretic properties of the moduli space of multi-scale $k$-differentials are related to the $k$-twisted double ramification cycle \cite{jppz17, hol21}. Using these, it could be possible to obtain a result for $k$-differentials similar in spirit to the one that we present here.

\subsection*{Notation and conventions}
\begin{itemize}
    \item For a topological space $X$, let $H^*(X)$ denote the cohomology ring of $X$ with coefficients in $\C$.
    \item For $n\ge0$, the notation $[n]$ denotes the set $\{1,2,\dots,n\}$.
    \item In \hyperref[sec:spin]{Section \ref*{sec:spin}} and \hyperref[sec:dr]{Section \ref*{sec:dr}} we sum over repeated Greek indices.
    \item In absence of ambiguity, we use the symbol $*$ to indicate any value, in the appropriate range, of a subscript or superscript.
    \item Given a collection of variables $\{u_i:i\in\Z\}$, the notation $u_{\le m}$ denotes the collection of all $u_i$ with $i\le m$.
\end{itemize}

\subsection*{Acknowledgments}
We would like to thank P. Rossi and A. Sauvaget for suggesting this problem to us, and for their continued guidance.  Moreover, we are grateful to A. Buryak, M. Costantini and P. Spelier for useful discussions. The Sage package \textit{admcycles} \cite{admcycles} was of great use to us when performing and checking intersection-theoretic computations. \newline

\noindent S. V. was partially supported by the ERC Starting Grant 101164820 ``Spin Curves Enumeration''. D. K. was supported by the University of Padova and is affiliated to the INFN under the national project MMNLP.

\begin{center}\section{Residueless meromorphic differentials and spin refinement}\label{sec:spin}\end{center}
\subsection{Multi-scale residueless differentials}\label{subsec:multi}

Fix integers $g,n\ge0$ satisfying $2g-2+n>0$. Let $\a=(\a_1,\dots,\a_n)\in\Z^n$ be a partition of $2g-2$. The space of projectivized meromorphic differentials
\begin{equation*}
    \mathcal{H}_g(\a_1,\dots,\a_n)\subset\Mgn
\end{equation*}
is the locus of smooth marked curves $(C,x_1,\dots,x_n)\in\Mgn$ on which there exists a meromorphic differential $\omega$ whose associated divisor is $(\omega)=\sum_{i=1}^n\a_ix_i$. Equivalently, there is an isomorphism
\begin{equation*}
    \omega_C \simeq \mathcal{O}_C\left(\sum_{i=1}^n\a_ix_i\right),
\end{equation*}
with $\omega_C$ the canonical bundle of $C$. Let $\overline{\mathcal{H}}_g(\a_1,\dots,\a_n)$ denote the closure of $\mathcal{H}_g(\a_1,\dots,\a_n)$ in $\Mgnb$. Similarly, the space of residueless projectivized meromorphic differentials
\begin{equation*}
    \mathcal{H}_g^\textrm{res}(\a_1,\dots,\a_n)\subset\Mgn
\end{equation*}
is the locus of smooth marked curves with the same property as above, with the added condition that the residues of the meromorphic differential vanish at all poles. Its closure in $\Mgnb$ is denoted by $\overline{\mathcal{H}}_g^\textrm{res}(\a_1,\dots,\a_n)$. Let $N_{\a_{[n]}}:=\lvert\{i\in[n]:\a_i<0\}\rvert$ be the number of negative entries of $\a$. Then \cite{cmz22}
\begin{equation*}
    \operatorname{codim}\overline{\mathcal{H}}_g^\textrm{res}(\a_1,\dots,\a_n)=g-1+N_{\a_{[n]}}.
\end{equation*}
We use the same notation for the homology class $\Hres{g}{\a_1,\dots,\a_n}\in H_{2(2g-2-N_{\a_{[n]}})}\bigl(\Mgnb\bigr)$ and its Poincaré dual $\Hres{g}{\a_1,\dots,\a_n}\in H^{2(g-1+N_{\a_{[n]}})}\bigl(\Mgnb\bigr)$, which we call the cycle of residueless meromorphic differentials. \newline

The remainder of this subsection is a repeat of Section 1.2 of \cite{brz24}, which we include for completeness' sake. We give an overview of the properties of $\overline{\mathcal{H}}_g^\textrm{res}(\a_1,\dots,\a_n)$ from the point of view of multi-scale differentials. \newline

In \cite[Section 2]{bcggm19b} and \cite[Sections 3 and 4.1]{cmz22} the authors identify the space $\mathcal{H}^\textrm{res}_g(\a_1,\dots,\a_n)$ with the corresponding stratum $B_g^\textrm{res}(\a_1,\dots,\a_n)$ inside the projectivized Hodge bundle twisted by the polar part of $\a=(\a_1,\dots,\a_n)$,
\begin{equation*}
    \P\left(\pi_*\,\omega\Biggl(\,\sum_{i\in [n]\,\vert\,\a_i<0}\a_ix_i\Biggr)\right).
\end{equation*}
Here $\omega$ is the relative dualizing sheaf of the universal curve over $\Mgn$ and $\pi$ is the projection to $\Mgn$. The incidence variety compactification (IVC) of $B_g^\textrm{res}(\a_1,\dots,\a_n)$ is its closure inside the projectivized twisted Hodge bundle above. By taking a normalization of a blowup of the IVC, the authors constructed a proper smooth Deligne-Mumford stack, which we denote by $\overline{B}_g^\mathrm{res}(\a_1,\dots,\a_n)$, containing $B_g^\textrm{res}(\a_1,\dots,\a_n)$ as a dense open substack and whose boundary is a normal crossing divisor. The stack $\overline{B}_g^\textrm{res}(\a_1,\dots,\a_n)$ is a moduli stack for equivalence classes of projectivized multi-scale differentials with residue conditions, which we define after giving some preliminary definitions.

\begin{definition}
    An \emph{enhanced level graph} is a stable graph $\Gamma$ of genus $g$ with a set $L(\Gamma)$ of $n$ marked legs together with:
    \begin{itemize}
        \item[(1)] A total preorder on the set $V(\Gamma)$ of vertices. The preorder is described by a surjective level function $\ell:V(\Gamma)\rightarrow\{0,-1,\dots,-L\}$. An edge is called \emph{horizontal} if it is attached to vertices of the same level and \emph{vertical} otherwise.
        \item[(2)] A function $\kappa:E(\Gamma)\rightarrow\Z_{\ge0},e\mapsto\kappa_e$, called an \emph{enhancement}, such that $\kappa_e=0$ if and only if $e$ is horizontal.
    \end{itemize}
    For every level $-L\le j\le0$ let $C_{(j)}$ be the (possibly disconnected) stable curve obtained from $C$ by removing all irreducible components whose level is not $j$, and let $C_{(>j)}$ be the (possibly disconnected) stable curve obtained from $C$ by removing all irreducible components whose level is smaller than or equal to $j$.
\end{definition}
\begin{definition}
    Given a meromorphic differential $\omega$ on a smooth curve $C$ and a point $x\in C$, if $\omega$ has order $\operatorname{ord}_x\omega=\a\neq-1$ at $x$, then for a local coordinate $z$ in a neighbourhood of $x$ such that $z(x)=0$ we have, locally, $\omega=(cz^\a+O(z^{\a+1}))dz$ for some $c\in\C^*$. Then the $k=\lvert\a+1\rvert$ numbers $\zeta\in\C$ satisfying $\zeta^{\a+1}=c^{-1}$ determine $k$ projectivized vectors $\zeta\at{\frac{\partial}{\partial z}}{x}\in T_xC/\mathbb{R}_{>0}$ (if $\a\ge0$) or $-\zeta\at{\frac{\partial}{\partial z}}{x}\in T_xC/\mathbb{R}_{>0}$ (if $\a\le-2)$. These projectivized vectors called \emph{outgoing} and \emph{incoming prongs} of $\omega$, respectively. The sets of outgoing and incoming prongs at $x$ are denoted by $P^\text{out}_x$ and $P^\text{in}_x$.
\end{definition}
\begin{definition}
    Let $(C,x_1,\dots,x_n)\in\Mgnb$ be a stable marked curve and $(\a_1,\dots,\a_n)\in\Z^n$ satisfy $\sum_{i=1}^n\a_i=2g-2$. A \emph{multi-scale differential} on $C$ of profile $\a$ with \emph{zero residues} at $x_1,\dots,x_n\in C$ consists of:
    \begin{itemize}
        \item[(1)] A structure of enhanced level graph $(\Gamma_C,\ell,\kappa)$ on the dual graph $\Gamma_C$ on $C$, where a node is said to be horizontal or vertical if the corresponding edge is;
        \item[(2)] A collection of meromorphic differentials $\{\omega_v\}_{v\in V(\Gamma)}$, one on each irreducible component $C_v$ of $C$, holomorphic and non-vanishing outside of the marked points and nodes, such that:
        \begin{itemize}
            \item[(i)] $\operatorname{ord}_{x_i} \omega_{v(x_i)}=\a_i$ for each $1\le i\le n$;
            \item[(ii)] $\res_{x_i}\omega_{v(x_i)}=0$ for each $1\le i\le n$;
            \item[(iii)] For all pairs $v_1,v_2\in V(\Gamma)$, if $q_1\in C_{v_1}$ and $q_2\in C_{v_2}$ form a node $e\in E(\Gamma_C)$ then
            \begin{equation*}
                \operatorname{ord}_{q_1}\omega_{v_1} + \operatorname{ord}_{q_2}\omega_{v_2} = -2;
            \end{equation*}
            \item[(iv)] For all pairs $v_1,v_2\in V(\Gamma)$, if $q_1\in C_{v_1}$ and $q_2\in C_{v_2}$ form a node $e\in E(\Gamma_C)$ then $\ell(v_1)\ge\ell(v_2)$ if and only if $\operatorname{ord}_{q_1}\omega_{v_1}\ge-1$. Together with property (iii) this implies that $\ell(v_1)=\ell(v_2)$ if and only if $\operatorname{ord}_{q_1}\omega_{v_1}=\operatorname{ord}_{q_2}\omega_{v_2}=-1$;
            \item[(v)] For all pairs $v_1,v_2\in V(\Gamma)$, if $q_1\in C_{v_1}$ and $q_2\in C_{v_2}$ for a horizontal node $e\in E(\Gamma_C)$ (i.e. $\kappa_e=0$) then
            \begin{equation*}
                \res_{q_1}\omega_{v_1} + \res_{q_2}\omega_{v_2} = 0;
            \end{equation*}
            \item[(vi)] For every negative level $-L\le j\le-1$ of $\Gamma_C$ and for every connected component  $Y$ of $C_{(>j)}$:
            \begin{equation*}
                \sum_{q\in Y\cap C_{(j)}}\res_{q^-}\omega_{v(q^-)} = 0,
            \end{equation*}
            where $q^+\in Y$ and $q^-\in C_{(j)}$ form the vertical node $q\in Y\cap C_{(j)}$.
        \end{itemize}
        \item[(3)] A cyclic order reversing bijection $\sigma_q:P^\text{in}_{q^-}\rightarrow P_{q^+}^\text{out}$ for each vertical node $q$ formed by identifying $q^-$ on the upper level with $q^+$ on the lower level, where $\kappa_q=\lvert P^\text{in}_{q^-}\rvert=\lvert P^\text{out}_{q^+}\rvert$.
    \end{itemize}
\end{definition}
In the notation in \cite[Section 4.1]{cmz22}, condition (2)(vi) above is a reformulation of the $\mathfrak{R}$-global residue condition in the particular case when $\lambda$ is the partition of $H_p$ in one-element subsets and $\lambda_\mathfrak{R}=\lambda$. \newline

Lastly, there is an action by the universal cover of the torus $\C^L\rightarrow(\C^*)^L$ on multi-scale residueless differentials $\{\omega_v\}_{v\in V(\Gamma_C)}$ by rescaling the differentials with strictly negative levels and rotating the prong matchings between levels accordingly, producing fractional Dehn twists. The stabilizer $\text{Tw}_{\Gamma_C}$ of this action is called the \emph{twist group} of the enhanced level graph $\Gamma_C$. Two multi-scale differentials are defined to be equivalent if they differ by the action of $T_{\Gamma_C}:=\C^L/\text{Tw}_{\Gamma_C}$. A further quotient by the action of $\C^*$ rescaling the differentials on all levels and leaving all prong matchings untouched, produces equivalence classes of projectivized multi-scale residueless differentials. A special case of \cite[Proposition 4.2]{cmz22} corresponding to the choice of the $\mathfrak{R}$-global residue condition outlined above is the following proposition.
\begin{proposition}
    Given $(\a_1,\dots,\a_n)\in\Z^n$ and $g\ge0$ satisfying $\sum_{i=1}^n\a_i=2g-2$, there exists a proper Deligne-Mumford stack $\overline{B}^\mathrm{res}_g(\a_1,\dots,\a_n)$ containing $B^\mathrm{res}_g(\a_1,\dots,\a_n)$ as a dense open substack whose boundary is a normal crossing divisor. The stack $\overline{B}^\mathrm{res}_g(\a_1,\dots,\a_n)$ is a moduli stack for for families of equivalence classes of projectivized multi-scale residueless differentials. Its dimension is
    \begin{equation*}
        \dim\overline{B}^\mathrm{res}_g(\a_1,\dots,\a_n)=2g-2+n-N_{\a_{[n]}}.
    \end{equation*}
    Let $D_{(\Gamma,\ell,\kappa)}=D_\Gamma$ denote the closure of the stratum parametrizing multi-scale differentials whose enhanced level graph is $(\Gamma,\ell,\kappa)$. Then $D_\Gamma$ is a proper smooth closed substack of $\overline{B}_g^\mathrm{res}(\a_1,\dots,\a_n)$ of codimension
    \begin{equation*}
        \operatorname{codim}D_\Gamma=h+L,
    \end{equation*}
    where $h$ is the number of horizontal edges in $(\Gamma,\ell,\kappa)$ and $L+1$ is the number of levels.
\end{proposition}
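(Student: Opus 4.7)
The plan is to read off this proposition as a specialization of \cite[Proposition 4.2]{cmz22}, applied to a particular linear residue condition $\mathfrak{R}$. The key observation, already anticipated in the paragraph preceding the statement, is that if one takes $\lambda$ to be the partition of the pole set $H_p$ into singletons and sets $\lambda_{\mathfrak{R}}=\lambda$, then the $\mathfrak{R}$-global residue condition of Costantini-Möller-Zachhuber reduces precisely to conditions (2)(ii) and (2)(vi) of the preceding definition of a multi-scale residueless differential: the singleton partition forces $\res_{x_i}\omega_{v(x_i)}=0$ at every marked pole, and the connected-component conditions become condition (2)(vi). With this dictionary in place, the construction of the proper smooth Deligne-Mumford stack, the identification of its dense open locus $B^\mathrm{res}_g(\a_1,\dots,\a_n)$, the moduli interpretation in terms of equivalence classes of projectivized multi-scale differentials (after quotienting by $T_{\Gamma_C}$ and by the overall $\C^*$-action, as recalled just above the proposition), and the normal crossing property of the boundary are all transported from \cite{cmz22} directly.

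For the dimension formula the plan is the following. The non-residueless stratum $B_g(\a_1,\dots,\a_n)$ of projectivized multi-scale differentials has dimension $2g-2+n$, as computed in \cite{bcggm19b}. Passing to the residueless stratum imposes one linear condition at each of the $N_{\a_{[n]}}$ marked poles, and these conditions are independent. The global residue conditions at the vertical nodes impose no further codimension because, once the residues at the marked poles vanish level by level, the residue theorem applied to each connected component of $C_{(>j)}$ renders condition (2)(vi) a consequence of the data already recorded. Subtracting $N_{\a_{[n]}}$ from $2g-2+n$ then yields the claimed dimension. The codimension formula for the boundary stratum $D_\Gamma$, namely $h+L$, follows from the normal crossing structure: each horizontal edge of $(\Gamma,\ell,\kappa)$ and each of the $L$ transitions between the $L+1$ levels contributes one local equation cutting out a smooth divisor, and $D_\Gamma$ is the transverse intersection of these divisors. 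Smoothness and properness of $D_\Gamma$ are inherited from the ambient stack, since normal crossing boundary strata of a smooth Deligne-Mumford stack are themselves smooth.

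The main obstacle, and essentially the only step that is not bookkeeping, is to verify rigorously the translation between the $\mathfrak{R}$-global residue condition of \cite{cmz22} with $\lambda=\lambda_{\mathfrak{R}}$ the finest partition, and the concrete conditions (2)(ii) and (2)(vi) above; in particular one has to check that no additional linear constraint is implicitly added in the closure and that the dependency among the global residue conditions, coming from the residue theorem on each connected component of $C_{(>j)}$, is exactly what makes them contribute zero codimension. Once this check is carried out, every other claim of the proposition is a direct citation of \cite[Proposition 4.2]{cmz22}.
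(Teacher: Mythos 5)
Your overall route coincides with the paper's: the paper gives no independent proof, presenting the proposition verbatim as the special case of \cite[Proposition 4.2]{cmz22} obtained by taking $\lambda$ to be the partition of $H_p$ into one-element subsets and $\lambda_{\mathfrak{R}}=\lambda$ --- exactly the dictionary you set up --- so the moduli interpretation, properness, smoothness, the normal crossing boundary, the dimension, and the codimension formula $\operatorname{codim}D_\Gamma=h+L$ are all imported from that citation, just as in the paper.

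One correction to your supplementary verification of the dimension formula, which as written contains two compensating off-by-one errors. In the strictly meromorphic case ($N_{\a_{[n]}}\ge1$) the projectivized stratum $B_g(\a_1,\dots,\a_n)$ has dimension $2g-3+n$, not $2g-2+n$; the value $2g-2+n$ holds only when all $\a_i\ge0$. And the $N_{\a_{[n]}}$ conditions $\res_{x_i}\omega=0$ are \emph{not} independent: since the divisor of $\omega$ is supported on the marked points, every pole is marked, and the residue theorem forces the sum of these residues to vanish, so they cut codimension $N_{\a_{[n]}}-1$, not $N_{\a_{[n]}}$. Together these give $(2g-3+n)-(N_{\a_{[n]}}-1)=2g-2+n-N_{\a_{[n]}}$, matching the statement (and consistent with the paper's codimension $g-1+N_{\a_{[n]}}$ for $\overline{\mathcal{H}}^{\mathrm{res}}_g$ versus codimension $g$ for the strictly meromorphic $\overline{\mathcal{H}}_g$). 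You do invoke a residue-theorem dependency, but you attach it to the global residue condition (2)(vi) at the vertical nodes; for the dimension of $\overline{B}^{\mathrm{res}}_g$ only the dense open stratum of smooth curves matters, where (2)(vi) is vacuous, and the actual dependency lives among the marked-pole conditions (2)(ii). Since the dimension formula is in any case part of what \cite[Proposition 4.2]{cmz22} delivers, this slip does not invalidate your proposal, but the independence claim should be fixed.
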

There is a forgetful morphism $p:\overline{B}^\mathrm{res}_g(\a_1,\dots,\a_n)\rightarrow\Mgnb$ associating to each projectivized multi-scale residueless differential on a stable curve $C$ the stable curve itself. It restricts to an isomorphism $p:B_g^\mathrm{res}(\a_1,\dots,\a_n)\rightarrow\mathcal{H}_g^\mathrm{res}(\a_1,\dots,\a_n)$ of Deligne-Mumford stacks, and 
\begin{equation*}
    p_*[\overline{B}^\mathrm{res}_g(\a_1,\dots,\a_n)] = \Hres{g}{\a_1,\dots,\a_n}.
\end{equation*}

\subsection{Spin refinement}\label{subsec:spin}

A \emph{spin structure} on a smooth curve $C$ is a line bundle $L\rightarrow C$ satisfying $L^{\otimes2}\simeq\omega_C$. 
A pointed smooth curve $(C,x_1,\dots,x_n)\in\Mgn$ lies in $\mathcal{H}_g(\a_1^\prime,\dots,\a_n^\prime)$ if and only if there is a line bundle isomorphism
\begin{equation*}
    \omega_C \simeq \mathcal{O}_C\left(\sum_{i=1}^n\a_i^\prime x_i\right).
\end{equation*}
Consider the case where each $\a_i^\prime=2\a_i$ is an even number. Then a curve $(C,x_1,\dots,x_n)\in\mathcal{H}_g(2\a_1,\dots,2\a_n)$ carries a natural spin structure $L\rightarrow C$ defined by
\begin{equation*}
    L = \mathcal{O}_C\left(\sum_{i=1}^n\a_ix_i\right).
\end{equation*}
The \emph{parity} of a spin structure $L\rightarrow C$ is defined as the parity of $h^0(C,L)$, and is known to coincide with the Arf invariant of the associated theta characteristic. This quantity is invariant in families of spin structures over smooth curves (see \cite{mum71} or \cite{ati71}). Therefore one can decompose the space $\mathcal{H}_g(2\a_1,\dots,2\a_n)$ as a disjoint union of \textit{spin-parity components} according to even or odd parity of the spin structure defined above.
\begin{equation*}
    \mathcal{H}_g(2\a_1,\dots,2\a_n) = \mathcal{H}_g(2\a_1,\dots,2\a_n)^\mathrm{even} \sqcup \mathcal{H}_g(2\a_1,\dots,2\a_n)^\mathrm{odd}.
\end{equation*}
Then one can consider the respective closures of the spin-parity components in the moduli space of stable curves, and take their cycles:
\begin{equation}\label{eq:hspin}
    \Hspin{g}{2\a_1,\dots,2\a_n} := [\overline{\mathcal{H}}_g(2\a_1,\dots,2\a_n)^\mathrm{even}] - [\overline{\mathcal{H}}_g(2\a_1,\dots,2\a_n)^\mathrm{odd}].
\end{equation}
In what follows, we will consider the corresponding spin-parity cycles for residueless projectivized meromorphic differentials:
\begin{equation*}
    \HresSpin{g}{2\a_1,\dots,2\a_n} := [\overline{\mathcal{H}}^\mathrm{res}_g(2\a_1,\dots,2\a_n)^\mathrm{even}] - [\overline{\mathcal{H}}^\mathrm{res}_g(2\a_1,\dots,2\a_n)^\mathrm{odd}].
\end{equation*}
The idea to consider such cycles was taken from \cite{css21}. In \cite{won24}, based on a few assumptions, an algorithm to compute the spin-parity classes \hyperref[eq:hspin]{(\ref*{eq:hspin})} for explicit order conditions is given.

\subsection{The classes \texorpdfstring{$\HresSpin{g}{2\a_1,\dots,2\a_n}$}{} as a partial cohomological field theory}

The following is a generalization of the notion of a cohomological field theory (CohFT) of \cite{km94}, first introduced in \cite{lrz15}.
\begin{definition}
    A \emph{partial CohFT} is a system of linear maps $c_{g,n}:V^{\otimes n}\rightarrow H^\textrm{even}\bigl(\Mgnb\bigr)$ for all pairs $(g,n)$ of nonnegative integers satisfying $2g-2+n>0$, where $V$ is a finite-dimensional $\C$-vector space (the \emph{phase space}) together with a distinguished element $e_\mathbbm{1}\in V$ (the \emph{unit}) and a symmetric nondegenerate bilinear form $\eta\in(V^*)^{\otimes2}$ (the \emph{metric}), such that for any basis $\{e_\a\}_{\a\in A}$ of $V$ the following axioms are satisfied:
    \begin{itemize}
        \item[(i)] The maps $c_{g,n}$ are equivariant with respect to the $S_n$-action permuting the $n$ copies of $V$ and the $n$ marked points of $\Mgnb$, respectively.
        \item[(ii)] If $\pi:\Mb{g,n+1}\rightarrow\Mgnb$ is the morphism forgetting the last marked point, then for any $\a_1,\dots,\a_n\in A$ it holds that 
        \begin{equation*}
            \pi^*c_{g,n}\bigl(\otimes_{i=1}^ne_{\a_i}\bigr)=c_{g,n}\bigl(\otimes_{i=1}^ne_{\a_i}\otimes e_\mathbbm{1}\bigr),
        \end{equation*}
        Moreover, for any $\a,\b\in A$ it holds that $c_{0,3}(e_\a\otimes e_\b\otimes e_\mathbbm{1})=\eta(e_\a\otimes e_\b):=\eta_{\a\b}$, where we identify $H^*\bigl(\Mb{0,3}\bigr)=\C$.
        \item[(iii)] Let $g=g_1+g_2$ and $n=n_1+n_2$, with $2g_1-1+n_1>0$ and $2g_2-1+n_2>0$. Let $\mathrm{gl}:\Mb{g_1,n_1+1}\times\Mb{g_2,n_2+1}\rightarrow\Mb{g,n}$ be the morphism that glues two stable curves at their last marked points. Let $I$ and $J$ be the indexing sets of the points on the first and second curve that are not glued, so $I\sqcup J=[n]$ with $\lvert I\rvert=n_1,\lvert$ and $J\rvert=n_2$. Then for every $\a_1,\dots,\a_n\in A$:
        \begin{equation*}
            \mathrm{gl}^*c_{g,n} \bigl(\otimes_{i=1}^ne_{\a_i}) = c_{g_1,n_1+1}\bigl(\otimes_{i\in I}\,e_{\a_i}\otimes e_\mu\bigr)\eta^{\mu\nu}c_{g_2,n_2+1}\bigl(\otimes_{j\in J}\,e_{\a_j}\otimes e_\nu\bigr).
        \end{equation*}
    \end{itemize}
    A \emph{CohFT} is a partial CohFT that also satisfies the axiom
    \begin{enumerate}
        \item[(iv)] For every $\a_1,\dots,\a_n\in A$:
        \begin{equation*}
            \sigma^*c_{g,n+1}(\otimes_{i=1}^ne_{\a_i})=c_{g,n+2}(\otimes_{i=1}^ne_{\a_i}\otimes e_\mu\otimes e_\nu)\eta^{\mu\nu},
        \end{equation*}
        where $\sigma:\Mb{g,n+2}\rightarrow\Mb{g+1,n}$ is the morphism that identifies the last two marked points of a stable curve, creating a non-separating node that increases the genus of the curve.
    \end{enumerate}
\end{definition}
\begin{definition}\label{def:pcohft}
    A partial CohFT $c_{g,n}:V^{\otimes n}\rightarrow H^\textrm{even}\bigl(\Mgnb\bigr)$ is called \emph{homogeneous} if $V$ is a graded vector space with a homogeneous basis $\{e_\a\}_{\a\in A}$, with $q_\a:=\deg e_\a$, satisfying: the degree of the unit $e_\mathbbm{1}$ is $0$, the metric $\eta:V^{\otimes2}\rightarrow\C$ is homogeneous with $\delta:=-\deg\eta$, and there exist constants $\gamma$ and $r^\a$ for each $\a\in A$ such that:
    \begin{equation}\label{eq:hom}
        \operatorname{Deg}c_{g,n}(\otimes_{i=1}^ne_{\a_i}) + \pi_*c_{g,n}(\otimes_{i=1}^ne_{\a_i}\otimes r^\a e_\a) = \left(\sum_{i=1}^nq_{\a_i} + \gamma g-\delta\right)c_{g,n}(\otimes_{i=1}^ne_{\a_i}),
    \end{equation}
    where $\operatorname{Deg}:H^*\bigl(\Mgnb\bigr)\rightarrow H^*\bigl(\Mgnb\bigr)$ is the operator that acts on $H^i\bigl(\Mgnb\bigr)$ by multiplication by $\frac{i}{2}$ and $\pi:\Mb{g,n+1}\rightarrow\Mgnb$ is the morphism forgetting the last marked point. The constant $\gamma$ is the \emph{conformal dimension} of the partial CohFT.
\end{definition}
As remarked in \cite[Section 3]{br21}, sufficient conditions for the definition of a partial CohFT to make sense when the phase space $V$ is countably generated, for example when $V=\operatorname{span}(\{e_\a\}_{\a\in\Z})$, are the following:
\begin{itemize}
    \item The set $\{\a_n\in\Z:c_{g,n}(\otimes_{i=1}^ne_{\a_i})\neq0\}$ is finite for every $g,n$ in the stable range and $\a_1,\dots,\a_{n-1}\in\Z$;
    \item The metric $\eta_{\a\b}$ has a unique two-sided inverse $\eta^{\a\b}$.
\end{itemize}
Below we adapt \cite[Proposition 1.8]{brz24} to the spin context.

\begin{proposition}\label{prop:cohft}
    Let $V:=\mathrm{span}\left(\{e_{\a}\}_{\a\in\mathbb{Z}}\right)$, and $\eta$ be the nondegenerate symmetric bilinear form on $V$ satisfying $\eta_{\a\b}=\eta(e_\a\otimes e_\b)=\delta_{\a+\b,-1}$. Then the classes $c_{g,n}:V^{\otimes n}\rightarrow H^\mathrm{even}(\Mgnb)$ with
    \begin{equation*}
        c_{g,n}(e_{\a_1}\otimes\cdots\otimes e_{\a_n}):=\HresSpin{g}{2\a_1,\dots,2\a_n}\in H^{2(g-1+N_{\a_{[n]}})}(\Mgnb),\quad\a_1,\dots,\a_n\in\mathbb{Z},
    \end{equation*}
    form an infinite rank homogeneous partial CohFT with unit $e_0$ and metric $\eta$.
\end{proposition}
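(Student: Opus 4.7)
The plan is to parallel the proof of \cite[Proposition 1.8]{brz24}, inserting at each step the verification that passing to the spin-signed combination is compatible with the relevant morphisms. The two well-definedness conditions for a countably generated phase space are immediate: the dimension constraint $\sum_i 2\a_i=2g-2$ forces $\a_n$ to be determined by $\a_1,\dots,\a_{n-1}$, so for fixed data only one term of any sum is non-zero; and the metric $\eta_{\a\b}=\delta_{\a+\b,-1}$ has the unique two-sided inverse $\eta^{\a\b}=\delta_{\a+\b,-1}$. Axiom (i) follows from the symmetry of the defining data in the marked points. For axiom (ii), the key geometric observation is that inserting a marked point with $\a_{n+1}=0$ leaves the divisor $\sum_i \a_i x_i$ unchanged, so the spin bundle $L$ is pulled back under $\pi$, its cohomology is preserved, and residuelessness and parity are inherited on the interior. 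Handling boundary corrections as in the non-spin case gives the equality of closures. On $\Mb{0,3}$, the condition $\a+\b=-1$ is forced; the unique differential on $\P^1$ with one pole is automatically residueless; and $L=\mathcal{O}_{\P^1}(\a x_1+\b x_2)$ has degree $-1$, hence $h^0(L)=0$, so the class is even with value $+1=\eta_{\a\b}$.

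The gluing axiom (iii) is the core of the proof. Pulling back along $\mathrm{gl}$ and analysing the contribution from multi-scale differentials whose dual graph contains the gluing edge, the order condition $\operatorname{ord}_{q_1}\omega+\operatorname{ord}_{q_2}\omega=-2$ at the node matches the pairing $\mu+\nu=-1$ in the inverse metric, so the sum over $\mu\in\Z$ reproduces the tensor contraction $c_{g_1,n_1+1}(\cdots\otimes e_\mu)\,\eta^{\mu\nu}\,c_{g_2,n_2+1}(\cdots\otimes e_\nu)$. Residuelessness on each component follows from the global residue theorem, since the only new pole is at the node and its residue vanishes because all others do. For the spin contribution one invokes the classical splitting formula for theta characteristics on a reducible nodal curve, which shows that $h^0(L)$ is additive modulo $2$ in the restrictions $L_1,L_2$ to the two sides of a separating node, so that the parity signs multiply and the signed cycle factorizes as required.

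For the homogeneity condition of \hyperref[def:pcohft]{Definition \ref*{def:pcohft}}, the assignment $q_\a=-\delta\a$ is forced by $q_0=0$ together with $q_\a+q_{-1-\a}=\delta$. The operator $\mathrm{Deg}$ contributes $(g-1+N_{\a_{[n]}})\cdot c_{g,n}$, matching the known codimension. The push-forward term $r^\a\pi_* c_{g,n+1}(\cdots\otimes e_\a)$ collapses, by the dimension constraint on $\Mb{g,n+1}$, to a single summand with $\a=g-1-\sum_i\a_i$, whose value is governed by the behaviour of residueless meromorphic strata under the insertion of an extra zero or pole. Matching this to $(\sum_i q_{\a_i}+\gamma g-\delta)\cdot c_{g,n}$ fixes $\gamma$, $\delta$ and $r^\a$, inheriting the values from the non-spin calculation of \cite{brz24}.

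The main obstacle I expect is the spin-parity factorization under gluing: one has to verify that the splitting formula for theta characteristics is compatible with the multi-scale residueless compactification $\overline{B}^{\mathrm{res}}_g$ and with the prong-matching data entering its definition, not merely at generic points of the boundary strata. Once this compatibility is settled, the remaining arguments reduce essentially to the non-spin computation of \cite{brz24}, tracking the parity sign at each step.
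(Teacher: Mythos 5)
Your overall architecture parallels the paper's proof, but two points need genuine repair. The serious one is homogeneity: your claim that $q_\a=-\delta\a$ is \emph{forced} by $q_0=0$ and $q_\a+q_{-1-\a}=\delta$ is false (these conditions only constrain $q$ on each pair $\{\a,-1-\a\}$, leaving a free choice per pair), and this linear grading is in fact incompatible with the homogeneity identity \hyperref[eq:hom]{(\ref*{eq:hom})}. Indeed $c_{g,n}(\otimes_{i}e_{\a_i})$ vanishes unless $\sum_i\a_i=g-1$, so with $q_\a=-\delta\a$ the right-hand side of \hyperref[eq:hom]{(\ref*{eq:hom})} would carry the factor $\sum_iq_{\a_i}+\g g-\delta=(\g-\delta)g$, whereas $\operatorname{Deg}$ acts by $g-1+N_{\a_{[n]}}$, and $N_{\a_{[n]}}$ varies independently of $g$ (compare, in genus $0$, the profiles $(1,1,-1,-2)$ with $N=2$ and $(1,1,1,-4)$ with $N=1$). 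Nor can the pushforward term absorb the mismatch: $r^\a$ is a constant attached to the single index $\a=g-1-\sum_i\a_i$, which does not see $N_{\a_{[n]}}$, and for $\a\ge0$ the class $\pi_*c_{g,n+1}(\cdots\otimes e_\a)$ has the wrong codimension to be proportional to $c_{g,n}(\cdots)$ in any case. The paper instead takes the non-linear grading $q_\a=0$ for $\a\ge0$, $q_\a=1$ for $\a\le-1$, together with $r^\a=0$ and $\g=\delta=1$ (both choices satisfy your two constraints); then $\sum_iq_{\a_i}=N_{\a_{[n]}}$ and \hyperref[eq:hom]{(\ref*{eq:hom})} becomes exactly the codimension statement $\operatorname{codim}\overline{\mathcal{H}}^{\mathrm{res}}_g(2\a_1,\dots,2\a_n)=g-1+N_{\a_{[n]}}$.

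The second gap is the one you flag yourself at the end: the parity factorization at the separating node is precisely the new content of the spin case, and your proposal defers it (``once this compatibility is settled'') rather than proving it. The paper closes it by citing \cite[Proposition 5.3]{css21}, which states that the parity of a multi-scale differential over a stable curve of compact type is the sum of the parities at the vertices modulo $2$; since the generic point of $\mathrm{gl}\bigl(\overline{\mathcal{H}}_1\times\overline{\mathcal{H}}_2\bigr)$ is a one-nodal compact-type curve, this replaces your appeal to the ``classical splitting formula for theta characteristics'' by a statement already adapted to the multi-scale compactification with its twist and prong-matching data — exactly the compatibility you worried about. Note moreover that parity additivity only yields the set-theoretic decomposition of $\mathrm{gl}^{-1}(\overline{\mathcal{H}}^{\mathrm{even}})$ and $\mathrm{gl}^{-1}(\overline{\mathcal{H}}^{\mathrm{odd}})$, hence a priori $\mathrm{gl}^*[\overline{\mathcal{H}}^{\mathrm{even}}]=a\,[\overline{\mathcal{H}}_1^{\mathrm{even}}]\otimes[\overline{\mathcal{H}}_2^{\mathrm{even}}]+b\,[\overline{\mathcal{H}}_1^{\mathrm{odd}}]\otimes[\overline{\mathcal{H}}_2^{\mathrm{odd}}]$ with unknown multiplicities; one still needs the generic transversality of the intersection along $\overline{\mathcal{H}}_1\times\overline{\mathcal{H}}_2$ (the local-coordinate computation of \cite{brz24}) to conclude $a=b=c=d=1$, a step your write-up omits. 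The remaining parts — well-definedness over the countable index set, axioms (i) and (ii) via invariance of the parity under adding a marked point of order $0$, and the $c_{0,3}$ normalization — coincide with the paper's argument.
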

\begin{proof}
    Firstly, for fixed $g,n$ in the stable range and for $\a_1,\dots,\a_{n-1}\in\Z$ the set $\{\a_n\in\Z:c_{g,n}(\otimes_{i=1}^ne_{\a_i})\neq0\}$ consists of the single element $\a_n=g-1-\sum_{i=1}^{n-1}\a_i$. Moreover, the metric $\eta_{\a\b}=\delta_{\a+\b,-1}$ has a unique two-sided inverse $\eta^{\a\b}=\delta_{\a+\b,-1}$. Let us now check the three axioms of \hyperref[def:pcohft]{Definition \ref*{def:pcohft}}.
    \begin{itemize}
        \item[(i)] The $S_n$-equivariance of $c_{g,n}$ is clear from the definition.
        
        \item[(ii)] A spin structure on $\P^1$ is isomorphic to $\mathcal{O}(-1)$, so it has no global sections. Thus every spin structure on $\P^1$ is even, so in particular $c_{0,3}(e_\a\otimes e_\b\otimes e_0):=\Hspin{0}{2\a,2\b,0}=[\overline{\mathcal{H}}_0(2\a,2\b,0)]$ for every $\a,\b\in\Z$. On $(\P^1,0,\infty,1)\in\mathcal{M}_{0,3}$ a unique (up to $\C^*$ multiplication) meromorphic differential whose divisor is $2\a\cdot[0]+2\b\cdot[\infty]+0\cdot[1]$ exists if and only if $\b=-\a-1$, and is given by $\omega=z^\a dz$. Thus $c_{0,3}(e_\a\otimes e_\b\otimes e_0)=\delta_{\a+\b,-1} = \eta_{\a\b}$.

        Now let $2g-2+n>0$, $\a_1,\dots,\a_n\in\Z$ and $\pi:\Mb{g,n+1}\rightarrow\Mgnb$ be the morphism forgetting the last marked point. We show that
        \begin{equation}\label{eq:(ii)}
            \pi^*\HresSpin{g}{2\a_1,\dots,2\a_n} = \HresSpin{g}{2\a_1,\dots,2\a_n,0}.
        \end{equation}
        Let $p:\overline{B}_g^\mathrm{res}(2\a_1,\dots,2\a_n) \rightarrow\Mgnb$ and $\tilde{p}:\overline{B}_g^\mathrm{res}(2\a_1,\dots,2\a_n,0) \rightarrow\Mb{g,n+1}$ be the forgetful morphisms associating to each projectivized multi-scale residueless differential on a stable curve $C$ the stable curve itself. For $\star\in\{\mathrm{even},\mathrm{odd}\}$ denote the preimage $p^{-1}\bigl(\overline{\mathcal{H}}_g^\mathrm{res}(2\a_1,\dots,2\a_n)^\star\bigr)$ by $\overline{B}_g^\mathrm{res}(2\a_1,\dots,2\a_n)^\star$, and similarly denote $\tilde{p}^{-1}\bigl(\overline{\mathcal{H}}_g^\mathrm{res}(2\a_1,\dots,2\a_n,0)^\star\bigr)$ by $\overline{B}_g^\mathrm{res}(2\a_1,\dots,2\a_n,0)^\star$. Consider the lifting map $\tilde{\pi}:\overline{B}_g^\mathrm{res}(2\a_1,\dots,2\a_n,0)\rightarrow\overline{B}_g^\mathrm{res}(2\a_1,\dots,2\a_n)$ of $\pi$ through $\tilde{p}$ and $p$. Given a meromorphic differential on a curve lying in $B_g^\mathrm{res}(2\a_1,\dots,2\a_n)$, the parity of the associated spin structure is not affected by adding a marked point on which the differential does not have a zero or pole. Therefore $\tilde{\pi}$ restricts to a morphism $\tilde{\pi}^\star:\overline{B}_g^\mathrm{res}(2\a_1,\dots,2\a_n,0)^\star\rightarrow\overline{B}_g^\mathrm{res}(2\a_1,\dots,2\a_n)^\star$. Let $X^\star$ be the fiber product of $\overline{B}_g^\mathrm{res}(2\a_1,\dots,2\a_n)^\star$ and $\Mb{g,n+1}$ over $\Mgnb$ with respective projections $a^\star$ and $b^\star$, and $f^\star:\overline{B}_g^\mathrm{res}(2\a_1,\dots,2\a_n,0)^\star\rightarrow X^\star$ the induced morphism:
        \[\begin{tikzcd}
            \overline{B}_g^\mathrm{res}(2\a_1,\dots,2\a_n,0)^\star \\
            & X^\star & \overline{B}_g^\mathrm{res}(2\a_1,\dots,2\a_n)^\star \\
            & \Mb{g,n+1} & \Mgnb 
            \arrow["{f^\star}" description, dotted, from=1-1, to=2-2]
            \arrow["{\tilde{\pi}^\star}", bend left, from=1-1, to=2-3]
            \arrow["{\tilde{p}}"', bend right, from=1-1, to=3-2]
            \arrow["{a^\star}", from=2-2, to=2-3]
            \arrow["{b^\star}"', from=2-2, to=3-2]
            \arrow["p", from=2-3, to=3-3]
            \arrow["\pi"', from=3-2, to=3-3]
        \end{tikzcd}\]
        This is the same situation as in the argument given in \cite[Proposition 1.8]{brz24}, from which we deduce
        \begin{equation*}
            \pi^*[\overline{\mathcal{H}}_g^\mathrm{res}(2\a_1,\dots,2\a_n)^\star] = [\overline{\mathcal{H}}_g^\mathrm{res}(2\a_1,\dots,2\a_n,0)^\star], \quad \star\in\{\mathrm{even},\mathrm{odd}\}.
        \end{equation*}
        Then equation \hyperref[eq:(ii)]{(\ref*{eq:(ii)})} follows.
        
        \item[(iii)] Now consider the gluing morphism $\mathrm{gl}:\Mb{g_1,n_1+1}\times\Mb{g_2,n_2+1} \rightarrow \Mgnb$, where $g=g_1+g_2,\,n=n_1+n_2$ and $I\sqcup J=[n]$ with $\lvert I\rvert=n_1,\lvert J\rvert = n_2$. We show that
        \begin{equation}\label{eq:(iii)}
            \mathrm{gl}^*\HresSpin{g}{2\a_1,\dots,2\a_n} = \HresSpin{g_1}{2\a_I,2\b} \otimes \HresSpin{g_2}{2\a_J,-2\b-2},
        \end{equation}
        where $2\a_I:=(2\a_i)_{i\in I}$ and $2\a_J:=(2\a_j)_{j\in J}$ and $\b=g_1-1-\sum_{i\in I}\a_i=-g_2+\sum_{j\in J}\a_j$. By the same reasoning as \cite[Proposition 1.8]{brz24} in the proof of the non-spin analogue of \hyperref[eq:(iii)]{(\ref*{eq:(iii)})}, there is a morphism 
        \begin{equation*}
            \widetilde{\mathrm{gl}}: \overline{B}_{g_1}^\mathrm{res}(2\a_I,2\b) \times \overline{B}_{g_2}^\mathrm{res}(2\a_J,-2\b-2) \rightarrow \overline{B}_g^\mathrm{res}(2\a_1,\dots,2\a_n)
        \end{equation*}
        lifting the morphism $\mathrm{gl}$, which is an isomorphism onto its image. Therefore 
        \begin{equation*}
            \mathrm{gl}^{-1} \bigl(\overline{\mathcal{H}}_g^\mathrm{res}(2\a_1,\dots,2\a_n)\bigr) = \overline{\mathcal{H}}_{g_1}(2\a_I,2\b)\times\overline{\mathcal{H}}_{g_2}(2\a_J,-2\b-2).
        \end{equation*}
        In what follows, for $\star\in\{\mathrm{even},\mathrm{odd}\}$ denote $\overline{\mathcal{H}}_g^\mathrm{res}(2\a_1,\dots,2\a_n)^\star$ by $\overline{\mathcal{H}}^\star$, and similarly denote $\overline{\mathcal{H}}_{g_1}^\mathrm{res}(2\a_I,2\b)^\star$ and $\overline{\mathcal{H}}_{g_2}^\mathrm{res}(2\a_I,-2\b-2)^\star$ by $\overline{\mathcal{H}}_1^\star$ and $\overline{\mathcal{H}}_2^\star$. Moreover, denote the smooth parts of $\overline{\mathcal{H}}_1^\star$ and $\overline{\mathcal{H}}_2^\star$ by $\mathcal{H}_1^\star\subset\overline{\mathcal{H}}_1^\star$ and $\mathcal{H}_2^\star\subset\overline{\mathcal{H}}_2^\star$. By \cite[Proposition 5.3]{css21}, the spin parity of a multi-scale differential over a stable curve of compact type is given by the sum of the parity of each vertex of the underlying stable graph, modulo 2. Thus 
        \begin{gather*}
            \mathrm{gl}\bigl((\mathcal{H}_1^\mathrm{even}\times\mathcal{H}_2^\mathrm{even}) \sqcup (\mathcal{H}_1^\mathrm{odd}\times\mathcal{H}_2^\mathrm{odd})\bigr) \subset \overline{\mathcal{H}}^\mathrm{even}, \\
            \mathrm{gl}\bigl((\mathcal{H}_1^\mathrm{even}\times\mathcal{H}_2^\mathrm{odd}) \sqcup (\mathcal{H}_1^\mathrm{odd}\times\mathcal{H}_2^\mathrm{even})\bigr) \subset \overline{\mathcal{H}}^\mathrm{odd},
        \end{gather*}
        and therefore 
        \begin{gather*}
            \mathrm{gl}^{-1}\bigl(\overline{\mathcal{H}}^\mathrm{even}\bigr) = (\overline{\mathcal{H}}_1^\mathrm{even}\times\overline{\mathcal{H}}_2^\mathrm{even}) \sqcup (\overline{\mathcal{H}}_1^\mathrm{odd}\times\overline{\mathcal{H}}_2^\mathrm{odd}), \\
            \mathrm{gl}^{-1}\bigl(\overline{\mathcal{H}}^\mathrm{odd}\bigr) = (\overline{\mathcal{H}}_1^\mathrm{even}\times\overline{\mathcal{H}}_2^\mathrm{odd}) \sqcup (\overline{\mathcal{H}}_1^\mathrm{odd}\times\overline{\mathcal{H}}_2^\mathrm{even}).
        \end{gather*}
        These considerations lead one to conclude that
        \begin{gather*}
            \mathrm{gl}^*[\overline{\mathcal{H}}^\mathrm{even}] = a\,[\overline{\mathcal{H}}_1^\mathrm{even}]\otimes[\overline{\mathcal{H}}_2^\mathrm{even}] + b\,[\overline{\mathcal{H}}_1^\mathrm{odd}]\otimes[\overline{\mathcal{H}}_2^\mathrm{odd}], \\
            \mathrm{gl}^*[\overline{\mathcal{H}}^\mathrm{odd}] = c\,[\overline{\mathcal{H}}_1^\mathrm{even}]\otimes[\overline{\mathcal{H}}_2^\mathrm{odd}] + d\,[\overline{\mathcal{H}}_1^\mathrm{odd}]\otimes[\overline{\mathcal{H}}_2^\mathrm{even}],
        \end{gather*}
        for some constants $a,b,c,d$. By the local coordinate computation in \cite[Proposition 1.8]{brz24}, the intersection of $\overline{\mathcal{H}}$ with the image of $\mathrm{gl}$ along $\overline{\mathcal{H}}_1\times\overline{\mathcal{H}}_2$ is generically transversal. Thus, in particular, the intersection of $\overline{\mathcal{H}}^\mathrm{even}$ with the image along $\overline{\mathcal{H}}_1^\mathrm{even}\times\overline{\mathcal{H}}_2^\mathrm{even}$ is generically transversal, so $a=1$. Similarly $b=c=d=1$. Equation \hyperref[eq:(iii)]{(\ref*{eq:(iii)})} then follows.
    \end{itemize}
    Lastly, the fact that $\operatorname{codim}\overline{\mathcal{H}}_g^\mathrm{res}(2\a_1,\dots,2\a_n)=g-1+N_{\a_{[n]}}$ implies that 
    \begin{equation*}
        \operatorname{Deg}c_{g,n}(\otimes_{i=1}^ne_{\a_i})=(g-1+N_{\a_{[n]}})c_{g,n}(\otimes_{i=1}^ne_{\a_i}).
    \end{equation*}
    By choosing the constants $r^\a=0$ for all $\a\in\Z$, $q_\a=0$ if $\a\ge0$ and $q_\a=1$ if $\a\le-1$, and $\gamma=\delta=1$ (which are compatible with $\deg e_0=0$ and $\deg\eta=-\delta$), we see that equation \hyperref[eq:hom]{(\ref*{eq:hom})} is satisfied. Thus the partial CohFT $c_{g,n}$ is homogeneous.
\end{proof}

\begin{center}\section{The associated DR hierarchy}\label{sec:dr}\end{center}
\subsection{The DR hierarchy construction}

In \cite{bur15} a construction associating an integrable Hamiltonian system of evolutionary PDEs to a CohFT was given. In \cite{bdgr18} the construction was shown to be valid for partial CohFTs, and in \cite{br21} the first example of the DR hierarchy associated to an infinite rank partial CohFT was given. We outline this construction below. \newline

For $i=1,\dots,n$ let $\psi_i\in H^2(\Mgnb)$ denote the $i^\mathrm{th}$ $\psi$-class, i.e. the first Chern class of the line bundle $\mathbb{L}_i\rightarrow\Mgnb$ whose fiber at a stable curve is the cotangent line at the $i^\mathrm{th}$ marked point. For $j=0,\dots,g$ let $\lambda_j\in H^{2j}(\Mgnb)$ denote the $j^\mathrm{th}$ $\lambda$-class, i.e. the $j^\mathrm{th}$ Chern class of the Hodge bundle $\Omega\Mgnb\rightarrow\Mgnb$ mentioned in the introduction. For $a_1,\dots,a_n\in\Z$ satisfying $\sum_{i=1}^na_i=0$ let $\DR_g(a_1,\dots,a_n)\in H^{2g}(\Mgnb)$ denote the Poincaré dual cohomology class of the untwisted \emph{double ramification (DR) cycle}. This can be constructed \cite{li02,gv05} as the pushforward, through the forgetful map to $\Mgnb$, of the virtual fundamental class of the moduli space of projectivized stable maps to $\P^1$ relative to $0$ and $\infty$, with ramification profile $a_1,\dots,a_n$ at the marked points. There are various other constructions of the DR cycle on $\Mgnb$. A list of these constructions, and the equivalences between them, can be found in \cite[Section 1.6]{hs21}. \newline

Let $\mathcal{M}_{g,n}^\mathrm{ct}\subset\Mgnb$ denote the moduli space of stable curves of compact type. The restriction $\at{\DR_g(a_1,\dots,a_n)}{\mathcal{M}_{g,n}^\mathrm{ct}}$ is a homogeneous polynomial in $a_1,\dots,a_n$ of degree $2g$ with coefficients in $H^{2g}(\mathcal{M}_{g,n}^\mathrm{ct})$ \cite{jppz17}. Hain's formula \cite{hai13} gives a useful expression for $\at{\DR_g(a_1,\dots,a_n)}{\mathcal{M}_{g,n}^\mathrm{ct}}$ which we will use in our subsequent computations. Moreover, the fact that $\lambda_g$ vanishes on $\Mgnb\setminus\mathcal{M}_{g,n}^\mathrm{ct}$ (see \cite[Section 0.4]{fp00}) implies that the cohomology class $\lambda_g\DR_g(a_1,\dots,a_n)\in H^{4g}(\Mgnb)$ is a homogeneous polynomial in the variables $a_1,\dots,a_n$. In the particular case when $n=2$ there is a useful formula for $\lambda_g\DR_g(-a,a)$ from \cite{bhs22} which we will also use later. \newline

For a countable indexing set $A$, consider the formal variables $\{u_k^\a:\a\in A,k\ge0\}$ together with an additional variable $\e$. Heuristically, given a vector space $V=\operatorname{span}(\{e_\a\}_{\a\in A})$, one can think of $u^\a=u^\a(x)$ as the component of a loop $u:S^1\rightarrow V$ along $e_\a$, with $x$ being the coordinate on the circle $S^1$, and of $u^a_k$ as the $k^\mathrm{th}$ derivative of $u^\a$ with respect to $x$ (the variable $\e$ plays the role of counting how many $x$-derivatives appear, in an appropriate sense). This is just a heuristic interpretation as we work in a formal algebraic setting by describing an appropriate ring of functions for the loop space of $V$ as follows. The ring $\C[[u_*^*]]$ is graded by the differential grading $\deg_{\d_x}u^\a_k:=k$, with the degree $d$ part being denoted by $\mathcal{A}_A^{[d]}$. Define $\mathcal{A}_A:=\oplus_{d\ge0}\mathcal{A}_A^{[d]}$ and $\widehat{\mathcal{A}}_A:=\mathcal{A}_A[[\e]]$. There is an operator $\d_x:\widehat{\mathcal{A}}_A\rightarrow\widehat{\mathcal{A}}_A$ corresponding heuristically to differentiation with respect to $x$:
\begin{equation*}
    \d_xf := \sum_{k\ge0}u^\a_{k+1}\frac{\d f}{\d u_k^\a},\quad f\in\widehat{\mathcal{A}}_A.
\end{equation*}
Let $\widehat{\Lambda}_A:=\widehat{\mathcal{A}}_A\Big/\left(\d_x\widehat{\mathcal{A}}_A\oplus\C[[\e]]\right)$. The spaces $\widehat{\mathcal{A}}_A$ and $\widehat{\Lambda}_A$ are called the spaces of \emph{differential polynomials} and of \emph{local functionals}, respectively. By assigning $\deg_{\d_x}\e=-1$, denote the degree $d$ parts of $\widehat{\mathcal{A}}_A$ and $\widehat{\Lambda}_A$ by $\widehat{\mathcal{A}}_A^{[d]}$ and $\widehat{\Lambda}_A^{[d]}$, respectively. Note that these constructions are pertinent to the case where the indexing set $A$ is countable, while for finite $A$ the corresponding definitions of $\widehat{\mathcal{A}}_A$ and $\widehat{\Lambda}_A$ are slightly different, see \cite[Section 2.1]{ros17}. The natural projection is denoted by $\int dx:\widehat{\mathcal{A}}_A\rightarrow\widehat{\Lambda}_A$, suggesting that the quotient corresponds to the possibility of integrating by parts on $S^1$. For $f\in\widehat{\mathcal{A}}_A$ we will use the shorthand notation $\overline{f}:=\int f dx$. For each $\a\in A$, the variational derivative $\frac{\delta}{\delta u^\a}:\widehat{\Lambda}_A\rightarrow\widehat{\mathcal{A}}_A$ is the operator given by
\begin{equation*}
    \frac{\delta\overline{f}}{\delta u^\a} := \sum_{k\ge0}(-\d_x)^k\frac{\d f}{\d u^\a_k},\quad \overline{f}\in\widehat{\Lambda}_A.
\end{equation*}
One can define Poisson brackets on the space of local functionals $\widehat{\Lambda}_A$. A result of \cite{get02} states that, after a suitable coordinate transformation, all Poisson brackets on this space are of the form
\begin{equation}\label{eq:poisson}
    \{\overline{f},\overline{g}\} := \int\left(\frac{\delta\overline{f}}{\delta u^\a}\pi^{\a\b}\frac{\delta\overline{g}}{\delta u^\b}\right)dx, \quad \overline{f},\overline{g}\in\widehat{\Lambda}_A,
\end{equation}
with $\pi^{\a\b}$ a constant, symmetric and nondegenerate bilinear form. \newline

Differential polynomials and local functionals can also be described using another set of formal variables $\{p_a^\a:\a\in A,a\in\Z\}$, corresponding heuristically to the Fourier components of $u^\a=u^\a(x)$. These are defined by
\begin{equation*}
    u^\a_k=\sum_{a\in\Z}(ia)^kp_a^\a e^{iax},\quad \a\in A,
\end{equation*}
The Poisson bracket \hyperref[eq:poisson]{(\ref*{eq:poisson})} takes the form $\{p_a^\a,p_b^\b\}=ia\pi^{\a\b}\delta_{a+b,0}$ in the Fourier variables. \newline

Let $c_{g,n}:V^{\otimes n}\rightarrow H^*(\Mgnb)$ be a partial CohFT with phase space $V=\operatorname{span}(\{e_\a\}_{\a\in A})$, unit $e_\mathbbm{1}$ and metric $\eta$. The \emph{Hamiltonian densities} of the DR hierarchy associated to this partial CohFT \cite{br16} are the generating series $g_{\a,-1}:=\eta_{\a\mu}u^\mu$ for all $\a\in A,$ and
\begin{align*}
    g_{\a,d} = & \hspace{-1em}\sum_{\substack{g\ge0,n\ge1 \\ 2g-1+n>0}} \frac{\e^{2g}}{n!} \sum_{k_1,\dots,k_n\ge0}\, \prod_{i=1}^nu_{k_i}^{\a_i} \\
    & \times\coef\left(\,\int_{\Mb{g,n+1}}\hspace{-1.5em}\DR_g\bigl(-\sum_{i=1}^na_i,a_1,\dots,a_n\bigr)\lambda_g\psi_1^d\,c_{g,n}(e_\a\otimes\otimes_{i=1}^ne_{\a_i})\right)\in\widehat{\mathcal{A}}_A^{[0]},
\end{align*}
for all $\a\in A$ and $d\in\Z_{\ge0}$ (we remind that we sum over the repeated Greek indices $\a_i$). The notation $\coef(P)$ indicates the coefficient of $a_1^{k_1}\cdots a_n^{k_n}$ of the polynomial $P\in\mathbb{Q}[a_1,\dots,a_n]$. In the Fourier variables $\{p_a^\a:\a\in A,a\in\Z\}$, the Hamiltonian densities are
\begin{align}
    \nonumber g_{\a,d} = & \hspace{-1em}\sum_{\substack{g\ge0,n\ge1 \\ 2g-1+n>0}} \frac{(-\e^2)^g}{n!} \sum_{a_1,\dots,a_n\in\Z}\left(\,\int_{\Mb{g,n+1}}\hspace{-1.5em}\DR_g\bigl(-\sum_{i=1}^na_i,a_1,\dots,a_n\bigr)\lambda_g\psi_1^d\,c_{g,n}(e_\a\otimes\otimes_{i=1}^ne_{\a_i})\right) \\
    \label{eq:gadfourier} & \times \left(\prod_{i=1}^np_{a_i}^{\a_i}\right)e^{i\sum_{i=1}^na_ix}.
\end{align}

The \emph{Hamiltonians} of the DR hierarchy associated to the above partial CohFT are the local functionals $\overline{g}_{\a,d}\in\widehat{\Lambda}_A^{[0]},\a\in A,d\ge-1$. In Fourier variables this corresponds to taking the constant term in $x$ of $g_{\a,d}$. It was proved in \cite{bur15} that the Hamiltonians of the DR hierarchy are in involution with respect to the Poisson bracket \hyperref[eq:poisson]{(\ref*{eq:poisson})} on $\widehat{\Lambda}_A$, where we take $\pi$ to be the metric $\eta$ of the CohFT:
\begin{equation*}
    \{\overline{g}_{\a_1,d_1},\overline{g}_{\a_2,d_2}\}=0, \qquad \a_1,\a_2\in A, \quad d_1,d_2\ge-1.
\end{equation*}
This implies that the following infinite system of evolutionary PDEs, called the \emph{DR hierarchy},
\begin{equation*}
    \dd{u^\a}{t^\b_d} = \eta^{\a\mu}\d_x\frac{\delta\overline{g}_{\b,d}}{\delta u^\mu},\quad \a,\b\in A,\quad d\ge0,
\end{equation*}
satisfies the compatibility conditions
\begin{equation*}
    \frac{\d}{\d t^{\b_2}_{d_2}}\dd{u^\a}{t^{\b_1}_{d_1}} = \frac{\d}{\d t^{\b_1}_{d_1}}\dd{u^\a}{t^{\b_2}_{d_2}}, \quad \a,\b_1,\b_2\in A,\quad d_1,d_2\ge0.
\end{equation*}

\subsection{The DR hierarchy associated to \texorpdfstring{$\HresSpin{g}{2\a_1,\dots,2\a_n}$}{}}\label{sec:DRcohft}

Let us apply the above construction to the partial CohFT 
\begin{equation*}
    c_{g,n}(\otimes_{i=1}^ne_{\a_i})=\HresSpin{g}{2\a_1,\dots,2\a_n} 
\end{equation*}
of \hyperref[prop:cohft]{Proposition \ref*{prop:cohft}}. The indexing set in this case is $A=\Z$. The Hamiltonian densities are given by $g_{\a,-1}=\eta_{\a\mu}u^\mu=u_{-\a-1}$ for all $\a\in\Z$, and
\begin{align}\label{eq:densities} 
    g_{\a,d} = & \hspace{-1em}\sum_{\substack{g\ge0,n\ge1 \\ 2g-1+n>0}} \frac{\e^{2g}}{n!} \sum_{k_1,\dots,k_n\ge0}\, \prod_{i=1}^nu_{k_i}^{\a_i} \\
    \nonumber & \hspace{-2.5em} \times\coef\left(\,\int_{\Mb{g,n+1}}\hspace{-1.5em}\DR_g\bigl(-\sum_{i=1}^na_i,a_1,\dots,a_n\bigr)\lambda_g\psi_1^d\,\HresSpin{g}{2\a,2\a_1,\dots,2\a_n}\right)\in\widehat{\mathcal{A}}_\Z^{[0]}
\end{align}
for all $\a\in\Z$ and $d\ge0$. The DR hierarchy in this case is the infinite system of compatible evolutionary PDEs
\begin{equation}\label{eq:dr}
    \dd{u^\a}{t_d^\b} = \d_x\frac{\delta\overline{g}_{\beta,d}}{\delta u^{-\a-1}},\quad\a,\b\in\Z,\quad d\ge0.
\end{equation}
Let $q_\a=0$ if $\a\ge0$ and $q_\a=1$ if $\a\le-1$. It is useful to introduce the following triple grading on $\widehat{\mathcal{A}}_\Z$:
\begin{equation*}
    \overline{\deg}\,u^\a_k:=(k,1-q_\a,-2\a)=\begin{cases}(k,1,-2\a),&\a\ge0,\\(k,0,-2\a),&\a\le-1,\end{cases}\qquad\overline{\deg}\,\e:=(-1,0,1).
\end{equation*}
Consider, moreover, the Euler differential operator on $\widehat{\mathcal{A}}_\Z$ given by
\begin{equation*}
    \widehat{E}:=\sum_{k\ge0}(1-q_\a)u^\a_k\frac{\d}{\d u^\a_k}.
\end{equation*}
\begin{proposition}\label{prop:deg}
    The Hamiltonian densities \hyperref[eq:densities]{(\ref*{eq:densities})} satisfy
    \begin{equation*}
        \overline{\deg}\,g_{\a,d}=(0,d+1+q_\a,2\a+2)=\begin{cases}(0,d+1,2\a+2),&\a\ge0,\\(0,d+2,2\a+2),&\a\le-1.\end{cases}
    \end{equation*}
\end{proposition}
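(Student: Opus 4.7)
The plan is to show that every nonzero term in the sum \hyperref[eq:densities]{(\ref*{eq:densities})} defining $g_{\a,d}$ has tri-degree exactly $(0,\,d+1+q_\a,\,2\a+2)$, by checking each of the three components separately. Since the tri-grading is compatible with the ring structure on $\widehat{\mathcal{A}}_\Z$, this will imply the claim.

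First I would handle the third component (the ``weight'' grading $-2\a$). From \hyperref[prop:cohft]{Proposition \ref*{prop:cohft}} we know that the coefficient of $\prod_i u_{k_i}^{\a_i}$ in $g_{\a,d}$ can only be nonzero when $\a+\sum_{i=1}^n\a_i=g-1$, since the set of $\a_{n+1}$ for which $c_{g,n+1}(e_\a\otimes\otimes_i e_{\a_i}\otimes e_{\a_{n+1}})$ is nonzero consists of a single value determined by this equation. Hence $-2\sum_i\a_i=2\a-2g+2$, and adding the contribution $2g$ from $\e^{2g}$ yields $2\a+2$.

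Next I would handle the first component. The key input is Hain's formula combined with the vanishing of $\lambda_g$ off compact type, which implies that $\lambda_g\DR_g(-\sum a_i,a_1,\dots,a_n)$ is homogeneous polynomial of degree exactly $2g$ in $a_1,\dots,a_n$. Therefore the coefficient extraction $\coef_{a_1^{k_1}\cdots a_n^{k_n}}$ is nonzero only when $\sum_i k_i=2g$. Combining the contribution $\sum_i k_i$ from $\prod_i u_{k_i}^{\a_i}$ with the contribution $-2g$ from $\e^{2g}$ gives first component $0$.

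For the second component I would perform a dimension count on $\Mb{g,n+1}$, which has complex dimension $3g-2+n$. The cohomological degrees of the factors inside the integral add up to
\begin{equation*}
    g + g + d + (g-1+N_{\a,\a_{[n]}}) = 3g-1+d+N_{\a,\a_{[n]}},
\end{equation*}
where $N_{\a,\a_{[n]}}=q_\a+\sum_{i=1}^nq_{\a_i}$ counts the negative entries among $\a,\a_1,\dots,\a_n$ (this uses the codimension formula for $\HresSpin{g}{2\a,2\a_1,\dots,2\a_n}$). For a nonzero integral this must equal $3g-2+n$, giving $n=d+1+N_{\a,\a_{[n]}}$. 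The second-degree contribution from $\prod_i u_{k_i}^{\a_i}$ is $\sum_i(1-q_{\a_i})=n-\sum_i q_{\a_i}=n-N_{\a,\a_{[n]}}+q_\a=d+1+q_\a$, as required.

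The main ``obstacle'' is really just keeping the bookkeeping straight: the three grading conditions align neatly because (a) the CohFT constraint $\sum\a_i=g-1-\a$ forces the weight, (b) the homogeneity of $\lambda_g\DR_g$ forces the differential degree, and (c) the dimension of $\Mb{g,n+1}$ against the cohomological degrees of the four factors forces the $q$-weighted degree. No serious computation is needed beyond invoking these three facts.
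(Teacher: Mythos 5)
Your proof is correct and takes essentially the same approach as the paper's: the first entry follows from the degree-$2g$ homogeneity of $\lambda_g\DR_g$ restricted to compact type (which is precisely the reason behind the fact $g_{\a,d}\in\widehat{\mathcal{A}}_\Z^{[0]}$ that the paper cites), the second from the dimension count on $\Mb{g,n+1}$ (this is the paper's statement $\widehat{E}(g_{\a,d})=(d+1+q_\a)g_{\a,d}$ ``deduced from dimension counting''), and the third from the vanishing of $\HresSpin{g}{2\a,2\a_1,\dots,2\a_n}$ unless $\a+\sum_{i=1}^n\a_i=g-1$. You have simply spelled out the bookkeeping that the paper's terser proof leaves implicit.
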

\begin{proof}
    The first entry in $\overline{\deg}$ corresponds to the differential degree $\deg_{\d_x}$, so the first entry of $\overline{\deg}\,g_{\a,d}$ corresponds to the previously mentioned fact $g_{\a,d}\in\widehat{A}_\Z^{[0]}$. The second entry of $\overline{\deg}\,g_{\a,d}$ follows from $\widehat{E}(g_{\a,d})=(d+1+q_\a)g_{\a,d}$, which can be deduced from dimension counting in the expression \hyperref[eq:densities]{(\ref*{eq:densities})}. The third entry is due to the fact that $\overline{\mathcal{H}}_g^\mathrm{res}(2\a,2\a_1,\dots,2\a_n)=0$ unless $-\sum_{i=1}^n2\a_i+2g=2\a+2$.
\end{proof}

We now reduce the system \hyperref[eq:dr]{(\ref*{eq:dr})} to only include cycles of meromorphic differentials with two zeros and any number of poles. This reduction is almost the same as in \cite[Proposition 3.1]{brz24}.
\begin{proposition}
    The subset $\bigl\{\frac{\d}{\d t_0^\b}\bigr\}_{\b\ge0}$ of flows of the DR hierarchy \hyperref[eq:dr]{(\ref*{eq:dr})} preserves the submanifold $\{u^\a_k=0:\a\ge0,k\ge0\}$.
\end{proposition}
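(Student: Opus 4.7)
The plan is to reduce the statement to a grading-degree argument based on \hyperref[prop:deg]{Proposition \ref*{prop:deg}}, in the spirit of the non-spin analogue \cite[Proposition 3.1]{brz24}. Preservation of the submanifold $\{u^\alpha_k=0:\alpha\ge0,k\ge0\}$ under the flows $\frac{\d}{\d t_0^\b}$ amounts to showing that, for every $\a\ge0$ and every $\b\ge0$, the right-hand side
\begin{equation*}
\dd{u^\a}{t^\b_0}=\d_x\frac{\delta\overline{g}_{\b,0}}{\delta u^{-\a-1}}
\end{equation*}
vanishes when restricted to that submanifold.

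First I would focus on the variational derivative itself. Since $\a\ge0$ we have $-\a-1\le-1$, so $\frac{\delta}{\delta u^{-\a-1}}$ acts by differentiating in the variables $u^{-\a-1}_k$, all of which have \emph{negative} upper index. This is the key structural observation: the operator strips away one factor with negative upper index from each monomial, but cannot affect factors with non-negative upper index.

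Next, I would invoke the second entry of the triple grading $\overline{\deg}$ supplied by \hyperref[prop:deg]{Proposition \ref*{prop:deg}}. Specializing to $d=0$ and $\b\ge0$ (so $q_\b=0$), that entry equals $1$, meaning $\widehat{E}(g_{\b,0})=g_{\b,0}$. Because $\widehat{E}=\sum_{k,\a}(1-q_\a)u^\a_k\frac{\d}{\d u^\a_k}$ only counts the variables $u^\a_k$ with $\a\ge0$, the eigenvalue $1$ forces every monomial appearing in $g_{\b,0}$ to contain exactly one factor of the form $u^\a_k$ with $\a\ge0$ (all other factors having $\a\le-1$). This is the crucial homogeneity property I would then exploit.

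Combining the two observations, the variational derivative $\frac{\delta \overline{g}_{\b,0}}{\delta u^{-\a-1}}$ removes only a negative-index factor, so each of its monomials still carries exactly one factor $u^\gamma_k$ with $\gamma\ge0$. Restricting to the submanifold $\{u^\a_k=0:\a\ge0,k\ge0\}$ therefore annihilates every monomial, and the same is true after applying $\d_x$, which preserves the property of containing a non-negative-index factor (differentiation in $x$ only raises the $k$-index). This yields the claim for all $\a,\b\ge0$. I do not anticipate a real obstacle here; the only point that requires some care is checking that the action of $\widehat{E}$ really counts the non-negative-index variables, which is immediate from the definition of $q_\a$, and verifying the grading identity $\widehat{E}(g_{\b,0})=(0+1+q_\b)g_{\b,0}$ inherited directly from \hyperref[prop:deg]{Proposition \ref*{prop:deg}}.
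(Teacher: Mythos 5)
Your proposal is correct and follows essentially the same route as the paper: the paper's proof is exactly the grading argument via the second entry of $\overline{\deg}$ (equivalently the Euler operator $\widehat{E}$) from \hyperref[prop:deg]{Proposition \ref*{prop:deg}}, observing that $\overline{\deg}\,\frac{\delta\overline{g}_{\b,0}}{\delta u^{-\a-1}}=(0,1,2\b-2\a)$ so the restriction to $u_*^{\ge0}=0$ vanishes. Your unpacking of the eigenvalue $1$ as ``each monomial contains exactly one factor $u^\gamma_k$ with $\gamma\ge0$,'' and the remark that $\d_x$ preserves this property, is just a more explicit phrasing of the same argument.
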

\begin{proof}
    The statement to be proved is equivalent to
    \begin{equation*}
        \at{\dd{u^\a}{t_0^\b}}{u_*^{\ge0}} = \at{\d_x\frac{\delta\overline{g}_{\beta,0}}{\delta u^{-\a-1}}}{u_*^{\ge0}} = 0 \quad \text{for all }\a\ge0,\,\b \ge0.
    \end{equation*}
    So fix $\a\ge0$ and $\b\ge0$. By definition $\overline{\deg}\,u^{-\a-1}_k=(k,0,2\a+2)$ for all $k\ge0$, and by \hyperref[prop:deg]{Proposition \ref*{prop:deg}} $\overline{\deg}\,g_{\b,0}=(0,1,2\b+2)$. Thus
    \begin{equation*}
        \overline{\deg}\,\dd{g_{\b,0}}{u^{-\a-1}_k} = (-k,1,2\b-2\a), \quad k\ge0,
    \end{equation*}
    and so
    \begin{equation}\label{eq:deggb}
        \overline{\deg}\,\frac{\delta\overline{g}_{\b,0}}{\delta u^{-\a-1}} = (0,1,2\b-2\a).
    \end{equation}
    Notice that $\overline{\deg}\,u^\gamma_k=(k,1,-2\gamma)$ for all $\g,k\ge0$, so by looking at the second entry of $\overline{\deg}$ we conclude
    \begin{equation*}
        \at{\frac{\delta\overline{g}_{\beta,0}}{\delta u^{-\a-1}}}{u_*^{\ge0}} = 0,
    \end{equation*}
    as desired.
\end{proof}

Motivated by this, introduce some new notation. Let $u_\a^{(k)}:=u_k^{-\a}$, $u_\a:=u^{(0)}_\a$, and $t^\a:=t^{\a-1}_0$ for $\a\ge1,k\ge0$. Consider the three gradings on the new ring of differential polynomials $\R_u:=\C[u_*^{(*)}]$:
\begin{itemize}
    \item The differential grading $\deg_{\d_x}$ is given by $\deg_{\d x}u^{(k)}_\a:=k$. The corresponding homogeneous component of $\R_u$ of degree $d$ is denoted by $\R_u^{[d]}$.
    \item The grading $\deg$ is given by $\deg\,u_\a^{(k)}:=2\a+k$.
    \item The grading $\widetilde{\deg}$ is given by $\widetilde{\deg}\,u_\a^{(k)}:=1$. The corresponding homogeneous component of $\R_u$ of degree $d$ is denoted by $\R_{u;d}$. Moreover, denote $\bigoplus_{d\ge l}\R_{u;d}$ by $\R_{u;\ge l}$.
\end{itemize}
Let $\R^\mathrm{ev}_u:=\bigoplus_{d\ge0}\R_u^{[2d]}$, and extend the three gradings to $\widehat{\R}_u:=\R_u[\e]$ by
\begin{equation*}
    \deg_{\d x}\e:=-1,\qquad \deg\,\e:=0,\qquad \widetilde{\deg}\,\e:=0.
\end{equation*}
Let $\widehat{\R}^\mathrm{ev}_u:=\R^\mathrm{ev}_u[\e]$.
\begin{proposition}
    For integers $\a,\b$, consider the generating series
    \begin{align}
        \label{eq:pspin}&\Pspin{\a\b}:=\sum_{g\ge0,n\ge1}\frac{\e^{2g}}{n!}\sum_{k_1,\dots,k_n\ge0}\,\prod_{i=1}^nu_{\a_i}^{(k_i)} \\ 
        \nonumber & \times \coef \left(\int_{\DR_g\bigl(-\sum_{i=1}^na_i,0,a_1,\dots,a_n\bigr)}\lambda_g\HresSpin{g}{2\a-2,2\b-2,-2\a_1,\dots,-2\a_n}\right).
    \end{align}
    Then $\Pspin{\a\b}\in\widehat{\R}_{u;\ge1}^{\mathrm{ev};[0]}$ and $\deg\Pspin{\a\b}=2\a+2\b-2$, and the system of evolutionary equations
    \begin{equation}\label{eq:redr}
        \dd{u_\a}{t^\b}=\d_x\Pspin{\a\b},\qquad\a,\b\ge1,
    \end{equation}
    satisfies the compatibility conditions $\frac{\d}{\d t^{\beta_2}}\dd{u_\a}{t^{\beta_1}}=\frac{\d}{\d t^{\beta_1}}\dd{u_\a}{t^{\beta_2}}$ for all $\a,\b_1,\b_2\ge1$. Moreover, for $\b\ge1$ we have $\Pspin{1,\b}-u_\b\in\operatorname{im}(\d_x^2)$.
\end{proposition}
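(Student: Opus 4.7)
The plan is to identify $\Pspin{\a\b}$ with a restricted variational derivative of a DR hierarchy Hamiltonian density. Under the coordinate change $u_\a=u^{-\a}$, $t^\a=t^{\a-1}_0$, the flow $\dd{u_\a}{t^\b}$ of the system \hyperref[eq:redr]{(\ref*{eq:redr})} translates to $\dd{u^{-\a}}{t^{\b-1}_0}=\d_x\frac{\delta\overline{g}_{\b-1,0}}{\delta u^{\a-1}}$ from the full DR hierarchy \hyperref[eq:dr]{(\ref*{eq:dr})}. A standard computation in the DR hierarchy formalism expresses $\frac{\delta\overline{g}_{\b-1,0}}{\delta u^{\a-1}}$ as an integral over $\Mb{g,n+2}$ with an additional marked point of ramification $0$ carrying the CohFT insertion $e_{\a-1}$. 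Restricting to the submanifold $\{u^{\ge 0}_*=0\}$, which is preserved by these flows by the preceding proposition, then reproduces formula \hyperref[eq:pspin]{(\ref*{eq:pspin})} for $\Pspin{\a\b}$.

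Given this identification, the first three assertions follow quickly. The membership $\Pspin{\a\b}\in\widehat{\R}_{u;\ge1}^{\mathrm{ev};[0]}$ and the grading $\deg\Pspin{\a\b}=2\a+2\b-2$ translate the triple degree $\overline{\deg}\,\overline{g}_{\b-1,0}$ from \hyperref[prop:deg]{Proposition \ref*{prop:deg}} into the new variables: the first component of $\overline{\deg}$ gives $\deg_{\d_x}=0$, the third component gives $\deg=2\a+2\b-2$ via $\deg u_\gamma^{(k)}=2\gamma+k$, evenness of the differential grading follows from the constraint $\sum k_i=2g$ enforced in each monomial by balancing against $\overline{\deg}\,\e=(-1,0,1)$, and $\widetilde{\deg}\ge 1$ is automatic since the sum in \hyperref[eq:pspin]{(\ref*{eq:pspin})} starts at $n\ge 1$. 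The compatibility $\frac{\d}{\d t^{\b_2}}\dd{u_\a}{t^{\b_1}}=\frac{\d}{\d t^{\b_1}}\dd{u_\a}{t^{\b_2}}$ is then inherited from Buryak's theorem \cite{bur15} that the Hamiltonians $\overline{g}_{\a,d}$ are pairwise in involution, combined with the invariance of the reduction submanifold under the relevant flows.

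The remaining claim $\Pspin{1,\b}-u_\b\in\operatorname{im}(\d_x^2)$ is the main technical step and I expect it to be the main obstacle. Setting $\a=1$ makes the first spin-cycle argument $2\a-2=0$, triggering the unit axiom of the partial CohFT verified in \hyperref[prop:cohft]{Proposition \ref*{prop:cohft}}: $\HresSpin{g}{0,2\b-2,-2\a_1,\dots,-2\a_n}=\pi^*\HresSpin{g}{2\b-2,-2\a_1,\dots,-2\a_n}$, with $\pi:\Mb{g,n+2}\to\Mb{g,n+1}$ forgetting the second marked point. The projection formula, together with $\lambda_g$ being pulled back through $\pi$, reduces the problem to understanding the pushforward $\pi_*\DR_g(-\sum a_i,0,a_1,\dots,a_n)$ on $\Mb{g,n+1}$ for $g\ge 1$. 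Three classes of contributions arise. First, the $g=0,n=1$ case is handled directly on $\Mb{0,3}$, since $\Mb{0,2}$ does not exist: the unique residueless meromorphic differential $z^{2\b-2}dz$ on $\P^1$ induces the even spin structure $\mathcal{O}(-1)$, yielding exactly $u_\b$. Second, $g=0,n\ge 2$ vanishes because $\HresSpin{0}{2\b-2,-2\a_1,\dots,-2\a_n}$ has codimension $n-1$ on the $(n-2)$-dimensional $\Mb{0,n+1}$. Third, every $g\ge 1$ contribution carries $\sum k_i=2g\ge 2$ derivatives, and must be shown to combine into an element of $\operatorname{im}(\d_x^2)$. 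This third case is the hardest: individual monomials of differential degree $\ge 2$ need not be $\d_x^2$-exact on their own, so one must aggregate the contributions carefully using a string-type identity for the DR pushforward with a $0$-ramification marked point, adapting the non-spin analysis of \cite[Proposition 3.1]{brz24}.
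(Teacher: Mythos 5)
Your overall route is the same as the paper's: the paper identifies system (\ref{eq:redr}) with the restriction of the DR hierarchy (\ref{eq:dr}) to the submanifold $\{u^{\ge0}_*=0\}$ by passing to the Fourier variables $p_a^\a$ and using $\frac{\delta}{\delta u^{\a-1}}=\sum_{a\in\Z}e^{-iax}\frac{\d}{\d p_a^{\a-1}}$, reads the gradings off Proposition \ref{prop:deg} exactly as you do, and inherits the compatibility conditions from the involutivity of the Hamiltonians together with the preservation of the submanifold. Two points in your treatment of $\Pspin{1,\b}-u_\b\in\operatorname{im}(\d_x^2)$ need attention, one of bookkeeping and one of substance. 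The bookkeeping: in (\ref{eq:pspin}) the marked point carrying the insertion $e_{\a-1}$ (spin order $2\a-2$) is the one with DR ramification $-\sum_i a_i$, while the ramification-$0$ point carries $e_{\b-1}$ (spin order $2\b-2$); your first paragraph assigns ramification $0$ to the $e_{\a-1}$ point, and consistently with that slip you later say $\pi$ forgets the \emph{second} marked point. In fact, for $\a=1$ it is the \emph{first} marked point --- spin order $0$, DR ramification $-\sum_i a_i$ --- that is forgotten, as your own displayed identity requires. The distinction is not cosmetic: forgetting the DR-$0$ point instead would make the DR cycle the pullback class and the spin class not one, and the projection-formula reduction you invoke would fail.

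The substance: the step you flag as the main obstacle, and propose to attack by carefully aggregating monomials via a new string-type identity, is in the paper a single citation. By \cite[Lemma 5.1]{bdgr18}, the class $\pi_*\bigl(\DR_g\bigl(-\sum_{i=1}^na_i,0,a_1,\dots,a_n\bigr)\bigr)\lambda_g$ is a polynomial in $a_1,\dots,a_n$ divisible by $\bigl(\sum_{i=1}^na_i\bigr)^2$ (note that $\lambda_g$ is needed already for polynomiality, since it kills the non-compact-type locus). Since under the coefficient extraction $\coef$ multiplication of the polynomial by $\sum_ia_i$ corresponds precisely to applying $\d_x$ to the monomial $\prod_iu_{\a_i}^{(k_i)}$, divisibility by $\bigl(\sum_ia_i\bigr)^2$ translates verbatim into membership in $\operatorname{im}(\d_x^2)$: the exactness holds at the level of the polynomial before any splitting into monomials, so no aggregation argument is required, and your correct observation that individual monomials need not be $\d_x^2$-exact is thereby bypassed rather than confronted. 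Your handling of the exceptional cases matches the paper: $(g,n)=(0,1)$ computed directly on $\Mb{0,3}$ yields $u_\b$, and the $g=0$, $n\ge2$ terms vanish --- in the paper's formulation this is automatic, since $\pi_*$ of the fundamental class is zero.
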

\begin{proof}
    System \hyperref[eq:redr]{(\ref*{eq:redr})} is the restriction of system \hyperref[eq:dr]{(\ref*{eq:dr})} to the submanifold $\{u^\a_k=0:\a,k\ge0\}$ in the new coordinates $u_*^{(*)}$. To see this, one may switch to Fourier coordinates $\{p_a^\a:\a\in\Z,a\in\Z\}$ to write the Hamiltonian densities in the form \hyperref[eq:gadfourier]{(\ref*{eq:gadfourier})}, and use the fact that
    \begin{equation*}
        \frac{\delta}{\delta u^{\a-1}}=\sum_{a\in\Z}e^{-iax}\frac{\d}{\d p_a^{\a-1}}.
    \end{equation*}
    Thus the compatibility conditions follow from the compatibility conditions of the original system. The fact that $\Pspin{\a\b}\in\widehat{\R}_{u;\ge1}^{\mathrm{ev};[0]}$ follows from previously mentioned properties of the DR cycle and DR hierarchy, while $\deg\Pspin{\a\b}=2\a+2\b-2$ is equivalent to equation \hyperref[eq:deggb]{(\ref*{eq:deggb})} in the new coordinates. Lastly, $\Pspin{1,\b}-u_\b\in\operatorname{im}(\d_x^2)$ because for $(g,n)\neq(0,1)$ we have
    \begin{align*}
        & \int_{\Mb{g,n+2}}\hspace{-1.9em}\DR_g\bigl(-\sum_{i=1}^na_i,0,a_1,\dots,a_n\bigr)\lambda_g\HresSpin{g}{0,2\b-2,-2\a_1,\dots,-2\a_n} \\
        = & \int_{\Mb{g,n+2}}\hspace{-1.5em}\pi_*\Bigl(\DR_g\bigl(-\sum_{i=1}^na_i,0,a_1,\dots,a_n\bigr)\Bigr)\lambda_g\HresSpin{g}{2\b-2,-2\a_1,\dots,-2\a_n},
    \end{align*}
    where $\pi:\Mb{g,n+2}\rightarrow\Mb{g,n+1}$ forgets the first marked point, and the fact that \newline $\pi_*\Bigl(\DR_g\bigl(-\sum_{i=1}^na_i,0,a_1,\dots,a_n\bigr)\Bigr)\lambda_g$ is a polynomial in $a_1,\dots,a_n$ divisible by $(\sum_{i=1}^na_i)^2$ \cite[Lemma 5.1]{bdgr18}.
\end{proof}

We now focus on the reduced system \hyperref[eq:redr]{(\ref*{eq:redr})}. In the following two results, we determine some initial data for of the differential polynomials $\Pspin{\a\b}$. As it will turn out, this data will be enough to completely determine the system once we switch to normal coordinates (\hyperref[thm:bkpr]{Theorem \ref*{thm:bkpr}}). We adopt the convention $u_\a^{(*)}=0$ for $\a\le 0$, and use the notation $u_{\le\gamma}^{(*)}$ to denote the collection of all variables $u_\a^{(*)}$ with $\alpha\le\gamma$.
\begin{lemma}\label{lem:Pspins}
    For all $\a\ge1$,
    \begin{equation}\label{eq:pa1}
        \Pspin{\a,1}=u_\a,
    \end{equation}
    and for all $\a,\b\ge1$,
    \begin{equation*}
        \Pspin{\a\b}=u_{\a+\b-1}+\widetilde{P}^\mathrm{spin}_{\a\b}\bigl(u_{\le\a+\b-2}^{(*)};\e\bigr),
    \end{equation*}
    where $\widetilde{P}^\mathrm{spin}_{\a\b}\in\widehat{\R}_{u;\ge1}^{\mathrm{ev};[0]}$.
\end{lemma}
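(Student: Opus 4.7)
Plan. The second (general) identity follows from a degree/homogeneity argument; the first stronger identity requires additionally a pushforward vanishing argument that uses the forgetful axiom of the partial CohFT.

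For the general form $\Pspin{\a\b}=u_{\a+\b-1}+\widetilde{P}^\mathrm{spin}_{\a\b}(u^{(*)}_{\le\a+\b-2};\e)$, I would invoke the homogeneity $\deg\Pspin{\a\b}=2\a+2\b-2$ from the preceding proposition together with the convention $u_\g^{(*)}=0$ for $\g\le 0$. Any monomial $\e^{2g}\prod_{i=1}^m u_{\a_i}^{(k_i)}$ appearing in $\Pspin{\a\b}$ must then satisfy $\a_i\ge 1$ and $\sum_i(2\a_i+k_i)=2\a+2\b-2$. For $m\ge 2$ this forces $\a_j\le\a+\b-m\le\a+\b-2$; for a linear term either $(k_1,\a_1)=(0,\a+\b-1)$ or $\a_1\le\a+\b-2$. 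The coefficient of the unique ``large-index'' monomial $u_{\a+\b-1}$ is picked up only by the $(g,n)=(0,1)$ contribution in \hyperref[eq:pspin]{(\ref*{eq:pspin})}: for $g\ge 1,\,n=1$, the non-vanishing constraint of the partial CohFT forces $\a_1=\a+\b-1-g\le\a+\b-2$. On the point $\Mb{0,3}$ the integrand reduces to $\HresSpin{0}{2\a-2,2\b-2,-2(\a+\b-1)}=1$, since all spin structures on $\P^1$ are even (as in the proof of \hyperref[prop:cohft]{Proposition~\ref*{prop:cohft}}).

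For the identity $\Pspin{\a,1}=u_\a$: when $\b=1$ we have $2\b-2=0$, so the second marked point of the CohFT class $\HresSpin{g}{2\a-2,0,-2\a_1,\ldots,-2\a_n}$ carries the unit $e_0$. For every $(g,n)\ne(0,1)$ the pair $(g,n+1)$ is stable, and the forgetful axiom (ii) gives $\HresSpin{g}{2\a-2,0,-2\a_1,\ldots,-2\a_n}=\pi_2^*\HresSpin{g}{2\a-2,-2\a_1,\ldots,-2\a_n}$, where $\pi_2\colon\Mb{g,n+2}\to\Mb{g,n+1}$ forgets the second marked point. The projection formula rewrites the $(g,n)$ integrand as the pairing of $(\pi_2)_*\bigl(\lambda_g\DR_g(-\sum_{i=1}^n a_i,0,a_1,\ldots,a_n)\bigr)$ with $\HresSpin{g}{2\a-2,-2\a_1,\ldots,-2\a_n}$ on $\Mb{g,n+1}$. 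The key claim is the pushforward vanishing: on $\mathcal{M}^\mathrm{ct}_{g,n+2}$ Hain's formula shows that a zero-ramification marked point drops out, giving $\DR_g(-\sum a_i,0,a_1,\ldots,a_n)=\pi_2^*\DR_g(-\sum a_i,a_1,\ldots,a_n)$ there; combined with the vanishing of $\lambda_g$ off compact type, one upgrades this to $\lambda_g\DR_g(-\sum a_i,0,a_1,\ldots,a_n)=\pi_2^*(\lambda_g\DR_g(-\sum a_i,a_1,\ldots,a_n))$ on $\Mb{g,n+2}$, whence the pushforward is $(\pi_2)_*\pi_2^*$ of a class and vanishes by the projection formula since $(\pi_2)_*1=0$. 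Only $(g,n)=(0,1)$ then survives, giving $\Pspin{\a,1}=u_\a$.

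I expect the main obstacle to be the careful justification of the pushforward vanishing $(\pi_2)_*\bigl(\lambda_g\DR_g(-\sum_{i=1}^n a_i,0,a_1,\ldots,a_n)\bigr)=0$; while the combination of Hain's compact-type formula with the vanishing of $\lambda_g$ is the intended geometric input, boundary corrections to the DR cycle away from compact type have to be handled carefully. A likely alternative is to reformulate the argument through the variational derivative of the local functional $\overline{g}_{0,0}$ and invoke a marked-point-exchanged variant of \cite[Lemma~5.1]{bdgr18}, in analogy with the argument employed in the preceding proposition for $\Pspin{1,\b}-u_\b\in\operatorname{im}(\d_x^2)$.
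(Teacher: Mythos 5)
Your proposal is correct and follows essentially the same route as the paper: for $\b=1$ the paper likewise observes that for $(g,n)\neq(0,1)$ every factor in the integrand of \hyperref[eq:pspin]{(\ref*{eq:pspin})} --- the class $\HresSpin{g}{2\a-2,0,-2\a_1,\dots,-2\a_n}$ by axiom (ii), $\lambda_g$, and the DR cycle with a zero entry --- is a pull-back along the map forgetting the second marked point, so the integral vanishes, while the $(g,n)=(0,1)$ term over $\Mb{0,3}$ yields $u_\a$, and the general form follows from $\deg\Pspin{\a\b}=2\a+2\b-2$ together with the coefficient $1$ of $u_{\a+\b-1}$ read off from $\Mb{0,3}$. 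Your more cautious derivation of the DR pull-back via Hain's formula on compact type combined with the vanishing of $\lambda_g$ off $\mathcal{M}^{\mathrm{ct}}_{g,n+2}$ is precisely the standard fact the paper tacitly invokes (and uses elsewhere for polynomiality of $\lambda_g\DR_g$), so the step you flag as the main obstacle is already closed by your own argument.
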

\begin{proof}
    For $\beta=1$ and $(g,n)\neq(0,1)$, all cycles in the integral in \hyperref[eq:pspin]{(\ref*{eq:pspin})} are pull-backs via the morphism $\pi:\Mb{g,n+2}\to\Mb{g,n+1}$ forgetting the second marked point, and thus the integral vanishes. If $(g,n)=(0,1)$ the integral is over $\Mb{0,3}$ and all cycles equal $1$. Thus \hyperref[eq:pa1]{(\ref*{eq:pa1})} follows. More generally, on $\Mb{0,3}$ all the cycles in the integral \hyperref[eq:pspin]{(\ref*{eq:pspin})} equal $1$, so the coefficient of $u_{\a+\b-1}$ in $\Pspin{\a\b}$ is $1$.
\end{proof}
\begin{proposition}\label{prop:Pspins}
    The following is true:
    \begin{align*}
        \Pspin{1,2} & = u_2 + \frac{\e^2}{8}u_1^{(2)}, \\
        \Pspin{1,3} & = u_3 + \frac{3\e^2}{8}u_2^{(2)} + \frac{\e^2}{12}u_1^{(2)}u_1 + \frac{\e^2}{12}u_1^{(1)}u_1^{(1)} + \frac{37\e^4}{1152}u_1^{(4)}, \\
        \Pspin{2,2} & = u_3 + u_1u_2 + \frac13u_1^3 + \frac{\e^2}{12}u_2^{(2)} + \frac{\e^2}{24}u_1u_1^{(2)} + \frac{\e^2}{12}u_1^{(1)}u_1^{(1)} + \frac{7\e^4}{5760}\,u_1^{(4)}.
    \end{align*}
\end{proposition}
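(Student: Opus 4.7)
The strategy is to expand the definition \hyperref[eq:pspin]{(\ref*{eq:pspin})} of $\Pspin{\alpha\beta}$, use the vanishing and degree constraints to reduce each infinite sum to a handful of intersection numbers on specific moduli spaces $\Mb{g,n+2}$, and then evaluate each one using known formulas for double ramification cycles, $\lambda_g$-classes, and residueless spin strata.

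The first step is to pin down the finite list of contributing indices. The spin class $\HresSpin{g}{2\alpha-2, 2\beta-2, -2\alpha_1, \ldots, -2\alpha_n}$ vanishes unless the orders sum to $2g-2$, which forces $\alpha + \beta - \sum_i \alpha_i = g+1$. Combined with the differential-degree condition $\sum_i k_i = 2g$ (coming from $\deg_{\partial_x}\Pspin{\alpha\beta}=0$) and the convention $u_\alpha = 0$ for $\alpha \le 0$, this leaves only $(g,n) \in \{(0,1),(0,2),(1,1)\}$ for $\Pspin{1,2}$, and additionally $(g,n) \in \{(0,3),(1,2),(2,1)\}$ for $\Pspin{1,3}$ and $\Pspin{2,2}$.

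Next, I would evaluate these contributions genus-by-genus. For $g=0$, every spin line bundle on a rational tree has negative degree on each component, so $h^0 = 0$ and the spin class coincides with the underlying residueless stratum class; the needed intersection numbers then follow from the analogous $g=0$ computations in \cite{brz24}. For $g=1$, one uses Hain's formula \cite{hai13} for $\DR_1$ restricted to compact type, the standard expression for $\lambda_1$, and a description of the codimension-$1$ spin class as a combination of boundary divisors and $\psi$-classes with weights determined by the parities of the multi-scale strata they parametrize (\cite{css21} together with a boundary analysis in the spirit of \cite[Lemma 4.1]{brz24}). Extracting the relevant DR-variable coefficient yields the $\varepsilon^2$ terms.

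The main obstacle, and the genuinely new input, is the $g=2$ contribution to $\Pspin{1,3}$ and $\Pspin{2,2}$, namely integrals of the form
\begin{equation*}
    \mathrm{Coef}_{a_1^{4}}\int_{\Mb{2,3}} \DR_2(-a_1,0,a_1)\,\lambda_2\,\HresSpin{2}{2\alpha-2,\,2\beta-2,\,-2}
\end{equation*}
for $(\alpha,\beta)\in\{(1,3),(2,2)\}$, isolated as \hyperref[lem:g2int]{Lemma \ref*{lem:g2int}}. My plan for this is to combine the explicit polynomial representative of $\lambda_g\DR_g(-a,a)$ on $\Mb{g,2}$ from \cite{bhs22}, pulled back along the morphism $\Mb{2,3}\to\Mb{2,2}$ that forgets the basepoint, with a boundary-stratum decomposition of the genus-$2$ residueless spin class using \cite{css21}, for which on compact type the parity is additive over vertices. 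The bookkeeping of boundary strata and their assigned parities is the delicate part, and the Sage package \emph{admcycles} serves as a cross-check. Summing all genus contributions then produces the stated formulas for $\Pspin{1,2}$, $\Pspin{1,3}$ and $\Pspin{2,2}$.
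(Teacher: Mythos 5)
Your genus~$0$ and genus~$1$ reductions are essentially the paper's: the finite list of contributing $(g,n)$ forced by $\deg\Pspin{\a\b}=2\a+2\b-2$ and $\sum k_i=2g$ is correct, the genus-$0$ integrals reduce to explicit residue computations on $\P^1$ (the specific numbers $\int_{\Mb{0,4}}\HresSpin{0}{2,2,-2,-4}=1$ and $\int_{\Mb{0,5}}\HresSpin{0}{2,2,-2,-2,-2}=2$ are not literally in \cite{brz24} and must be done, but the method is identical), and in genus $1$ Hain's formula plus the parity facts of \cite{css21} --- concretely that the odd locus of $\mathcal{H}_1(2\a_1,\dots,2\a_n)$ is $\mathcal{H}_1(\a_1,\dots,\a_n)$, giving $\Hspin{1}{2\a_1,\dots,2\a_n}=\Hg{1}{2\a_1,\dots,2\a_n}-2\,\Hg{1}{\a_1,\dots,\a_n}$ --- recover Lemmata \ref{lem:g0int}--\ref{lem:g1drint}.

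The genuine gap is your genus-$2$ step. After pulling back the \cite{bhs22} expression for $a^{-4}\DR_2(-a,a)\lambda_2$ along $\Mb{2,3}\to\Mb{2,2}$, the boundary terms are indeed handled by CohFT splitting and vertex-wise parity additivity (and note your ``delicate bookkeeping'' is misplaced there: those terms reduce to genus $\le 1$ data, with only $B_3$ surviving against $\HresSpin{2}{2,2,-2}$). What remains is the main term $\int_{\Mb{2,3}}\psi_3^4\HresSpin{2}{\cdot}=\Aspin{2}{\cdot}$, and there is no ``boundary-stratum decomposition of the genus-$2$ residueless spin class'': that class has dense interior in $\mathcal{M}_{2,3}$ and is not supported on the boundary; what \cite{css21,won24} supply are relations expressing $\psi$-\emph{multiples} of (residueless) spin classes as boundary pushforwards, a weaker and different statement. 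If, as your phrasing suggests, you evaluate $\Aspin{2}{\cdot}$ via the closed formula \eqref{eq:csspsi}, your proof becomes conditional: that formula is proved only for $g=0,1$ and rests for $g\ge2$ on the open Assumption 1.3 (equivalently Conjecture 2.5) of \cite{css21} concerning spin DR cycles. This is exactly why the paper, after presenting this shorter conditional argument as its ``first method,'' gives an unconditional second method, which your proposal does not replace: the Picard relation $\kappa_1=\frac{7}{5}\delta_{1|1}$ on $\mathcal{M}_2^{ct}$ (Proposition \ref{prop:picrelation}), the comparisons of Propositions \ref{resles} and \ref{splittingformula} from \cite{won24}, and the star-graph formula (Proposition \ref{stargraph}, \cite{hs21}) are combined into invertible linear systems determining the unknown genus-$2$ integrals (Lemma \ref{lem:g2n2integral}, Propositions \ref{lem:firstintegral} and \ref{g2n3integral}). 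Finally, \textit{admcycles} cannot close this gap as a cross-check: it computes tautological and DR intersections, but the classes $\HresSpin{2}{\cdot}$ themselves are only accessible through the assumption-based algorithm of \cite{won24}.
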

\noindent We give the proof of the proposition in the next section. For now we just point out that, from the expression for $\Pspin{\a\b}$ in \hyperref[eq:pspin]{(\ref*{eq:pspin})}, the proof will involve calculating intersection numbers on moduli spaces of pointed curves of genera $g=0,1$ and $2$. \newline

For every $\b\ge1$, the differential polynomial $\Pspin{1,\b}-u_\b$ only depends on variables $u_\gamma^{(*)}$ with $\gamma\le\b-1$. Therefore the change of variables $u_\b\mapsto v_\b\bigl(u_*^{(*)},\e\bigr):=\Pspin{1,\b}$ is invertible. Moreover, the fact that $\Pspin{1,\b}-u_\b\in\operatorname{im}(\d_x^2)$ implies that the system \hyperref[eq:redr]{(\ref*{eq:redr})} has the following form in the new variables $v_*$:
\begin{equation}\label{eq:newDR}
    \dd{v_\a}{t^\b}=\d_x\Qspin{\a\b},\qquad\a,\b\ge1,
\end{equation}
where
\begin{gather*}
    \Qspin{\a\b}\in\widehat{\R}_{v;\ge1}^{\mathrm{ev};[0]}, \\
    \deg\Qspin{\a\b}=2\a+2\b-2, \\
    \Qspin{\a,1}=\Qspin{1,\a}=v_\a, \\
    \Qspin{\a\b}=\Qspin{\b\a}.
\end{gather*}
\hyperref[lem:Pspins]{Lemma \ref*{lem:Pspins}} and \hyperref[prop:Pspins]{Proposition \ref*{prop:Pspins}} imply that
\begin{gather*}
    \Qspin{\a,\b} = v_{\a+\b-1} + \widetilde{Q}^\mathrm{spin}_{\a\b}(v^{(*)}_{\le\a+\b-2};\e), \quad \widetilde{Q}^\mathrm{spin}_{\a\b}\in\widehat{\R}^{\mathrm{ev};[0]}_{v;\ge1}, \\
    \Qspin{2,2} = v_3 + v_1v_2 - \frac{\e^2}{6}v_2^{(2)} + \frac13v_1^3 - \frac{\e^2}{6}v_1v_1^{(2)} + \frac{\e^4}{180}v_1^{(4)}.
\end{gather*}
In \hyperref[thm:bkpr]{Theorem \ref*{thm:bkpr}} we will show that the properties above, together with the commutativity of the flows $\left\{\frac{\d}{\d t^\b}\right\}_{\b\ge1}$ of the hierarchy, completely determine all the differential polynomials $\Qspin{\a\b}$. 

\begin{center}\section{Computation of initial values}\label{sec:comp}\end{center}
We prove \hyperref[prop:Pspins]{Proposition \ref*{prop:Pspins}} in this section. The proof involves computing intersection numbers of the form
\begin{equation*}
    \int_{\Mb{g,n+2}}\hspace{-1.9em}\DR_g\bigl(-\sum_{i=1}^na_i,0,a_1,\dots,a_n\bigr)\lambda_g\HresSpin{g}{2\a-2,2\b-2,-2\a_1,\dots,-2\a_n}
\end{equation*}
for $(\a,\b)\in\{(1,2),(1,3),(2,2)\}$. Now since $\deg\Pspin{1,2}=4$ and $\deg\Pspin{1,3}=\deg\Pspin{2,2}=6$ with respect to the grading $\deg u_\a^{(k)}=2\a+k$, the only monomials in $u_*^{(*)}$ that can appear in these differential polynomials have coefficients determined by intersection numbers involving $g\le2$ and $n\le3$. We therefore split up the proof in the Lemmata \hyperref[lem:g0int]{\ref*{lem:g0int}}-\hyperref[lem:g1drint]{\ref*{lem:g1drint}} and \hyperref[lem:g2int]{\ref*{lem:g2int}}, ordered in terms of genus $g$ and method used.\\

\subsection{Intersection numbers in genera \texorpdfstring{$g=0,1$}{}}

For $g=0$, any spin structure on $\P^1$ is isomorphic to $\mathcal{O}(-1)$, and is therefore even. Therefore the spaces $\mathcal{H}^\mathrm{res}_0(2\a_1,\dots,2\a_n)$ have no odd components, so $\HresSpin{0}{2\a_1,\dots,2\a_n}=\Hres{0}{2\a_1,\dots,2\a_n}$. Thus the coefficients of $u_1u_2$ and $u_1^3$ in $\Pspin{2,2}$ can be readily determined in the next lemma.
\begin{lemma}\label{lem:g0int}
    The following is true:
    \begin{gather}
        \label{eq:m04}\int_{\Mb{0,4}}\HresSpin{0}{2,2,-2,-4}=1, \\
        \label{eq:m05}\int_{\Mb{0,5}}\HresSpin{0}{2,2,-2,-2,-2}=2.
    \end{gather}
\end{lemma}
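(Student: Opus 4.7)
The plan is to exploit the observation already noted before the lemma: every spin structure on $\P^1$ is isomorphic to $\mathcal{O}(-1)$ and hence even, so the spin refinement class collapses to the ordinary residueless class, $\HresSpin{0}{2\a_1,\dots,2\a_n}=\Hres{0}{2\a_1,\dots,2\a_n}$. Combined with the dimension formula $\dim\overline{B}_0^\mathrm{res}(2\a_1,\dots,2\a_n)=2g-2+n-N_{\a_{[n]}}$, which vanishes in both cases, each integral reduces to a weighted count of points in the corresponding zero-dimensional residueless stratum inside the open moduli space $\M{0,n}$.

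For the first integral, I would parametrize $\M{0,4}$ by $(\P^1;0,1,\infty,t)$ and observe that, up to scaling, the only candidate meromorphic differential with divisor $2[0]+2[1]-2[\infty]-4[t]$ is
\begin{equation*}
\omega_t = \frac{z^2(z-1)^2}{(z-t)^4}\,dz.
\end{equation*}
A direct Laurent expansion gives $\res_{z=t}\omega_t=\tfrac{1}{3!}\d_z^3\bigl[z^2(z-1)^2\bigr]\bigm|_{z=t}=4t-2$, while $\res_\infty\omega_t$ then vanishes automatically by the global residue theorem. The residueless locus in $\M{0,4}$ is therefore the single reduced point $t=\tfrac12$, giving integral $1$.

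The second integral is handled in the same spirit: on $(\P^1;0,1,\infty,s,t)$ the unique candidate differential up to scaling is
\begin{equation*}
\omega_{s,t} = \frac{z^2(z-1)^2}{(z-s)^2(z-t)^2}\,dz,
\end{equation*}
and imposing $\res_s\omega_{s,t}=\res_t\omega_{s,t}=0$ (the residue at $\infty$ being automatic) yields the symmetric polynomial system $s^2-2st+t=0$ and $t^2-2st+s=0$. Subtracting gives $(s-t)(s+t-1)=0$; discarding the branch $s=t$, which lies in the boundary of $\Mb{0,5}$, substituting $t=1-s$ in either equation produces the quadratic $3s^2-3s+1=0$, whose two complex-conjugate roots give two interior points of $\M{0,5}$. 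Transversality of the two residue conditions at each solution (visible from the non-degenerate Jacobian of the system restricted to the line $s+t=1$) shows each is counted with multiplicity $1$.

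The main obstacle is then to rule out boundary contributions, i.e.\ to show that $\overline{B}_0^\mathrm{res}(2,2,-2,-2,-2)$ is supported entirely over the open stratum of $\Mb{0,5}$. I would address this by enumerating the finitely many stable dual graphs in the boundary of $\Mb{0,5}$ and, for each, writing out the order and residueless constraints imposed component-wise on the associated multi-scale differential. In every case these reduce to genus $0$ residue equations of the type encountered in the $\Mb{0,4}$ computation, and each such system either forces a forbidden collision of marked points or admits no solution at all. Once this verification is complete, the count over the open stratum yields exactly $2$.
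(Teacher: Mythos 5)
Your point counts are correct and your method is essentially the paper's: reduce to the non-spin class via the evenness of every spin structure on $\P^1$, write down the unique (up to scale) differential with the prescribed divisor in an explicit cross-ratio normalization, and solve the residue equations. The paper fixes a different triple of points (placing the poles at $0,1,\infty$, finding $t=-1$, resp.\ $(s,t)=(e^{\pm i\pi/3},e^{\mp i\pi/3})$), while you move the poles and find $t=\tfrac12$, resp.\ the two roots of $3s^2-3s+1=0$; the counts of one and two points agree, and your use of the global residue theorem to dispose of the residue at the third pole is fine.

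The one place you diverge is in declaring boundary contributions and multiplicities to be ``the main obstacle,'' and this step --- which you only sketch --- is in fact vacuous, so no gap results from leaving it unexecuted. The class $\Hres{0}{\a_1,\dots,\a_n}$ is by definition the cycle class of the closure in $\Mb{0,n}$ of the stratum $\mathcal{H}^{\mathrm{res}}_0(\a_1,\dots,\a_n)\subset\M{0,n}$. Since $\operatorname{codim}=g-1+N_{\a_{[n]}}$ gives dimension $2g-2+n-N_{\a_{[n]}}=0$ in both cases, the stratum is a finite set of points of $\M{0,n}$, hence already closed: its closure acquires no points over the boundary of $\Mb{0,n}$, and there are no boundary dual graphs to enumerate. (Equivalently, $B^{\mathrm{res}}_0(\a)$ is dense in the proper zero-dimensional stack $\overline{B}^{\mathrm{res}}_0(\a)$, so the two coincide.) Likewise, the fundamental class of this reduced finite substack integrates to its cardinality --- pointed genus-zero curves with at least three markings have trivial automorphisms --- so your Jacobian/transversality verification is also unnecessary: multiplicity one is built into the definition of the cycle, exactly as the paper tacitly uses when it concludes directly from the point counts.
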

\begin{proof}
    Consider the isomorphism $\C\setminus\{0,1\}\rightarrow\M{0,4}$ given by $t\mapsto(\P^1,1,t,\infty,0)$. There exists a unique (up to $\C^*$ rescaling) meromorphic differential $\omega$ on $\P^1$ whose associated divisor is $2\cdot[1]+2\cdot[t]+(-2)\cdot[\infty]+(-4)\cdot[0]$, namely $\omega=z^{-4}(z-1)^2(z-t)^2dz$. Its residue at $z=0$ is $-2(t+1)$, so $\omega$ is residueless if and only if $t=-1$. Thus $\mathcal{H}_0^\mathrm{res}(2,2,-2,-4)$ consists of a single point, yielding equation \hyperref[eq:m04]{(\ref*{eq:m04})}. To prove the second equality, denote the diagonal in $\left(\C\setminus\{0,1\}\right)^2$ by $\Delta$. There is an isomorphism $\left(\C\setminus\{0,1\}\right)^2\setminus\Delta\rightarrow\M{0,5}$ sending $(s,t)$ to $(\P^1,s,t,0,1,\infty)$. There is a unique (up to $\C^*$ rescaling) meromorhpic differential $\eta$ on $\P^1$ with associated divisor $2\cdot[s]+2\cdot[t]+(-2)\cdot[0]+(-2)\cdot[1]+(-2)\cdot[\infty]$, given by $\eta=z^{-2}(z-1)^{-2}(z-s)^2(z-t)^2dz$. The residues of $\eta$ at $z=0$ and $z=1$ are $2st(st-s-t)$ and $-2(s-1)(t-1)(st-1)$, respectively. Thus $\eta$ is residueless if and only if $(s,t)=(e^{\pm i\pi/3},e^{\mp i\pi/3})$, so $\mathcal{H}^\mathrm{res}_0(2,2,-2,-2,-2)$ consists of two points, yielding equation \hyperref[eq:m05]{(\ref*{eq:m05})}.
\end{proof}

We now pass to $g=1$. Given a point $(E;x_1,\dots,x_n)\in\mathcal{H}_1(2\a_1,\dots,2\a_n)$, the spin structure $L$ on the elliptic curve $E$ induced by the meromorphic differential is isomorphic to $\mathcal{O}_E(\a_1x_1+\dots+\a_nx_n)$. If $L\simeq\mathcal{O}_E$ then it has one section and is odd, whereas if $L\not\simeq\mathcal{O}_E$ it has no sections and is even. Therefore the odd component of $\mathcal{H}_1(2\a_1,\dots,2\a_n)$ is the locus where $\mathcal{O}_E(\a_1x_1+\dots+\a_nx_n)$ is trivial, which corresponds to $\mathcal{H}_1(\a_1,\dots,\a_n)$. Thus
    \begin{align}
        \nonumber\Hspin{1}{2\a_1,\dots,2\a_n} & = \Hg{1}{2\a_1,\dots,2\a_n}-2\cdot\Hg{1}{2\a_1,\dots,2\a_n}^\mathrm{odd} \\
        \label{eq:h1} & = \Hg{1}{2\a_1,\dots,2\a_n}-2\cdot\Hg{1}{\a_1,\dots,\a_n}.
    \end{align}
If at most one of the $\a_i$ is negative (so that the residueless condition is automatic), then the above holds for $\HresSpin{1}{2\a_1,\dots,2\a_n}$ as well. We use this to deduce the coefficients of $u_1^{(2)}$ in $\Pspin{1,2}$ and of $u_2^{(2)}$ in $\Pspin{1,3}$ in this lemma.
\begin{lemma}\label{lem:g1int1}
    The following is true:
    \begin{gather*}
        \int_{\Mb{1,3}}\DR_1(-a,0,a)\lambda_1\HresSpin{1}{0,2,-2}=\frac{a^2}{8}, \\
        \int_{\Mb{1,3}}\DR_1(-a,0,a)\lambda_1\HresSpin{1}{0,4,-4}=\frac{3a^2}{8}.
    \end{gather*}
\end{lemma}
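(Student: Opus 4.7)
The plan is to use identity (4.1) to rewrite each spin class in terms of ordinary meromorphic strata, pull back to $\Mb{1,2}$ via the forgetful morphism removing the first marked point, and then carry out the resulting intersection on $\Mb{1,2}$ using Hain's formula for $\DR_1$ on compact type.

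Applying (4.1): for $(0,2,-2)$, the halved orders $(0,1,-1)$ would require a meromorphic differential with a single simple pole on a genus~1 curve, which is forbidden by the residue theorem, so $\Hres{1}{0,1,-1}=0$ and $\HresSpin{1}{0,2,-2}=\Hres{1}{0,2,-2}$. For $(0,4,-4)$, $\HresSpin{1}{0,4,-4}=\Hres{1}{0,4,-4}-2\,\Hres{1}{0,2,-2}$. The two equalities of the lemma are therefore equivalent to computing
\begin{equation*}
I_k:=\int_{\Mb{1,3}}\DR_1(-a,0,a)\,\lambda_1\,\bigl[\overline{\mathcal{H}}_1^{\mathrm{res}}(0,2k,-2k)\bigr],\qquad k=1,2,
\end{equation*}
and verifying $I_1=\tfrac{a^2}{8}$ together with $I_2-2I_1=\tfrac{3a^2}{8}$.

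By axiom (ii) of the partial CohFT with unit $e_0$, $[\overline{\mathcal{H}}_1^{\mathrm{res}}(0,2k,-2k)]=\pi_1^*[\overline{\mathcal{H}}_1^{\mathrm{res}}(2k,-2k)]$, where $\pi_1:\Mb{1,3}\to\Mb{1,2}$ forgets the first marked point. The projection formula then gives
\begin{equation*}
I_k=\int_{\Mb{1,2}}\pi_{1*}\bigl(\DR_1(-a,0,a)\lambda_1\bigr)\cdot\bigl[\overline{\mathcal{H}}_1^{\mathrm{res}}(2k,-2k)\bigr].
\end{equation*}
Since $\lambda_1$ vanishes off the compact-type locus, I would substitute Hain's formula for $\DR_1(-a,0,a)$, which expresses it on compact type as a linear combination of $\psi_1,\psi_2,\psi_3$ and the separating boundary classes $\dz{I}$ with coefficients polynomial in $a$. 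Applying the standard pullback/pushforward identities for $\psi$- and boundary classes under the forgetful map, together with $\lambda_1\cdot\delta_{\mathrm{irr}}=0$, should produce an explicit class of the shape $\pi_{1*}\bigl(\DR_1(-a,0,a)\lambda_1\bigr)=a^2\,C\in H^2(\Mb{1,2})$ with $C$ a tautological class, in agreement with the divisibility result \cite[Lemma~5.1]{bdgr18}.

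It then remains to pair $C$ with $[\overline{\mathcal{H}}_1^{\mathrm{res}}(2k,-2k)]\in H^2(\Mb{1,2})$ for $k=1,2$. On the compact-type locus these strata coincide with the closures of the $2k$-torsion loci $\{2k(y_1-y_2)\sim 0\}$ and admit explicit tautological expressions, either via the multi-scale model or via classical formulas for the class of the torsion divisor (equivalently, they differ from $\DR_1(2k,-2k)$ on compact type by explicit boundary corrections that can be computed directly). The main obstacle is identifying the correct class $[\overline{\mathcal{H}}_1^{\mathrm{res}}(2k,-2k)]$ including all such boundary contributions; once that class is in hand, the intersection on $\Mb{1,2}$ reduces to a short calculation in the tautological ring which yields $I_1=\tfrac{a^2}{8}$ and $I_2=\tfrac{5a^2}{8}$, giving the two stated identities. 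Verification with the \textit{admcycles} package provides a convenient cross-check.
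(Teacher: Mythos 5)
Your skeleton matches the paper's proof: you invoke the genus-$1$ spin relation (the paper's equation \hyperref[eq:h1]{(\ref*{eq:h1})}, which indeed applies to the residueless classes here since each profile has a single pole), you correctly argue $\Hres{1}{0,1,-1}=0$ via the residue theorem, and you push the computation down to $\Mb{1,2}$ exactly as the paper does. One inefficiency: you do not need Hain's formula to identify your class $C$. The paper simply cites $\pi_{1*}\DR_1(-a,0,a)=a^2\,[\Mb{1,2}]$ (\cite[Example 3.7]{bssz15}, \cite[Lemma 5.4]{bdgr18}); since both $\lambda_1$ and $\HresSpin{1}{0,2k,-2k}=\pi_1^*\HresSpin{1}{2k,-2k}$ are $\pi_1$-pullbacks, the projection formula immediately gives $C=\lambda_1$, reducing the lemma to the two integrals $\int_{\Mb{1,2}}\lambda_1\Hres{1}{2,-2}$ and $\int_{\Mb{1,2}}\lambda_1\Hres{1}{4,-4}$.

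The genuine gap sits at exactly the step you yourself flag as ``the main obstacle'': you never determine the class $\Hres{1}{2k,-2k}$ (or its pairing with $\lambda_1$), and the values $I_1=\tfrac{a^2}{8}$, $I_2=\tfrac{5a^2}{8}$ are asserted rather than derived. Everything before this point is formal bookkeeping; these two integrals are the entire content of the lemma. The paper closes this by citing \cite[Lemma 3.6]{brz24}, which gives $\int_{\Mb{1,2}}\lambda_1\Hres{1}{\a,-\a}=\frac{\a^2-1}{24}$ for $\a\ge2$ --- morally, the locus $\{\a(x_1-x_2)\sim 0\}$ is a multisection of degree $\a^2-1$ (the nonzero points of order dividing $\a$) over $\Mb{1,1}$, where $\lambda_1$ integrates to $\frac{1}{24}$ --- yielding $\frac{3}{24}=\frac{1}{8}$ and $\frac{15}{24}=\frac{5}{8}$, hence $I_1=\frac{a^2}{8}$ and $I_2-2I_1=\frac{5a^2}{8}-\frac{2a^2}{8}=\frac{3a^2}{8}$. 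Your sketched alternative (comparing the stratum closure with $\DR_1(2k,-2k)$ on compact type, where Hain's formula gives $\bigl(\tfrac12\lambda_1+\dz{1,2}\bigr)(2k)^2$, and computing the boundary correction supported on $\dz{1,2}$) would also work, but it is precisely this correction that you leave uncomputed. As written, the proposal correctly reduces the lemma to a known statement it neither proves nor cites, so it is incomplete --- even though the numbers you assert happen to be right.
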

\begin{proof}
    If $\pi:\Mb{1,3}\rightarrow\Mb{1,2}$ is the morphism that forgets the first marked point, then $\pi_*\DR_1(-a,0,a)=a^2\left[\Mb{1,2}\right]$ by \cite[Example 3.7]{bssz15}, \cite[Lemma 5.4]{bdgr18}. This, together with the fact that $\HresSpin{g}{0,2\a_2,2\a_3}=\pi^*\HresSpin{g}{2\a_2,2\a_3}$ for any $\a_1,\a_2\in\mathbb{Z}$, implies that the two equalities to be proved are equivalent to
    \begin{gather*}
        \int_{\Mb{1,2}}\lambda_1\HresSpin{1}{2,-2}=\frac{1}{8}, \\
        \int_{\Mb{1,2}}\lambda_1\HresSpin{1}{4,-4}=\frac{3}{8}.
    \end{gather*}
    It was proved in \cite[Lemma 3.6]{brz24} that $\int_{\Mb{1,2}}\lambda_1\Hres{1}{\a,-\a}=\frac{\a^2-1}{24}$ for $\a\ge2$, so
    \begin{align*}
        \int_{\Mb{1,2}}\lambda_1\HresSpin{1}{2,-2} \overset{\hyperref[eq:h1]{(\ref*{eq:h1})}}{=} \int_{\Mb{1,2}}\lambda_1\Hres{1}{2,-2} - 2\int_{\Mb{1,2}}\lambda_1\Hres{1}{1,-1} = \frac18, \\
        \int_{\Mb{1,2}}\lambda_1\HresSpin{1}{4,-4} \overset{\hyperref[eq:h1]{(\ref*{eq:h1})}}{=} \int_{\Mb{1,2}}\lambda_1\Hres{1}{4,-4} - 2\int_{\Mb{1,2}}\lambda_1\Hres{1}{2,-2} = \frac38,
    \end{align*}
    proving the claim.
\end{proof}
For $I\subset[n]$, denote by $\delta_0^I\in H^2\bigl(\Mgnb\bigr)$ the class of the closure of the substack of stable curves with dual graph
\begin{equation*}
    \begin{tikzpicture}[baseline={([yshift=-.5ex]current bounding box.center)}, node distance={20mm}, pin distance={7mm},every pin edge/.style={thin}]
        \node[draw, circle, pin=above left:{}, pin=below left:{}] (0) {\phantom{0}};
        \node[draw, circle, pin=above right:{}, pin=below right:{}] (g) [right of = 0] {$g$};
        \draw (0) -- (g);
        \draw[line width = 0.25mm, dotted, bend left=35] (-0.75,-0.45) to node[midway, left] {$I$} (-0.75,0.55);
        \draw[line width = 0.25mm, dotted, bend right=35] (2.75,-0.45) to node[midway, right] {$[n]\setminus I$} (2.75,0.5);
    \end{tikzpicture},
\end{equation*}
i.e. of curves having exactly one node separating a genus 0 zero component carrying the points marked by $I$ and a genus $g$ component carrying the points marked by $[n]\setminus I$. The next lemma computes the coefficients of $u_2^{(2)},u_1u_1^{(2)}$ and $u_1^{(1)}u_1^{(1)}$ in $\Pspin{2,2}$.
\begin{lemma}\label{lem:g1hain}
    The following is true:
    \begin{gather}
        \label{eq:g1hain1}\int_{\Mb{1,3}}\DR_1(-a,0,a)\lambda_1\HresSpin{1}{2,2,-4}=\frac{a^2}{12}, \\
        \label{eq:g1hain2}\int_{\Mb{1,4}}\DR_1(-a_1-a_2,0,a_1,a_2)\lambda_1\HresSpin{1}{2,2,-2,-2}=\frac{1}{24}a_1^2+\frac16a_1a_2+\frac{1}{24}a_2^2.
    \end{gather}
\end{lemma}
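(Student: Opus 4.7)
The strategy, suggested by the label in the lemma's name, is to apply Hain's formula \cite{hai13} to $\DR_1$ restricted to the compact-type locus. Since $\lambda_g$ vanishes off $\mathcal{M}_{g,n}^{\mathrm{ct}}$, the factor $\lambda_1$ in each integrand restricts the integral to $\mathcal{M}_{1,n}^{\mathrm{ct}}$, where Hain's formula writes $\DR_1$ as a quadratic polynomial in the DR weights with coefficients in $\psi_i$'s and boundary divisors $\delta_0^J$. Each integral thus decomposes into a finite sum of intersection numbers of the form $\int \psi_i\, \lambda_1\, \HresSpin{1}{\cdot}$ and $\int \delta_0^J\, \lambda_1\, \HresSpin{1}{\cdot}$.

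For the first identity on $\Mb{1,3}$, the profile $(2,2,-4)$ has a single pole, so the residueless condition is automatic and equation \hyperref[eq:h1]{(\ref*{eq:h1})} reduces the spin class to non-spin residueless classes:
\begin{equation*}
    \HresSpin{1}{2,2,-4} = \Hres{1}{2,2,-4} - 2\,\Hres{1}{1,1,-2}.
\end{equation*}
For each Hain summand the $\psi$-class contribution is evaluated via a forgetful pushforward, using $\pi_*\DR_1(-a,0,a) = a^2[\Mb{1,2}]$ together with the identity $\int_{\Mb{1,2}}\lambda_1 \Hres{1}{\a,-\a} = (\a^2-1)/24$ from \cite[Lemma 3.6]{brz24}. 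Each boundary contribution $\delta_0^J$ restricts through the gluing map $\mathrm{gl}:\Mb{0,|J|+1}\times\Mb{1,4-|J|}\to\Mb{1,3}$, where axiom (iii) of \hyperref[prop:cohft]{Proposition \ref*{prop:cohft}} factors $\mathrm{gl}^*\Hres{1}{\cdot}$ as a tensor of $\Hres$-classes on the two components; each factor is then evaluated by the same tools. Collecting the coefficients of $a^2$ yields $a^2/12$.

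The second identity on $\Mb{1,4}$ is more delicate because the profile $(2,2,-2,-2)$ has two poles, making the residueless condition nontrivial and \hyperref[eq:h1]{(\ref*{eq:h1})} inapplicable at top level. Hain's expansion is used as before, and at each boundary divisor $\delta_0^J$ we pull back via axiom (iii) to obtain
\begin{equation*}
    \mathrm{gl}^*\HresSpin{1}{2,2,-2,-2} = \HresSpin{0}{2\a_J, 2\b}\boxtimes\HresSpin{1}{2\a_{[4]\setminus J},-2\b-2},\qquad \b=-1-\textstyle\sum_{j\in J}\a_j.
\end{equation*}
The genus-zero factor reduces to $\Hres{0}{\cdot}$ (every spin on $\P^1$ is even) and is evaluated by direct residue analysis in the style of \hyperref[lem:g0int]{Lemma \ref*{lem:g0int}}. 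For every split except $J = \{1,2\}$, the surviving genus-one factor retains at most one pole, so \hyperref[eq:h1]{(\ref*{eq:h1})} applies and reduces the computation to that of the first identity. The exceptional split produces the class $\HresSpin{1}{-2,-2,4}$; this can be computed via the algorithm of \cite{won24}, or by a direct analysis of the odd locus in the corresponding stratum of differentials. Matching coefficients of $a_1^2$, $a_2^2$, and $a_1 a_2$ then yields the claimed polynomial.

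The principal obstacle is the bookkeeping for the second identity: enumerating all relevant boundary divisors in $\mathcal{M}_{1,4}^{\mathrm{ct}}$, tracking the pivot weight $\b$ and the associated parity shifts via \cite[Proposition 5.3]{css21}, and separately dispatching the exceptional two-pole split. Since the calculation is long but entirely mechanical, one would want to cross-check every intermediate intersection number with the \textit{admcycles} package \cite{admcycles} before assembling the final result.
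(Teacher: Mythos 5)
Your overall architecture---restrict to compact type via $\lambda_1$, expand $\DR_1$ by Hain's formula, and split each boundary term with the gluing axiom of \hyperref[prop:cohft]{Proposition \ref*{prop:cohft}}---is exactly the paper's. But there is a genuine gap in how you propose to handle the terms that are not pure boundary. The paper first substitutes $\psi_i=\lambda_1+\sum_{I\ni i}\delta_0^I$ into Hain's formula and uses $\lambda_1^2=0$ on $\Mb{1,n}$, so that $\DR_1\cdot\lambda_1$ becomes a combination of classes $\delta_0^J\lambda_1$ only---e.g.\ $\DR_1(-a,0,a)\lambda_1=\bigl(\dz{1,3}+\dz{1,2,3}\bigr)\lambda_1\,a^2$---and no $\psi$-integrals ever arise. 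You instead keep terms $\int\psi_i\lambda_1\HresSpin{1}{\cdot}$ and claim to evaluate them ``via a forgetful pushforward, using $\pi_*\DR_1(-a,0,a)=a^2[\Mb{1,2}]$''. That tool does not apply here: once Hain's formula has been invoked there is no DR cycle left to push forward, and the projection-formula argument behind the $\pi_*\DR_1$ identity requires the rest of the integrand to be a pullback along $\pi$, which holds in \hyperref[lem:g1int1]{Lemma \ref*{lem:g1int1}} (where the weight-$0$ point carries order $0$) but fails for $\HresSpin{1}{2,2,-4}$ and $\HresSpin{1}{2,2,-2,-2}$, where the weight-$0$ point carries a zero of order $2$. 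As written, your evaluation of the $\psi$-terms stalls---most seriously in the second identity, where you would need $\int_{\Mb{1,4}}\psi_1\lambda_1\HresSpin{1}{2,2,-2,-2}$, an integral against a two-pole residueless spin class to which neither \hyperref[eq:h1]{(\ref*{eq:h1})} nor any tool you cite applies (the paper only develops the relevant machinery, \hyperref[resles]{Proposition \ref*{resles}}, later and for the genus-$2$ computations).

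Relatedly, your ``exceptional split'' $J=\{1,2\}$ is an artifact of keeping the $\psi$-terms. Since $\DR_1(-a_1-a_2,0,a_1,a_2)$ is the pullback of $\DR_1(-a_1-a_2,a_1,a_2)$ along the map forgetting the weight-zero point, and $\pi^*$ of a divisor $\dz{J}$ always produces divisors whose genus-zero side contains at least one of the old markings, the pure-boundary expansion \hyperref[eq:drhain2]{(\ref*{eq:drhain2})} contains no $\dz{1,2}$ at all: the boundary coefficient of $\dz{1,2}$ in the raw Hain formula is cancelled exactly by the $\dz{1,2}$-part of the $\psi_1$-term after the substitution. Consequently the class $\HresSpin{1}{-2,-2,4}$ never needs to be computed---in the paper every glued genus-one factor carries at most one pole and is handled by \hyperref[eq:h1]{(\ref*{eq:h1})}, with the genus-zero factors supplied by \hyperref[lem:g0int]{Lemma \ref*{lem:g0int}} and the pullback/degree vanishings you correctly anticipate. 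The repair is simple and restores your plan: perform the substitution $\psi_i=\lambda_1+\sum_{I\ni i}\delta_0^I$ and use $\lambda_1^2=0$ \emph{before} integrating; then your gluing computations go through verbatim, and neither the algorithm of \cite{won24} nor any direct analysis of the odd locus is required.
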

\begin{proof}
    The tautological class $\lambda_1\in H^2\left(\Mgnb\right)$ vanishes outside the locus of stable curves of compact type $\Mgn^\mathrm{ct}\subset\Mgnb$. Hain's formula \cite{hai13} gives an explicit expression for the double ramification cycle restricted to stable curves of compact type:
    \begin{align*}
        \at{\DR_1(-a,a)}{\M{1,2}^\mathrm{ct}} = & \left(\frac12\lambda_1+\dz{1,2}\right)a^2, \\
        \at{\DR_1(-a_1-a_2,a_1,a_2)}{\M{1,3}^\mathrm{ct}} = & \left(\lambda_1+\dz{1,2}+\dz{1,2,3}\right)a_1^2 + \Bigl(\lambda_1+\dz{1,2}+\dz{1,3}-\dz{2,3} \\ 
        & +\dz{1,2,3}\Bigr)a_1a_2 + \left(\lambda_1+\dz{1,3}+\dz{1,2,3}\right)a_2^2.
    \end{align*}
    The $\psi$-classes in the original statement of Hain's formula have been replace with $\lambda$-classes by using
    \begin{equation*}
        \psi_i=\lambda_1+\sum_{\substack{I\subset[n] \\ \lvert I\rvert\ge2,i\in I}}\delta_0^I\in H^2\left(\Mb{1,n}\right)
    \end{equation*} 
    for every $i=1,\dots,n$. Pulling back along the morphism forgetting the second marked point and the fact that $\lambda_1^2=0$ yields
    \begin{equation}
        \label{eq:drhain1}\DR_1(-a,0,a)\lambda_1 = \left(\dz{1,3}+\dz{1,2,3}\right)\lambda_1\,a^2
    \end{equation}
    and
    \begin{align}
        \nonumber \DR_1&(-a_1-a_2,0,a_1,a_2)\lambda_1 =\left(\dz{1,3}+\dz{1,2,3}+\dz{1,3,4}+\dz{1,2,3,4}\right)\lambda_1\,a_1^2 \\ 
        \nonumber & + \Bigl(\dz{1,3}+\dz{1,4} -\dz{3,4}+\dz{1,2,3}+\dz{1,2,4}+\dz{1,3,4}-\dz{2,3,4}+\dz{1,2,3,4}\Bigr)\lambda_1\,a_1a_2 \\ 
        \label{eq:drhain2} & + \left(\dz{1,4}+\dz{1,2,4}+\dz{1,3,4}+\dz{1,2,3,4}\right)\lambda_1\,a_2^2.
    \end{align}
    We show equation \hyperref[eq:g1hain1]{(\ref*{eq:g1hain1})}. Firstly,
    \begin{align*}
        \int_{\Mb{1,3}}\dz{1,3}\lambda_1\HresSpin{1}{2,2,-4} & = \left(\int_{\Mb{0,3}}\HresSpin{0}{2,-4,0}\right)\left(\int_{\Mb{1,2}}\lambda_1\HresSpin{1}{2,-2}\right) \\ 
        & = \frac18.
    \end{align*}
    The first equality follows from axiom (iii) of a partial CohFT. The integral on $\Mb{0,3}$ is 1 by axiom (ii) of a partial CohFT and the integral on $\Mb{1,1}$ was computed in the proof of \hyperref[lem:g1int1]{Lemma \ref*{lem:g1int1}}. Secondly,
    \begin{equation*}
        \int_{\Mb{1,3}}\hspace{-1em}\dz{1,2,3}\lambda_1\HresSpin{1}{2,2,-4} = \left(\int_{\Mb{0,4}}\HresSpin{0}{2,2,-4,-2}\right)\left(\int_{\Mb{1,1}}\lambda_1\HresSpin{1}{0}\right).
    \end{equation*}
    The integral on $\Mb{0,4}$ is $1$ from \hyperref[lem:g0int]{Lemma \ref*{lem:g0int}}. For the integral on $\Mb{1,1}$ we have $\HresSpin{1}{0} = -\Hres{1}{0}=-\Hg{1}{0}$. But $\mathcal{H}_1(0)=\M{1,1}$, since every smooth elliptic curve carries a nowhere vanishing holomorphic differential. Therefore
    \begin{equation*}
        \int_{\Mb{1,3}}\dz{1,2,3}\lambda_1\HresSpin{1}{2,2,-4}
        = -\int_{\Mb{1,1}}\lambda_1=-\frac{1}{24}.
    \end{equation*}
    Equation \hyperref[eq:g1hain1]{(\ref*{eq:g1hain1})} now follows from \hyperref[eq:drhain1]{(\ref*{eq:drhain1})}. We can prove equation \hyperref[eq:g1hain2]{(\ref*{eq:g1hain2})} in a similar fashion. By previous lemmas and axiom (iii) of the partial CohFT we have
    \begin{gather*}
        \int_{\Mb{1,4}}\dz{1,2,3}\lambda_1\HresSpin{1}{2,2,-2,-2} = \frac18, \\
        \int_{\Mb{1,4}}\dz{1,2,4}\lambda_1\HresSpin{1}{2,2,-2,-2} = \frac18, \\
        \int_{\Mb{1,4}}\dz{1,2,3,4}\lambda_1\HresSpin{1}{2,2,-2,-2} = -\frac{1}{12}.
    \end{gather*}
    All the remaining integrals arising from the expression \hyperref[eq:drhain2]{(\ref*{eq:drhain2})} for $\DR_1(-a_1-a_2,0,a_1,a_2)\lambda_1$ vanish. For example
    \begin{align*}
        & \int_{\Mb{1,4}}\dz{3,4}\lambda_1\HresSpin{1}{2,2,-2,-2} \\
        = \, & \left(\int_{\Mb{0,3}}\HresSpin{0}{-2,-2,2}\right)\left(\int_{\Mb{1,3}}\lambda_1\HresSpin{1}{2,2,-4}\right)
    \end{align*}
    vanishes because both integrals in the product vanish due to degree reasons, and
    \begin{align*}
        & \int_{\Mb{1,4}}\dz{1,3,4}\lambda_1\HresSpin{1}{2,2,-2,-2} \\ 
        = \, & \left(\int_{\Mb{0,4}}\HresSpin{0}{2,-2,-2,0}\right)\left(\int_{\Mb{1,2}}\lambda_1\HresSpin{1}{2,-2}\right)
    \end{align*}
    vanishes because the integrand on $\Mb{0,4}$ is the pull-back via the morphism forgetting the last marked point.
\end{proof}
Before moving on, we introduce some results from \cite{css21} on the intersection theoretic properties of strata of $k$-differentials. For our purposes we only consider $k=1$. In the reference, the authors give a conjectural formula for intersections of spin strata of meromorphic differentials with the top power of a $\psi$-class located at one of the poles, thus generalizing a theorem from \cite{bssz15}. Their notation is as follows. For a pair of nonnegative integers $(g,n)$ satisfying $2g-2+n>0$, consider a vector $a=(a_1,\dots,a_n)\in\Z^n$ with each $a_i$ odd, satisfying $\sum_{i=1}^na_i=2g-2+n$ and $a_1<0$. Set
\begin{equation*}
    \Aspin{g}{a}:=\int_{\Mgnb}\psi_1^{2g-3+n}\Hspin{g}{a_1-1,\dots,a_n-1}.
\end{equation*}
We remark that in \cite{css21} the space $\Hspin{g}{a_1-1,\dots,a_n-1}$ is denoted by $[\overline{\mathcal{M}}_g(a)]^\mathrm{spin}$. The conjectural formula given by \cite[Theorem 1.4]{css21} is
\begin{equation}\label{eq:csspsi}
    \Aspin{g}{a}=2^{-g}[z^{2g}]\exp\left(\frac{a_1z\cdot\mathcal{S}^\prime(z)}{\mathcal{S}(z)}\right)\frac{\cosh(z/2)}{\mathcal{S}(z)^{2g+n}}\prod_{i>1}\mathcal{S}(a_iz),
\end{equation}
where $\mathcal{S}(z)=\frac{\sinh(z/2)}{z/2}=\sum_{k\ge0}\frac{z^{2k}}{2^{2k}(2k+1)!}$ and $[z^{2g}](\cdot)$ stands for the coefficient of $z^{2g}$ in the power series. The above formula has been proved to hold for $g=0$ and $g=1$ due to the previously mentioned properties of the even and odd components of the moduli space in these genera. We give more details on the state of the conjecture when we compute intersection numbers for $g=2$ in the next subsection. \newline

Furthermore, the authors in \cite{css21} provide a formula for intersections of (spin) strata of \emph{residueless} meromorphic differentials with top powers of $\psi$-classes. For a vector of odd numbers $a=(a_0,a_1,\dots,a_n)$ such that $\sum_{i=0}^na_i=2g-1+n$, $a_0>0$ and $a_i<0$ for $1\le i\le n$, let
\begin{equation*}
    \mathcal{A}^{\mathfrak{R}(n-1),\mathrm{spin}}_g(a):=\int_{\Mb{g,n+1}}\psi_0^{2g-1}\HresSpin{g}{a_0-1,a_1-1,\dots,a_n-1}.
\end{equation*}
The notation $\mathfrak{R}(n-1)$ refers to the fact that we consider the stratum of differentials with vanishing residues at the poles indexed by $1,\dots,n-1$ (and thus vanishing residues at all $n$ poles). The formula for the intersection number above will involve a sum over $\operatorname{TR}(g,a)$, which is the set of genus $g$ twisted graphs (see \cite[Definition 2.1]{css21} for twisted graphs) with legs $a$ such that the vertex $v_0\in V(\Gamma)$ carrying the marking $0$ has genus $g$. Given $(\Gamma,I)\in\operatorname{TR}(g,a)$ and a vertex $v\in V(\Gamma)$, denote by $I(v)$ the vector of integer twists $I(h)$ indexed by the half-edges $h\in H(\Gamma)$ incident to $v$. A straightforward analysis reveals that each vertex $v\in V(\Gamma)$ has exactly one half-edge adjacent to it with a positive twist, whose value is denoted by $I(v)^+$. Define
\begin{equation*}
    F(\Gamma,I)=\prod_{v\in V(\Gamma)}-\bigl(-I(v)^+\bigr)^{n(v)-2}
\end{equation*}
where $n(v)$ is the number of half edges adjacent to $v$. Then \cite[Proposition 6.2]{css21}
\begin{equation}\label{eq:AspinResF}
    \mathcal{A}^{\mathfrak{R}(n-1),\mathrm{spin}}_g(a)=\sum_{(\Gamma,I)\in\operatorname{TR}(g,a)}-F(\Gamma,I)\cdot\Aspin{g}{I(v_0)}.
\end{equation}
Strictly speaking, the result above was only stated and proved in the non-spin scenario, but it can be generalized directly to the spin setting (see the discussion in the proof of \cite[Theorem 6.10]{css21}). 

With these results in hand, we compute the coefficients of $u_1u_1^{(2)}$ and $u_1^{(1)}u_1^{(1)}$ in $\Pspin{1,3}$.
\begin{lemma}\label{lem:g1drint}
    The following is true:
    \begin{equation*}
        \int_{\Mb{1,4}}\DR_1(-a_1-a_2,0,a_1,a_2)\lambda_1\HresSpin{1}{0,4,-2,-2} = \frac{(a_1+a_2)^2}{12}.
    \end{equation*}
\end{lemma}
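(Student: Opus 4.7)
The plan is to follow the strategy of \hyperref[lem:g1hain]{Lemma \ref*{lem:g1hain}}. Since $\lambda_1$ vanishes off the compact-type locus, I will first expand $\DR_1(-a_1-a_2,0,a_1,a_2)\lambda_1$ using Hain's formula exactly as in equation \hyperref[eq:drhain2]{(\ref*{eq:drhain2})}, and then apply the splitting axiom of the partial CohFT (\hyperref[prop:cohft]{Proposition \ref*{prop:cohft}}) to each resulting boundary class $\delta_0^I\lambda_1$, factoring the integral on $\Mb{1,4}$ as a product over the genus-$0$ and genus-$1$ components.

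A subset-by-subset check shows that the genus-$0$ factor vanishes for every $I$ in the decomposition except $\{1,3\}$ and $\{1,4\}$: in each other case, either the corresponding residueless stratum on $\P^1$ is empty (verified by a direct residue computation analogous to \hyperref[lem:g0int]{Lemma \ref*{lem:g0int}}), or the class pulls back from a moduli space of too low dimension to support it. The two surviving contributions are equal by the symmetry of $\HresSpin{1}{0,4,-2,-2}$ under swapping markings $3$ and $4$, and each reduces, using $\int_{\Mb{0,3}}\HresSpin{0}{0,-2,0}=1$, to the common value
\[
    A:=\int_{\Mb{1,3}}\lambda_1\HresSpin{1}{4,-2,-2}.
\]
Assembling the coefficients from Hain's formula then yields the expression $A(a_1+a_2)^2$, so the lemma reduces to proving $A=\frac{1}{12}$.

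To evaluate $A$, I label the three markings on the target $\Mb{1,3}$ so that marking $1$ carries the order-$4$ zero. The identity $\psi_1=\lambda_1+\dz{1,2}+\dz{1,3}+\dz{1,2,3}$, together with the same splitting-axiom and genus-$0$ vanishing arguments (now applied to the strata $\mathcal{H}^{\mathrm{res}}_0(4,-2,-4)$ and $\mathcal{H}^{\mathrm{res}}_0(4,-2,-2,-2)$), shows that every boundary correction contributes zero. Hence
\[
    A=\int_{\Mb{1,3}}\psi_1\HresSpin{1}{4,-2,-2}=\mathcal{A}_1^{\mathfrak{R}(1),\mathrm{spin}}(5,-1,-1)
\]
in the notation of \cite{css21}. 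I then invoke the residueless formula \hyperref[eq:AspinResF]{(\ref*{eq:AspinResF})} combined with the closed expression \hyperref[eq:csspsi]{(\ref*{eq:csspsi})} for $\Aspin{1}{\cdot}$, which holds in genus $1$. Only two twisted graphs in $\operatorname{TR}(1,(5,-1,-1))$ contribute: the trivial single-vertex graph, and a two-vertex graph in which the genus-$1$ vertex $v_0$ (carrying the positive leg) is joined by a single internal edge to a genus-$0$ vertex carrying the two legs of twist $-1$. A direct power-series evaluation of \hyperref[eq:csspsi]{(\ref*{eq:csspsi})} gives $\Aspin{1}{(5,-1,-1)}=\frac{5}{24}$ and $\Aspin{1}{(5,-3)}=\frac{3}{8}$, and combining these with the weights $-5$ and $3$ prescribed by \hyperref[eq:AspinResF]{(\ref*{eq:AspinResF})} produces
\[
    A=-5\cdot\frac{5}{24}+3\cdot\frac{3}{8}=\frac{1}{12},
\]
as required.

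The main technical obstacle is the exhaustive case analysis for the boundary vanishings in the first two steps of the argument; once these are disposed of, the identification with $\mathcal{A}_1^{\mathfrak{R}(1),\mathrm{spin}}$ and the final power-series computation via \hyperref[eq:csspsi]{(\ref*{eq:csspsi})} are routine.
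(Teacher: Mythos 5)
Your proof is correct, and it lands on exactly the same final computation as the paper; the only genuine divergence is in the first reduction. The paper gets to $A=\int_{\Mb{1,3}}\lambda_1\HresSpin{1}{4,-2,-2}$ in one line: since $\HresSpin{1}{0,4,-2,-2}$ is pulled back along the morphism $\pi:\Mb{1,4}\rightarrow\Mb{1,3}$ forgetting the first marking, the projection formula together with $\pi_*\DR_1(-a_1-a_2,0,a_1,a_2)=(a_1+a_2)^2\left[\Mb{1,3}\right]$ (from \cite{bssz15,bdgr18}) does everything at once --- this is precisely what the order-$0$ marking is there for. You instead re-run the Hain expansion \hyperref[eq:drhain2]{(\ref*{eq:drhain2})} and check all eight boundary classes by hand; your case analysis is right (only $\dz{1,3}$ and $\dz{1,4}$ survive, with Hain coefficients $a_1^2+a_1a_2$ and $a_1a_2+a_2^2$, reassembling to $(a_1+a_2)^2A$), so your route is valid, just heavier --- though it does double as a consistency check on \hyperref[eq:drhain2]{(\ref*{eq:drhain2})}. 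From $A$ onward the two arguments coincide: the same substitution $\lambda_1=\psi_1-\dz{1,2}-\dz{1,3}-\dz{1,2,3}$ with vanishing boundary corrections, the same identification $A=\mathcal{A}^{\mathfrak{R}(1),\mathrm{spin}}_1(5,-1,-1)$, and the same two twisted graphs with weights $-5$ and $3$ from \hyperref[eq:AspinResF]{(\ref*{eq:AspinResF})}.

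One convention subtlety, which you resolved correctly but silently: you write $\Aspin{1}{5,-1,-1}$ and $\Aspin{1}{5,-3}$ with the positive entry first, whereas the definition preceding \hyperref[eq:csspsi]{(\ref*{eq:csspsi})} demands $a_1<0$ (the paper accordingly writes $\Aspin{1}{-1,-1,5}$ and $\Aspin{1}{-3,5}$). Your values $\frac{5}{24}$ and $\frac{3}{8}$ come from attaching the exponential factor $\exp\left(a_1z\,\mathcal{S}'(z)/\mathcal{S}(z)\right)$ to the entry $5$, i.e.\ from keeping the distinguished point at the zero, where the $\psi$-class of $\mathcal{A}^{\mathfrak{R}(1),\mathrm{spin}}_1$ actually sits; these are indeed the values the recursion needs, since $-5\cdot\frac{5}{24}+3\cdot\frac{3}{8}=\frac{1}{12}$. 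Had you applied \hyperref[eq:csspsi]{(\ref*{eq:csspsi})} literally with $a_1=-1$ in the exponent, you would have found $\frac{11}{24}$ in place of $\frac{5}{24}$ and a total of $-\frac{7}{6}$; the discrepancy is masked in the second number because for $g=1$, $n=2$ the constraint $a_1+a_2=2$ makes the evaluation symmetric, so $(5,-3)$ and $(-3,5)$ both give $\frac{3}{8}$. As an independent confirmation that your bookkeeping is the right one, $A=\frac{1}{12}$ also drops out of \hyperref[lem:g1integral]{Lemma \ref*{lem:g1integral}} at $\a=-2$, so your arithmetic stands.
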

\begin{proof}
    As before, if $\pi:\Mb{1,4}\rightarrow\Mb{1,3}$ is the morphism that forgets the first marked point, then $\pi_*\DR_1(-a_1-a_2,0,a_1,a_2)=(a_1+a_2)^2\left[\Mb{1,3}\right]$. Therefore the equation in the statement is equivalent to
    \begin{gather}
        \int_{\Mb{1,3}}\lambda_1\HresSpin{1}{4,-2,-2} = \frac{1}{12}.
    \end{gather}
    On $\Mb{1,3}$ we have $\lambda_1=\psi_1-\dz{1,2}-\dz{1,3}-\dz{1,2,3}$. By using axiom (iii) of a partial CohFT again, we see that both integrals in the product
    \begin{equation*}
        \int_{\Mb{1,3}}\dz{1,2}\HresSpin{1}{4,-2,-2}=\left(\int_{\Mb{0,3}}\HresSpin{0}{4,-2,-4}\right)\left(\int_{\Mb{1,2}}\HresSpin{1}{-2,2}\right)
    \end{equation*}
    vanish due to degree reasons. The same is true for $\dz{1,3}$ and $\dz{1,2,3}$. Thus it remains to show
    \begin{equation*}
        \int_{\Mb{1,3}}\psi_1\HresSpin{1}{4,-2,-2}=\frac{1}{12},
    \end{equation*}
    which in the previous notation corresponds to $\mathcal{A}^{\mathfrak{R}(1),\mathrm{spin}}_1(5,-1,-1)=\frac{1}{12}$. Equation \hyperref[eq:AspinResF]{(\ref*{eq:AspinResF})} involves a sum over $\operatorname{TR}(1,a)$ with $a=(5,-1,-1)$, which consists of the two twisted graphs
    \begin{equation*}
        (\Gamma_1,I_1)=\,
        \begin{tikzpicture}[baseline={([yshift=-.5ex]current bounding box.center)}, node distance={20mm}, pin distance={7mm},every pin edge/.style={thin}]
            \node[draw, circle, pin=left:$1$, label={[label distance=0.1mm,font=\tiny]175:$5$}, pin=45:$2$, label={[label distance=0.1mm,font=\tiny]25:\textendash$1$}, pin=below right:$3$, label={[label distance=0.3mm,font=\tiny]275:\textendash$1$}] (1) {$1$};
        \end{tikzpicture}
        ,\qquad (\Gamma_2,I_2)=\,
        \begin{tikzpicture}[baseline={([yshift=-.5ex]current bounding box.center)}, node distance={20mm}, pin distance={7mm},every pin edge/.style={thin}]
            \node[draw, circle, pin=left:$1$, label={[label distance=0.1mm,font=\tiny]175:$5$}, label={[label distance=0.1mm,font=\tiny]5:\textendash$3$}] (1) {1};
            \node[draw, circle, pin=45:$2$, label={[label distance=0.1mm,font=\tiny]25:\textendash$1$}, pin=315:$3$, label={[label distance=0.3mm,font=\tiny]275:\textendash$1$},label={[label distance=0.1mm,font=\tiny]175:$3$}] (0) [right of = 1] {\phantom{0}};
            \draw (1) -- (0);
        \end{tikzpicture}.
    \end{equation*}
    The twists at the half-edges are indicated by a smaller font. Here $F(\Gamma_1,I_1)=5$ and $F(\Gamma_2,I_2)=-3$, so
    \begin{equation*}
        \mathcal{A}_1^{\mathfrak{R}(1),\mathrm{spin}}(5,-1,-1)=-5\,\Aspin{1}{-1,-1,5}+3\,\Aspin{1}{-3,5}=\frac{1}{12},
    \end{equation*}
    where the two intersection numbers appearing on the right are computed using \hyperref[eq:csspsi]{(\ref*{eq:csspsi})}.
\end{proof}

\subsection{Intersection numbers in genus \texorpdfstring{$g=2$}{}}

What remains to prove \hyperref[prop:Pspins]{Proposition \ref*{prop:Pspins}} is to compute the coefficient of $u_1^{(4)}$ in $\Pspin{1,3}$ and $\Pspin{2,2}$. The result is given in the following lemma.
\begin{lemma}\label{lem:g2int}
    The following is true:
    \begin{gather*}
        \int_{\Mb{2,3}}\DR_2(-a,0,a)\lambda_2\HresSpin{2}{0,4,-2} = \frac{37a^4}{1152}, \\
        \int_{\Mb{2,3}}\DR_2(-a,0,a)\lambda_2\HresSpin{2}{2,2,-2} = \frac{7a^4}{5760}.
    \end{gather*}
\end{lemma}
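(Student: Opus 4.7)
The plan is to compute both intersection numbers by combining Hain's formula \cite{hai13} for $\DR_2$ on compact type (which is all that contributes, since $\lambda_2$ vanishes off $\mathcal{M}_{2,n}^\mathrm{ct}$), the formula of \cite{bhs22} for $\lambda_g\DR_g(-a,a)$ with two marked points, the evaluations \eqref{eq:csspsi} and \eqref{eq:AspinResF} of intersections of $\psi$-classes against spin and spin-residueless strata, and the pullback/splitting axioms of \hyperref[prop:cohft]{Proposition \ref*{prop:cohft}} to reduce boundary contributions to the genus $0$ and $1$ data already assembled in \hyperref[lem:g0int]{Lemma \ref*{lem:g0int}}--\hyperref[lem:g1drint]{Lemma \ref*{lem:g1drint}}.

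For the first integral the middle marked point carries weight $0$ in the DR data and order $0$ in the stratum data, so axiom (ii) combined with $S_n$-equivariance gives $\HresSpin{2}{0,4,-2}=\pi^*\HresSpin{2}{4,-2}$, where $\pi\colon\Mb{2,3}\to\Mb{2,2}$ forgets that point. Since $\lambda_2=\pi^*\lambda_2$, the projection formula reduces the problem to
\[
\int_{\Mb{2,2}}\pi_*\DR_2(-a,0,a)\cdot\lambda_2\HresSpin{2}{4,-2}.
\]
I would expand $\DR_2(-a,0,a)$ on $\Mb{2,3}^\mathrm{ct}$ via Hain's formula and apply the standard push--pull identities for $\psi$- and $\delta$-classes to rewrite $\pi_*\DR_2(-a,0,a)\in H^2(\Mb{2,2}^\mathrm{ct})$ as a polynomial of degree $4$ in $a$ in $\psi_1,\psi_2,\lambda_1$ and the separating boundary divisors. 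Integrating against $\lambda_2\HresSpin{2}{4,-2}$, the smooth-stratum terms reduce to numbers of the form $\int \psi_i\lambda_2\HresSpin{2}{4,-2}$ evaluated from \eqref{eq:csspsi} at $g=2$, while the separating-node contributions decompose via axiom (iii) into products of two genus-$1$ spin integrals already computed.

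For the second integral the class $\HresSpin{2}{2,2,-2}$ is not a pullback, so I would work directly on $\Mb{2,3}^\mathrm{ct}$. Hain's formula expands $\DR_2(-a,0,a)\lambda_2$ as a polynomial of degree $4$ in $a$ supported either on the smooth genus-$2$ stratum or on one of the separating boundary divisors $\delta_1^S$, $S\subset\{1,2,3\}$. On the smooth stratum one is left with intersection numbers of the shape $\int_{\Mb{2,3}}\psi_*^k\lambda_2\HresSpin{2}{2,2,-2}$, computable from \eqref{eq:csspsi} and, to handle the trivially-ordered point, its residueless variant \eqref{eq:AspinResF}. On each $\delta_1^S$ the splitting axiom combined with the parity identity \hyperref[eq:h1]{(\ref*{eq:h1})} factors the integrand into a product of two genus-$1$ pieces, each of which either appeared in \hyperref[lem:g1int1]{Lemma \ref*{lem:g1int1}}--\hyperref[lem:g1drint]{Lemma \ref*{lem:g1drint}} or is an elementary $\lambda_1\psi^k$ computation on $\Mb{1,n}$. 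Summing all contributions and collecting powers of $a$ produces the stated values.

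The main obstacle is the genus $2$ case of \eqref{eq:csspsi}, which is only conjectural in general and established unconditionally in \cite{css21} only for $g\le 1$; for the specific order profiles appearing here one must either verify the conjecture directly or replace it with a bespoke genus-$2$ calculation. All intermediate and final numerical identities can be cross-checked with the \textit{admcycles} \cite{admcycles} package, providing an independent certification of the claimed values $\frac{37a^4}{1152}$ and $\frac{7a^4}{5760}$.
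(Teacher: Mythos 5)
There is a genuine gap at the heart of your plan: you propose to evaluate the genus-$2$ smooth-stratum contributions, i.e.\ mixed integrals of the shape $\int_{\Mb{2,2}}\lambda_2\psi_i\HresSpin{2}{4,-2}$ and $\int_{\Mb{2,3}}\lambda_2\psi_i\psi_j\HresSpin{2}{2,2,-2}$, ``from \eqref{eq:csspsi} at $g=2$'' (with \eqref{eq:AspinResF} for the trivially-ordered point). But \eqref{eq:csspsi} computes only the pure top-power numbers $\Aspin{g}{a}=\int_{\Mgnb}\psi_1^{2g-3+n}\Hspin{g}{a_1-1,\dots,a_n-1}$, with the $\psi$ at a pole and \emph{no} $\lambda$-classes, and \eqref{eq:AspinResF} likewise handles only $\psi_0^{2g-1}$ against residueless strata. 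Neither formula says anything about $\lambda_2\psi$-mixed integrals, so the evaluation step you rely on for both integrals simply does not exist. This is precisely why the paper's first proof does \emph{not} push forward the DR cycle and keep $\lambda_2$: it first invokes the identity \eqref{eq:drl2} of \cite{bhs22}, $a^{-4}\DR_2(-a,a)\lambda_2=\psi_2^4-[\,\cdot\,]$, to absorb $\lambda_2$ into a pure $\psi^4$-class plus boundary terms carrying $\psi$'s on genus $\le1$ pieces (its pullback \eqref{eq:DRlbig}), after which the genus-$2$ term is literally $\Aspin{2}{-1,1,5}$ resp.\ $\Aspin{2}{-1,3,3}$ and \eqref{eq:csspsi} applies. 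You cite \cite{bhs22} in your preamble but never deploy it; once you have pushed $\DR_2(-a,0,a)$ down to a divisor class on $\Mb{2,2}$ (first integral) or expanded via Hain on $\Mb{2,3}^{\mathrm{ct}}$ keeping $\lambda_2$ (second integral), the trade is no longer available, and the remaining two-point genus-$2$ quantities require the paper's second, much heavier method: the Picard relation $\kappa_1=\frac75\delta_{1|1}$ of \hyperref[prop:picrelation]{Proposition \ref*{prop:picrelation}}, the spin splitting formulas of \hyperref[resles]{Propositions \ref*{resles}} and \hyperref[splittingformula]{\ref*{splittingformula}}, and the invertible linear systems culminating in \hyperref[lem:g2n2integral]{Lemma \ref*{lem:g2n2integral}}.

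Two secondary problems compound this. First, Hain's genus-$2$ formula \eqref{eq:hainhain} is \emph{quadratic} in divisor classes, so self-intersections such as $\bigl(\delta_1^{\{1\}}\bigr)^2$ and $\bigl(\delta_2^{\{1\}}\bigr)^2$ appear; these are not computed by the CohFT splitting axiom alone but require the excess-intersection corrections $(-\psi_h-\psi_{h'})$ of \hyperref[prop:excessint]{Proposition \ref*{prop:excessint}}, which your ``axiom (iii) factors the integrand'' step omits. Second, the rational-tails divisors $\delta_2^{\{i\}}$ in \eqref{eq:hainhain} do not split into two genus-$1$ pieces: they leave behind genus-$2$ two-point integrals $\int_{\Mb{2,2}}\lambda_2\psi\,\HresSpin{2}{\cdot}$, returning you to the same uncomputed quantities. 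Your instinct to flag the conjectural status of \eqref{eq:csspsi} at $g=2$ is correct and matches the paper (its second proof exists to avoid the conjecture), and an admcycles cross-check of the final values is reasonable; but as written the proposal has no valid mechanism for the genus-$2$ terms, conditional or otherwise.
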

We use two different methods to prove this. The first one is shorter but depends on the use of the formula \hyperref[eq:csspsi]{(\ref*{eq:csspsi})} for $g=2$, which for the moment is conjectural. The second method produces more general formulas using only known results and more direct computations, but is considerably longer.

\subsubsection{First method: spin DR conjecture}

The validity of equation \hyperref[eq:csspsi]{(\ref*{eq:csspsi})} depends on certain properties of the $1$-twisted spin double ramification cycles $\DR^\mathrm{spin}_g(a)$ (for $a=(a_1,\dots,a_n)$ with $\sum_i^na_i=2g-2+n$), a family of cycles that the authors of \cite{css21} introduce in their text as a spin analogue of Pixton's formula \cite{jppz17,pix23,bhpss23}. These conjectural properties are listed as Assumption 1.3 in \cite{css21}:
\begin{itemize}
    \item The cycle $\DR^\mathrm{spin}_g(a_1,\dots,a_n)$ is polynomial in the $a_i$;
    \item It holds that $\pi^*\DR^\mathrm{spin}_g(a_1,\dots,a_n)=\DR^\mathrm{spin}_g(a_1,\dots,a_n,1)$, where $\pi:\Mb{g,n+1}\rightarrow\Mgnb$ is the morphism forgetting the last marked point.
    \item If $P$ is a monomial in classes $\psi_i$ for which $a_i$ is negative, then
    \begin{equation*}
        \int_{\Mgnb}\Hspin{g}{a_1-1,\dots,a_n-1}\cdot P=\int_{\Mgnb}\DR^\mathrm{spin}_g(a_1,\dots,a_n)\cdot P.
    \end{equation*}
\end{itemize} 
The first property is true in the non-spin case, see \cite{pix23poly} or \cite{spe24}. The second one follows from a direct computation involving the definition of $\DR^\mathrm{spin}_g(a)$, and the third one is a consequence of \cite[Conjecture 2.5]{css21}. This last conjecture is a generalization of \cite[Conjecture A]{fp18} (which was proved in \cite{hs21,bhpss23}) to the spin context. As mentioned previously, these properties can be shown to hold for genera $g=0$ and $g=1$. For $g\ge2$ they remain to be proven. \newline

In the first proof of \hyperref[lem:g2int]{Lemma \ref*{lem:g2int}} we will make use of the following formula from \cite{bhs22}:
\begin{equation}\label{eq:drl2}
    a^{-4}\DR_2(-a,a)\lambda_2=\psi_2^4 - 
    \left[\begin{tikzpicture}[baseline={([yshift=-1.25ex]current bounding box.center)}, node distance={20mm}, pin distance={7mm},every pin edge/.style={thin}]
        \node[draw, circle, pin=left:$1$, label={[label distance=0.1mm,font=\scriptsize]5:$\psi$}] (a) {$1$};
        \node[draw, circle, label={[label distance=0.1mm,font=\scriptsize]5:$\psi^2$}, pin=right:$3$] (b) [right of = 1] {$1$};
        \draw (a)--(b);
    \end{tikzpicture}\right].\qquad(a\neq0)
\end{equation}
This is the one point case of a more general formula \cite{bgr19} related to the strong DR/DZ equivalence conjecture \cite{bur15,bdgr18}. A proof of the strong DR/DZ equivalence conjecture was completed by \cite{bls24}. By taking the pull-back of \hyperref[eq:drl2]{(\ref*{eq:drl2})} along the morphism $\pi:\Mb{2,3}\rightarrow\Mb{2,2}$ forgetting the second marked point one obtains
\begin{align}
    \nonumber a^{-4}&\DR_2(-a,0,a)\lambda_2 = \psi_3^4 -
    \left[\begin{tikzpicture}[baseline={([yshift=-0.5ex]current bounding box.center)}, node distance={20mm}, pin distance={7mm},every pin edge/.style={thin}]
        \node[draw, circle, pin=left:$1$, label={[label distance=0.1mm,font=\scriptsize]5:$\psi^3$}] (a) {$2$};
        \node[draw, circle, pin=above right:$2$, pin=below right:$3$] (b) [right of = a] {\phantom{0}};
        \draw (a)--(b);
    \end{tikzpicture}\right]
    \\
    \nonumber & -
    \left[\begin{tikzpicture}[baseline={([yshift=-0.5ex]current bounding box.center)}, node distance={20mm}, pin distance={7mm},every pin edge/.style={thin}]
        \node[draw, circle, pin=above left:$1$, pin=below left:$2$, label={[label distance=0.1mm,font=\scriptsize]5:$\psi$}] (a) {$1$};
        \node[draw, circle, pin=right:$3$, label={[label distance=0.1mm,font=\scriptsize]5:$\psi^2$}] (b) [right of = a] {$1$};
        \draw (a)--(b);
    \end{tikzpicture}\right] - 
    \left[\begin{tikzpicture}[baseline={([yshift=-0.5ex]current bounding box.center)}, node distance={20mm}, pin distance={7mm},every pin edge/.style={thin}]
        \node[draw, circle, pin=left:$1$, label={[label distance=0.1mm,font=\scriptsize]5:$\psi$}] (a) {$1$};
        \node[draw, circle, pin=above right:$2$, pin=below right:$3$, label={[label distance=0.1mm,font=\scriptsize]355:$\psi^2$}] (b) [right of = a] {$1$};
        \draw (a)--(b);
    \end{tikzpicture}\right]
    \\
    \nonumber & + 
    \left[\begin{tikzpicture}[baseline={([yshift=-0.5ex]current bounding box.center)}, node distance={20mm}, pin distance={7mm},every pin edge/.style={thin}]
        \node[draw, circle, pin=left:$1$, label={[label distance=0.1mm,font=\scriptsize]5:$\psi$}] (a) {$1$};
        \node[draw, circle, label={[label distance=0.1mm,font=\scriptsize]5:$\psi$}] (b) [right of = a] {$1$};
        \node[draw, circle, pin=above right:$2$, pin=below right:$3$] (c) [right of = b] {\phantom{0}};
        \draw (a)--(b)--(c);
    \end{tikzpicture}\right]
    \\
    \label{eq:DRlbig} & +
    \left[\begin{tikzpicture}[baseline={([yshift=-4ex]current bounding box.center)}, node distance={20mm}, pin distance={7mm},every pin edge/.style={thin}]
        \node[draw, circle, pin=left:$1$] (a) {$1$};
        \node[draw, circle, pin=above:$2$] (b) [right of = a] {\phantom{0}};
        \node[draw, circle, pin=right:$3$,label={[label distance=0.1mm,font=\scriptsize]5:$\psi^2$}] (c) [right of = b] {$1$};
        \draw (a)--(b)--(c);
    \end{tikzpicture}\right].
\end{align}
Denote the five boundary classes appearing above by $B_1,B_2,B_3,B_4,B_5\in H^4\bigl(\Mb{2,3}\bigr)$ respectively.
\begin{proof}[First proof of \text{\hyperref[lem:g2int]{Lemma \ref*{lem:g2int}}}]
    The intersection product $\int_{\Mb{2,3}}B_i\HresSpin{2}{0,4,-2}$ vanishes for every $i=1,\dots,5$. For example the first and third integrals in the product
    \begin{align*}
        \int_{\Mb{2,3}}B_4\HresSpin{2}{0,4,-2}&=\left(\int_{\Mb{1,2}}\psi_2\HresSpin{1}{0,0}\right)\left(\int_{\Mb{1,2}}\psi_2\HresSpin{1}{-2,2}\right) \\
        &\times\left(\int_{\Mb{0,3}}\HresSpin{0}{4,-2,-4}\right)
    \end{align*}
    vanish. Thus from \hyperref[eq:DRlbig]{(\ref*{eq:DRlbig})} we obtain
    \begin{align*}
        \int_{\Mb{2,3}}\DR_2(-a,0,a)\lambda_2\HresSpin{2}{0,4,-2} & = \int_{\Mb{2,3}}\psi_3^4\HresSpin{2}{0,4,-2}\cdot a^4 \\
        & = \Aspin{2}{-1,1,5}\cdot a^4 \overset{\hyperref[eq:csspsi]{(\ref*{eq:csspsi})}}{=}\frac{37a^4}{1152}.
    \end{align*}
    For the other integral in the statement, a similar analysis leads one to conclude \newline $\int_{\Mb{2,3}}B_i\HresSpin{2}{2,2,-2}=0$ for $i\neq3$. For $i=3$,
    \begin{align*}
        \int_{\Mb{2,3}}B_3\HresSpin{2}{2,2,-2} & = \left(\int_{\Mb{1,2}}\psi_2\HresSpin{1}{2,-2}\right)\left(\int_{\Mb{1,3}}\psi_2^2\HresSpin{1}{2,-2,0}\right) \\
        & = \Aspin{1}{-1,3}\cdot\Aspin{1}{-1,1,3} \overset{\hyperref[eq:csspsi]{(\ref*{eq:csspsi})}}{=} \frac{1}{64}.
    \end{align*}
    Therefore
    \begin{align*}
        \int_{\Mb{2,3}}\DR_2(-a,0,a)\lambda_2\HresSpin{2}{2,2,-2} & = \left(\int_{\Mb{2,3}}\psi_3^4\HresSpin{2}{2,2,-2} - \frac{1}{64}\right)a^4 \\
        & = \left(\Aspin{2}{-1,3,3} - \frac{1}{64}\right)a^4 \overset{\hyperref[eq:csspsi]{(\ref*{eq:csspsi})}}{=} \frac{7a^4}{5760},
    \end{align*}
    as claimed.
\end{proof}

\subsubsection{Second method: Picard relation}\label{secondmethod}

Here we prove quasi-polynomial formulas from which the integrals of \hyperref[lem:g2int]{Lemma \ref*{lem:g2int}} follow, without resorting to equation \hyperref[eq:csspsi]{(\ref*{eq:csspsi})}. The proof demonstrates how explicit computations in genus $2$ can be done by leveraging a relation in the Picard group. We start by explicating some results that will help to reduce our genus $2$ integrals to known integrals. \newline

For an indexing set $I\subset[n]$, let $\Gamma_h^I$  denote the stable graph of genus $g$ with $n$ legs having a single edge separating a genus $h$ vertex carrying the indices $I$ from a genus $g-h$ vertex with the remaining legs. Let $\delta_{1|1}$ be the class of the closure of the substack of genus 2 stable curves of compact type whose dual graph has two genus 1 vertices, and let $\delta_{rt}$ contains the compact-type genus 2 stable curves with rational tails. \newline

Recall that $\Pic(\Mgnb)$ is generated by the determinant of the Hodge bundle, the $\psi$-classes and the boundary classes. While these generators are linearly dependent for $g=0,1,2$, they are free for all higher genera by \cite{AC}. As in \cite{Mum83}, for example genus 2 modular form considerations provide a relation between these generators for $g=2$. Together with Mumford's GRR computations relating $\kappa_1$ and $\lambda_1$, this gives 
 the following relation, restricting formula (8.5) of \textit{loc.cit.} to compact-type:
\begin{proposition}\label{prop:picrelation}
The following relation holds:
\[\kappa_1 = \frac{7}{5}\delta_{1|1}\in \Pic(\mathcal{M}^{ct}_{2})\otimes \Q.\]
By pulling back this relation along forgetful maps, one obtains \[ \kappa_1 - \sum_{i=1}^n \psi_i = \frac{7}{5}\delta_{1|1} - \delta_{rt} \in \Pic(\mathcal{M}^{ct}_{2,n})\otimes \Q.\]
\end{proposition}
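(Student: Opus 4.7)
The plan is to deduce the first identity on $\mathcal{M}^{ct}_2$ from two classical Mumford relations, and then to transport it to $\mathcal{M}^{ct}_{2,n}$ by iteratively pulling back along the forgetful morphisms. The key feature of the computation is that the correction $-\delta_{rt}$ does not arise from the pull-back of $\delta_{1|1}$ (which is clean), but instead accumulates from the iterated pull-backs of the $\psi$-classes.

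For the first identity I combine, in $\Pic(\mathcal{M}^{ct}_2)\otimes\Q$, Mumford's modular-forms relation $10\lambda_1=\delta_0+2\delta_{1|1}$ with the identity $\kappa_1=12\lambda_1-\delta_0-\delta_{1|1}$ provided by Mumford's GRR computation. Restricting both relations to compact type kills $\delta_0$ and yields $\lambda_1=\tfrac{1}{5}\delta_{1|1}$; substituting back gives $\kappa_1=\tfrac{12}{5}\delta_{1|1}-\delta_{1|1}=\tfrac{7}{5}\delta_{1|1}$.

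For the second identity let $\pi_n\colon\mathcal{M}^{ct}_{2,n}\to\mathcal{M}^{ct}_2$ be the composition of forgetful maps. For each single step $\pi\colon\mathcal{M}^{ct}_{2,k+1}\to\mathcal{M}^{ct}_{2,k}$ I would apply the standard Arbarello--Cornalba pull-back formulas: $\pi^*\kappa_1=\kappa_1-\psi_{k+1}$; $\pi^*\psi_i=\psi_i-\delta_{0,\{i,k+1\}}$ for $i\le k$; $\pi^*\delta_{0,I}=\delta_{0,I}+\delta_{0,I\cup\{k+1\}}$ for $|I|\ge 2$; and $\pi^*\delta_{1|1}=\delta_{1|1}$. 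The last identity holds because any codimension-one boundary stratum of $\mathcal{M}^{ct}_{2,k+1}$ dominating a $\delta_{1|1}$-divisor is itself of $\delta_{1|1}$-type: a configuration with an extra rational bubble sits in codimension at least two. A direct induction on $n$ using these four formulas yields $\pi_n^*\kappa_1=\kappa_1-\sum_{i=1}^n\psi_i+\delta_{rt}$; combining with $\pi_n^*\delta_{1|1}=\delta_{1|1}$ and rearranging produces the claimed relation.

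The main obstacle is conceptual rather than computational: the summand $-\delta_{rt}$ looks at first as though it ought to originate from the pull-back of $\delta_{1|1}$, but it is in fact generated entirely by the iterated pull-backs of the $\psi$-classes, which produce $\delta_{0,\{i,j\}}$-corrections that reassemble into $\delta_{rt}=\sum_{|I|\ge 2}\delta_{0,I}$. A secondary point of care is the $\mathbb{Z}/2$ automorphism of the $\delta_{1|1}$ graph, which must be handled consistently in order to avoid a spurious factor of two in the pull-back of $\delta_{1|1}$.
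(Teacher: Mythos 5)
Your proposal is correct and follows essentially the same route as the paper: there, too, the identity $\kappa_1=\tfrac{7}{5}\delta_{1|1}$ on $\mathcal{M}^{ct}_{2}$ is obtained by combining Mumford's genus-$2$ relation $10\lambda_1=\delta_0+2\delta_{1|1}$ with the GRR identity $\kappa_1=12\lambda_1-\delta$ (formula (8.5) of Mumford, restricted to compact type, where $\delta_0$ dies), and the $n$-pointed statement is then deduced by pulling back along the forgetful maps. Your write-up merely makes explicit the induction the paper leaves implicit — the Arbarello--Cornalba formulas $\pi^*\kappa_1=\kappa_1-\psi_{n+1}$, $\pi^*\psi_i=\psi_i-\delta_{0,\{i,n+1\}}$, $\pi^*\delta_{0,I}=\delta_{0,I}+\delta_{0,I\cup\{n+1\}}$, the clean identity $\pi^*\delta_{1|1}=\delta_{1|1}$, and the reassembly of the resulting $\delta_{0,I}$-corrections into $\delta_{rt}=\sum_{\lvert I\rvert\ge2}\delta_{0,I}$ — all of which check out.
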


In \cite[Theorem 6]{sau19} it is shown how to reduce classes of strata of differentials with residue conditions to classes of strata of differentials without residue conditions on the IVC and in \cite[Proposition 8.3]{cmz22} for the multi-scale differentials. In \cite[Lemma 5.7]{css21} the argument for the spin-parity version is written down without residue-conditions, using how spin-parity redistributes along degenerations. We require the following special case of the spin-parity version. \newline 

Denote by $SB(\alpha)$ the \textit{simple bi-colored graphs}, i.e. twisted stable level graphs (as in Definition 7.2 of \cite{sau19}) with one vertex at level $0$, one vertex at level $-1$, and twists given by $\alpha+1$. Denote by $SB(\alpha)_i$ the simple bi-colored graphs with leg $i$ at the lower level, and by $SB^{\res}(\alpha)$ the simple bi-colored graphs such that at least one pole sits at the lower vertex. 
\begin{proposition}\label{resles}
    For $g\ge0$, let $\a=(\alpha_1,\dots,\a_n)$ be a partition of $2g-2$ consisting of even numbers, with exactly two entries being negative. Then 
    \begin{align*}
        & \HresSpin{g}{\alpha} = -(\alpha_i+1)\psi_i\Hspin{g}{\alpha} \\
        & + \left(\sum_{(\Gamma,I) \in SB(\alpha)_i } \hspace{-0.35em}-\hspace{-0.35em}\sum_{(\Gamma,I) \in SB(\alpha)^{\res} }\right) \frac{m(\Gamma,I)}{|\Aut(\Gamma,I)|}(\xi_\Gamma)_*\bigg(\Hspin{g_0}{I_0-1}\otimes \Hspin{g_{-1}}{I_{-1}-1}\bigg).
    \end{align*}
Here, $g_l$ and $I_l$ are respectively the genus and tuple of twists at level $l$, and $m(\Gamma,I)$ is the multiplicity of the twisted graph.
\end{proposition}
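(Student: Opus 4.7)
The plan is to deduce \hyperref[resles]{Proposition \ref*{resles}} by combining the residue-imposition formula in the non-spin setting with the multiplicativity of spin parity across boundary strata of compact-type multi-scale differentials.

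First I would invoke the non-spin ancestor of the statement, namely \cite[Theorem 6]{sau19} (equivalently \cite[Proposition 8.3]{cmz22}), which expresses $\Hres{g}{\alpha}$ on the multi-scale compactification in terms of the tautological $\xi$-class. Specializing to our configuration of a partition $\alpha$ with exactly two negative entries, and choosing a marked point $i$ at which to measure the residue obstruction, yields an identity of exactly the shape asserted in the proposition but with every spin decoration erased:
$$\Hres{g}{\alpha} = -(\alpha_i+1)\,\psi_i\,\Hg{g}{\alpha} + \left(\sum_{(\Gamma,I)\in SB(\alpha)_i} - \sum_{(\Gamma,I)\in SB(\alpha)^{\mathrm{res}}}\right)\frac{m(\Gamma,I)}{|\Aut(\Gamma,I)|}\,(\xi_\Gamma)_*\bigl(\Hg{g_0}{I_0-1}\otimes\Hg{g_{-1}}{I_{-1}-1}\bigr).$$
The difference of the two sums over $SB(\alpha)_i$ and $SB(\alpha)^{\mathrm{res}}$ encodes the $\mathfrak{R}$-global residue condition of \cite{cmz22}: the first sum records boundary degenerations not yet enforcing residue vanishing, while the second records those already producing it. I would begin by verifying that the cited results indeed specialize to this form in our setup, checking multiplicities and the orientation of $\xi$.

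Next I would refine each term by spin parity. On the open smooth locus, the residue condition is a complex-analytic closed condition on the space of differentials, whereas the spin parity only depends on the order profile of the differential through $L=\mathcal{O}_C(\sum \alpha_i x_i)$; hence the residueless sub-stratum inherits its parity decomposition from the ambient stratum, so the first two terms split cleanly between even and odd parts. For the boundary terms, I would invoke \cite[Proposition 5.3]{css21}: the spin parity of a multi-scale differential on a curve of compact type is the sum modulo $2$ of the parities of its vertices. Applied to a two-vertex simple bi-colored graph this gives
\begin{align*}
\text{even} &= (\text{even}\times\text{even})\sqcup(\text{odd}\times\text{odd}),\\
\text{odd}  &= (\text{even}\times\text{odd})\sqcup(\text{odd}\times\text{even}),
\end{align*}
so the even-minus-odd combination factorizes tensorially as $\Hspin{g_0}{I_0-1}\otimes\Hspin{g_{-1}}{I_{-1}-1}$, by the identity $(a_+-a_-)(b_+-b_-)=(a_+b_++a_-b_-)-(a_+b_-+a_-b_+)$ already exploited in the proof of \hyperref[prop:cohft]{Proposition \ref*{prop:cohft}}(iii).

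The main obstacle is verifying that the multiplicities $m(\Gamma,I)/|\Aut(\Gamma,I)|$ and the generic transversality properties of the gluing maps $\xi_\Gamma$ established in \cite{sau19, cmz22} transfer verbatim to the spin-refined boundary pieces. Because the parity decomposition is a disjoint decomposition of the stratum rather than a further subscheme-theoretic cut, this transfer is essentially bookkeeping — the underlying excess-intersection and transversality statements depend only on the geometry of the multi-scale compactification, not on the spin labelling — but care is needed to confirm that each parity component appears with the predicted multiplicity, exactly as in the factor $a=b=c=d=1$ argument of the proof of \hyperref[prop:cohft]{Proposition \ref*{prop:cohft}}(iii).
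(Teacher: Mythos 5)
Your overall strategy (start from the non-spin residue formula of \cite{sau19,cmz22} and refine it by parity) is not the paper's route, and it contains a genuine gap at the boundary terms. The step where you invoke \cite[Proposition 5.3]{css21} to factor the even-minus-odd class of a boundary divisor as $\Hspin{g_0}{I_0-1}\otimes\Hspin{g_{-1}}{I_{-1}-1}$ is only justified for curves of compact type, i.e.\ for simple bi-colored graphs with a \emph{single} vertical edge. But $SB(\alpha)$ contains graphs whose two vertices are joined by several vertical edges (then $g=g_0+g_{-1}+(k-1)$ for $k$ edges and the curve is not of compact type), and for these the parity of a nearby smoothing is \emph{not} the sum of the vertex parities: it depends on the choice of prong-matching, and the even and odd smoothing directions must be counted separately. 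Consequently the even-minus-odd contribution of such a divisor is not the naive tensor product weighted by the non-spin multiplicity $m(\Gamma,I)/\lvert\Aut(\Gamma,I)\rvert$; one has to compute a spin-refined multiplicity, which is exactly what you dismiss as ``essentially bookkeeping.'' This computation — how spin parity redistributes along degenerations including non-separating ones, and with which signed prong-matching counts — is the actual content of \cite[Lemma 5.7]{css21} and of \cite[Propositions 5.9, 5.10 and Corollary 1.3]{won24}, and without it your argument does not determine the coefficients in the boundary sum.

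For contrast, the paper does not refine the non-spin identity post hoc. It uses that with exactly two poles $\overline{B}_g^\mathrm{res}(\a)\subset\overline{B}_g(\a)$ has codimension $1$, takes two \emph{already spin-refined} divisor relations on the multi-scale space from \cite{won24}: Proposition 5.9 expressing $[\overline{B}_g^\mathrm{res}(\a)]^\mathrm{spin}$ through $\xi^\mathrm{spin}$ and spin boundary divisors, and Proposition 5.10 expressing the $SB(\a)_i$ boundary sum through $\xi^\mathrm{spin}$ and $(p^*\psi_i)^\mathrm{spin}$; subtracting cancels $\xi^\mathrm{spin}$, and the formula then follows by pushing forward along $p$ via \cite[Corollary 1.3]{won24} and the projection formula. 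One part of your proposal is sound and worth keeping: since the multi-scale compactification is smooth \cite{bcggm19b}, the even and odd loci are open-and-closed, so restricting class identities to parity components is legitimate, and your factorization argument is correct for the single-edge (compact-type) graphs. The failure is confined to, but unavoidable at, the multi-edge graphs in $SB(\alpha)$.
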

\begin{proof}
Recall the projection $p:\overline{B}_g(\a_1,\dots,\a_n) \longrightarrow \Mb{g,n}$ from the moduli space of multi-scale differentials. In the case of only two poles, $\overline{B}_g^\mathrm{res}(\a_1,\dots,\a_n) \subset \overline{B}_g(\a_1,\dots,\a_n)$ is of codimension $1$. Then, \cite[Proposition 5.9]{won24} expresses the difference of the classes of spin-parity components $[\overline{B}_g^\mathrm{res}(\a_1,\dots,\a_n)]^{\mathrm{spin}}$ as a linear combination of the spin-parity version $\xi^\mathrm{spin}$ of the first Chern class of the tautological quotient line bundle over $\overline{B}_g^\mathrm{res}(\a_1,\dots,\a_n)$ and a weighted sum over spin-parity versions of boundary divisors of $\overline{B}_g^\mathrm{res}(\a_1,\dots,\a_n)$ described by simple bi-colored graphs with at most $1$ pole at the top vertex. Similarly, Proposition 5.10 of \textit{op.cit.} describes the weighted sum over spin-parity versions of boundary divisors described by $SB_i(\a)$ as a linear combination of $\xi^\mathrm{spin}$ and the spin-parity version of the $\psi$-class $(p^*\psi_i)^\mathrm{spin}$. Taking the difference of these two equations in the Chow ring of $\overline{B}_g(\a_1,\dots,\a_n)$ gives the desired formula up there. The push-forward along $p$ of said boundary divisors is computed in Corollary 1.3 of \textit{op.cit.} The projection formula finishes the proof.
\end{proof}

We use the following splitting formula, obtained as above by taking differences in \cite[Proposition 5.10]{won24} for distinct markings and pushing-forward using the residueless variant of Corollary 1.3 of \textit{op.cit.}
\begin{proposition}\label{splittingformula}
    Given $g,n\geq 0$, let $s,t$ be distinct indices in $\{1,\dots,n\}$ and $\alpha \in \Z^n$ a partition of $2g-2$ consisting of even numbers. Then
    \begin{align*}
        &\big((\alpha_s+1)\psi_s - (\alpha_t+1)\psi_t\big)\cdot \HresSpin{g}{\alpha} \\
        &\quad = \sum_{(\Gamma,I)\in SB(\alpha)}f_{s,t}(\Gamma)\frac{m(\Gamma,I)}{|\Aut(\Gamma,I)|} (\xi_\Gamma)_*\bigg(\HresSpin{g_0}{I_0-1}\otimes \HresSpin{g_{-1}}{I_{-1}-1}\bigg) .
    \end{align*}
    The function $f_{s,t}(\Gamma)$ equals $0$ if $s$ and $t$ sit on the same vertex, equals $1$ if $s$ sits on level $-1$ and $t$ sits on level $0$, and equals $-1$ otherwise.
\end{proposition}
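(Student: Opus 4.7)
The strategy is to apply the tautological relation of \cite{won24}, Proposition 5.10, to the two markings $s$ and $t$ separately on the moduli space of multi-scale residueless differentials $\overline{B}_g^{\mathrm{res}}(\a)$, subtract the two resulting identities, and push the difference forward along $p \colon \overline{B}_g^{\mathrm{res}}(\a) \to \Mgnb$. That cited result expresses, for any single marking $i$, the spin-parity class $(\a_i+1)(p^*\psi_i)^{\mathrm{spin}}$ as the sum of $\xi^{\mathrm{spin}}$ (the spin-parity first Chern class of the tautological quotient line bundle on $\overline{B}_g^{\mathrm{res}}(\a)$) and a weighted boundary contribution indexed by the bi-colored graphs in $SB(\a)_i$. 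Writing this for both $i=s$ and $i=t$ and subtracting immediately cancels the $\xi^{\mathrm{spin}}$ term.

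The second step is to reorganise the resulting difference of two boundary sums into a single sum over all of $SB(\a)$. For a graph $(\Gamma,I) \in SB(\a)$, its appearance in $SB(\a)_s$ versus $SB(\a)_t$ depends on the location of the legs $s$ and $t$: if both sit at level $0$ the graph belongs to neither sum and contributes $0$; if both sit at level $-1$ it belongs to both with identical coefficients, so the contributions cancel; if only $s$ sits at the lower level the graph contributes with sign $+1$, while the symmetric case gives $-1$. This is precisely the piecewise definition of $f_{s,t}(\Gamma)$, so the difference collapses to a single sum weighted by $f_{s,t}(\Gamma)$.

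It then remains to push the resulting relation forward along $p$. The projection formula, combined with $p_*[\overline{B}_g^{\mathrm{res}}(\a)^{\mathrm{spin}}] = \HresSpin{g}{\a}$, converts the left-hand side into $\bigl((\a_s+1)\psi_s - (\a_t+1)\psi_t\bigr)\cdot \HresSpin{g}{\a}$. For the boundary terms, the residueless variant of \cite{won24}, Corollary 1.3, sends the spin-parity boundary divisor of $\overline{B}_g^{\mathrm{res}}(\a)$ attached to a bi-colored graph $(\Gamma,I)$ to $\frac{m(\Gamma,I)}{|\Aut(\Gamma,I)|}\,(\xi_\Gamma)_*\bigl(\HresSpin{g_0}{I_0-1}\otimes \HresSpin{g_{-1}}{I_{-1}-1}\bigr)$, which is exactly the shape claimed in the statement.

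The main subtlety I would expect in carrying this out carefully is the push-forward step, where one must verify that the residueless variant of Corollary 1.3 correctly tracks spin parities through the gluing morphism. This reduces to \cite{css21}, Proposition 5.3, already invoked in the proof of axiom (iii) of Proposition \ref{prop:cohft}: on stable curves of compact type the parity of a multi-scale differential decomposes additively mod $2$ across the vertices. With that input, the signs in $\HresSpin{g_0}{I_0-1}\otimes \HresSpin{g_{-1}}{I_{-1}-1}$ agree with the restriction of $\HresSpin{g}{\a}$ to the boundary stratum, the combinatorial factor $m(\Gamma,I)/|\Aut(\Gamma,I)|$ matches the one appearing in Proposition \ref{resles}, and the formula follows.
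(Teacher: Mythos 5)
Your proposal matches the paper's own (very terse) argument exactly: the paper likewise obtains the formula by taking the difference of the relations in \cite[Proposition 5.10]{won24} for the two markings $s$ and $t$ on $\overline{B}_g^{\mathrm{res}}(\a)$, which cancels $\xi^{\mathrm{spin}}$ and collapses the two boundary sums into one weighted by $f_{s,t}(\Gamma)$, and then pushes forward along $p$ via the projection formula and the residueless variant of Corollary 1.3 of \emph{op.cit.} Your explicit case analysis justifying $f_{s,t}(\Gamma)$ and your remark on tracking parities through the gluing are welcome elaborations of steps the paper leaves implicit, but the route is the same.
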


Then, the following relation allows us in particular to express the integrals over strata of differentials in terms of computable integrals. Indeed, weighted fundamental classes of moduli spaces of twisted differentials of \cite{fp18}
, are related to $1$-twisted double ramification cycles by \cite{hs21}. The twisted double ramification cycles are then expressed in terms of a Pixton formula by \cite{bhpss23}, in turn proving the conjecture in \cite{fp18}
 on the equality of the weighted fundamental classes and Pixton's cycles in the strictly meromorphic case. We shall not need the full capacity of Pixton formulas for our computations. \newline

Let $a \in \Z^n$ a partition of $2g -2+n$. Consider (isomorphism classes of) \textit{simple star graphs} $(\Gamma,I)\in \text{Star}_g(a)$, as defined for example in \cite[Definition 2.3]{css21}, as stable graphs of genus $g$ with twists determined by $a$, with the edges between the \textit{central vertex} $v_0$ and the remaining \textit{outer vertices} $v\in V_{\text{Out}}$, such that twists on outer vertices are positive.
\begin{proposition}[Star-graph formula, \cite{hs21}]\label{stargraph}
For $a \in \Z^n$ a partition of $2g - 2+n$ such that $a \not\in \Z^n_{>0}$, we have
\[\DR_g^1(a) = \hspace{-0.5em} \sum_{(\Gamma,I)\in \mathrm{Star}_g(a)} \frac{m(\Gamma,I)}{|\Aut(\Gamma,I)|} \cdot (\xi_{\Gamma})_*\left( [\overline{\H}_{g(v_0)}(I(v_0)-1)\otimes\bigotimes_{v \in V_{\text{Out}}}[\overline{\H}_{g(v)}(I(v)-1)]\right).\]
\end{proposition}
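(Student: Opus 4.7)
The plan is to follow the strategy of Holmes–Schmitt and reduce the identity to the theorem of Bae–Holmes–Pandharipande–Schmitt–Schwarz identifying the $1$-twisted double ramification cycle with the weighted fundamental class of the moduli space of twisted canonical divisors of Farkas–Pandharipande. Since the hypothesis $a\not\in\Z^n_{>0}$ places us in the strictly meromorphic setting (after the shift by $1$), the moduli space $\widetilde{\mathcal{H}}_g(a-\mathbbm{1})$ is of pure codimension $g$ in $\Mgnb$ and splits as a union of irreducible components indexed by admissible twists on certain degenerate stable graphs. The strategy is therefore to identify these irreducible components with the star graphs appearing in the statement and to match the BHPSS multiplicities with $m(\Gamma,I)/|\Aut(\Gamma,I)|$.

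First I would recall the definition of $\DR_g^1(a)$ through the logarithmic compactification of the Abel–Jacobi section for the line bundle $\omega_C(-\sum(a_i-1)x_i)$, and invoke the main result of \cite{bhpss23} asserting the equality
\begin{equation*}
    \DR_g^1(a) = \sum_{(\Gamma,I)}\frac{m(\Gamma,I)}{|\Aut(\Gamma,I)|}\,(\xi_\Gamma)_*\bigl[\overline{H}(\Gamma,I)\bigr],
\end{equation*}
where the sum runs over twisted stable graphs $(\Gamma,I)$ of genus $g$ with legs twisted by $a$, and $\overline{H}(\Gamma,I)$ is the closure of the corresponding stratum of twisted canonical divisors. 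This is the hard black-box input, and the proof proposal is essentially to analyze which graphs survive.

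Next I would exploit the level structure of a twisted canonical divisor in the strictly meromorphic case. The argument proceeds by showing that the only $(\Gamma,I)$ whose stratum $\overline{H}(\Gamma,I)$ is non-empty and of expected dimension are the simple star graphs. This comes from two observations: (i) by the residue conditions and the balance $\sum I(h)=-2-2g(v)$ at each vertex, any vertex with a pole must lie on the bottom level, which forces the central vertex $v_0$ to carry all poles; (ii) positivity of the twists on the remaining vertices forces them to carry only holomorphic differentials, and stability together with $a\not\in\Z^n_{>0}$ rules out intermediate levels. Once only star graphs remain, the stratum factors as
\begin{equation*}
    \overline{H}(\Gamma,I) \;\simeq\; \overline{\mathcal{H}}_{g(v_0)}(I(v_0)-1)\times\prod_{v\in V_{\text{Out}}}\overline{\mathcal{H}}_{g(v)}(I(v)-1),
\end{equation*}
and the pushforward under $\xi_\Gamma$ yields the claimed product of strata.

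The main obstacle will be the bookkeeping of multiplicities. The factor $m(\Gamma,I)$ is defined in the BHPSS formulation from the lengths of the edges of the tropical picture (equivalently, the enhancements $\kappa_e$ in the multi-scale formalism of \cite{cmz22}), while in the weighted fundamental class of \cite{fp18} it arises from the local structure of the twisted Hodge bundle along the boundary. Matching these two combinatorial weights is non-trivial and requires a careful local computation at the nodes of the star graph, together with an application of the projection formula to $\xi_\Gamma$; this is precisely the technical core of \cite{hs21}. I would conclude by verifying that the automorphism factor $|\Aut(\Gamma,I)|$ is absorbed correctly in the gluing map, so that both sides agree as classes in $H^{2g}(\Mgnb)$.
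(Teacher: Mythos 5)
First, a point of comparison: the paper does not prove this proposition at all — it is imported as a known result, attributed to \cite{hs21} and prefaced by the remark that the weighted fundamental classes of twisted canonical divisors of \cite{fp18} are related to $1$-twisted double ramification cycles by \cite{hs21}, while \cite{bhpss23} separately expresses twisted DR cycles via Pixton's formula. So your proposal must be measured against the literature route, and there it has a genuine structural gap: your black-box input is misattributed in a way that makes the argument circular. The main theorem of \cite{bhpss23} is the equality of the twisted DR cycle with Pixton's \emph{tautological} formula; it does not assert any decomposition of $\DR_g^1(a)$ into pushforwards of closures of strata of twisted canonical divisors over twisted stable graphs. The identity you display as ``the hard black-box input'' — $\DR_g^1(a)$ equals the weighted fundamental class of the Farkas--Pandharipande space — is precisely the theorem of \cite{hs21} that the proposition cites, and once one knows the classification of the irreducible components of $\widetilde{\mathcal{H}}_g(a-\mathbbm{1})$ in the strictly meromorphic case, that identity \emph{is} the star-graph formula. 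In other words, you have assumed the statement and relabeled the remaining work; the ``survival analysis'' you propose is not a consequence of your black box but is the structure theorem of \cite{fp18}, and the multiplicity matching you defer to the end is the entire technical content of \cite{hs21} (an infinitesimal computation of the multiplicity of the DR locus along each component), not a verification one can wave through with the projection formula.

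Second, the mechanisms in your steps (i)--(ii) would fail as stated. The Farkas--Pandharipande space of twisted canonical divisors imposes \emph{no} residue conditions — that is exactly what distinguishes it from the incidence variety and multi-scale compactifications of \cite{bcggm19b}, where the global residue condition lives — so ``by the residue conditions\dots the central vertex carries all poles'' is not an available argument. The correct input is the purity theorem of \cite{fp18}: in the strictly meromorphic case every irreducible component of $\widetilde{\mathcal{H}}_g$ has codimension exactly $g$, and the components are in bijection with simple star graphs (poles and negative twists on the central vertex, positive twists and holomorphic differentials on the outer vertices, no intermediate levels — the deeper level structures occur only in higher codimension, i.e.\ in closures). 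Your vertex balance is also written with the wrong sign (the order condition at a vertex $v$ reads $\sum_h I(h) = 2g(v)-2$ up to the valence shift in the log convention, not $-2-2g(v)$), and the asserted isomorphism of $\overline{H}(\Gamma,I)$ with the product of strata holds only up to the finite gluing automorphisms absorbed into $\lvert\Aut(\Gamma,I)\rvert$. With the citation roles corrected — \cite{fp18} for the component structure, \cite{hs21} for $\DR_g^1 = $ weighted fundamental class, \cite{bhpss23} only if one additionally wants Pixton's formula — your outline becomes an accurate map of where the proof lives, but as written it is not a proof.
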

The spin-parity version of these results is the content of Conjecture 2.5 of \cite{css21}, and the route below circumvents the use of that formula.

\subsubsection*{Preliminary genus $1$ integrals}
Here we collect and compute some preliminary integrals in genus $g=1$ that appear in the hands-on method for the second proof of \hyperref[lem:g2int]{Lemma \ref*{lem:g2int}}. To begin, a straightforward generalization of the argument in the proof of \hyperref[lem:g1int1]{Lemma \ref*{lem:g1int1}} shows that for $\a\ge1$ even,
\begin{equation}\label{eq:g1prelim}
    \int_{\Mb{1,2}} \lambda_1 \HresSpin{1}{\alpha, -\alpha} = \frac{\alpha^2+2}{48}.
\end{equation}

\begin{lemma}\label{lem:g1integral}
Let $(\e_1,\e_2,\e_3)$ be a partition of $0$ consisting of even integers with $\e_1,\e_2<0$ and $\e_3>0$. Then the following holds:
\begin{align*}
\int_{\Mb{1,3}} \lambda_1 \HresSpin{1}{\e_1,\e_2,\e_3} &= \frac{1}{48} \, \e_1^{3} - \frac{1}{48} \, \e_1 \e_2^{2} - \frac{1}{48} \, \e_1 \e_3^{2} - \frac{1}{48} \, \e_1^{2} - \frac{1}{48} \, \e_2^{2} - \frac{1}{48} \, \e_3^{2} - \frac{1}{12}\,.
\end{align*}
In particular for $\a<0$ an even integer,
\[ \int_{\Mb{1,3}} \lambda_1 \HresSpin{1}{\a,-2,2-\a} = \frac{1}{24}\a^2-\frac{1}{12}\a-\frac{1}{4}\,.\]
\end{lemma}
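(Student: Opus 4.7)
The strategy is to use the tautological identity $\psi_i = \lambda_1 + \sum_{I \ni i,\,|I| \geq 2} \delta_0^I$ on $\Mb{1,n}$ to replace $\lambda_1$ with $\psi_3$ up to boundary corrections, and then evaluate the resulting $\psi_3$-integral via the residueless star graph formula. Taking $i = 3$ on $\Mb{1,3}$ gives $\lambda_1 = \psi_3 - \delta_0^{\{1,3\}} - \delta_0^{\{2,3\}} - \delta_0^{\{1,2,3\}}$. For each boundary divisor in the sum, the gluing axiom (iii) of \hyperref[prop:cohft]{Proposition \ref*{prop:cohft}} pulls back $\HresSpin{1}{\e_1,\e_2,\e_3}$ to a tensor product whose genus-$0$ factor lives on $\Mb{0,|I|+1}$; in each case this factor is a class $\HresSpin{0}{\cdots}$ of codimension $g-1+N = -1+N \geq 1$, and one checks directly that the number of negative orders (including the node twist) forces this codimension to exceed the complex dimension of $\Mb{0,|I|+1}$ (for $|I|=2$ a codimension-$1$ class on the point $\Mb{0,3}$, and for $|I|=3$ a codimension-$2$ class on the one-dimensional $\Mb{0,4}$). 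All three boundary integrals therefore vanish and
\begin{equation*}
\int_{\Mb{1,3}} \lambda_1 \HresSpin{1}{\e_1,\e_2,\e_3} = \int_{\Mb{1,3}} \psi_3 \HresSpin{1}{\e_1,\e_2,\e_3}.
\end{equation*}

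For this remaining integral I would apply \hyperref[eq:AspinResF]{(\ref*{eq:AspinResF})} with $g=1$ and $a=(\e_3+1,\e_1+1,\e_2+1)$, so that it equals $\mathcal{A}_1^{\mathfrak{R}(1),\mathrm{spin}}(\e_3+1,\e_1+1,\e_2+1)$. Under the one-positive-twist-per-vertex constraint, exactly two twisted graphs $(\Gamma,I)\in\operatorname{TR}(1,a)$ arise: (a) the single-vertex graph with $v_0$ of genus $1$ carrying all three legs; and (b) the two-vertex graph with $v_0$ of genus $1$ carrying leg $3$ joined by one edge, of necessarily positive twist $t = \e_3-1$, to a genus-$0$ vertex carrying legs $1$ and $2$. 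Direct computation of $F(\Gamma,I) = \prod_v -(-I(v)^+)^{n(v)-2}$ gives $\e_3+1$ in case (a) and $1-\e_3$ in case (b). Applying the closed formula \hyperref[eq:csspsi]{(\ref*{eq:csspsi})} (known in genus $1$) to evaluate $\Aspin{1}{I(v_0)}$ yields $(\e_1^2+\e_2^2+2)/48$ in case (a) and $(\e_3^2+2)/48$ in case (b).

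Summing the two contributions gives
\begin{equation*}
\int_{\Mb{1,3}} \psi_3\,\HresSpin{1}{\e_1,\e_2,\e_3} = -\frac{(\e_3+1)(\e_1^2+\e_2^2+2)}{48} + \frac{(\e_3-1)(\e_3^2+2)}{48},
\end{equation*}
which, after expansion and use of $\e_1+\e_2+\e_3=0$ (in particular $\e_3^3-\e_3(\e_1^2+\e_2^2)=2\e_1\e_2\e_3$), simplifies to $\frac{\e_1\e_2\e_3}{24}-\frac{\e_1^2+\e_2^2+\e_3^2}{48}-\frac{1}{12}$. The stated formula is equivalent to this expression via the companion identity $\e_1^3-\e_1\e_2^2-\e_1\e_3^2=2\e_1\e_2\e_3$, and the particular case follows by substituting $(\e_1,\e_2,\e_3)=(\a,-2,2-\a)$. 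The main obstacle is the combinatorial step of enumerating the graphs in $\operatorname{TR}(1,a)$ under the positive-twist-per-vertex constraint and tracking the sign conventions of $F$ and the distinguished argument in $\Aspin{1}{\cdot}$ through formula \hyperref[eq:csspsi]{(\ref*{eq:csspsi})}; once those are pinned down, the remaining algebra is routine.
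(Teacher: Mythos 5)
Your proposal is correct, and it reaches the stated formula by a genuinely different route from the paper's. The paper first applies Proposition \ref{resles} with $i=1$ to trade $\int\lambda_1\HresSpin{1}{\e_1,\e_2,\e_3}$ for $-(\e_1+1)\int\lambda_1\psi_1\Hspin{1}{\e_1,\e_2,\e_3}$ plus a single bi-colored boundary contribution; it then evaluates the $\lambda_1\psi_1$-integral by converting $\lambda_1$ into $\psi_1$ (the boundary corrections vanish), splitting the spin class as $[\overline{\H}_1(\e)]-2[\overline{\H}_1(\e/2)]$ via the genus-$1$ parity description, and quoting the non-spin $\psi^2$-integrals of \cite[Theorem 1.1]{css21}, with the boundary term handled by (\ref{eq:g1prelim}). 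You instead convert $\lambda_1$ into $\psi_3$ at the zero --- your vanishing argument for the three boundary terms via the codimension count $g-1+N_{\a_{[n]}}$ is exactly right --- and evaluate $\int\psi_3\HresSpin{1}{\e_1,\e_2,\e_3}=\mathcal{A}^{\mathfrak{R}(1),\mathrm{spin}}_1(\e_3+1,\e_1+1,\e_2+1)$ by the residueless star-graph formula (\ref{eq:AspinResF}); this transplants the method the paper uses for Lemma \ref{lem:g1drint}, which is precisely the special case $(\e_1,\e_2,\e_3)=(-2,-2,4)$ of your computation. Your enumeration of $\operatorname{TR}(1,a)$ is complete (an outer genus-$0$ vertex with one pole and one edge is unstable, and configurations with two edges force two positive twists at one vertex), and your $F$-values $\e_3+1$ and $1-\e_3$ are correct. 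One caution: your value $(\e_1^2+\e_2^2+2)/48$ in case (a) requires applying (\ref{eq:csspsi}) with the $\psi$-carrying marking (the zero, of twist $\e_3+1$) in the distinguished exponential slot, whereas (\ref{eq:csspsi}) as transcribed in the paper distinguishes a negative entry $a_1<0$; taking that transcription literally would give $(\e_2^2+\e_3^2+2)/48$ and a wrong total. Your reading is the operative one --- it is the only one reproducing the paper's own evaluation $\mathcal{A}^{\mathfrak{R}(1),\mathrm{spin}}_1(5,-1,-1)=\tfrac{1}{12}$ --- and you flagged this convention issue yourself. With it, your closed form $-\tfrac{(\e_3+1)(\e_1^2+\e_2^2+2)}{48}+\tfrac{(\e_3-1)(\e_3^2+2)}{48}$ agrees identically with the lemma's expression under $\e_1+\e_2+\e_3=0$, and the special case follows. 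In sum: the paper's route stays within non-spin $\psi$-integrals plus the elementary parity relation (\ref{eq:h1}), at the cost of invoking Proposition \ref{resles} and the auxiliary two-point integral (\ref{eq:g1prelim}); yours is shorter and uniform with Lemma \ref{lem:g1drint}, at the cost of leaning on the (genus-$1$-proven) spin formulas (\ref{eq:AspinResF}) and (\ref{eq:csspsi}) and on pinning down their sign and ordering conventions.
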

\begin{proof}
    By \hyperref[resles]{Proposition \ref*{resles}},
        \begin{align*} 
            \int&_{\Mb{1,3}} \lambda_1 \HresSpin{1}{\e_1,\e_2,\e_3} =  -(\e_1+1) \int_{\Mb{1,3}}\lambda_1\psi_1\Hspin{1}{\e_1,\e_2,\e_3} \\
            & \quad- (1-\e_1)\left(\int_{\Mb{1,2}}\lambda_1\Hspin{1}{\e_1,-\e_1}\right)\left(\int_{\Mb{0,3}}\Hspin{0}{\e_2,\e_3,\e_1}\right).
        \end{align*}
    Indeed, the only graphs to consider in the second term above have a genus 0 and genus 1 component at different levels, such that either marking $1$ is at the lower level, while all poles are upstairs (empty), or minus the graphs with $i$ upstairs but there is a pole on the lower level. The only option for this is the twisted graph with underlying stable graph $\Gamma^{\{1\}}_1$. The second term is computed via equation \hyperref[eq:g1prelim]{(\ref*{eq:g1prelim})} and the first term can be computed as follows. We have $\lambda_1 = \psi_1 -\delta_0^{\{1,2\}} - \delta_0^{\{1,3\}} -\delta_0^{\{1,2,3\}}$, and the intersection of the $\psi$-class with the two first boundary classes vanishes, and the integral with $\delta_0^{\{1,2,3\}}$ vanishes for dimension reasons. Thus
    \begin{align*}
        \int_{\Mb{1,3}}\lambda_1\psi_1\Hspin{1}{\e_1,\e_2,\e_3} & = \int_{\Mb{1,3}}\psi_1^2\Hspin{1}{\e_1,\e_2,\e_3} \\
        & = \int_{\Mb{1,3}}\psi_1^2[\overline{\H}_1(\e_1,\e_2,\e_3)] - 2\int_{\Mb{1,3}}\psi_1^2[\overline{\H}_1(\e_1/2,\e_2/2,\e_3/2)] \\
        & = \frac{1}{24}(1 + \frac{1}{2}\e_2^2 + \frac{1}{2}\e_3^2).
    \end{align*}
    The last equality is obtained by applying \cite[Theorem 1.1]{css21} and the equality (4) in the same reference.
\end{proof}


\begin{lemma}\label{lem:g1n3psiintegral}
Let $\e = (\e_1,\e_2,\e_3)$ be a partition of $0$ consisting of even numbers, where only one is negative and the other two are nonnegative. Then the following holds:
\begin{align*}
    \int_{\Mb{1,3}} \lambda_1 \psi_1 \HresSpin{1}{\e_1,\e_2,\e_3} = & \; \frac{1}{216}\e_1^2 - \frac{1}{216}\e_1\e_2 + \frac{5}{432}\e_2^2 - \frac{1}{216}\e_1\e_3 - \frac{1}{54}\e_2\e_3 \\
    & + \frac{5}{432}\e_3^2 +\frac{1}{24} + \frac{\delta_{\e_1,0}}{24}.
\end{align*}
\end{lemma}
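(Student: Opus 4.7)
The plan is to reduce the integral to $\int_{\Mb{1,3}} \psi_1^2 \HresSpin{1}{\e_1, \e_2, \e_3}$ by showing that the remaining terms in the Mumford-type expansion of $\lambda_1\psi_1$ vanish, and then to handle three sub-cases according to the sign of $\e_1$. Since exactly one entry is negative the residueless condition is automatic, so $\HresSpin{1}{\e_1, \e_2, \e_3} = \Hspin{1}{\e_1, \e_2, \e_3}$. On $\Mb{1,3}$, Mumford's relation $\psi_i = \lambda_1 + \sum_{I \ni i, |I| \ge 2} \delta_0^I$ with $i=1$ gives $\lambda_1 = \psi_1 - \dz{1,2} - \dz{1,3} - \dz{1,2,3}$. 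The products $\psi_1 \dz{1,2}$ and $\psi_1 \dz{1,3}$ vanish because marking $1$ lies on a $\Mb{0,3}$ factor, where all $\psi$-classes are trivial. For $\dz{1,2,3}$, the CohFT factorization via the gluing $\xi\colon\Mb{0,4}\times\Mb{1,1}\to\Mb{1,3}$ produces $\xi^* \HresSpin{1}{\e_1, \e_2, \e_3} = \HresSpin{0}{\e_1, \e_2, \e_3, -2} \otimes \HresSpin{1}{0}$, whose first factor has codimension $1$ on $\Mb{0,4}$ (its profile has $N=2$ negative entries); multiplied by $\psi_1$, the class exceeds the top degree of $\Mb{0,4}$, so this boundary integral vanishes as well. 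Hence
\[ \int_{\Mb{1,3}} \lambda_1 \psi_1 \HresSpin{1}{\e_1, \e_2, \e_3} = \int_{\Mb{1,3}} \psi_1^2 \HresSpin{1}{\e_1, \e_2, \e_3}. \]

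For $\e_1 < 0$, formula \hyperref[eq:csspsi]{(\ref*{eq:csspsi})} with $a_i = \e_i + 1$ applies directly and, after extracting the coefficient of $z^2$, yields $\frac{\e_2^2 + \e_3^2 + 2}{48}$. For $\e_1 > 0$ (so the unique pole is at marking $2$ or $3$; WLOG $\e_2 < 0$), Mumford's relation applied at $i=1$ and $i=2$ combines into $\psi_1 - \psi_2 = \dz{1,3} - \dz{2,3}$, so
\[ \int (\psi_1^2 - \psi_2^2) \HresSpin{1}{\e_1, \e_2, \e_3} = \int (\dz{1,3} - \dz{2,3})(\psi_1 + \psi_2) \HresSpin{1}{\e_1, \e_2, \e_3}. \]
Two of the four resulting cross terms ($\dz{1,3}\psi_1$ and $\dz{2,3}\psi_2$) vanish by the same $\psi$-on-$\Mb{0,3}$ argument, and the remaining integrals $\int \psi_2 \dz{1,3} \HresSpin{1}{\ldots}$ and $\int \psi_1 \dz{2,3} \HresSpin{1}{\ldots}$ reduce via the CohFT axiom to integrals on $\Mb{0,3} \times \Mb{1,2}$. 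The $\Mb{0,3}$-factor contributes $1$, and the resulting $\Mb{1,2}$-integrals of the form $\int \psi_i \HresSpin{1}{\alpha, -\alpha}$ are computed using \hyperref[eq:csspsi]{(\ref*{eq:csspsi})} when $\psi$ sits at the pole, and via $\psi_1 = \lambda_1 + \dz{1,2}$ combined with \hyperref[eq:g1prelim]{(\ref*{eq:g1prelim})} when not. The outcome is $\frac{\e_2^2 - \e_1^2}{48}$; combined with $\int \psi_2^2 \HresSpin{1}{\ldots} = \frac{\e_1^2 + \e_3^2 + 2}{48}$ from \hyperref[eq:csspsi]{(\ref*{eq:csspsi})}, this produces $\frac{\e_2^2 + \e_3^2 + 2}{48}$ again.

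For $\e_1 = 0$, axiom (ii) of the partial CohFT gives $\HresSpin{1}{0, \e_2, \e_3} = \pi^* \HresSpin{1}{\e_2, \e_3}$, where $\pi\colon \Mb{1,3} \to \Mb{1,2}$ forgets marking $1$. Using $\pi^*\lambda_1 = \lambda_1$ and $\pi_*\psi_1 = \kappa_0 = 2$, the projection formula combined with \hyperref[eq:g1prelim]{(\ref*{eq:g1prelim})} yields $2 \cdot \frac{\e_2^2 + 2}{48}$, which exceeds the common value from the other two cases at $\e_1=0$ by exactly $\frac{1}{24}$, producing the $\delta_{\e_1, 0}/24$ correction. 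A direct substitution using $\e_1 + \e_2 + \e_3 = 0$ then verifies that the polynomial part in the statement simplifies to $\frac{\e_2^2 + \e_3^2 + 2}{48}$. The main technical burden lies in the case $\e_1 > 0$, where the swap $\psi_1^2 \leftrightarrow \psi_2^2$ produces several $\psi$-class integrals on $\Mb{1,2}$ against spin strata, some of which have $\psi$ at a zero rather than a pole, requiring an extra boundary reduction before formulas \hyperref[eq:csspsi]{(\ref*{eq:csspsi})} and \hyperref[eq:g1prelim]{(\ref*{eq:g1prelim})} can be applied.
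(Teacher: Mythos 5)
Your proposal is correct, but it reaches the formula by a genuinely different route than the paper. The paper first converts the spin class into non-spin strata via the genus-one identity $\Hspin{1}{\e_1,\e_2,\e_3}=\Hg{1}{\e_1,\e_2,\e_3}-2\,\Hg{1}{\e_1/2,\e_2/2,\e_3/2}$ (the odd locus being the stratum of square roots), then applies the star-graph formula of Proposition \ref{stargraph} to trade the stratum class for the double ramification cycle $\DR_1(\g)$ at the cost of two boundary star graphs, which produce exactly the constants $-\tfrac{1}{24}$ and $-\tfrac{\delta_{\g_1,0}}{24}$, and finally evaluates $\int_{\Mb{1,3}}\lambda_1\psi_1\DR_1(\g)$ by polynomiality using \textit{admcycles}. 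You instead stay on the spin side throughout: your reduction $\int\lambda_1\psi_1\HresSpin{1}{\e}=\int\psi_1^2\HresSpin{1}{\e}$ is sound (the $\psi_1\dz{1,2}$, $\psi_1\dz{1,3}$ terms die on the $\Mb{0,3}$ factor, and the $\dz{1,2,3}$ term dies because $\HresSpin{0}{\e_1,\e_2,\e_3,-2}$ has codimension $1$ on $\Mb{0,4}$), and I verified your three cases: formula (\ref{eq:csspsi}) gives $\Aspin{1}{a_1,a_2,a_3}=\frac{a_1-1}{24}+\frac{a_2^2+a_3^2}{48}$, hence $\frac{\e_2^2+\e_3^2+2}{48}$ for $\e_1<0$; for $\e_1>0$ the swap $\psi_1-\psi_2=\dz{1,3}-\dz{2,3}$ with the surviving cross terms reducing to $\int_{\Mb{1,2}}\psi\,\HresSpin{1}{\a,-\a}=\frac{\a^2+2}{48}$ (via (\ref{eq:csspsi}) when $\psi$ sits at the pole, via $\psi_1=\lambda_1+\dz{1,2}$ and (\ref{eq:g1prelim}) when at the zero, the boundary term vanishing for codimension reasons) yields $\frac{\e_2^2-\e_1^2}{48}$ and again the same value; and the $\e_1=0$ case via $\pi_*\psi_1=\kappa_0=2$ gives $2\cdot\frac{\e_2^2+2}{48}$, which transparently explains the $\frac{\delta_{\e_1,0}}{24}$ correction. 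Your closed form $\frac{\e_2^2+\e_3^2+2}{48}+\frac{\delta_{\e_1,0}}{24}$ does agree with the stated polynomial on the locus $\e_1+\e_2+\e_3=0$, as you check. The trade-off: the paper's argument is uniform in $\e$ (no casework, and the $\delta_{\g_1,0}$ falls out of the star-graph combinatorics) and matches the method of the neighbouring Lemma \ref{lem:g1n4integral}, but outsources the DR integral to computer algebra; yours is fully hand-computable from formulas already established for genus at most one and produces a cleaner closed form, at the price of three cases and the $\psi$-swap maneuver, needed precisely because (\ref{eq:csspsi}) requires the $\psi$-class to sit at a pole.
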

\begin{proof}
    Since only one entry of $\e$ is negative, the residueless condition is automatic. Thus
    \begin{align*}
        \Hspin{1}{\e_1,\e_2,\e_3}=[\overline{\H}_1(\e_1,\e_2,\e_3)] =[\overline{\H}_1(\e_1,\e_2,\e_3)] - 2[\overline{\H}_1(\e_1/2,\e_2/2,\e_3/2)].
    \end{align*}
    Therefore it suffices to compute $\int_{\Mb{1,3}} \lambda_1 \psi_1[\overline{\H}_1(\g_1,\g_2,\g_3)]$ with $\g=(\g_1,\g_2,\g_3)$ a partition of $0$, possibly containing odd numbers, with only one negative entry. We apply the star graph formula of \hyperref[stargraph]{Proposition \ref*{stargraph}}, using the fact that in genus $g=1$ the twisted DR cycle $\DR_1^1(a+1)$ in the log-convention coincides with the untwisted DR cycle $\DR_1(a)$ in the non-log-convention:
    \begin{align*}
        \int_{\Mb{1,3}} \lambda_1 & \psi_1[\overline{\H}_1(\g_1,\g_2,\g_3)] \\
        & = \int_{\Mb{1,3}} \lambda_1 \psi_1 \DR_1(\g_1,\g_2,\g_3) -\left(\int_{\Mb{1,1}}\lambda_1[\overline{\H}_1(0)]\right)\left(\int_{\Mb{0,4}}\psi_1[\overline{\H}_0(\g_1,\g_2,\g_3,-2)]\right) \\ 
        & \quad - (1-\g_1)\left(\int_{\Mb{0,3}}[\overline{\H}_0(\g_2,\g_3,\g_1-2)]\right)\left(\int_{\Mb{1,2}}\lambda_1\psi_1[\overline{\H}_1(\g_1,-\g_1)]\right).
    \end{align*}
    Indeed, the only simple star graphs that contribute have as underlying stable graph the trivial one or $\Gamma_0^{\{1,2,3\}}$ or $\Gamma^{\{1\}}_1$. Evaluating further one obtains
    \begin{align*}
        & = \int_{\Mb{1,3}} \lambda_1 \psi_1 \DR_1(\g_1,\g_2,\g_3) - \frac{1}{24} - \delta_{\g_1,0}\int_{\Mb{1,2}}\lambda_1\psi_1 \\
        & = \frac{1}{108}\g_1^2 - \frac{1}{108}\g_1\g_2 + \frac{5}{216}\g_2^2 - \frac{1}{108}\g_1\g_3 - \frac{1}{27}\g_2\g_3 + \frac{5}{216}\g_3^2 - \frac{1}{24} - \frac{\delta_{\g_1,0}}{24}.
    \end{align*}
    The integrals over the DR cycles can be computed explicitly by the polynomiality property and the \textit{admcycles} Sage package \cite{admcycles}.
\end{proof}

\begin{lemma}\label{lem:g1n4integral}
Let $\e = (\e_1,\e_2,\e_3,\e_4)$ be a partition of $0$ consisting of even numbers, such that $\e_1,\e_2 > 0$ and $\e_3,\e_4<0$. Then
\begin{align*} 
    & \int_{\Mb{1,4}} \lambda_1\psi_1  \HresSpin{1}{\e} \\
    = &\; \frac{1}{48}\left(\e_{1}^{3} + 3\e_{1}^{2} \e_{3} + 4\e_{1} \e_{3}^{2} + 2\e_{3}^{3} - \e_{1}^{2} - 2\e_{1}\e_{3} - 2\e_{3}^{2} + 2\e_{1} + 2\e_{3} - 2\right)\delta_{\e_1 + \e_3 < 1} \\
    & + \frac{1}{48}\left(\e_{1}^{3} + 3\e_{1}^{2}\e_{4} + 4\e_{1}\e_{4}^{2} + 2\e_{4}^{3} - \e_{1}^{2} - 2\e_{1} \e_{4} - 2\e_{4}^{2} + 2\e_{1} + 2\e_{4} - 2\right)\delta_{\e_1+\e_4 < 1} \\
    & - \frac{1}{256} \, \e_{1}^{3} + \frac{1}{384} \, \e_{1}^{2} \e_{2} - \frac{11}{768} \, \e_{1} \e_{2}^{2} - \frac{7}{384} \, \e_{1}^{2} \e_{3} - \frac{11}{384} \, \e_{1} \e_{2} \e_{3} - \frac{1}{24} \, \e_{2}^{2} \e_{3} - \frac{11}{768} \, \e_{1} \e_{3}^{2} - \frac{7}{384} \e_{1}^{2} \e_{4} \\
    & - \frac{11}{384} \, \e_{1} \e_{2} \e_{4} - \frac{1}{24} \, \e_{2}^{2} \e_{4} + \frac{5}{384} \, \e_{1} \e_{3} \e_{4} - \frac{11}{768} \, \e_{1} \e_{4}^{2} - \frac{19}{768} \, \e_{1}^{2} - \frac{5}{128} \, \e_{1} \e_{2} - \frac{43}{768} \, \e_{2}^{2} + \frac{1}{384} \e_{1} \e_{3} \\
    & + \frac{5}{384} \, \e_{2} \e_{3}- \frac{11}{768} \, \e_{3}^{2}  + \frac{1}{384} \, \e_{1} \e_{4} + \frac{5}{384} \, \e_{2} \e_{4} + \frac{5}{384} \, \e_{3} \e_{4} - \frac{11}{768} \, \e_{4}^{2} - \frac{1}{24} \, \e_{1} - \frac{1}{24} \, \e_{3} - \frac{1}{24} \, \e_{4} \\
    & - \frac{1}{12}.
\end{align*}

\end{lemma}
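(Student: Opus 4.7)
The plan is to follow the same two-step strategy used in \hyperref[lem:g1integral]{Lemma \ref*{lem:g1integral}} and \hyperref[lem:g1n3psiintegral]{Lemma \ref*{lem:g1n3psiintegral}}, but now on $\Mb{1,4}$. First I would apply \hyperref[resles]{Proposition \ref*{resles}} with $i=1$ to rewrite the residueless spin class as
\[
\HresSpin{1}{\e} \;=\; -(\e_1+1)\,\psi_1\,\Hspin{1}{\e} \;+\; \sum_{(\Gamma,I)} c_{(\Gamma,I)}\,(\xi_\Gamma)_*\bigl(\Hspin{g_0}{I_0-1}\otimes \Hspin{g_{-1}}{I_{-1}-1}\bigr),
\]
where the sum ranges over simple bi-colored graphs in $SB(\e)_1$ and $SB(\e)^{\mathrm{res}}$ with the signs, multiplicities $m(\Gamma,I)/|\Aut(\Gamma,I)|$, and twists prescribed by that proposition. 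Multiplying by $\lambda_1\psi_1$ and integrating over $\Mb{1,4}$ then separates the computation into a ``main'' term $-(\e_1+1)\int_{\Mb{1,4}}\lambda_1\psi_1^2\,\Hspin{1}{\e}$ and a finite sum of boundary contributions.

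For the main term, I would expand $\Hspin{1}{\e}=[\overline{\H}_1(\e)]-2[\overline{\H}_1(\e/2)]$ as in \hyperref[eq:h1]{(\ref*{eq:h1})} and apply the star-graph formula (\hyperref[stargraph]{Proposition \ref*{stargraph}}) to each non-spin stratum, reducing to $\int_{\Mb{1,4}}\lambda_1\psi_1^2\,\DR_1(\g)$ plus products of integrals on strict subdivisions. Using the polynomiality of $\DR_1(\g)$ in the twists, these integrals can be computed with the \textit{admcycles} Sage package; the remaining pieces either reduce to the standard Hodge integral on $\Mb{1,1}$ or to genus $0$ stratum integrals of the type appearing in \hyperref[lem:g0int]{Lemma \ref*{lem:g0int}}.

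For the boundary corrections coming from the simple bi-colored graphs, each graph has a single edge, so along the gluing map the class $\lambda_1\psi_1$ restricts to a sum of products of $\lambda_1$-classes and $\psi_1$-classes on the two vertices. The resulting pushforward integrals factor as products of integrals exactly of the form computed in \hyperref[eq:g1prelim]{(\ref*{eq:g1prelim})}, \hyperref[lem:g1integral]{Lemma \ref*{lem:g1integral}}, and \hyperref[lem:g1n3psiintegral]{Lemma \ref*{lem:g1n3psiintegral}}, together with elementary genus $0$ stratum integrals. The indicator factors $\delta_{\e_1+\e_3<1}$ and $\delta_{\e_1+\e_4<1}$ in the statement arise precisely because a simple bi-colored graph with genus $0$ bottom vertex carrying legs $\{1,3\}$ (resp. $\{1,4\}$) requires the compatibility condition $I(e)=\e_1+\e_3+1\ge1$ (resp. $\e_1+\e_4+1\ge 1$) on the unique edge twist; exactly in one of the regimes does the graph belong to $SB(\e)_1\setminus SB(\e)^{\mathrm{res}}$, producing an indicator function rather than a polynomial in the final answer.

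The main obstacle is not any single computation but the combinatorial bookkeeping: one has to enumerate all simple bi-colored graphs on $n=4$ legs with the two positive and two negative entries of $\e$ distributed over the two levels, determine for each whether it lies in $SB(\e)_1$, in $SB(\e)^{\mathrm{res}}$, or in both, and track the resulting signs and the automorphism and multiplicity factors. Once this data is tabulated, substituting the known lower-order integrals and collecting terms by symmetry in $(\e_3,\e_4)$ yields the stated closed-form expression directly.
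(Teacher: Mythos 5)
Your proposal follows the paper's own proof essentially step for step: apply \hyperref[resles]{Proposition \ref*{resles}} with $i=1$ to reduce to the main term $-(\e_1+1)\int_{\Mb{1,4}}\lambda_1\psi_1^2\Hspin{1}{\e}$ plus finitely many simple bi-colored boundary contributions, handle the main term via $\Hspin{1}{\e}=[\overline{\H}_1(\e)]-2[\overline{\H}_1(\e/2)]$ and the star-graph formula of \hyperref[stargraph]{Proposition \ref*{stargraph}} (reducing to $\int_{\Mb{1,4}}\lambda_1\psi_1^2\DR_1(\e)$, computed by polynomiality with \textit{admcycles}), and evaluate the boundary terms by the earlier genus $0$ and genus $1$ lemmas, with the indicator functions arising from which bi-colored graphs exist or cancel. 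One detail to correct when executing the bookkeeping: the term carrying $\delta_{\e_1+\e_3<1}$ comes from the graph with genus $1$ \emph{top} vertex carrying legs $\{1,3\}$ (twist $1-\e_1-\e_3$ at its node) and genus $0$ bottom vertex carrying $\{2,4\}$, which for $\e_1+\e_3<1$ lies in $SB(\e)^{\res}\setminus SB(\e)_1$ and contributes with a minus sign — not, as you wrote, a genus $0$ bottom vertex with legs $\{1,3\}$ surviving in $SB(\e)_1\setminus SB(\e)^{\mathrm{res}}$ — while for $\e_1+\e_3\ge2$ the graph (then with the genus $1$ vertex at the bottom) lies in both $SB(\e)_1$ and $SB(\e)^{\res}$ and its contributions cancel; your stated procedure of classifying each graph's membership would catch this upon execution.
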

\begin{proof}
    By \hyperref[resles]{Proposition \ref*{resles}} we have
    \begin{align*}
        & \int_{\Mb{1,4}} \lambda_1\psi_1  \HresSpin{1}{\e} = -(\e_1+1)\int_{\Mb{1,4}}
        \lambda_1\psi_1^2\Hspin{1}{\e} \\ 
        + &\, (\e_1-1)\left(\cancel{\int_{\Mb{1,2}}\lambda_1\psi_1\HresSpin{1}{\e_1,-\e_1}}\right) \hspace{-0.2em} \left(\int_{\Mb{0,4}}\Hspin{0}{\e_2,\e_3,\e_4,\e_1-2}\right) \\
        + &\, (\e_1+\e_2-1)\left(\int_{\Mb{0,3}}\Hspin{0}{\e_3,\e_4,\e_1+\e_2-2}\right) \hspace{-0.2em} \left(\int_{\Mb{1,3}}\lambda_1\psi_1\Hspin{1}{\e_1,\e_2,-\e_2-\e_1}\right) \\
        - &\, (1-\e_1-\e_3)\left(\int_{\Mb{1,3}}\lambda_1\psi_1\Hspin{1}{\e_1,\e_3,-\e_3-\e_1}\right) \hspace{-0.2em} \left(\int_{\Mb{0,3}}\Hspin{0}{\e_2,\e_4,\e_1+\e_3-2}\right) \\
        - &\, (1-\e_1-\e_4)\left(\int_{\Mb{1,3}}\lambda_1\psi_1\Hspin{1}{\e_1,\e_4,-\e_4-\e_1}\right) \hspace{-0.2em} \left(\int_{\Mb{0,3}}\Hspin{0}{\e_2,\e_3,\e_1+\e_4-2}\right).
    \end{align*} 
    Indeed, the graphs contributing to this sum (but the first term) have either $i$ on the lower vertex with all poles on the upper vertex, or (minus) $i$ on the upper vertex with a single pole on the lower vertex. Note that the penultimate line vanishes unless $\e_1+\e_3<1$, while the last line vanishes unless $\e_1+\e_4<1$. The first integral decomposes as
    \begin{align*}
        \int_{\Mb{1,4}}\lambda_1\psi_1^2\Hspin{1}{\e} = \int_{\Mb{1,4}}
        \lambda_1\psi_1^2[\overline{\H}_1(\e)] - 2\int_{\Mb{1,4}}
        \lambda_1\psi_1^2[\overline{\H}_1(\e/2)], 
    \end{align*}
    the terms of which can be related to an integral over a DR cycle by the star graph formula of \hyperref[stargraph]{Proposition \ref*{stargraph}}. Using $\DR_1^1(\e+1) = \DR_1(\e)$, we have 
    \[\int_{\Mb{1,4}}\lambda_1\psi_1^2\DR_1(\e) = \int_{\Mb{1,4}}\lambda_1\psi_1^2[\overline{\H}_1(\e)] + \frac{1}{24}\int_{\Mb{0,5}}\psi_1^2[\overline{\H}_0(\e,-2)].\] Indeed, the star graphs with genus $0$ vertex and $2$ or $3$ legs including leg number $1$ give zero due to the $\psi^2_1$ class, and dimension reasons let the contributions from genus $1$ vertices with $1$ or $2$ legs including number 1 vanish.
    Using \textit{admcycles} one computes 
    \begin{align*}
        \int_{\Mb{1,4}} \lambda_1\psi_1^2\DR_1(\e) &= \frac{1}{128} \e_{1}^{2} - \frac{1}{192} \e_{1} \e_{2} + \frac{11}{384} \e_{2}^{2} - \frac{1}{192} \e_{1} \e_{3} - \frac{5}{192} \e_{2} \e_{3} \\ & \quad+ \frac{11}{384}\e_{3}^{2} - \frac{1}{192} \e_{1} \e_{4} - \frac{5}{192} \e_{2} \e_{4} - \frac{5}{192} \e_{3} \e_{4} + \frac{11}{384} \e_{4}^{2}. 
    \end{align*}
    Therefore,
    \begin{align*}
        \int_{\Mb{1,4}}\lambda_1\psi_1^2 \Hspin{1}{\e} & = \frac{1}{256} \e_{1}^{2} - \frac{1}{384} \e_{1} \e_{2} + \frac{11}{768} \e_{2}^{2} - \frac{1}{384} \e_{1} \e_{3} - \frac{5}{384} \e_{2} \e_{3} \\ 
        & \quad + \frac{11}{768} \e_{3}^{2} - \frac{1}{384} \e_{1} \e_{4} - \frac{5}{384} \e_{2} \e_{4} - \frac{5}{384} \e_{3} \e_{4} + \frac{11}{768} \e_{4}^{2} +\frac{1}{24}.
    \end{align*}
    This, together with \hyperref[lem:g1n3psiintegral]{Lemma \ref*{lem:g1n3psiintegral}} for the remaining integrals, gives the desired result. 
\end{proof}

\textbf{The first integral of \hyperref[lem:g2int]{Lemma \ref*{lem:g2int}.}} The following result will be needed in the subsequent computation.
\begin{lemma}\label{lem:g2n2integral}
    Let $\b_1$ be an even integer and $\b_2=2-\b_1$. Set $\beta = (\b_1,\b_2)$. Then
    \[\int_{\Mb{2,2}} \lambda_2 \psi_1 \HresSpin{2}{\beta} = \begin{cases} 0, & \beta_1=0 \;\mathrm{or}\; \beta_2 = 0, \\
    \frac{1}{4608}\beta_1^4 - \frac{1}{960}\beta_1^3 + \frac{1}{384}\beta_1^2 - \frac{7}{1440}\beta_1+ \frac{17}{1920}, &\mathrm{else}.
    \end{cases}\]
\end{lemma}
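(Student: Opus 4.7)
The plan is to split the proof into two cases according to whether the profile $\beta$ contains a negative entry. When $\beta_1 \in \{0, 2\}$, the profile $(\beta_1, 2-\beta_1)$ has no negative entry, so $N_\beta = 0$ and $\HresSpin{2}{\beta_1,\beta_2} = \Hspin{2}{\beta_1,\beta_2}$ lies in real cohomological degree $2(g-1+N_\beta) = 2$. The integrand $\lambda_2 \psi_1 \HresSpin{2}{\beta_1,\beta_2}$ then has real degree $4 + 2 + 2 = 8$, strictly less than $\dim_{\mathbb{R}} \Mb{2,2} = 10$, so the integral vanishes for degree reasons, yielding the first case.

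In the generic case exactly one of $\beta_1, \beta_2$ is negative, so $N_\beta = 1$ and the integrand is of top degree. Since $\lambda_2$ is supported on $\mathcal{M}^{\mathrm{ct}}_{2,2}$, the Picard relation of \hyperref[prop:picrelation]{Proposition \ref*{prop:picrelation}} gives
\[\psi_1 = \kappa_1 - \psi_2 - \tfrac{7}{5}\delta_{1|1} + \delta_{rt} \in \Pic(\mathcal{M}^{\mathrm{ct}}_{2,2}) \otimes \mathbb{Q}\]
under the integral sign. The splitting formula of \hyperref[splittingformula]{Proposition \ref*{splittingformula}} with $(s,t) = (1,2)$ produces the second relation $(\beta_1+1)\psi_1 \HresSpin{2}{\beta_1,\beta_2} = (\beta_2+1)\psi_2 \HresSpin{2}{\beta_1,\beta_2} + B_\beta$, where $B_\beta$ is an explicit boundary pushforward from simple bi-colored graphs in $SB(\beta)$. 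Eliminating $\psi_2 \HresSpin{2}{\beta_1,\beta_2}$ between these two identities and using $\beta_1 + \beta_2 = 2$ (so that the combined coefficient of the target integral is $\tfrac{4}{\beta_2+1}$) yields
\[\int_{\Mb{2,2}} \lambda_2 \psi_1 \HresSpin{2}{\beta_1,\beta_2} = \tfrac{\beta_2+1}{4} \int \lambda_2 \left(\kappa_1 - \tfrac{7}{5}\delta_{1|1} + \delta_{rt}\right) \HresSpin{2}{\beta_1,\beta_2} + \tfrac{1}{4}\int \lambda_2\, B_\beta.\]

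Each of the remaining integrals can be reduced to quantities already computed. The $\delta_{1|1}$ and $\delta_{rt}$ contributions decompose via axiom (iii) of the partial CohFT into products over the two components of the boundary curve; along $\delta_{1|1}$, $\lambda_2$ restricts to $\lambda_1 \otimes \lambda_1$ (the genus-$1$ factors have $\lambda_2 = 0$), and each resulting factor is a genus-$1$ integral of the form treated in \hyperref[lem:g1integral]{Lemmata \ref*{lem:g1integral}}--\hyperref[lem:g1n4integral]{\ref*{lem:g1n4integral}} or in equation \hyperref[eq:g1prelim]{(\ref*{eq:g1prelim})}; along $\delta_{rt}$, $\lambda_2$ stays on the genus-$2$ side while the genus-$0$ factor is handled by \hyperref[lem:g0int]{Lemma \ref*{lem:g0int}} or trivializes. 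Each summand of $\int \lambda_2\, B_\beta$ factors similarly across its simple bi-colored graph. The genuinely new ingredient is $\int \lambda_2 \kappa_1 \HresSpin{2}{\beta_1,\beta_2}$: using $\kappa_1 = \pi_*\psi_3^2$ for the forgetful morphism $\pi: \Mb{2,3} \to \Mb{2,2}$ together with the pullback axiom, this becomes $\int_{\Mb{2,3}} \lambda_2 \psi_3^2 \HresSpin{2}{\beta_1,\beta_2,0}$. The main obstacle is this last integral; I plan to evaluate it by iterating the Picard/splitting procedure on $\mathcal{M}^{\mathrm{ct}}_{2,3}$, writing $\psi_3^2$ as a polynomial in $\kappa$-, $\psi$- and boundary classes, each of whose integral against $\lambda_2 \HresSpin{2}{\beta_1,\beta_2,0}$ reduces either to the star-graph formula of \hyperref[stargraph]{Proposition \ref*{stargraph}} or to the known genus-$1$ integrals above. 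Assembling all contributions produces the claimed quartic polynomial in $\beta_1$, completing the proof.
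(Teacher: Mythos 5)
Your overall architecture coincides with the paper's: the degenerate case by dimension count is exactly the paper's argument, and treating the three integrals $X=\int_{\Mb{2,2}}\lambda_2\psi_1\HresSpin{2}{\beta}$, $Y=\int_{\Mb{2,2}}\lambda_2\psi_2\HresSpin{2}{\beta}$, $K=\int_{\Mb{2,2}}\lambda_2\kappa_1\HresSpin{2}{\beta}$ as unknowns constrained by \hyperref[prop:picrelation]{Proposition \ref*{prop:picrelation}} and \hyperref[splittingformula]{Proposition \ref*{splittingformula}} reproduces two of the three rows (the paper's $A$ and $C$) of the matrix problem \hyperref[matrix1]{(\ref*{matrix1})}; your elimination algebra, with combined coefficient $\beta_1+\beta_2+2=4$, is correct. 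The genuine gap is at the step you yourself flag as the main obstacle: the evaluation of $K=\int_{\Mb{2,3}}\lambda_2\psi_3^2\HresSpin{2}{\beta,0}$. Your stated mechanism --- ``writing $\psi_3^2$ as a polynomial in $\kappa$-, $\psi$- and boundary classes'' --- describes a reduction that does not exist: the Picard relation only controls the combination $\kappa_1-\sum_i\psi_i$, and the splitting formula only controls the differences $(\a_s+1)\psi_s-(\a_t+1)\psi_t$, so no iteration of these ever isolates a single $\psi$-monomial modulo boundary. As written, your two relations determine $(X,Y,K)$ only up to a one-parameter family, and since $K$ enters your final formula with nonzero coefficient $(\beta_2+1)/4$, the computation cannot be completed.

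The missing idea --- precisely the paper's $B$-integral --- is to exploit the added marking of order $0$: apply the splitting formula on $\Mb{2,3}$ with $(s,t)=(1,3)$, where $\a_3=0$ makes the $\psi_3$-coefficient equal to $1$, so that $\bigl((\beta_1+1)\psi_1-\psi_3\bigr)\HresSpin{2}{\beta,0}$ equals an explicit boundary pushforward; then multiply by $\lambda_2\psi_3$ and push forward along $\pi:\Mb{2,3}\to\Mb{2,2}$ using $\pi_*(\psi_3)=\kappa_0=4$, $\pi_*(\psi_1\psi_3)=4\psi_1$ (since $\psi_3\,\delta_0^{\{1,3\}}=0$) and $\pi_*(\psi_3^2)=\kappa_1$. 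This yields the third linear relation $4(\beta_1+1)X-K=B$, where $B$ is a sum over three simple bi-colored graphs whose genus-$2$ contribution $\int_{\Mb{2,2}}\lambda_2\psi_1\HresSpin{2}{0,2}$ vanishes by your own degenerate case --- the observation that protects the scheme from circularity --- leaving only genus-$1$ inputs from \hyperref[lem:g1integral]{Lemmata \ref*{lem:g1integral}}--\hyperref[lem:g1n4integral]{\ref*{lem:g1n4integral}} and \hyperref[eq:g1prelim]{(\ref*{eq:g1prelim})}. Adjoining this relation to your elimination gives $\bigl(4-4(\beta_1+1)(\beta_2+1)\bigr)X=C+(\beta_2+1)(A-B)$, whose coefficient $4(\beta_1^2-2\beta_1-2)$ is nonzero for every integer $\beta_1$, and this agrees with the paper's solution of \hyperref[matrix1]{(\ref*{matrix1})}. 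Two smaller slips: along $\delta_{rt}=\delta_0^{\{1,2\}}$ it is the genus-$2$ factor $\int_{\Mb{2,1}}\lambda_2\HresSpin{2}{2}$ that kills the term (by dimension), not the genus-$0$ factor; and the star-graph formula of \hyperref[stargraph]{Proposition \ref*{stargraph}} applies to non-spin classes, so it may enter only through the genus-$1$ lemmas (via the decomposition of spin classes), not directly against $\HresSpin{2}{\cdot}$.
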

\begin{proof}
    The case of $\beta_1=0$ or $\beta_2 = 0$ follows from dimension counting. Otherwise, integrals of the form $\int_{\Mb{2,2}} \lambda_2 \psi_1 \HresSpin{2}{\beta}$ are treated as unknowns and can be computed by solving the invertible system
    \begin{align}\label{matrix1}
        \int_{\Mb{2,2}} \lambda_2 \begin{pmatrix}
        \psi_1 + \psi_2 - \kappa_1\\
        4(\beta_1+1)\psi_1 - \kappa_1\\
        (\beta_1 +1) \psi_1 - (\beta_2+1)\psi_2
        \end{pmatrix}
        \HresSpin{2}{\beta} =: \begin{pmatrix} A\\ B\\ C\end{pmatrix},
    \end{align}
    in case $A,B,C$ are known. We now compute these 3 integrals $A,B,C$. \\

\noindent \textbf{A:} \hyperref[prop:picrelation]{Proposition \ref*{prop:picrelation}} rewrites this integral, with $\delta_{1|1} = \delta_1^{\{1,2\}} + \delta_1^{\{1\}} \text{ and } \delta_{rt} = \delta_0^{\{1,2\}}$, as
\begin{align*}
    \int_{\Mb{2,2}}&\lambda_2\left(-\frac{7}{5}\delta_{1|1} + \delta_{rt}\right) \HresSpin{2}{\beta} \\
    = & -\frac{7}{5}\left(\int_{\Mb{1,3}}\lambda_1\HresSpin{1}{\beta_1,\beta_2,-\beta_1-\beta_2}\right)\left(\int_{\Mb{1,1}}\lambda_1\HresSpin{1}{0}\right) \\
    & - \frac{7}{5}\left(\int_{\Mb{1,2}}\lambda_1\HresSpin{1}{\beta_1,-\beta_1}\right)\left(\int_{\Mb{1,2}}\lambda_1\HresSpin{1}{\beta_2,-\beta_2}\right) \\
    & + \left(\cancel{\int_{\Mb{2,1}}\lambda_2\HresSpin{2}{2}}\right)\left(\int_{\Mb{0,3}}\HresSpin{1}{\beta_1,\beta_2,-2-\beta_1-\beta_2}\right) \\
    = & -\frac{7 \beta_1^4}{11520} + \frac{7 \beta_1^3}{2880} - \frac{7 \beta_1^2}{2880} - \frac{7}{320},
\end{align*}
where we used \hyperref[lem:g1n4integral]{Lemma \ref*{lem:g1n4integral}} and equation \hyperref[eq:g1prelim]{(\ref*{eq:g1prelim})}. \newline

\noindent \textbf{B:}
Assume first that $\beta_1 <0$. Let $\pi:\Mb{2,3}\rightarrow\Mb{2,2}$ be the morphism forgetting the third marked point. Now $\pi^*\psi_i = \psi_i - \delta_0^{\{i,3\}}$, $\kappa_0 = 4$ and $\kappa_1 = \pi_*(\psi_3^2)$, so 
\begin{equation*}
    B = \int_{\Mb{2,2}} \lambda_2\bigl((4(\beta_1+1)\psi_1 - \kappa_1\bigr)
    \HresSpin{2}{\beta} = \int_{\Mb{2,3}} \lambda_2\psi_3 ((\beta_1+1)\psi_1 - \psi_3)\HresSpin{2}{\beta,0}.
\end{equation*}
The splitting formula of \hyperref[splittingformula]{Proposition \ref*{splittingformula}} applied to the right-hand side implies that three terms contribute. We draw the twisted graph associated to each term:

\begin{align*}
    \left[\vcenter{\hbox{\begin{tikzpicture}[
        baseline={([yshift=-.5ex]current bounding box.center)},
        node distance={20mm},
        pin distance={7mm},
        every pin edge/.style={thin}]
        \node[draw, circle,
        pin=left:$3$,
        label={[label distance=0.1mm,font=\tiny]175:$1$},
        label={[label distance=0.1mm,font=\tiny]265:$3$}] (1) {2};
        \node[draw, circle,
        pin=315:$2$,
        pin=225:$1$,
        label={[label distance=0.1mm,font=\tiny]340:$\beta_2\!+\!1$},
        label={[label distance=0.3mm,font=\tiny]205:$\beta_1\!+\!1$},
        label={[label distance=0.1mm,font=\tiny]95:\textendash$3$}] (0) [below of = 1] {0};
        \draw (1) -- (0);
    \end{tikzpicture}}}\right] 
    & : \quad 3\left(\int_{\Mb{2,2}}\lambda_2\psi_1\HresSpin{2}{0,2}\right)\left(\int_{\Mb{0,3}}\HresSpin{0}{\beta_1,\beta_2,-4}\right), \\ 
    \left[\vcenter{\hbox{\begin{tikzpicture}[
        baseline={([yshift=-.5ex]current bounding box.center)},
        node distance={20mm},
        pin distance={7mm},
        every pin edge/.style={thin}]
        \node[draw, circle,
        pin=left:$3$,
        label={[label distance=0.1mm,font=\tiny]175:$1$},
        label={[label distance=0.1mm,font=\tiny]265:$1$}] (1) {1};
        \node[draw, circle,
        pin=315:$2$,
        pin=225:$1$,
        label={[label distance=0.1mm,font=\tiny]340:$\beta_2\!+\!1$},
        label={[label distance=0.3mm,font=\tiny]205:$\beta_1\!+\!1$},
        label={[label distance=0.1mm,font=\tiny]95:\textendash$1$}] (0) [below of = 1] {1};
        \draw (1) -- (0);
    \end{tikzpicture}}}\right] 
    & : \quad \left(\int_{\Mb{1,2}}\lambda_1\psi_1\HresSpin{1}{0,0}\right)\left(\int_{\Mb{1,3}}\lambda_1\HresSpin{1}{\beta_1,\beta_2,-2}\right), \\
    \left[\vcenter{\hbox{\begin{tikzpicture}[
        baseline={([yshift=-.5ex]current bounding box.center)},
        node distance={20mm},
        pin distance={7mm},
        every pin edge/.style={thin}]
        \node[draw, circle,
        pin=left:$1$,
        label={[label distance=-0.9mm,font=\tiny]175:$\beta_1\!+\!1$},
        label={[label distance=-0.9mm,font=\tiny]265:\textendash$\beta_1\!+\!1$}] (1) {1};
        \node[draw, circle,
        pin=315:$2$,
        pin=225:$3$,
        label={[label distance=0.1mm,font=\tiny]340:$\beta_2\!+\!1$},
        label={[label distance=0.1mm,font=\tiny]195:$1$},
        label={[label distance=-0.9mm,font=\tiny]95:\textendash$\beta_2\!+\!1$}] (0) [below of = 1] {1};
        \draw (1) -- (0);
    \end{tikzpicture}}}\right] 
    & : \quad 
    \begin{matrix} -(1-\beta_1)\left(\displaystyle\int_{\Mb{1,2}}\lambda_1\HresSpin{1}{\beta_1,-\beta_1}\right) \\ \times\left(\displaystyle\int_{\Mb{1,3}}\lambda_1\psi_2\HresSpin{1}{\beta_2,0,\beta_1-2}\right). \end{matrix}
\end{align*}
The first term vanishes, and so
\begin{align*}
    B 
    & = -\frac{1}{24} \int_{\Mb{1,3}}\lambda_1\HresSpin{1}{\beta_1,\beta_2,-2} \\ 
    & \quad + 2(\beta_1-1)\left(\int_{\Mb{1,2}}\lambda_1\HresSpin{1}{\beta_1,-\beta_1}\right)\left(\int_{\Mb{1,2}}\lambda_1\HresSpin{1}{\beta_2,\beta_1-2}\right) \\ 
    & =\frac{\beta_1}{48} - \frac{\beta_1^2}{64} + \frac{\beta_1^3}{96} - \frac{5 \beta_1^4}{1152} + \frac{\beta_1^5}{1152}.
\end{align*}
The last equality is due to \hyperref[lem:g1integral]{Lemma \ref*{lem:g1integral}}. Similar reasoning shows that $B$ admits the same expression for $\beta_1 >0$, so that $B$ is polynomial on its whole domain\footnote{ \textit{A priori} the level graphs differ depending on the sign of $\beta_1$, but the integrals coincide.}. \newline

\noindent \textbf{C:} The splitting formula of \hyperref[splittingformula]{Proposition \ref*{splittingformula}} decomposes these integrals into the weighted sums
\begin{align*}
    C & = \int_{\Mb{2,2}}\lambda_2((\beta_1+1)\psi_1-(\beta_2+1)\psi_2)\Hspin{2}{\beta_1,\beta_2} \\ &=
    (\beta_1-1)\left(\int_{\Mb{1,2}}\lambda_1\Hspin{1}{\beta_1,2-\beta_1}\right)\left(\int_{\Mb{1,2}}\lambda_1\Hspin{1}{\beta_2,2-\beta_2}\right)\\
    & = \frac{\beta_1^5}{2304} - \frac{5 \beta_1^4}{2304} + \frac{\beta_1^3}{192} - \frac{\beta_1^2}{144} + \frac{5 \beta_1}{576} - \frac{1}{192}.
\end{align*}
Indeed, the only relevant $(g,n) = (2,2)$ twisted level graph is $\Gamma^{\{1\}}_1$ with, in case $\beta_1<0$, twists $\beta_1+1$ and $\beta_2+1$ on the upper and lower vertex respectively, counted with a minus sign due to the level function. For $\beta_1>0$ the minus sign comes from the twist instead of the level function, so that we get the same result. \newline

\noindent Now that we know $A,B,C$, the corresponding matrix problem \hyperref[matrix1]{(\ref*{matrix1})} is 
\begin{align*}
    & \begin{pmatrix}
        1 && 1 && -1 \\
        4(\beta_1+1) && 0 && -1 \\
        (\beta_1+1) && -(3-\beta_1) && 0
    \end{pmatrix}
    \begin{pmatrix}
        \int_{\Mb{2,2}}\lambda_2\psi_1\HresSpin{2}{\beta}\\
        \int_{\Mb{2,2}}\lambda_2\psi_2\HresSpin{2}{\beta}\\
        \int_{\Mb{2,2}}\lambda_2\kappa_1\HresSpin{2}{\beta}
    \end{pmatrix} 
    = \begin{pmatrix}A\\ B\\ C \end{pmatrix}, \\ 
    & \text{ with } \begin{pmatrix}A\\ B\\ C\end{pmatrix} =
    \begin{pmatrix}
        -7 \beta_1^4/11520 + 7 \beta_1^3/2880 - 7 \beta_1^2/2880 - 7/320 \\
        \beta_1/48 - \beta_1^2/64 + \beta_1^3/96 - 5 \beta_1^4/1152 + \beta_1^5/1152\\
        \beta_1^5/2304 - 5 \beta_1^4/2304 + \beta_1^3/192 - \beta_1^2/144 + 5 \beta_1/576 - 1/192
    \end{pmatrix}.
\end{align*}
Therefore
\begin{align*}
    \int_{\Mb{2,2}}\lambda_2\psi_1\HresSpin{2}{\beta_1} &= \left(B - A - \frac{1}{3-\beta_1}C\right)/\,\left(4\beta_1+3-\frac{\beta_1+1}{3-\beta_1}\right) \\
    & = \frac{1}{4608}\beta_1^4 - \frac{1}{960}\beta_1^3 + \frac{1}{384}\beta_1^2 - \frac{7}{1440}\beta_1 + \frac{17}{1920},
\end{align*}
as claimed.
\end{proof}

The next proposition implies the first equation of \hyperref[lem:g2int]{Lemma \ref*{lem:g2int}}.
\begin{proposition}\label{lem:firstintegral}
    For $\b\ge3$, the following holds:
    \[\int_{\Mb{2,3}}\DR_2(-a,0,a)\lambda_2\HresSpin{2}{0,2\beta-2,-2\beta+4} = \frac{12\beta^4-64\beta^3+148\beta^2-152\beta+65}{5760}a^4.\]
\end{proposition}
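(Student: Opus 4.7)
The plan is to parallel the strategy of Lemma~\ref{lem:g2n2integral}: reduce the integral on $\Mb{2,3}$ to an integral on $\Mb{2,2}$, then combine the Picard relation (Proposition~\ref{prop:picrelation}) with the splitting formula (Proposition~\ref{splittingformula}) to decompose into products of genus~$\le 1$ integrals already computed in Lemmas~\ref{lem:g0int}--\ref{lem:g1n4integral}.

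First, since the profile $(0,2\b-2,-2\b+4)$ has only one negative entry when $\b\ge3$, the residueless condition is automatic (the single pole has vanishing residue by the residue theorem), and axiom~(ii) of the partial CohFT gives
\[
\HresSpin{2}{0,2\b-2,-2\b+4}=\pi_1^*\Hspin{2}{2\b-2,-2\b+4},
\]
where $\pi_1:\Mb{2,3}\to\Mb{2,2}$ forgets the first (weight-zero) marked point. Using the decomposition~\eqref{eq:DRlbig}, which comes from \cite{bhs22} and does not rely on the spin DR conjecture, one writes $\DR_2(-a,0,a)\lambda_2=a^4(\psi_3^4-B_1-B_2-B_3+B_4+B_5)$ on $\Mb{2,3}$. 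The integral thus splits into a main contribution coming from $a^4\int\psi_3^4\cdot\HresSpin{2}{0,2\b-2,-2\b+4}$ plus five boundary contributions, one for each $B_i$.

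For each boundary term $B_i$, the gluing axiom~(iii) factors $\int_{\Mb{2,3}}B_i\cdot\HresSpin{2}{0,2\b-2,-2\b+4}$ as a sum of products of integrals over smaller moduli spaces. A dimension count eliminates most of these products; the surviving ones involve genus~$0$ integrals computable directly (Lemma~\ref{lem:g0int}) and genus~$1$ integrals already evaluated in Lemmas~\ref{lem:g1int1}, \ref{lem:g1integral}, and~\ref{lem:g1n3psiintegral}. For the main term, expanding $\psi_3=\pi_1^*\psi_3+\delta_0^{\{1,3\}}$ and applying the projection formula reduces it to integrals on $\Mb{2,2}$ of the shape $\int_{\Mb{2,2}}\lambda_2\,\psi_3^k\,\Hspin{2}{2\b-2,-2\b+4}$, together with boundary corrections from the powers of $\delta_0^{\{1,3\}}$ (which are themselves pushforwards from $\Mb{2,2}$ involving $\psi$-classes at the node). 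These $\psi$-class integrals on $\Mb{2,2}$ are treated exactly as in Lemma~\ref{lem:g2n2integral}: one sets up a linear system using the Picard relation $\kappa_1=\tfrac{7}{5}\delta_{1|1}$ on $\Mb{2,2}^{ct}$, uses the splitting formula to write the $\delta_{1|1}$-integrals in terms of genus-$1$ data, and solves for the unknown $\psi$-integrals.

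The principal obstacle is the bookkeeping: the decomposition~\eqref{eq:DRlbig} together with the $\psi$-class comparison generates many individual contributions, and each must be either ruled out by dimension counting or evaluated via the appropriate preceding lemma. Once every piece is assembled and the rational coefficients summed, one reads off a polynomial in $\b$ which should coincide with $\tfrac{1}{5760}(12\b^4-64\b^3+148\b^2-152\b+65)$ (checked against the $\b=3$ case already yielding $37/1152$, matching the first integral of Lemma~\ref{lem:g2int}). Polynomiality in $a$ is automatic: $\lambda_2$ restricts to $\Mb{2,3}^{ct}$, on which $\DR_2(-a,0,a)$ is polynomial in $a$ by Hain's formula, so only the coefficient of $a^4$ survives.
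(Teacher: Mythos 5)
Your reduction to $\Mb{2,2}$ and your use of the genus $\le 1$ lemmata for the boundary terms are fine, but there is a genuine gap in how you handle the main term, and it is exactly where the unconditional difficulty of this proposition sits. The decomposition \hyperref[eq:DRlbig]{(\ref*{eq:DRlbig})} expresses the \emph{product} $\DR_2(-a,0,a)\lambda_2$ in terms of $\psi_3^4$ and boundary classes: the $\lambda_2$ is consumed by that formula and does not survive into the terms on the right-hand side. Consequently your main contribution is $a^4\int_{\Mb{2,3}}\psi_3^4\,\HresSpin{2}{0,2\b-2,-2\b+4}$ with \emph{no} $\lambda_2$ factor, and after pushing forward along $\pi_1$ it becomes $\int_{\Mb{2,2}}\psi^3\,\Hspin{2}{2\b-2,-2\b+4}$, i.e.\ the top-$\psi$-power spin intersection number $\Aspin{2}{-2\b+5,\,2\b-1}$. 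This is precisely the quantity that the conjectural formula \hyperref[eq:csspsi]{(\ref*{eq:csspsi})} computes in genus $2$ and that the second method is designed to avoid. Your claimed reduction to integrals of the shape $\int_{\Mb{2,2}}\lambda_2\psi^k\,\Hspin{2}{2\b-2,-2\b+4}$ is not available: no $\lambda_2$ remains (indeed on $\Mb{2,2}$ such an integral is only nonzero for $k=1$ by dimension, while your main term has $\psi$-degree $3$), so \hyperref[lem:g2n2integral]{Lemma \ref*{lem:g2n2integral}} does not apply. Nor can you rerun its linear-system argument without $\lambda_2$: the Picard relation of \hyperref[prop:picrelation]{Proposition \ref*{prop:picrelation}} lives in $\Pic(\mathcal{M}_2^{ct})\otimes\Q$, and it is exactly the $\lambda_2$ factor (vanishing outside compact type, and splitting as $\lambda_1\otimes\lambda_1$ on separating boundary divisors) that makes the relation usable against the spin class; dropping it would force you to control $\delta_{irr}$-terms and restrictions of spin strata to non-compact-type degenerations, which are outside the toolkit assembled in the paper.

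The paper's proof avoids this trap by never invoking \hyperref[eq:DRlbig]{(\ref*{eq:DRlbig})} here: it first pushes the DR cycle down, writing the integral as $\int_{\Mb{2,2}}\pi_*\DR_2(0,a,-a)\,\lambda_2\,\HresSpin{2}{2\b-2,-2\b+4}$, and then applies Hain's formula to compute $\lambda_2\,\pi_*\DR_2(0,a,-a)$ as $\tfrac{\lambda_2}{8}\bigl(7\psi_2+\kappa_1-\psi_1-7\delta_0^{\{1,2\}}-3\delta_1^{\{1\}}-3\delta_1^{\{1,2\}}\bigr)a^4$, keeping $\lambda_2$ explicit throughout. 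The Picard relation then collapses this to $\lambda_2\bigl(\psi_2-\delta_0^{\{1,2\}}-\tfrac15\delta_1^{\{1\}}-\tfrac15\delta_1^{\{1,2\}}\bigr)a^4$; the $\delta_0^{\{1,2\}}$ term dies because $\int_{\Mb{2,1}}\lambda_2\HresSpin{2}{2}=0$, the $\delta_1$-terms reduce to the genus $1$ integrals of \hyperref[lem:g1integral]{Lemma \ref*{lem:g1integral}}, and the only genus $2$ input needed is $\int_{\Mb{2,2}}\lambda_2\psi_2\HresSpin{2}{\e_1,\e_2}$, which is \hyperref[lem:g2n2integral]{Lemma \ref*{lem:g2n2integral}}. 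If you want to salvage your outline, you must replace the decomposition \hyperref[eq:DRlbig]{(\ref*{eq:DRlbig})} by this Hain-formula computation of $\lambda_2\,\pi_*\DR_2(0,a,-a)$; as written, your argument either silently assumes the genus $2$ case of \hyperref[eq:csspsi]{(\ref*{eq:csspsi})} or relies on a dimensionally incorrect reduction.
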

\begin{proof}
For brevity, let $\e_1 := 2\beta-2 \geq 4$, and $ \e_2 := -2\beta+4 \leq -2$. Firstly,
\[\int_{\Mb{2,3}}\DR_2(-a,0,a)\lambda_2\HresSpin{2}{0,\e_1,\e_2} = \int_{\Mb{2,2}}\pi_*\DR_2(0,a,-a)\lambda_2\HresSpin{2}{\e_1,\e_2}.\]
An application of Hain's formula in \textit{admcycles} yields
\[\lambda_2\pi_*\DR_2(0,a,-a) = \frac{\lambda_2}{8}\left(7\psi_2+ \kappa_1 - \psi_1 - 7 \delta_0^{\{1,2\}} -3 \delta_1^{\{1\}} - 3 \delta_1^{\{1,2\}}\right)a^4.\]
By the Picard relation of \hyperref[prop:picrelation]{Proposition \ref*{prop:picrelation}}, we can rewrite this as 
\begin{align*}
    \lambda_2\pi_*\DR_2(0,a,-a) & = \frac{\lambda_2}{8}\left(8\psi_2 +\frac{7}{5}\delta_{1|1} - \delta_{rt} - 7 \delta_0^{\{1,2\}} -3 \delta_1^{\{1\}} - 3 \delta_1^{\{1,2\}}\right)a^4 \\
    & = \lambda_2\left(\psi_2 -  \delta_0^{\{1,2\}} -\frac{1}{5} \delta_1^{\{1\}} - \frac{1}{5} \delta_1^{\{1,2\}}\right)a^4.
\end{align*}
Since $\int_{\Mb{2,1}}\lambda_2\HresSpin{2}{2} = 0$, the integrals against $\delta_0^{\{1,2\}}$ vanish. Hence, we only have to consider the genus 1 integrals computed in \hyperref[lem:g1integral]{Lemma \ref*{lem:g1integral}} and the genus 2 integral computed in \hyperref[lem:g2n2integral]{Lemma \ref*{lem:g2n2integral}}. Putting it all together brings us the following expression for the coefficient of $a^4$ in $\int_{\Mb{2,3}}\DR_2(-a,0,a)\lambda_2\HresSpin{2}{0,\e_1,\e_2}$:
\begin{align*}
    & \int_{\Mb{2,2}}\hspace{-0.65em}\psi_2\lambda_2\HresSpin{2}{\e_1,\e_2} -\frac{1}{5} \int_{\Mb{2,2}}\hspace{-0.65em}\delta_1^{\{1\}}\lambda_2\HresSpin{2}{\e_1,\e_2} - \frac{1}{5} \int_{\Mb{2,2}}\hspace{-0.65em}\delta_1^{\{1,2\}}\lambda_2\HresSpin{2}{\e_1,\e_2} \\
    & = \frac{1}{7680}\e_2^4 - \frac{1}{1440}\e_2^3 + \frac{13}{5760}\e_2^2 - \frac{7}{1440}\e_2 + \frac{11}{1920}.
\end{align*}
In terms of $\beta$, this reads as desired.
\end{proof}

The initial value of $\beta = 3$ in the formula above gives \[\int_{\Mb{2,3}}\DR_2(-a,0,a)\lambda_2\HresSpin{2}{0,4,-2} = \frac{37a^4}{1152},\] which proves the first part of \hyperref[lem:g2int]{Lemma \ref*{lem:g2int}}. The class $\Hspin{2}{0,4,-2}$ can be computed using the algorithm of \cite{won24} and verifies this result. \newline

\textbf{The second integral of \hyperref[lem:g2int]{Lemma \ref*{lem:g2int}.}} The idea here is similar to \hyperref[lem:g2n2integral]{Lemma \ref*{lem:g2n2integral}}, albeit that the complexity is increased due to the absence of trivial orders.
\begin{proposition}\label{g2n3integral} 
    The following holds:
    \[\int_{\Mb{2,3}}\DR_2(-a,0,a)\lambda_2\HresSpin{2}{2\gamma -2,2,-2\gamma + 2} = \frac{4+3\delta_{\gamma,2}}{5760}a^4, \quad \gamma \geq 2.\]
\end{proposition}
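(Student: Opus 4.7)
The plan is to adapt the strategy of \hyperref[lem:firstintegral]{Proposition \ref*{lem:firstintegral}} to the setting in which no marking carries trivial order in the spin class. A key initial observation is that, for $\gamma \geq 2$, the only negative entry of $(2\gamma-2, 2, -2\gamma+2)$ is the third, so the residueless condition is automatic and $\HresSpin{2}{2\gamma-2, 2, -2\gamma+2} = \Hspin{2}{2\gamma-2, 2, -2\gamma+2}$. This lets us work with the spin class (without residueless constraint) throughout.

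Next I would use the expansion \hyperref[eq:DRlbig]{(\ref*{eq:DRlbig})} for $a^{-4}\DR_2(-a,0,a)\lambda_2$ directly on $\Mb{2,3}$, writing it as $\psi_3^4 - B_1 - B_2 - B_3 + B_4 + B_5$. For each boundary class $B_i$, axiom (iii) of the partial CohFT factorizes $\int_{\Mb{2,3}} B_i \cdot \Hspin{2}{2\gamma-2,2,-2\gamma+2}$ as a product of integrals on smaller moduli spaces of compact type. These factors can all be evaluated using \hyperref[lem:g0int]{Lemma \ref*{lem:g0int}}, the genus-one results \hyperref[lem:g1integral]{Lemma \ref*{lem:g1integral}}--\hyperref[lem:g1n4integral]{Lemma \ref*{lem:g1n4integral}}, the genus-two result \hyperref[lem:g2n2integral]{Lemma \ref*{lem:g2n2integral}}, and formula \hyperref[eq:g1prelim]{(\ref*{eq:g1prelim})}; many of them vanish for degree or dimension reasons.

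The main difficulty is the remaining non-factorizable term
\[
\int_{\Mb{2,3}}\psi_3^4\,\Hspin{2}{2\gamma-2, 2, -2\gamma+2}.
\]
In order to avoid invoking the conjectural formula \hyperref[eq:csspsi]{(\ref*{eq:csspsi})} for $g=2$, I would iteratively apply the splitting formula of \hyperref[splittingformula]{Proposition \ref*{splittingformula}}. Taking $s=3$ (the pole) together with $t=1$ or $t=2$, one obtains a relation of the form $\bigl((-2\gamma+3)\psi_3 - (\alpha_t+1)\psi_t\bigr) \cdot \Hspin{2}{2\gamma-2,2,-2\gamma+2}$ equals a sum of simple bi-colored graph contributions, all of which factor via axiom (iii). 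Applying this four times to reduce the four powers of $\psi_3$ expresses the target integral as a combination of factorizable boundary terms plus an integral of the form $\int \psi_i^4 \, \Hspin{}{}$ for $i \in \{1,2\}$, which can in turn be evaluated via a matrix system in the spirit of \hyperref[lem:g2n2integral]{Lemma \ref*{lem:g2n2integral}}. The Picard relation of \hyperref[prop:picrelation]{Proposition \ref*{prop:picrelation}}, pulled back from $\Mb{2,2}^{ct}$ to $\Mb{2,3}^{ct}$, provides one of the necessary equations of this system.

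The main obstacle, beyond the lengthy bookkeeping generated by the iterated splitting, is the special case $\gamma = 2$: when $\gamma = 2$ the two non-negative orders $2\gamma - 2 = 2$ and $2$ coincide at markings $1$ and $2$, which produces additional simple bi-colored graph contributions in the splitting formula (those where markings $1$ and $2$ are placed on opposite levels and interact symmetrically). These extra contributions, absent for $\gamma \geq 3$, should account for the $3\delta_{\gamma, 2}$ correction in the final answer.
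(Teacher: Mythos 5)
There is a genuine gap at the heart of your plan: the evaluation of the leftover term $\int_{\Mb{2,3}}\psi_3^4\,\Hspin{2}{2\gamma-2,2,-2\gamma+2}$. By stripping off $\lambda_2$ via \hyperref[eq:DRlbig]{(\ref*{eq:DRlbig})}, you have reproduced exactly the reduction of the paper's \emph{first} proof of \hyperref[lem:g2int]{Lemma \ref*{lem:g2int}}, whose endpoint is the conjectural genus-$2$ number $\Aspin{2}{-2\gamma+3,2\gamma-1,3}$, and your proposed patch does not close it. The splitting formula of \hyperref[splittingformula]{Proposition \ref*{splittingformula}} only produces relations for \emph{differences} $(\alpha_s+1)\psi_s-(\alpha_t+1)\psi_t$ against the class, so iterating it (even four times, with $\psi_3^3$ multiplied in) leaves the system one normalization short at each stage, and its boundary terms terminate at genus-$2$ pure-$\psi$ numbers such as $\int_{\Mb{2,2}}\psi_1^3\,\Hspin{2}{\beta_1,\beta_2}$ --- again $\Aspin{2}{\cdot}$-type quantities with no further reduction available. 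The missing normalization cannot come from the Picard relation: \hyperref[prop:picrelation]{Proposition \ref*{prop:picrelation}} holds only in $\Pic(\mathcal{M}^{ct}_{2,n})\otimes\Q$, so it may be paired only against integrands supported on compact type. In \hyperref[lem:g2n2integral]{Lemma \ref*{lem:g2n2integral}} every equation of the matrix system carries $\lambda_2$, which kills the non-compact-type locus; your target integrals $\int\psi_3^4\,\Hspin{2}{\cdot}$ contain no $\lambda$-class, the stratum class genuinely meets $\delta_{irr}$, and the partial CohFT gives you no control there (axiom (iv) is exactly what is not available). So the ``matrix system in the spirit of \hyperref[lem:g2n2integral]{Lemma \ref*{lem:g2n2integral}}'' cannot be assembled for pure-$\psi$ integrals, and your route either silently reinvokes the conjectural formula \hyperref[eq:csspsi]{(\ref*{eq:csspsi})} in genus $2$ or stalls.

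This is precisely why the paper's unconditional proof never discards $\lambda_2$: it expands $\DR_2(-a,0,a)\lambda_2$ on compact type by Hain's formula \hyperref[eq:hainhain]{(\ref*{eq:hainhain})} into $\psi$- and boundary monomials, evaluates the resulting $H$-integrals through matrix systems (the $A$--$E$ equations) in which every line retains $\lambda_2$ --- making the compact-type Picard relation legitimately applicable alongside the splitting formula and the $\kappa_1$-pullback trick --- and handles products of boundary divisors with the excess-intersection formula of \hyperref[prop:excessint]{Proposition \ref*{prop:excessint}}. Your opening observation that the residueless condition is vacuous (one pole) is correct and consistent with the paper's usage, and the boundary terms $B_i$ of \hyperref[eq:DRlbig]{(\ref*{eq:DRlbig})} do factor as you say. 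Your diagnosis of the $3\delta_{\gamma,2}$ correction is also off: in the paper it does not come from a symmetric interaction of markings $1$ and $2$ across levels, but from the degeneration of several contributions when $\alpha_1=2\gamma-2<4$, e.g.\ the $\delta_{\alpha_1\ge4}$ conditions on twisted-graph terms and the vanishing case $\beta_1=0$ or $\beta_2=0$ of \hyperref[lem:g2n2integral]{Lemma \ref*{lem:g2n2integral}} --- though this is secondary to the structural problem above.
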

\noindent The computation scheme is as follows. By Hain's formula,
\begin{align}
    \nonumber & \at{\DR_2(-a,0,a)}{\Mb{2,3}^\mathrm{ct}}= \frac{a^4}{8} \biggl[\psi_1^2 + 2\psi_1\psi_3+\psi_3^2 -2\Bigl(\psi_1\delta_1^{\{1\}} + \psi_1\delta^{\{3\}}_1 + \psi_3\delta_1^{\{1\}} + \psi_3\delta_1^{\{3\}} \\ 
    \nonumber & + \psi_1\delta_2^{\{1\}} + \psi_3\delta_2^{\{3\}}\Bigr) + \bigl(\delta_1^{\{1\}}\bigr)^2 + 2\delta_1^{\{1\}}\delta_1^{\{3\}} + 2\delta^{\{1\}}_1\delta_2^{\{1\}} + 2\delta_1^{\{1\}}\delta^{\{3\}}_2 + \bigl(\delta^{\{3\}}_1\bigr)^2 +2\delta_1^{\{3\}}\delta_2^{\{1\}} \\
    \label{eq:hainhain} & + 2\delta_1^{\{3\}}\delta_2^{\{3\}} + \bigl(\delta_2^{\{1\}}\bigr)^2 + 2\delta_2^{\{1\}}\delta_2^{\{3\}}  + \bigl(\delta_2^{\{3\}}\bigl)^2 \Big].
\end{align}
Let $\a=(\a_1,\a_2,\a_3):=(2\g-2,2,-2\g+2)$ with $\g\ge2$. By the expression above, we must compute $3$ types of integrals, which we call \emph{$H$-integrals}:
\begin{align*}
    H_{ij}(\alpha) & := \int_{\Mb{2,3}} \lambda_2\psi_i\psi_j \HresSpin{2}{\alpha}, \quad i,j\in \{1,3\}, \\
    H_{i;h}^I(\alpha) & := \int_{\Mb{2,3}} \lambda_2 \psi_i\delta_h^I \HresSpin{2}{\alpha}, \quad i \in \{1,3\}, \;\, h\in\{1,2\}, \;\, I = \begin{cases}\{1\}\;\text{or}\;\{3\}, & h=1,\\ \{i\},&h=2.\end{cases}, \\
    H_{h;l}^{I;J}(\alpha) & := \int_{\Mb{2,3}} \lambda_2\delta_h^I\delta_l^J\HresSpin{2}{\alpha}, \quad h,l\in\{1,3\}, \;\, I,J\in\bigl\{\{1\},\{3\}\bigr\}.
\end{align*}

\noindent $\bm{H_{ij}:}$ Consider the system 
\begin{align}
    \label{matrixproblemHij} & \begin{pmatrix} A\\B\\C\\D\\E\end{pmatrix} := \int_{\Mb{2,3}} \lambda_2 
    \begin{pmatrix} 
    \psi_1\psi_1+\psi_1\psi_2+\psi_1\psi_3 - \psi_1\kappa_1\\
    5(\alpha_1+1) \psi_1\psi_1 - \psi_1\kappa_1 \\
    (\alpha_1+1)\psi_1\psi_1 - (\alpha_3+1)\psi_1\psi_3  \\
    (\alpha_1+1)\psi_1\psi_1 - (\alpha_2+1)\psi_1\psi_2  \\
    (\alpha_1+1)\psi_3\psi_1 - (\alpha_3+1)\psi_3\psi_3  \\
    \end{pmatrix}
    \HresSpin{2}{\alpha} \\
    \nonumber = &\; \begin{pmatrix} 
        1 & 1 & 1 & - 1 & 0\\
        5(\alpha_1+1) & 0 & 0 &-1 & 0\\
        (\alpha_1+1) & 0 & -(\alpha_3+1) & 0 & 0\\
        (\alpha_1 + 1) & - (\alpha_2+1) & 0 & 0 & 0\\
        0 & 0 & (\alpha_1 + 1) & 0 & -(\alpha_3+1) 
    \end{pmatrix}
    \begin{pmatrix}
        \int_{\Mb{2,3}} \lambda_2 \psi_1\psi_1\HresSpin{2}{\alpha} \\
        \int_{\Mb{2,3}} \lambda_2 \psi_1    \psi_2    
        \HresSpin{2}{\alpha}\\
        \int_{\Mb{2,3}} \lambda_2 \psi_1    \psi_3    
        \HresSpin{2}{\alpha}\\
        \int_{\Mb{2,3}} \lambda_2 \psi_1    \kappa_1    
        \HresSpin{2}{\alpha}\\
        \int_{\Mb{2,3}} \lambda_2 \psi_3    \psi_3    
        \HresSpin{2}{\alpha}   
    \end{pmatrix}.
\end{align}

\noindent Let us compute $A,B,C,D$ and $E$. \newline

\noindent \textbf{A:} 
By \hyperref[prop:picrelation]{Proposition \ref*{prop:picrelation}}, 
\[A =  \int_{\Mb{2,3}} \lambda_2 \psi_1 \left(-\frac{7}{5}\delta_{1|1} + \delta_{rt}\right) \HresSpin{2}{\alpha}.\] 
Now since $\delta_{1|1} = \delta_1^{\{1,2\}}+\delta_1^{\{1,3\}}+\delta_1^{\{2,3\}} + \delta_1^{\{1,2,3\}} \text{ and } \delta_{rt} = \delta_2^{\{1\}}+\delta_2^{\{2\}} +\delta_2^{\{3\}} + \delta_0^{\{1,2,3\}}$ on $\Mb{2,3}$, we have
\begin{align*} 
   A = & -\frac{7}{5} \left(\int_{\Mb{1,3}} \lambda_1\psi_1 \HresSpin{1}{2\gamma-2,2,-2\gamma}\right)\left( \int_{\Mb{1,2}}\lambda_1 \HresSpin{1}{-2\gamma+2,2\gamma-2}\right) \\ 
   & - \frac{7}{5}\left(\int_{\Mb{1,3}} \lambda_1\psi_1 \HresSpin{1}{2\gamma-2,-2\gamma+2,0}\right)\left( \int_{\Mb{1,2}}\lambda_1 \HresSpin{1}{2,-2}\right) \\ 
   & - \frac{7}{5}\left(\cancel{\int_{\Mb{1,3}} \lambda_1 \HresSpin{1}{2,-2\gamma+2,2\gamma-4}}\right) \left(\int_{\Mb{1,2}}\lambda_1\psi_1 \HresSpin{1}{2\gamma-2,-2\gamma+2}\right) \\ 
   & - \frac{7}{5}\left(\int_{\Mb{1,4}} \lambda_1\psi_1 \HresSpin{1}{2\gamma-2, 2,-2\gamma+2,-2}\right)\left(\int_{\Mb{1,1}}\lambda_1\HresSpin{1}{0}\right) \\ 
   & + \left(\int_{\Mb{2,2}}\lambda_2\psi_1 \HresSpin{2}{2\gamma-2,-2\gamma+4}\right)\left({\int_{\Mb{0,3}}\HresSpin{0}{2,-2\gamma+2,2\gamma-6}}\right) \\ 
   & + \left(\int_{\Mb{2,2}} \lambda_2\HresSpin{2}{\alpha_2,2-\alpha_2}\right)\left(\cancel{\int_{\Mb{0,3}}\psi_1\HresSpin{0}{\alpha_1,\alpha_3,-2-\alpha_1-\alpha_3}}\right) \\ 
   & + \left(\int_{\Mb{2,2}} \lambda_2\HresSpin{2}{\alpha_3,2-\alpha_3}\right)\left(\cancel{\int_{\Mb{0,3}}\psi_1\HresSpin{0}{\alpha_1,\alpha_2,-2-\alpha_1-\alpha_2}}\right) \\ 
   & + \left(\cancel{\int_{\Mb{2,1}}\lambda_2 \HresSpin{2}{2}}\right)\left(\int_{\Mb{0,4}}\psi_1\HresSpin{0}{\alpha_1,\alpha_2,\alpha_3,-2-\a_1-\a_2-\a_3}\right). 
\end{align*}
Note that the crossed-out terms vanish for dimension reasons. By the Lemmata~\ref{lem:g1n3psiintegral}, \ref{lem:g1n4integral}, \ref{lem:g2n2integral}, this evaluates to
\begin{align*}
    A = \begin{cases} -\frac{7}{80}, & \gamma = 2, \\
    -\frac{1}{160} \, \gamma^{4} - \frac{1}{360} \, \gamma^{3} + \frac{13}{480} \, \gamma^{2} - \frac{11}{360} \, \gamma - \frac{1}{1920}, & \gamma \ge3.\end{cases}
\end{align*}

\noindent \textbf{B:} By a similar reasoning as for the B integral of \hyperref[lem:g2n2integral]{Lemma \ref*{lem:g2n2integral}}, 
\[\int_{\Mb{2,3}} \lambda_2 \psi_1 (5(\alpha_1+1) \psi_1 - \kappa_1)\HresSpin{2}{\alpha} = \int_{\Mb{2,4}} \lambda_2 \psi_1 \psi_4((\alpha_1+1) \psi_1 - \psi_4)\HresSpin{2}{\alpha,0}.\] 
The splitting formula of \hyperref[splittingformula]{Proposition \ref*{splittingformula}} equates this with
\begin{align*}
    B & = (\alpha_1-3)\left(\int_{\Mb{0,4}}\psi_3\HresSpin{0}{\alpha_2,\alpha_3,0,\alpha_1-4}\right)\left(\int_{\Mb{2,2}}\lambda_2\psi_1\HresSpin{2}{\alpha_1,2-\alpha_1}\right) \\ 
    & \quad + \quad\quad\quad\,\,\left(\int_{\Mb{1,2}}\lambda_1\psi_1\HresSpin{1}{0,0}\right)\left(\int_{\Mb{1,4}}\lambda_1\psi_1\HresSpin{1}{\alpha_1,\alpha_2,\alpha_3,-2}\right) \\
    &\quad+ (\alpha_1-1)\left(\int_{\Mb{1,4}}\lambda_1\psi_3\HresSpin{1}{\alpha_2,\alpha_3,0,\alpha_1-2}\right)\left(\cancel{\int_{\Mb{1,2}}\lambda_1\psi_1\HresSpin{1}{\alpha_1,-\alpha_1}}\right) \\
    &\quad+ (1-\alpha_3)\left(\int_{\Mb{1,3}}\lambda_1\psi_2\HresSpin{1}{\alpha_3,0,-\alpha_3}\right)\left(\int_{\Mb{1,3}}\lambda_1\psi_1\HresSpin{1}{\alpha_1,\alpha_2, \alpha_3-2}\right) \\
    &\quad- (\alpha_2-1)\left(\int_{\Mb{1,3}} \lambda_1\psi_1 \HresSpin{1}{\alpha_1, \alpha_3,\alpha_2-2} \right)\left(\int_{\Mb{1,3}}\lambda_1\psi_2\HresSpin{1}{\alpha_2,0,-\alpha_2}\right).
\end{align*}
This evaluates to
\begin{align*}
    B = 
    \begin{cases} \frac{29}{96}, & \g =2,  \\
    \frac{5}{144} \, \gamma^5 - \frac{21}{160} \, \gamma^{4} + \frac{241}{720} \, \g^3 - \frac{83}{144} \, \gamma^{2} + \frac{113}{192} \, \gamma - \frac{103}{384}, & \gamma \ge 3.
    \end{cases}
\end{align*}

\noindent \textbf{C:}
Again by the splitting formula, 
\begin{align*}
    &\int_{\Mb{2,3}}\lambda_2\psi_1((\alpha_1+1)\psi_1 - (\alpha_3+1)\psi_3)\HresSpin{2}{\alpha_1,\alpha_2,\alpha_3} \\
    & = (\alpha_1-3)\left(\int_{\Mb{2,2}}\lambda_2\psi_1\HresSpin{2}{\alpha_1,2-\alpha_1}\right)\left(\int_{\Mb{0,3}}\HresSpin{0}{\alpha_2,\alpha_3,\alpha_1-4}\right)\delta_{\alpha_1\geq4} \\
    & \quad + (1-\alpha_3)\left(\int_{\Mb{1,2}}\lambda_1\HresSpin{1}{\alpha_3,-\alpha_3}\right)\left(\int_{\Mb{1,3}}\lambda_1\psi_1\HresSpin{1}{\alpha_1,\alpha_2,\alpha_3-2}\right) \\
    & \quad+ (\alpha_1-1)\left(\int_{\Mb{1,3}}\lambda_1\HresSpin{1}{\alpha_2,\alpha_3,\alpha_1-2}\right)\left(\cancel{\int_{\Mb{1,2}}\lambda_1\psi_1\HresSpin{1}{\alpha_1,-\alpha_1}}\right).
\end{align*}
Thus, 
\begin{align*}
    C = \begin{cases} \frac{11}{64}, & \gamma = 2, \\ 
    \frac{1}{48} \, \gamma^{5} - \frac{139}{1440} \, \gamma^{4} + \frac{67}{240} \, \gamma^{3} - \frac{139}{288} \, \gamma^{2} + \frac{277}{576} \, \gamma - \frac{253}{1152}, & \gamma \ge3.
\end{cases}
\end{align*}

\noindent \textbf{D:}
This integral coincides with swapping the second and third marking in the splitting formula expression for $C$, and it can then be computed to yield:
\begin{align*}
    D = \begin{cases} -\frac{1}{64}, & \gamma = 2, \\ 
    \frac{1}{144} \, \gamma^{5} - \frac{89}{1440} \, \gamma^{4} + \frac{161}{720} \, \gamma^{3} - \frac{31}{72} \, \gamma^{2} + \frac{259}{576} \, \gamma - \frac{253}{1152}, & \gamma \ge 3.
    \end{cases}
\end{align*}

\noindent \textbf{E:}
This integral can be computed by a splitting formula as for $C$ up to a different genus $0$ term, relating to the vanishing when multiplied by a $\psi$-class, and the effect of replacing $\psi_1$ with $\psi_3$ in the left-hand side on the expansion terms, to yield:
\begin{align*}
E = \frac{1}{48} \, \gamma^{5} - \frac{161}{1440} \, \gamma^{4} + \frac{49}{144} \, \gamma^{3} - \frac{781}{1440} \, \gamma^{2} + \frac{153}{320} \, \gamma - \frac{107}{640}.\\
\end{align*}

\begin{lemma}
    With these expressions for $A,B,C,D,E$, the matrix problem \hyperref[matrixproblemHij]{(\ref*{matrixproblemHij})} is solved to yield:
    \begin{align*}
        H_{11}(\alpha) = & \begin{cases} \frac{17}{480}, & \gamma = 2, \\ \frac{1}{288} \, \gamma^{4} - \frac{1}{120} \, \gamma^{3} + \frac{29}{1440} \, \gamma^{2} - \frac{7}{240} \, \gamma + \frac{131}{5760} & \gamma \ge3. \end{cases} \\
        H_{13}(\alpha) = & \begin{cases} \frac{21}{320}, & \gamma = 2, \\ \frac{1}{144} \, \gamma^{4} - \frac{1}{36} \, \gamma^{3} + \frac{53}{720} \, \gamma^{2} - \frac{11}{120} \, \gamma + \frac{21}{320}, & \gamma \ge 3. \end{cases} \\
        H_{33}(\alpha) = & \begin{cases} \frac{131}{5760}, & \gamma = 2, \\ \frac{1}{288} \, \gamma^{4} - \frac{7}{360} \, \gamma^{3} + \frac{77}{1440} \, \gamma^{2} - \frac{1}{16} \, \gamma + \frac{13}{384}, & \gamma \ge 3. \end{cases}
    \end{align*}
\end{lemma}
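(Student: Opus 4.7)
The plan is to exploit the sparse structure of the $5 \times 5$ linear system in \hyperref[matrixproblemHij]{(\ref*{matrixproblemHij})}: rows $2$ through $5$ each couple $H_{11}$ to exactly one of the four remaining unknowns, and $H_{33}$ appears only in row $5$. This allows inversion by forward substitution against $H_{11}$. First, from rows $2$, $3$ and $4$ we read off
\begin{align*}
    \int_{\Mb{2,3}}\lambda_2\psi_1\kappa_1\HresSpin{2}{\alpha} &= 5(\alpha_1+1)H_{11} - B, \\
    H_{13} &= \frac{(\alpha_1+1)H_{11} - C}{\alpha_3+1}, \\
    \int_{\Mb{2,3}}\lambda_2\psi_1\psi_2\HresSpin{2}{\alpha} &= \frac{(\alpha_1+1)H_{11} - D}{\alpha_2+1},
\end{align*}
and then row $5$ gives $H_{33} = ((\alpha_1+1)H_{13} - E)/(\alpha_3+1)$.

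Second, substituting these expressions into the first row of the system yields a single scalar equation for $H_{11}$,
\begin{equation*}
    H_{11}\left[1 + \frac{\alpha_1+1}{\alpha_2+1} + \frac{\alpha_1+1}{\alpha_3+1} - 5(\alpha_1+1)\right] = A - B + \frac{D}{\alpha_2+1} + \frac{C}{\alpha_3+1}.
\end{equation*}
For $\alpha = (2\gamma-2, 2, -2\gamma+2)$ with $\gamma \geq 2$, a direct computation shows that the bracketed coefficient reduces to $-8(7\gamma^2 - 14\gamma + 6)/(3(2\gamma-3))$, whose numerator has no integer roots, so the coefficient is nonzero and the equation determines $H_{11}$ uniquely. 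Plugging in the explicit expressions already obtained for $A, B, C, D, E$ and simplifying gives the claimed value of $H_{11}$, after which $H_{13}$ and then $H_{33}$ follow by back-substitution.

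The only real obstacle is careful bookkeeping: the formulas for $B$, $C$ and $D$ each contain a $\delta_{\alpha_1 \geq 4}$ indicator arising from an integral over $\Mb{0,3}$ which vanishes for $\alpha_1 < 4$. For $\gamma = 2$, where $\alpha_1 = 2$, this boundary contribution drops out, producing the special rational values; for $\gamma \geq 3$ it yields an additional polynomial contribution in $\gamma$. Once this case split is tracked faithfully throughout, everything reduces to symbolic linear algebra, and the final polynomial identities can be verified by hand or cross-checked using the \textit{admcycles} package.
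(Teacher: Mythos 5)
Your proposal is correct and takes essentially the same route as the paper, which treats this lemma as a direct inversion of the $5\times5$ system with the computed $A,\dots,E$: your elimination order, the scalar equation for $H_{11}$ with nonvanishing coefficient $-8(7\gamma^2-14\gamma+6)/\bigl(3(2\gamma-3)\bigr)$, and the back-substitutions $H_{13}=\bigl((\alpha_1+1)H_{11}-C\bigr)/(\alpha_3+1)$, $H_{33}=\bigl((\alpha_1+1)H_{13}-E\bigr)/(\alpha_3+1)$ all check out and reproduce the stated values (e.g.\ $H_{11}=\tfrac{17}{480}$, $H_{13}=\tfrac{21}{320}$, $H_{33}=\tfrac{131}{5760}$ at $\gamma=2$). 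One cosmetic correction: the $\gamma=2$ case split is not confined to $B,C,D$ — it also enters $A$ (via the indicator $\delta_{\e_1+\e_4<1}$ in Lemma~\ref{lem:g1n4integral} and the vanishing at $\beta_2=0$ in Lemma~\ref{lem:g2n2integral}), and for $B$ the indicator comes from an integral over $\Mb{0,4}$ rather than $\Mb{0,3}$, but since $A,\dots,E$ are given with their case splits already resolved this does not affect your argument.
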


\noindent $\bm{H_{i;h}^I:}$ These integrals are collected in the following lemma.
\begin{lemma}
The only non-vanishing integrals of type $H_{i;h}^I$ are the following: 
\begin{align*}
    H_{1;1}^{\{3\}}(\alpha)&= \frac{1}{144} \, \gamma^{4} - \frac{1}{72} \, \gamma^{3} + \frac{1}{48} \, \gamma^{2} - \frac{1}{48} \, \gamma + \frac{1}{64},\\
    H_{3;1}^{\{1\}}(\alpha) & = \frac{1}{144} \, \gamma^{4} - \frac{1}{24} \, \gamma^{3} + \frac{5}{48} \, \gamma^{2} - \frac{17}{144} \, \gamma + \frac{11}{192}, \\
    H_{1,2}^{\{1\}}(\alpha) & =\left(\frac{1}{288} \, \gamma^{4} - \frac{1}{45} \, \gamma^{3} + \frac{9}{160} \, \gamma^{2} - \frac{5}{72} \, \gamma + \frac{47}{1152}\right)\delta_{\gamma \ge3}, \\
    H_{3,2}^{\{3\}}(\alpha) & = \frac{1}{288} \, \gamma^{4} - \frac{1}{180} \, \gamma^{3} + \frac{1}{160} \, \gamma^{2} + \frac{3}{640}.
\end{align*}
\end{lemma}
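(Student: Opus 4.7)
The plan is to evaluate each integral $H_{i;h}^I(\alpha)$ by applying the projection formula for the gluing morphism $\mathrm{gl}:\Mb{h,|I|+1}\times\Mb{2-h,4-|I|}\rightarrow\Mb{2,3}$ representing $\delta_h^I$, combined with the gluing axiom (iii) of the partial CohFT from \hyperref[prop:cohft]{Proposition \ref*{prop:cohft}}. Since the inverse metric is $\eta^{\mu\nu}=\delta_{\mu+\nu,-1}$, the pullback $\mathrm{gl}^*\HresSpin{2}{2\gamma-2,2,-2\gamma+2}$ reduces to a single tensor product $\HresSpin{h}{2\alpha_I,2\mu}\boxtimes\HresSpin{2-h}{2\alpha_{[3]\setminus I},-2\mu-2}$, with $\mu$ uniquely determined by the degree constraint on the first factor. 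One further uses the factorization $\mathrm{gl}^*\lambda_2=\lambda_1\boxtimes\lambda_1$ along $\delta_1^I$ and $\mathrm{gl}^*\lambda_2=\lambda_2\boxtimes 1$ along $\delta_2^{\{i\}}$ (the latter because $\P^1$ carries no holomorphic differentials), while $\mathrm{gl}^*\psi_i$ restricts to the $\psi$-class at leg $i$ on whichever factor contains it.

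With these ingredients the vanishing of $H_{1;1}^{\{1\}}$ and $H_{3;1}^{\{3\}}$ is immediate by a dimension count: the $\psi$-class lands on the $\Mb{1,2}$ factor, producing an integrand of the form $\lambda_1\psi_*\HresSpin{1}{2\gamma-2,2-2\gamma}$ in cohomological degree $4+2(g-1+N)=6>4=\dim_\mathbb{R}\Mb{1,2}$. For the remaining four integrals, the evaluation is routine. The decomposition of $H_{1;1}^{\{3\}}$ reads
\begin{equation*}
    H_{1;1}^{\{3\}}(\alpha)=\left(\int_{\Mb{1,2}}\lambda_1\HresSpin{1}{-2\gamma+2,2\gamma-2}\right)\left(\int_{\Mb{1,3}}\lambda_1\psi_1\HresSpin{1}{2\gamma-2,2,-2\gamma}\right),
\end{equation*}
with the two factors computed via equation \hyperref[eq:g1prelim]{(\ref*{eq:g1prelim})} and \hyperref[lem:g1n3psiintegral]{Lemma \ref*{lem:g1n3psiintegral}} to be $(2\gamma^2-4\gamma+3)/24$ and $(2\gamma^2+3)/24$ respectively. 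The integral $H_{3;1}^{\{1\}}$ is treated analogously, but the entries of the genus $1$ class in the second factor must be permuted by $S_n$-equivariance so as to place the $\psi$-leg in position $1$ before invoking \hyperref[lem:g1n3psiintegral]{Lemma \ref*{lem:g1n3psiintegral}}; the result is $(2\gamma^2-4\gamma+3)(2\gamma^2-8\gamma+11)/576$.

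The rational-tail integrals $H_{1;2}^{\{1\}}$ and $H_{3;2}^{\{3\}}$ each factor into a genus $0$ piece $\int_{\Mb{0,3}}\HresSpin{0}{\ldots}$ and a genus $2$ piece supplied by \hyperref[lem:g2n2integral]{Lemma \ref*{lem:g2n2integral}}. The genus $0$ factor equals $1$ whenever the residueless stratum is nonempty. For $H_{3;2}^{\{3\}}$ the relevant stratum $\HresSpin{0}{2\gamma-2,2,-2\gamma-2}$ has a single pole, so the residueless condition is automatic, and the integral reduces directly to $\int_{\Mb{2,2}}\lambda_2\psi_1\HresSpin{2}{-2\gamma+2,2\gamma}$. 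For $H_{1;2}^{\{1\}}$ the corresponding stratum is $\HresSpin{0}{2,2-2\gamma,2\gamma-6}$, which has a single pole (and hence class $1$) for $\gamma\ge3$, but degenerates at $\gamma=2$ to $\HresSpin{0}{2,-2,-2}$; this last stratum is empty since the unique (up to scale) candidate differential on $\P^1$ of profile $(2,-2,-2)$ has nonzero residues at both of its poles, which produces the indicator $\delta_{\gamma\ge3}$. The main bookkeeping difficulty throughout is to correctly permute the arguments of the formula of \hyperref[lem:g1n3psiintegral]{Lemma \ref*{lem:g1n3psiintegral}} (symmetric in $\epsilon_2,\epsilon_3$ but distinguishing $\epsilon_1$) and to identify the genus $0$ residueless strata that vanish when more than one pole is present.
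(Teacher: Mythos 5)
Your proposal is correct and takes essentially the same route as the paper: restriction to the boundary divisors $\delta_h^I$ via the gluing axiom of the partial CohFT, the factorizations $\mathrm{gl}^*\lambda_2=\lambda_1\boxtimes\lambda_1$ (resp. $\lambda_2\boxtimes1$) along $\delta_1^I$ (resp. $\delta_2^{\{i\}}$), and insertion of the previously computed integrals from equation \hyperref[eq:g1prelim]{(\ref*{eq:g1prelim})}, \hyperref[lem:g1n3psiintegral]{Lemma \ref*{lem:g1n3psiintegral}} and \hyperref[lem:g2n2integral]{Lemma \ref*{lem:g2n2integral}}, and your intermediate values check out (the factors $(2\gamma^2-4\gamma+3)/24$, $(2\gamma^2+3)/24$ and $(2\gamma^2-8\gamma+11)/24$ indeed reproduce the stated polynomials, and the genus $0$ analysis correctly yields the indicator $\delta_{\gamma\ge3}$). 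You merely make explicit some points the paper leaves implicit, namely the dimension-count vanishing of $H_{1;1}^{\{1\}}$ and $H_{3;1}^{\{3\}}$ and the emptiness of $\mathcal{H}_0^{\mathrm{res}}(2,-2,-2)$ at $\gamma=2$.
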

\begin{proof}
    The case $h=1$ follows from intersections with boundary classes, and inserting the already computed values of the genus $1$ integrals. For example:
    \begin{align*}
        H_{1;1}^{\{3\}}(\alpha) & =  \left(\int_{\Mb{1,2}}\lambda_1 \HresSpin{1}{\alpha_3,-\alpha_3}\right)\left(\int_{\Mb{1,3}}\lambda_1\psi_1\HresSpin{1}{\alpha_1,\alpha_2, \alpha_3-2}\right).
    \end{align*}
    For $h=2$ one obtains for example the following product of integrals, where the genus $2$ integrals are computed in \hyperref[lem:g2n2integral]{Lemma \ref*{lem:g2n2integral}}:
    \begin{align*}
        H_{1,2}^{\{1\}}(\alpha) & = \left(\int_{\Mb{2,2}}\lambda_2\psi_1 \HresSpin{2}{\alpha_1,2-\alpha_1}\right)\left(\int_{\Mb{0,3}}\HresSpin{0}{\alpha_2,\alpha_3,-2-\alpha_2-\alpha_3}\right).
    \end{align*}
\end{proof}

\noindent $\bm{H_{h;l}^{I;J}:}$ Recall \cite[Appendix A]{GP03} that the stacky fibre product of two gluing maps $\xi_{\Gamma_1}, \xi_{\Gamma_2}$ is the disjoint union of moduli spaces $\Mb{\Gamma}$ over (isomorphism classes of) \textit{generic bi-specialisations}, which are spans $\Gamma_1 \leftarrow \Gamma \rightarrow \Gamma_2$ of specialisations of stable graphs such that edges in $\Gamma$ can be found in either $\Gamma_1$ or $\Gamma_2$ (under the given morphisms). In case these edges are found in both specialisations, we get $\psi$-classes as excess intersection contribution to the intersection of the cocycle classes corresponding to $\Gamma_{1,2}$, as shown in \cite[Proposition 2.19]{vS}:
\begin{proposition}\label{prop:excessint} 
    The following holds:
    \begin{align*}
        \delta_{\Gamma_1}\cdot\delta_{\Gamma_2} = \sum_{ \textrm{ generic }\Gamma_1 \leftarrow \Gamma \rightarrow \Gamma_2} (\xi_{\Gamma})_*\left(\prod_{\{h,h'\} \textrm{ in both } \Gamma_{1} \text{ and } \Gamma_2} (-\psi_h - \psi_{h'})\right).
    \end{align*}
\end{proposition}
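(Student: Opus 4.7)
The plan is to apply the standard machinery for intersecting two tautological boundary classes on $\Mgnb$: realize the product as a refined intersection on a fibre product of gluing loci, decompose this fibre product into irreducible components, and extract the excess contribution from each component.

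First I would recall that $\delta_{\Gamma_i}$ is the pushforward of the fundamental class of $\Mb{\Gamma_i}$ along the gluing morphism $\xi_{\Gamma_i}$, so by the projection formula and base change the computation of $\delta_{\Gamma_1}\cdot\delta_{\Gamma_2}$ reduces to describing the stacky fibre product $\Mb{\Gamma_1}\times_{\Mgnb}\Mb{\Gamma_2}$ together with its natural map into $\Mgnb$. The content of \cite[Appendix A]{GP03} is precisely this: the fibre product is a disjoint union indexed by isomorphism classes of generic bi-specializations $\Gamma_1\leftarrow\Gamma\rightarrow\Gamma_2$, where genericity means that every edge of $\Gamma$ is the image of an edge of $\Gamma_1$ or $\Gamma_2$. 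On the corresponding component the induced morphism into $\Mgnb$ is exactly $\xi_\Gamma$.

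Next I would identify the excess normal bundle on each component $\Mb{\Gamma}$. The gluing morphism $\xi_{\Gamma_i}$ is a regular embedding (modulo gerbe structure at nodes), and its normal bundle along the locus of a single edge $\{h,h'\}$ is the tensor product of cotangent lines $\mathbb{L}_h^\vee\otimes\mathbb{L}_{h'}^\vee$ of the two branches, whose first Chern class is $-\psi_h-\psi_{h'}$. For an edge of $\Gamma$ that originates from only one of $\Gamma_1$ or $\Gamma_2$ the intersection is generically transverse in that direction, whereas each edge that originates from both $\Gamma_1$ and $\Gamma_2$ corresponds to a duplicated normal direction, and so contributes one factor of $-\psi_h-\psi_{h'}$ to the excess. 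Feeding this data into the excess intersection formula of \cite[Proposition 2.19]{vS} produces exactly the product over shared edges claimed in the statement.

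The main step to verify carefully is the bookkeeping of shared edges under the two specialization maps, so that the product is taken over the right set. Once one unravels the definition of a generic bi-specialization this is automatic: shared edges of $\Gamma$ are by definition those pairs $\{h,h'\}$ of half-edges that are realized as a single edge in both $\Gamma_1$ and $\Gamma_2$ under the morphisms of the span. With this identification the statement is the direct combination of \cite[Appendix A]{GP03} and \cite[Proposition 2.19]{vS}, and the resulting formula for $\delta_{\Gamma_1}\cdot\delta_{\Gamma_2}$ is the one asserted.
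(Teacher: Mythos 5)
Your proposal is correct and follows essentially the same route as the paper, which simply invokes \cite[Appendix A]{GP03} for the decomposition of the stacky fibre product $\Mb{\Gamma_1}\times_{\Mgnb}\Mb{\Gamma_2}$ into components indexed by generic bi-specialisations and \cite[Proposition 2.19]{vS} for the excess intersection formula with factors $-\psi_h-\psi_{h'}$ over shared edges. Your additional identification of the excess normal directions via the line bundles $\mathbb{L}_h^\vee\otimes\mathbb{L}_{h'}^\vee$ is exactly the content packaged into those citations, so there is nothing to correct.
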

Here $\psi_h$ comes from the $\psi$-class for the marking corresponding to $h$ in the component of $\Mb{\Gamma}$ corresponding to the vertex that has $h$ as half-edge. It is the pull-back under the projection to said component. This excess intersection formula for gluing maps will be used to compute the remaining $H$-integrals, by reducing them to sums over compact-type generic bi-specialisations to the graphs $\Gamma^I_h$ and $\Gamma_l^J$. \\

\noindent Consider integrals with $h=l=2$. Note that $\delta_2^{\{1\}}\delta_2^{\{3\}} = 0$, so $H_{2;2}^{\{3\};\{1\}} = H_{2;2}^{\{1\};\{3\}} = 0.$ Now since $\bigl(\delta_2^{\{1\}}\bigr)^2 = - (\xi_{\Gamma_2^1})_*(\psi_2\otimes 1)$, 
\begin{align*}
    H_{2;2}^{\{1\};\{1\}}(\alpha) &= -\int_{2,2}\lambda_2\psi_2 \HresSpin{2}{\alpha_1,\alpha_2+\alpha_3}\delta_{\alpha_2+\alpha_3 < 0}\\ &=\left(-\frac{1}{288} \, \gamma^{4} + \frac{7}{360} \, \gamma^{3} - \frac{7}{160} \, \gamma^{2} + \frac{31}{720} \, \gamma - \frac{23}{1152}\right)\delta_{\gamma\ge3}.
\end{align*}
By symmetry, and since both $\alpha_1,\alpha_2$ are positive for our range, 
\begin{align*}
    H_{2;2}^{\{3\};\{3\}}(\alpha) & = -\int_{\Mb{2,2}}\lambda_2\psi_2 \HresSpin{2}{\alpha_3,\alpha_2+\alpha_1} \\
    & = \frac{1}{288} \, \gamma^{4} + \frac{1}{120} \, \gamma^{3} - \frac{1}{96} \, \gamma^{2} + \frac{7}{720} \, \gamma - \frac{17}{1920}.
\end{align*}

\noindent Consider integrals with $h=l=1$.
The only generic compact-type bi-specialisation of $(\Gamma_1^{\{1\}},\Gamma_1^{\{1\}})$ is $\Gamma_1^{\{1\}}$ itself and so $\bigl(\delta_1^{\{1\}}\bigr)^2 = -(\xi_{\Gamma_1^{\{1\}}})_*(\psi_2\otimes 1) -(\xi_{\Gamma_1^{\{1\}}})_*(1\otimes\psi_3)$. Therefore
\begin{align*}
    H_{1;1}^{\{1\};\{1\}}(\alpha) & = -\left(\cancel{\int_{\Mb{1,2}}\lambda_1\psi_2\HresSpin{1}{\a_1,-\alpha_1}}\right)\left(\int_{\Mb{1,3}}\lambda_1\HresSpin{1}{\alpha_2,\alpha_3, -\alpha_2-\alpha_3}\right) \\ 
    & \quad- \left(\int_{\Mb{1,2}}\lambda_1\HresSpin{1}{\alpha_1,-\alpha_1}\right)\left(\int_{\Mb{1,3}}\lambda_1\psi_3\HresSpin{1}{\alpha_2,\alpha_3, -\alpha_2-\alpha_3}\right) \\ 
    & = -\frac{1}{144} \, \gamma^{4} + \frac{1}{36} \, \gamma^{3} - \frac{1}{18} \, \gamma^{2} + \frac{1}{18} \, \gamma - \frac{5}{192},
\end{align*}
in which the first integral vanishes by dimension reasons as $\alpha_1\neq 0$. By symmetry, 
\[H_{1;1}^{\{3\};\{3\}}(\alpha) = -\frac{1}{144} \, \gamma^{4} + \frac{1}{36} \, \gamma^{3} - \frac{1}{18} \, \gamma^{2} + \frac{1}{18} \, \gamma - \frac{5}{192}.\]

\noindent The generic compact-type bi-specialisation of $(\Gamma_1^{\{1\}},\Gamma_1^{\{3\}})$ is obtained by replacing in $\Gamma_1^{\{3\}}$ the genus $1$ vertex and markings $1,2$ with $\Gamma_0^{\{2\}} \in G_{1,2}$. In this case, the specialisations have no common edge and thus no $\psi$-classes appear. As a result, 
\begin{align*} 
    H_{1;1}^{\{1\};\{3\}}(\alpha) = & \; \left(\int_{\Mb{1,2}}\lambda_1\HresSpin{1}{\alpha_1,-\alpha_1}\right)\left(\int_{\Mb{0,3}}\HresSpin{0}{\alpha_2,\alpha_1-2, \alpha_3-2}\right) \\ & \times \left(\int_{\Mb{1,2}}\lambda_1\HresSpin{1}{\alpha_3,-\alpha_3}\right) = \frac{\alpha_1^2+2}{48}\cdot\frac{\alpha_3^2+2}{48}.
\end{align*}

\noindent Consider integrals with $h=2, l=1$.
The only generic compact-type bi-specialisation of $(\Gamma^{\{1\}}_2,\Gamma^{\{1\}}_1)$ is to replace the genus $1$ vertex with two legs in $\Gamma^{\{1\}}_1$ by $\Gamma_0^{\{1,2\}} \in G_{0,3}$. Thus, by similar reasoning as above: 
\begin{align*}
    H_{2;1}^{\{1\};\{1\}}(\alpha) = & \; \left(\int_{\Mb{1,2}}\lambda_1\HresSpin{1}{\alpha_1,-\alpha_1}\right)\left(\int_{\Mb{1,2}}\lambda_1\HresSpin{1}{\alpha_1-2,2-\alpha_1}\right) \\ 
    & \times\left(\int_{\Mb{0,3}}\HresSpin{0}{\alpha_2,\alpha_3,\alpha_1-4}\right)=\frac{\alpha_1^2+2}{48}\frac{(\alpha_1-2)^2+2}{48}\cdot\delta_{\alpha_1\geq 4}.
\end{align*} 
By symmetry, 
\[H_{2;1}^{\{3\};\{3\}}(\alpha)=\frac{\alpha_3^2+2}{48}\frac{(\alpha_3-2)^2+2}{48}.\]
Lastly, no generic compact-type bi-specialisations of $(\Gamma_1^I, \Gamma_2^J)$ exist when $I \neq J$, so 
\begin{equation*}
    H_{2;1}^{\{1\};\{3\}}(\alpha) = H_{2;1}^{\{3\};\{1\}}(\alpha) = 0.
\end{equation*}

\noindent\textit{Proof of \hyperref[g2n3integral]{Proposition \ref*{g2n3integral}}.} By using \hyperref[eq:hainhain]{(\ref*{eq:hainhain})}  and the $H$-integrals one obtains
\begin{align*}
    &\int_{\Mb{2,3}}\DR_2(-a,0,a)\lambda_2\HresSpin{2}{2\gamma-2,2, 2- 2\gamma} \\
    = &\; \frac{a^4}{8} \Big[H_{11}(\alpha) + 2H_{13}(\alpha)+H_{33}(\alpha) -2\bigl(H_{1;1}^{\{1\}}(\alpha) + H_{1;1}^{\{3\}}(\alpha) + H_{3;1}^{\{1\}}(\alpha) + H_{3;1}^{\{3\}}(\alpha) \\
    & \quad + H_{1;2}^{\{1\}}(\alpha) + H_{3;2}^{\{3\}}(\alpha)\bigr) + H_{1;1}^{\{1\};\{1\}}(\alpha) + 2H_{1;1}^{\{1\};\{3\}}(\alpha) + 2H_{1;2}^{\{1\};\{1\}}(\alpha) + 2H_{1;2}^{\{1\};\{3\}}(\alpha) \\ 
    & \quad + H_{1;1}^{\{3\};\{3\}}(\alpha) + 2H_{1;2}^{\{3\};\{1\}}(\alpha) + 2H_{1;2}^{\{3\};\{3\}}(\alpha) + H_{2;2}^{1;1}(\alpha) + 2H_{2;2}^{\{1\};\{3\}}(\alpha) + H_{2;2}^{\{3\};\{3\}}(\alpha)\Big] \\
    = &\; \begin{cases} \frac{7a^4}{5760}, & \gamma = 2, \\ \frac{a^4}{1440}, & \gamma \ge 3.
    \end{cases}
\end{align*}
\qed

\begin{center}\section{KP and BKP hierarchies}\label{sec:kp}\end{center}
\subsection{The KP hierarchy}
For a more comprehensive introduction to the KP hierarchy, see \cite{mjd00} or \cite{dic03} or \cite{hir04}. For all integers $i\ge1$ and $k\ge0$, let $f_i$ be a function of the time variables $T_1,T_2,T_3,\dots$, and let $f^{(k)}_i:=\d_x^kf_i$ be the $k^\text{th}$ derivative of $f_i$ with respect to the first variable $x:=T_1$. In what follows, we treat each $f_i$ as a formal variable. The ring of differential polynomials associated to these formal variables is $\mathcal{R}_f:=\C[f_*^{(*)}]$, where an asterisk indicates that a subscript or superscript could take any value. A pseudo-differential operator in the variables $f_*^{(*)}$ is a Laurent series of the form
\begin{equation*}
    A = \sum_{n\le m}a_n\d_x^n,\qquad m\in\Z,\quad a_n\in\R_f.
\end{equation*}
We denote the positive part of $A$ by $A_+:=\sum_{n=0}^ma_n\d_x^n$, and define $\operatorname{res} A$ to be $a_{-1}$. Let $\PO_f$ denote the space of pseudo-differentials in the variables $f_*^{(*)}$. The product $\circ$ of pseudo-differential operators is defined by linearly extending the rule
\begin{equation*}
    \bigl(a\d_x^k\bigr)\circ\bigl(b\d_x^l) := \sum_{p\ge0}\binom{k}{p}a\,b^{(p)}\d_x^{k+l-p},\qquad a,b\in\R_f,\quad k,l\in\Z,
\end{equation*}
where $b^{(p)}$ denotes $\d_x^pb$. This endows $\PO_f$ with the structure of an associative algebra. The commutator bracket of $A,B\in\PO_f$ is $[A,B]=A\circ B-B\circ A$. Let
\begin{equation*}
    L:=\d_x+\sum_{i\ge1}f_i\d_x^{-i}\in\PO_f.
\end{equation*}
The KP hierarchy is the system of compatible evolutionary PDEs in the variables $f_i$ depending on the time variables $x=T_1,T_2,T_3,\dots$ defined by
\begin{equation*}
    \dd{L}{T_k}=[(L^k)_+,L],\qquad k\ge1.
\end{equation*}
Thus the equations of the hierarchy are
\begin{equation}\label{eq:kph}
    \dd{f_i}{T_k}=\operatorname{Coef}_{\d_x^{-i}}[(L^k)_+,L]=:S_{i,k},\qquad i,k\ge1,
\end{equation}
where $\operatorname{Coef}_{\d_x^{-i}}[(L^k)_+,L]$ indicates the coefficient of $\d_x^{-i}$ of $[(L^k)_+,L]$. By compatibility of the equations we mean
\begin{equation*}
    \frac{\d}{\d T_\ell}\dd{f_i}{T_k} = \frac{\d}{\d T_k}\dd{f_i}{T_\ell},  \quad i,k,\ell\ge1.
\end{equation*}
For a proof that this is indeed the case, see \cite[Theorem 4.8]{bur22}. Notice that $L_+=\d_x$, so 
\begin{equation*}
    \dd{f_i}{T_1} = S_{i,1} = f_i^{(1)},\quad i\ge1,
\end{equation*}
which justifies the identification $x=T_1$. Define three gradings on $\R_f$: 
\begin{itemize}
    \item The differential grading $\deg_{\d_x}$ is given by $\deg_{\d x}f_i^{(k)}:=k$. The corresponding homogeneous component of $\R_f$ of degree $d$ is denoted by $\R_f^{[d]}$. Let $\R_f^\mathrm{ev}=\bigoplus_{d\ge0}\R_f^{[2d]}$.
    \item The grading $\deg$ is given by $\deg\,f^{(k)}_i:=i+1+k$.
    \item The grading $\widetilde{\deg}$ is given by $\widetilde{\deg}f_i^{(k)}:=1$. The corresponding homogeneous component of degree $d$ is denoted by $\R_{f;d}$, and let $\R_{f;\ge l}:=\bigoplus_{d\ge l}\R_{f;d}$.
\end{itemize}  
We extend $\deg$ from $\R_f$ to a grading on $\PO_f$ by setting $\deg\,\d_x:=1$. Then, for example, from equation \hyperref[eq:kph]{(\ref*{eq:kph})} it follows that $\deg S_{i,k}=i+k+1$. Let $f_{\le m}^{(*)}$ denote the collection of all variables $f_i^{(*)}$ with $i\le m$. One can readily show that
\begin{equation}\label{eq:sik}
    S_{i,k}=kf_{i+k-1}^{(1)}+\widetilde{S}_{i,k}(f^{(*)}_{\le i+k-2})
\end{equation}
for some differential polynomial $\widetilde{S}_{i,k}\in\R_{f;\ge1}$. Let us write some nontrivial differential polynomials $S_{i,k}$:
\begin{gather}
    \label{eq:s13} S_{1,3} = 3f_3^{(1)} + 3f_2^{(2)}+f_1^{(3)} + 3f_1f_1^{(1)}, \\
    \label{eq:s33} S_{3,3} = 3f_5^{(1)} + 3f_4^{(2)} + f_3^{(3)} + 3f_1f_3^{(1)} + 9f_1^{(1)}f_3 - 3f_1f_2^{(2)}   + 6f_2f_2^{(1)} - 3f_1^{(2)}f_2
\end{gather}

We now pass to the normal coordinates of the KP hierarchy. Notice that $\deg(\operatorname{res}L^k)=k+1$, and $\frac{\d}{\d f_k}\operatorname{res}L^k=k$, so $\operatorname{res}L^k-kf_k$ only depends on $f_{\le k-1}^{(*)}$. Therefore the change of variables $f_\a\mapsto w_\a(f^{(*)}_*):=\operatorname{res} L^\a$ is invertible. Clearly,
\begin{equation}\label{eq:waf}
    w_\a(f_*^{(*)}) = \a f_\a + \widetilde{w}_\a(f_{\le\a-1}^{(*)}).
\end{equation}
A direct computation gives the first few values of $w_\a(f_*^{(*)})$:
\begin{align}
    \label{eq:w1} w_1 = & \; f_1, \\
    w_2 = & \; 2f_2+f_1^{(1)}, \\
    \label{eq:w3} w_3 = & \; 3f_3 + 3f_2^{(1)} + f_1^{(2)} + 3f_1^2, \\
    w_4 = & \; 4f_4 + 6f_3^{(1)} + 4f_2^{(2)} + 12f_1f_2 + f_1^{(3)} + 6f_1f_1^{(1)}, \\
    \label{eq:w5} w_5 = & \; 5f_5 + 10f_4^{(1)} + 10f_3^{(2)} + 20f_1f_3 + 5f_2^{(3)} + 20f_1f_2^{(1)} + 10f_2^2 + 10f_1^{(1)}f_2 + f_1^{(4)} \\
    \nonumber & + 10f_1f_1^{(2)} + 5f_1^{(1)}f_1^{(1)} + 10f_1^3.
\end{align}
The KP hierarchy \hyperref[eq:kph]{(\ref*{eq:kph})} written in the variables $w_\a$ has the form
\begin{equation*}
    \centering\dd{w_\a}{T_\b}=\d_xR_{\a\b},\quad\a,\b\ge1,
\end{equation*}
where
\begin{gather*}
    R_{\a\b}\in\R_{w;\ge1},\\
    \deg R_{\a\b}=\a+\b,\\
    R_{\a,1}=R_{1,\a}=w_\a,\\
    R_{\a\b}=R_{\b\a}.
\end{gather*}
In \cite[Lemma 3.9]{brz24} it was proved that $R_{\a\b}\in\R_{w;\ge1}^\mathrm{ev}$, as opposed to just $R_{\a\b}\in\R_{w;\ge1}$.

\subsection{The BKP hierarchy}

There is an involution on the algebra of pseudo-differential operators given by
\begin{equation*}
    \Biggl(\sum_{n\le m}a_n\d_x^n\Biggr)^\dagger:=\sum_{n\le m}(-\d_x)^n\circ a_n.
\end{equation*}
The BKP hierarchy \cite{dkm81,djkm82} is a reduction of the KP hierarchy, obtained by imposing the constraint
\begin{equation}\label{eq:bkpred}
    L^\dagger = -\d_xL\d_x^{-1}
\end{equation}
on the pseudo-differential operator $L=\d_x+\sum_{i\ge1}f_i\d_x^{-i}$. Comparing the coefficients of $\d_x^{-i},i\ge1$, in equation \hyperref[eq:bkpred]{(\ref*{eq:bkpred})} one finds that
\begin{equation}\label{eq:fbkp}
    f_{2i}=-if_{2i-1}^{(1)}-\frac12\sum_{j=1}^{2i-2}\binom{2i-1}{j-1}f_j^{(2i-j)},\qquad i\ge1.
\end{equation}
Therefore, under the BKP constraint, we have
\begin{equation*}
    f_2 = -f_1^{(1)}, \; f_4 = -2f_3^{(1)} + f_1^{(3)}, \; \dots
\end{equation*}
In general, the variables $f_\mathrm{even}$ indexed by even numbers can be expressed solely in terms of variables indexed by odd numbers:
\begin{equation}\label{eq:fev}
    f_{2i}\bigl(f_\mathrm{odd}^{(*)}\bigr)=\sum_{j=1}^i\binom{2i}{2j-1}\frac{(1-2^{2j})B_{2j}}{j}f_{2i-2j+1}^{(2j-1)}, \qquad i \ge1,
\end{equation}
where $B_{2j}$ denotes the Bernoulli number.
To prove this, one writes $f_{2i}=\sum_{j=1}^ic_{i,j}f_{2i-2j+1}^{(2j-1)}$ and proves that $c_{i,j}=\binom{2i}{2j-1}\frac{(1-2^{2j})B_{2j}}{j}$ for all $j\le i$ and $i\ge1$ by induction on $i$. One uses the property \hyperref[eq:fbkp]{(\ref*{eq:fbkp})} together with the standard property of the Bernoulli numbers $\sum_{j=0}^n\binom{n+1}{j}B_j=\delta_{n,0}$ and the identity \cite{ll05,mag08}
\begin{equation*}
    \sum_{j=0}^n\binom{2n+1}{2j}(2-2^{2j})B_{2j}=\delta_{n,0}.
\end{equation*}
Moreover, the constraint \hyperref[eq:bkpred]{(\ref*{eq:bkpred})} is invariant under the odd flows of the KP hierarchy (see \cite{zab21}):
\begin{equation*}
    \frac{\d}{\d T_{2k-1}}(L^\dagger+\d_xL\d_x^{-1})=0,\qquad k\ge1.
\end{equation*}
The BKP hierarchy is the system compatible evolutionary PDEs in the odd variables $f_1,f_3,f_5,\dots$ depending on the odd times $x=T_1,T_3,T_5,\dots$ given by
\begin{equation*}
    \dd{f_{2i-1}}{T_{2k-1}}=S^\mathrm{BKP}_{i,k},\qquad i,k\ge1,
\end{equation*}
where $S^\mathrm{BKP}_{i,k}$ denotes $S_{2i-1,2k-1}$ under the substitutions $f_\mathrm{even}^{(*)}\mapsto f_\mathrm{even}^{(*)}\bigl(f_\mathrm{odd}^{(*)}\bigr)$ given by \hyperref[eq:fev]{(\ref*{eq:fev})}. Let us write $S_{1,2}^\mathrm{BKP}$ and $S_{2,2}^\mathrm{BKP}$, starting from \hyperref[eq:s13]{(\ref*{eq:s13})} and \hyperref[eq:s33]{(\ref*{eq:s33})}:
\begin{gather}
    \label{eq:s13bkp} S_{1,2}^\mathrm{BKP} = 3f_3^{(1)} - 2f_1^{(3)} + 6f_1f_1^{(1)}, \\
    \label{eq:s33bkp} S_{2,2}^\mathrm{BKP} = 3f_5^{(5)} - 5f_3^{(3)} + 3f_1f_3^{(1)} + 9f_1^{(1)}f_3 + 3f_1^{(5)} + 3f_1f_1^{(3)} + 9f_1^{(1)}f_1^{(2)}.
\end{gather}

The normal coordinates $w_\a=\operatorname{res}L^\a$ of the KP hierarchy give normal coordinates $w_{2\a-1}$ of the BKP hierarchy. Once the BKP constraint \hyperref[eq:bkpred]{(\ref*{eq:bkpred})} is imposed, equations \hyperref[eq:w1]{(\ref*{eq:w1})}-\hyperref[eq:w1]{(\ref*{eq:w5})} combined with \hyperref[eq:fev]{(\ref*{eq:fev})} give the first few values of $w_{2\a-1}(f_\mathrm{odd}^{(*)})$:
\begin{align}
    \label{eq:w1bkp} w_1 & = f_1, \\
    w_3 & = 3f_3 - 2f_1^{(2)} + 3f_1^2, \\
    \label{eq:w5bkp} w_5 & = 5f_5 - 10f_3^{(2)} + 20f_1f_3 + 6f_1^{(4)} - 10f_1f_1^{(2)} + 5f_1^{(1)}f_1^{(1)} + 10f_1^3.
\end{align}
Inverting this yields the first few values of $f_{2\a-1}(w_\mathrm{odd}^{(*)})$:
\begin{align}
    \label{eq:f1bkp} f_1 & = w_1, \\
    f_3 & = \frac13w_3 + \frac23w_1^{(2)} - w_1^2, \\
    \label{eq:f5bkp} f_5 & = \frac15w_5 + \frac23w_3^{(2)} - \frac43w_1w_3 + \frac{2}{15}w_1^{(4)} - \frac{14}{3}w_1w_1^{(2)} - 5w_1^{(1)}w_1^{(1)} + 2w_1^3.
\end{align}
In normal coordinates, the BKP hierarchy has the following form:
\begin{equation}\label{eq:bkphier}
    \dd{w_{2\a-1}}{T_{2\b-1}}=\d_xR^\mathrm{BKP}_{\a\b},\qquad\a,\b\ge1,
\end{equation}
where $R^\mathrm{BKP}_{\a\b}$ denotes $R_{2\a-1,2\b-1}$ with $w_\mathrm{even}^{(*)}\mapsto w_\mathrm{even}^{(*)}(w_\mathrm{odd}^{(*)})$, and
\begin{gather}
    \label{eq:bkpfirst} R_{\a\b}^\mathrm{BKP}\in\R_{w;\ge1}^\mathrm{ev}, \\
    \deg R_{\a\b}^\mathrm{BKP}=2\a+2\b-2, \\
    R_{\a,1}^\mathrm{BKP}=R_{1,\a}^\mathrm{BKP}=w_{2\a-1}, \\
    R_{\a\b}^\mathrm{BKP}=R_{\b\a}^\mathrm{BKP}.
\end{gather}
Combining \hyperref[eq:sik]{(\ref*{eq:sik})} and \hyperref[eq:waf]{(\ref*{eq:waf})} allows one to conclude that
\begin{equation}
     R_{\a\b}^\mathrm{BKP} = \frac{(2\a-1)(2\b-1)}{2\a+2\b-3}w_{2\a+2\b-3} + \widetilde{R}^\mathrm{BKP}_{\a\b}(w^{(*)}_{\le 2\a+2\b-5}), \quad \widetilde{R}^\mathrm{BKP}_{\a\b}\in\R^\mathrm{ev}_{w;\ge1},
\end{equation}
while combining \hyperref[eq:s13bkp]{(\ref*{eq:s13bkp})}-\hyperref[eq:s33bkp]{(\ref*{eq:s33bkp})} with \hyperref[eq:w1bkp]{(\ref*{eq:w1bkp})}-\hyperref[eq:f5bkp]{(\ref*{eq:f5bkp})} gives
\begin{equation}\label{eq:R33}
    R_{2,2}^\mathrm{BKP} = \frac95w_5 - w_3^{(2)} - 3w_1w_3 + \frac15w_1^{(4)} + 3w_1w_1^{(2)} + 3w_1^3.
\end{equation}
In the next section we show that this is all the information that is needed to uniquely determine all of the differential polynomials $R_{\a\b}^\mathrm{BKP}$, thanks to the commutativity of the flows of the BKP hierarchy.

\begin{center}\section{Reconstruction}\label{sec:rec}\end{center}
\subsection{Main result}

In \hyperref[sec:DRcohft]{Section \ref*{sec:DRcohft}} the following compatible system of PDEs was obtained by reducing the DR hierarchy associated to the partial CohFT $c_{g,n}(\otimes_{i=1}^n\a_i)=\HresSpin{g}{2\a_1,\dots,2\a_n}$ of \hyperref[prop:cohft]{Proposition \ref*{prop:cohft}} to the case where the differentials have two zeros, and switching to normal coordinates:
\begin{equation}\label{eq:normaldr}
    \dd{v_\a}{t^\b}=\d_x\Qspin{\a\b},\qquad\a,\b\ge1,
\end{equation}
where the differential polynomials $\Qspin{\a\b}$ satisfy
\begin{gather}
    \label{eq:normaldr1} \Qspin{\a\b}\in\widehat{\R}_{v;\ge1}^{\mathrm{ev};[0]}, \\
    \deg\Qspin{\a\b}=2\a+2\b-2, \\
    \Qspin{\a,1}=\Qspin{1,\a}=v_\a, \\
    \Qspin{\a\b}=\Qspin{\b\a}, \\
    \label{eq:Qab} \Qspin{\a\b} = v_{\a+\b-1} + \widetilde{Q}^\mathrm{spin}_{\a\b}(v^{(*)}_{\le\a+\b-2};\e), \quad \widetilde{Q}^\mathrm{spin}_{\a\b}\in\widehat{\R}^{\mathrm{ev};[0]}_{v;\ge1}, \\
    \label{eq:Q22} \Qspin{2,2} = v_3 + v_1v_2 + \frac13v_1^3 - \frac{\e^2}{6}v_2^{(2)}  - \frac{\e^2}{6}v_1v_1^{(2)} + \frac{\e^4}{180}v_1^{(4)}.
\end{gather}
The main result of this paper is the following.
\begin{theorem}\label{thm:main}
    The systems \hyperref[eq:normaldr]{(\ref*{eq:normaldr})} and \hyperref[eq:bkphier]{(\ref*{eq:bkphier})} are related by the change of variables
    \begin{equation*}
        v_\a = -\frac{w_{2\a-1}}{2\a-1},\quad t^\b=(2\b-1)T_{2\b-1}, \quad \a\ge1,\b\ge1,
    \end{equation*}
    together with the substitution $\e^2=2$.
\end{theorem}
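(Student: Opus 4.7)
The plan is to apply the prescribed change of variables to the reduced DR system \hyperref[eq:normaldr]{(\ref*{eq:normaldr})} and then invoke the reconstruction result \hyperref[thm:bkpr]{(Theorem \ref*{thm:bkpr})} to identify the outcome with the BKP hierarchy. Under the substitution $v_\a = -w_{2\a-1}/(2\a-1)$, $t^\b = (2\b-1)T_{2\b-1}$, $\e^2 = 2$, the DR system takes the form $\d w_{2\a-1}/\d T_{2\b-1} = \d_x \widetilde{R}_{\a\b}$ with $\widetilde{R}_{\a\b} := -(2\a-1)(2\b-1)\,\Qspin{\a\b}$ rewritten in the variables $w^{(*)}_{2*-1}$. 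It thus suffices to show that the collection $\{\widetilde R_{\a\b}\}$ satisfies the full list of hypotheses characterizing $\{R^\mathrm{BKP}_{\a\b}\}$ in \hyperref[thm:bkpr]{Theorem \ref*{thm:bkpr}}.

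The hypotheses I would verify are: commutativity of the flows, symmetry $\widetilde R_{\a\b}=\widetilde R_{\b\a}$, the normalization $\widetilde R_{\a,1}=\widetilde R_{1,\a}=w_{2\a-1}$, evenness $\widetilde R_{\a\b}\in\R^\mathrm{ev}_{w;\ge 1}$, the degree condition $\deg\widetilde R_{\a\b}=2\a+2\b-2$, the prescribed linear dispersionless part $\tfrac{(2\a-1)(2\b-1)}{2\a+2\b-3}\,w_{2\a+2\b-3}$, tau-symmetry, compatibility with spatial translations, and finally the identification $\widetilde R_{2,2} = R^\mathrm{BKP}_{2,2}$. All structural properties except the last come directly from the corresponding properties of the $\Qspin{\a\b}$ recorded in \hyperref[sec:dr]{Section \ref*{sec:dr}}: commutativity is inherited from the DR hierarchy, symmetry, evenness, and the normalization $\Qspin{\a,1}=v_\a$ are immediate from \hyperref[lem:Pspins]{Lemma \ref*{lem:Pspins}} together with the defining relations in \hyperref[sec:DRcohft]{Section \ref*{sec:DRcohft}}, the degree condition is \hyperref[prop:deg]{Proposition \ref*{prop:deg}} transported through the grading $\deg u_\a^{(k)}=2\a+k$, the linear-part check is a direct arithmetic consequence of the leading term $v_{\a+\b-1}$ of $\Qspin{\a\b}$ under $v_\a\mapsto -w_{2\a-1}/(2\a-1)$, and tau-symmetry together with translation covariance are standard features of any DR hierarchy.

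The one truly computational step, which I expect to be the only obstacle, is the identification $\widetilde R_{2,2}=R^\mathrm{BKP}_{2,2}$. I would substitute $v_1=-w_1$, $v_2=-w_3/3$, $v_3=-w_5/5$ and $\e^2=2$ into the explicit expression \hyperref[eq:Q22]{(\ref*{eq:Q22})} for $\Qspin{2,2}$ and multiply by $-9=-(2\cdot 2-1)^2$, expecting to recover $\tfrac{9}{5}w_5 - w_3^{(2)} - 3w_1w_3 + \tfrac{1}{5}w_1^{(4)} + 3w_1w_1^{(2)} + 3w_1^3$, which is $R^\mathrm{BKP}_{2,2}$ as given in \hyperref[eq:R33]{(\ref*{eq:R33})}. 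This equality is precisely where the intersection-theoretic content of \hyperref[sec:comp]{Section \ref*{sec:comp}} --- in particular the genus-$2$ computations of \hyperref[lem:g2int]{Lemma \ref*{lem:g2int}} that fix the $\e^4$-coefficient --- enters the argument in an essential way, and it is also where the specific constants in the change of variables, notably the value $\e^2=2$, get forced on us. With this final check in hand, \hyperref[thm:bkpr]{Theorem \ref*{thm:bkpr}} yields $\widetilde R_{\a\b}=R^\mathrm{BKP}_{\a\b}$ for all $\a,\b\ge 1$, and the theorem follows.
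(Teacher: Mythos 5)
Your proposal is correct and is essentially the paper's own proof: one checks that the substitution $v_\a=-w_{2\a-1}/(2\a-1)$, $t^\b=(2\b-1)T_{2\b-1}$, $\e^2=2$ carries properties \hyperref[eq:normaldr1]{(\ref*{eq:normaldr1})}--\hyperref[eq:Q22]{(\ref*{eq:Q22})} of the reduced DR hierarchy into properties \hyperref[eq:bkpfirst]{(\ref*{eq:bkpfirst})}--\hyperref[eq:R33]{(\ref*{eq:R33})} of the BKP hierarchy (your arithmetic verification that $-9\,\Qspin{2,2}$ becomes $R^{\mathrm{BKP}}_{2,2}$ is exactly the decisive computational check), and then invokes the uniqueness of \hyperref[thm:bkpr]{Theorem \ref*{thm:bkpr}}. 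The only cosmetic inaccuracy is that tau-symmetry and translation covariance are not among the stated hypotheses of \hyperref[thm:bkpr]{Theorem \ref*{thm:bkpr}} (they appear only in the introduction's informal summary), so listing them as conditions to verify is harmless but unnecessary.
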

\begin{proof}
    The substitution sends properties \hyperref[eq:normaldr1]{(\ref*{eq:normaldr1})}-\hyperref[eq:Q22]{(\ref*{eq:Q22})} of the reduced DR hierarchy to properties \hyperref[eq:bkpfirst]{(\ref*{eq:bkpfirst})}-\hyperref[eq:R33]{(\ref*{eq:R33})} of the BKP hierarchy. Then the result follows from \hyperref[thm:bkpr]{Theorem \ref*{thm:bkpr}}.
\end{proof}

\begin{remark}\label{rem:Prelations}
    Tracing back the definitions, one finds (with $\e^2=2$) 
    \begin{equation}\label{eq:traceback}
        \partial_x{\Pspin{\alpha\beta}} = \frac{\mathfrak{T}_u^f\left(\res[L^{2\alpha-1},(L^{2\beta-1})_+]\right)}{(2\a-1)(2\b-1)} - \frac{\partial}{\partial t^\beta} \widetilde{P}^\mathrm{spin}_{1,\a}.
    \end{equation}
    Here $\mathfrak{T}^f_u$ denotes the coordinate transformation from $f^{(*)}_\mathrm{odd}$ to $u^{(*)}_*$ determined by 
    \begin{equation*}
        \Pspin{1,\a}(u^{(*)}_*;2) = -\frac{1}{2\a-1}\res L^{2\a-1}(f^{(*)}_\mathrm{odd})
    \end{equation*}
    and equation \hyperref[eq:fev]{(\ref*{eq:fev})}. Note however that $\res[L^{2\alpha-1},(L^{2\beta-1})_+]$ is a function of $f^{(*)}_{\leq 2\alpha+2\beta-3}$, while the coordinate transformation requires knowledge of $\Pspin{1,\a+\b-1}$, which concerns a trivial order condition but is of the same degree as $\Pspin{\a\b}$, to compute where to send $f_{2\alpha+2\beta-3}$. In contrast, the reconstruction result for the $\Qspin{\a\b}$ below determines differential polynomials of strictly higher degree. \newline

    \noindent For $(\alpha,\beta) = (3,2)$, equation \hyperref[eq:traceback]{(\ref*{eq:traceback})} reads $\partial_x{\Pspin{32}} + \frac{\partial}{\partial t^2} \widetilde{P}^\mathrm{spin}_{13} = \frac{1}{15}\mathfrak{T}_u^f\left(\res[L^{5},(L^{3})_+]\right)$.
    We know $\Pspin{1,\alpha}$ for $\alpha\leq 3$ and $\Pspin{22}$ from \hyperref[prop:Pspins]{Proposition \ref*{prop:Pspins}} for part of the coordinate transformation to $f_\mathrm{odd}^{(*)}$ and the computation of $\frac{\partial}{\partial t^2}\widetilde{P}^\mathrm{spin}_{13}$ respectively. The degree 8 differential polynomials $\Pspin{14}(u_{\le4}^{(*)};2)$ and $\Pspin{32}(u_{\le4}^{(*)};2)$ have terms involving intersection numbers in genus $g\leq 3$. Non-trivial genus $0$ terms can be computed via \cite[Example 3.4]{br23} and genus $1$ terms are computed similarly to \hyperref[lem:g1hain]{Lemma \ref*{lem:g1hain}} and \hyperref[lem:g1int1]{Lemma \ref*{lem:g1int1}}. In genus $2$, comparing coefficients gives the relation:
    \[\int_{\DR_2(-a,0,a)}\lambda_2\HresSpin{2}{4,2,-4} = \int_{\DR_2(-a,0,a)}\lambda_2\HresSpin{2}{0,6,-4} -  \frac{797}{5760}a^4.\] 
    This is confirmed by Propositions \ref{lem:firstintegral} and \ref{g2n3integral}. Using the values for these genus $2$ integrals we get the following in genus $3$:
    \[\int_{\DR_3(-a,0,a)}\lambda_3\HresSpin{3}{4,2,-2} = \int_{\DR_3(-a,0,a)}\lambda_3\HresSpin{3}{0,6,-2} - \frac{3933193}{111283200}a^6.\] 
    The same coordinate transformation produces similar relations for $(\alpha,\beta) = (2,3)$ and shows how redistributing the orders of the zeroes affects the values.
\end{remark}

\subsection{BKP hierarchy reconstruction}

\begin{theorem}\label{thm:bkpr}
    The commutativity of the flows $\left\{\frac{\d}{\d t^\b}\right\}_{\b\ge1}$ of the system \hyperref[eq:normaldr]{(\ref*{eq:normaldr})}, together with the properties \hyperref[eq:normaldr1]{(\ref*{eq:normaldr1})}-\hyperref[eq:Q22]{(\ref*{eq:Q22})}, completely determines all the differential polynomials $\Qspin{\a\b}$.
\end{theorem}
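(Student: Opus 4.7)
The plan is to prove uniqueness by strong induction on the total index $N=\alpha+\beta$, using commutativity as the inductive engine. First, I would convert the hypothesis that the flows $\left\{\frac{\partial}{\partial t^\beta}\right\}_{\beta\ge 1}$ commute into the cleaner identity
\begin{equation*}
\frac{\partial \Qspin{\alpha,\beta_1}}{\partial t^{\beta_2}} = \frac{\partial \Qspin{\alpha,\beta_2}}{\partial t^{\beta_1}} \qquad (\star)
\end{equation*}
valid for all $\alpha,\beta_1,\beta_2\ge 1$. Commutativity of \hyperref[eq:normaldr]{(\ref*{eq:normaldr})} directly gives $\partial_x$ of the difference of the two sides; one upgrades to $(\star)$ itself using $\ker\partial_x=\C[\e]$ combined with the strict positivity of $\deg$ on both sides, ruling out a stray $\e$-polynomial constant of integration.

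Next I would expand $(\star)$ at $\beta_1=2$ via the chain rule and exploit the structural decomposition $\Qspin{\alpha,\beta}=v_{\alpha+\beta-1}+\widetilde{Q}^\mathrm{spin}_{\alpha,\beta}(v^{(*)}_{\le\alpha+\beta-2};\e)$. The Jacobian $\partial \Qspin{\alpha,2}/\partial v_{\alpha+1}^{(0)}=1$ produces a single \emph{new} contribution $\partial_x\Qspin{\alpha+1,\beta_2}$ on the left-hand side; symmetrically, $\partial\Qspin{\alpha,\beta_2}/\partial v_{\alpha+\beta_2-1}^{(0)}=1$ produces $\partial_x\Qspin{\alpha+\beta_2-1,2}$ on the right. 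All remaining terms involve only $\Qspin{\gamma,\beta_2}$ with $\gamma\le\alpha$, $\Qspin{\gamma,2}$ with $\gamma\le\alpha+\beta_2-2$, and $\Qspin{\alpha,\beta_2}$ itself. Both leading terms sit at total index $N=\alpha+\beta_2+1$, so after removing the outer $\partial_x$ (legitimate by positive homogeneity) this becomes a recurrence expressing one level-$N$ unknown in terms of another level-$N$ unknown plus data at level at most $N-1$.

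Combining this with the symmetry $\Qspin{\alpha\beta}=\Qspin{\beta\alpha}$, I would induct on $N$: the levels $N\le 4$ are covered by the hypotheses, the only nontrivial seed being $\Qspin{2,2}$ from \hyperref[eq:Q22]{(\ref*{eq:Q22})}. At each $N\ge 5$, the family of equations coming from $(\star)$ at $\alpha+\beta_2=N-1$, together with the symmetry identifications, is designed to provide exactly enough linear relations between the level-$N$ unknowns $\{\Qspin{\gamma,\delta}:\gamma+\delta=N\}$ to pin them all down, once level $\le N-1$ is known.

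The main obstacle is that $(\star)$ with $\beta_1=\beta_2=2$ degenerates, so the second column $\{\Qspin{\gamma,2}\}_{\gamma\ge 3}$ cannot be grown in isolation from its seed; it must be bootstrapped by travelling through the third column and returning via $\Qspin{\gamma,2}=\Qspin{2,\gamma}$. Concretely, applying $(\star)$ at $(\alpha,\beta_1,\beta_2)=(2,2,\gamma)$ yields $\frac{\partial\Qspin{2,2}}{\partial t^\gamma}=\frac{\partial\Qspin{2,\gamma}}{\partial t^2}$; since $\Qspin{2,2}$ depends only on $v^{(*)}_{\le 3}$, the left-hand side involves only $\Qspin{\delta,\gamma}$ for $\delta\le 3$ — namely $v_\gamma$, the target $\Qspin{2,\gamma}$, and $\Qspin{3,\gamma}=\Qspin{\gamma,3}$, the last of which is tied to $\Qspin{2,\gamma}$ via the column recursion for $\beta_2=3$. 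The hard step is to verify that for every $\gamma\ge 3$ this coupled system of polynomial identities, when compared coefficient by coefficient inside the finite-dimensional graded component of $\widehat{\R}_{v;\ge1}^{\mathrm{ev};[0]}$ of degree $2\gamma+2$, is nondegenerate — which should ultimately be traced back to the nontrivial dispersive corrections $-\tfrac{\e^2}{6}v_2^{(2)}$, $-\tfrac{\e^2}{6}v_1v_1^{(2)}$ and $\tfrac{\e^4}{180}v_1^{(4)}$ in $\Qspin{2,2}$, which make the induced linear system triangular with nonzero diagonal at each level.
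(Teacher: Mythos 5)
Your reduction skeleton is sound and in fact coincides with the paper's opening moves: the compatibility identity (your $(\star)$, the paper's equation \hyperref[eq:pq2b]{(\ref*{eq:pq2b})}) with $(p,q)=(\a-1,\b)$ reduces all $\Qspin{\a\b}$ with $\a,\b\ge3$ to the column $\{\Qspin{\g,2}\}$, and you correctly identify that $\b_1=\b_2=2$ degenerates, so this column must be bootstrapped through the third column via the combination $\Qspin{\a,3}-\Qspin{\a+1,2}$ — exactly the paper's $\mathcal{Q}_\a$, obtained from $(p,q)=(2,\a)$ and then eliminated by playing $(p,q)=(2,\a+1)$ against $(p,q)=(\a,3)$. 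The genuine gap is your last sentence. You defer the entire content of the theorem to the unproved claim that the resulting coupled system, compared coefficientwise in each finite-dimensional graded piece, is nondegenerate because the dispersive terms $-\frac{\e^2}{6}v_2^{(2)}$, $-\frac{\e^2}{6}v_1v_1^{(2)}$, $\frac{\e^4}{180}v_1^{(4)}$ of $\Qspin{2,2}$ make it ``triangular with nonzero diagonal''. That is not what happens. Writing $Q_{\a\b}$ for $\Qspin{\a\b}$, the master relation determines, at each stage, only the nonlinear ($\widetilde{\deg}\ge2$) parts $Q_{\a,2}^{[[\g]]}$, and determines them merely as functions of the still-unknown linear dispersion coefficients $C_{\g^\prime}$, $\g^\prime\ge\g+1$ (the coefficients of $\e^{2\a-2\g+2}v_\g^{(2\a-2\g+2)}$): this is the content of \hyperref[prop:deg2]{Proposition \ref*{prop:deg2}}, whose proof already requires a double induction and repeated use of the constraint that certain combinations lie in $\operatorname{im}(\d_x)$ (the same mechanism that fixes the genus-zero quadratic part in \hyperref[lem:deg2]{Lemma \ref*{lem:deg2}}). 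No triangular structure pins down the $C_\g$ themselves; in genus $g$ the coefficient $C_\g$ enters only through terms with $\g\ge\a+1-g$, so the linear data genuinely float across levels.

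To handle them, the paper resorts to a non-constructive argument that your proposal has no counterpart for: assume two solutions $Q_{\a,2}\neq\widehat{Q}_{\a,2}$, let $g$ be the lowest genus where they differ, write the difference as $\lambda v_\b^{(2g)}+\frac12\sum\omega_{ij}v_i^{(j)}v_{\b-i}^{(2g-j)}+\cdots$ with $\b=\a+1-g$ and $\lambda=C_\b-\widehat{C}_\b\neq0$, and extract from the $\widetilde{\deg}=2$ part of the $\e^{2g}$ coefficient of the master relation \hyperref[eq:masterb]{(\ref*{eq:masterb})} a system of linear relations among $\lambda$ and the $\omega_{ij}$, which forces $\lambda=0$ only after a case analysis ($g=\a$, $g=\a-1$, $g=1$, generic $g$), the case $g=1$ requiring the separately computed piece $Q_{\a,2}^{[[\a-1]]}$ of \hyperref[eq:a-1part]{(\ref*{eq:a-1part})}. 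Note that $\lambda$ multiplies monomials of the same degree, the same $\e$-order, and the same $\widetilde{\deg}$ on both sides of these relations, so there is no diagonal entry sourced by the $\e$-corrections of $\Qspin{2,2}$ that isolates it; the contradiction emerges from the interplay of binomial coefficients and the symmetry $\omega_{ij}=\omega_{\b-i,2g-j}$, not from triangularity. Until you supply analogues of \hyperref[prop:deg2]{Proposition \ref*{prop:deg2}} and of this $\lambda\neq0$ contradiction, your induction does not close: the ``hard step'' you flag is not a verification left to the reader but is precisely the theorem.
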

The proof of the theorem is split into several steps. In what follows we write $Q_{\a\b}:=\Qspin{\a\b}$ for brevity.

\begin{lemma}
    In the situation of \hyperref[thm:bkpr]{Theorem \ref*{thm:bkpr}}, all the polynomials $Q_{\a\b}$ with $\a,\b\ge3$ can be determined recursively from the polynomials $Q_{\g,2}$ with $\g\le\a+\b-2$.
\end{lemma}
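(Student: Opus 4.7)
The plan is to extract $Q_{\alpha\beta}:=\Qspin{\alpha\beta}$ linearly from the commutativity relation between the flows $\partial/\partial t^2$ and $\partial/\partial t^\beta$ evaluated on $v_{\alpha-1}$. Write $D_\beta f:=\partial f/\partial t^\beta=\sum_{\gamma,k}(\partial f/\partial v_\gamma^{(k)})\partial_x^{k+1}Q_{\gamma\beta}$ for the total derivation along the $\beta$-th flow. Since each $D_\beta$ commutes with $\partial_x$ and $\ker\partial_x=\C[\e]$ in $\widehat{\R}_v$, the compatibility $\partial_x D_{\beta_1}Q_{\mu\beta_2}=\partial_x D_{\beta_2}Q_{\mu\beta_1}$ is equivalent to
\begin{equation*}
D_{\beta_1}Q_{\mu\beta_2}-D_{\beta_2}Q_{\mu\beta_1}\in\C[\e].
\end{equation*}

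The key step is to apply this identity with $\mu=\alpha-1$ and $(\beta_1,\beta_2)=(\beta,2)$. By property (\ref{eq:Qab}) one has $Q_{\alpha-1,2}=v_\alpha+\widetilde{Q}^{\mathrm{spin}}_{\alpha-1,2}(v^{(*)}_{\le\alpha-1};\e)$, so the variable $v_\alpha$ enters $Q_{\alpha-1,2}$ with coefficient $1$ and no $x$-derivatives. Consequently the leading piece of $D_\beta Q_{\alpha-1,2}$ is precisely $\partial_x Q_{\alpha\beta}$, while every other contribution involves $Q_{\gamma\beta}$ with $\gamma\le\alpha-1$ only. Rearranging gives
\begin{equation*}
\partial_x Q_{\alpha\beta}=D_2 Q_{\alpha-1,\beta}-\sum_{\gamma\le\alpha-1,\,k\ge 0}\frac{\partial\widetilde{Q}^{\mathrm{spin}}_{\alpha-1,2}}{\partial v_\gamma^{(k)}}\,\partial_x^{k+1}Q_{\gamma\beta}+c
\end{equation*}
for some $c\in\C[\e]$ from the kernel above.

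Induction on $N:=\alpha+\beta\ge 6$ will then finish the argument. The base case $N=6$, $(\alpha,\beta)=(3,3)$, is handled by the formula above with $\mu=2$: the right-hand side contains $Q_{1,3}=v_3$, the datum $Q_{2,3}=Q_{3,2}$, and $D_2 Q_{2,3}$, which requires only $Q_{\gamma,2}$ with $\gamma\le 4=N-2$, precisely the hypothesized data. For the inductive step, every $Q_{\gamma\beta}$ with $\gamma\le\alpha-1$ satisfies $\gamma+\beta<N$ and is therefore already known, either by induction when $\gamma\ge 3$ or trivially when $\gamma\in\{1,2\}$; likewise $Q_{\alpha-1,\beta}$ is known, by induction when $\alpha\ge 4$ and as the datum $Q_{\beta,2}$ when $\alpha=3$. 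Since $Q_{\alpha-1,\beta}$ depends only on $v^{(*)}_{\le\alpha+\beta-2}$, the operator $D_2 Q_{\alpha-1,\beta}$ calls upon $Q_{\gamma,2}$ exactly within the allowed range $\gamma\le\alpha+\beta-2$. Integration then determines $Q_{\alpha\beta}$ up to a constant in $\C[\e]$, which is fixed to zero by the constraint $Q_{\alpha\beta}\in\widehat{\R}^{\mathrm{ev};[0]}_{v;\ge 1}$.

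The main obstacle is bookkeeping rather than substance: one must verify at each inductive step that every quantity appearing on the right-hand side either belongs to the given data $\{Q_{\gamma,2}:\gamma\le\alpha+\beta-2\}$ or has already been constructed, and that the coefficient of $Q_{\alpha\beta}$ is genuinely $\partial_x$ (i.e.\ that the leading $v_\alpha$ in $Q_{\alpha-1,2}$ is not accompanied by any $x$-derivatives of $v_\alpha$). Both of these verifications follow immediately from property (\ref{eq:Qab}), so the lemma reduces entirely to a clean linear recursion driven by the commutativity identity.
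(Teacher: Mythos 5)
Your proposal is correct and takes essentially the same route as the paper: its equation \hyperref[eq:pq2b]{(\ref*{eq:pq2b})} with $(p,q)=(\a-1,\b)$ is exactly your identity, your $D_2Q_{\a-1,\b}$ containing the paper's leading term $\d_xQ_{\a+\b-2,2}$, followed by the same recursion on the lower-order terms (organized by you as induction on $\a+\b$, by the paper via $\deg$). Your explicit treatment of $\ker\d_x=\C[\e]$ (the constant $c$, forced to vanish since all terms have $\widetilde{\deg}\ge1$) makes precise a step the paper passes over silently.
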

\begin{proof}
    For $p,q\ge1$, consider the compatibility relations $\frac{\d}{\d t^q}\dd{v_p}{t^2}=\frac{\d}{\d t^2}\dd{v_p}{t^q}$ of the system. These are equivalent to $\dd{Q_{p,2}}{t^q}=\dd{Q_{p,q}}{t^2}$. By writing $Q_{p,2}=v_{p+1}+\widetilde{Q}_{p,2}$ and $Q_{p,q}=v_{p+q-1}+\widetilde{Q}_{p,q}$ one obtains
    \begin{equation}\label{eq:pq2b}
        \d_xQ_{p+1,q} = \d_xQ_{p+q-1,2} + \sum_{i=1}^{p+q-2}\sum_{j\ge0}\dd{\widetilde{Q}_{p,q}}{v_i^{(j)}}\d_x^{j+1}Q_{i,2} - \sum_{i=1}^p\sum_{j\ge0}\dd{\widetilde{Q}_{p,2}}{v_i^{(j)}}\d_x^{j+1}Q_{i,q}.
    \end{equation}
    Setting $(p,q)=(\a-1,\b)$ gives 
    \begin{equation*}
        \d_xQ_{\a\b} = \d_xQ_{\a+\b-2,2} + \sum_{i=1}^{\a+\b-3}\sum_{j\ge0}\dd{\widetilde{Q}_{\a-1,\b}}{v_i^{(j)}}\d_x^{j+1}Q_{i,2} - \sum_{i=1}^{\a-1}\sum_{j\ge0}\dd{\widetilde{Q}_{\a-1,2}}{v_i^{(j)}}\d_x^{j+1}Q_{i,\b}.
    \end{equation*}
    Let $2d:=\deg Q_{\a\b}=2\a+2\b-2$. The first term on the right hand side of the above equation contains the polynomial $Q_{d-1,2}$, while all the remaining terms contain polynomials $Q_{\g\delta}$ with $\deg Q_{\g\delta}\le2d-2$. This allows one to recursively determine $Q_{\a\b}$ starting from the polynomials $Q_{\g,2}$ with $\g\le\a+\b-2$ and the initial condition $Q_{2,2} = v_3 + v_1v_2 - \frac{\e^2}{6}v_2^{(2)} + \frac13v_1^3 - \frac{\e^2}{6}v_1v_1^{(2)} + \frac{\e^4}{180}v_1^{(4)}$.
\end{proof}
By the above lemma, to prove \hyperref[thm:bkpr]{Theorem \ref*{thm:bkpr}} it suffices to determine the polynomials $Q_{\a,2}$ starting from the knowledge of $Q_{2,2}$ given by \hyperref[eq:Q22]{(\ref*{eq:Q22})}. Note that the proof of this determination uses a non-constructive argument only at the end. \newline

We introduce some notation. For a differential polynomial $P\in\widehat{\R}_v$, we denote the coefficient of $\e^h$ in $P$ by $P^{(h)}\in\R_v$ and the $\widetilde{\deg}=d$ part of $P$ by $P_d\in\widehat{\R}_{v;d}$. Moreover, denote the $\widetilde{\deg}\le l$ (resp. $\widetilde{\deg}\ge l$) part of $P$ by $P_{\le l}\in\widehat{\R}_{v;\le l}$ (resp. $P_{\ge l}\in\widehat{\R}_{v;\ge l}$). In the case $P=Q_{\a,2}$ we can write
\begin{equation*}
    Q_{\a,2} = \sum_{g=0}^{\a}\e^{2g}(Q_{\a,2})^{(2g)} = \sum_{g=0}^\a\e^{2g}\sum_{d=1}^{\a+1-g}\bigl(Q_{\a,2}\bigr)_d^{(2g)}.
\end{equation*}
Due to the initial conditions we have $\bigl(Q_{\a,2}\bigr)_1^{(0)}=v_{\a+1}$.
\begin{lemma}\label{lem:deg2}
    For every $\a\ge2$,
    \begin{equation*}
        \bigl(Q_{\a,2}\bigr)_2^{(0)} = \sum_{i=1}^{\a-1}v_iv_{\a+1-i}.
    \end{equation*}
\end{lemma}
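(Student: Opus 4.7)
The plan is to proceed by induction on $\a$. The base case $\a=2$ is immediate from \hyperref[eq:Q22]{(\ref*{eq:Q22})}, which gives $(Q_{2,2})_2^{(0)}=v_1v_2$, matching $S_2 := \sum_{i=1}^{1}v_iv_{3-i}$. For the inductive step I assume $(Q_{\b,2})_2^{(0)}=S_\b$ for all $2\le\b<\a$, and plan to determine $(Q_{\a,2})_2^{(0)}$ by combining the reduction formula \hyperref[eq:pq2b]{(\ref*{eq:pq2b})} with a $\d_x$-exactness requirement.

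The key is to apply the reduction twice. First, with $(p,q)=(\a-2,3)$, extracting the $\widetilde{\deg}=2,\,\deg_{\d_x}=1,\,\e^0$ part yields
\begin{equation*}
    \d_x (Q_{\a-1,3})_2^{(0)} = \d_x (Q_{\a,2})_2^{(0)} + \sum_{i\ge1}\frac{\d (Q_{\a-2,3})_2^{(0)}}{\d v_i}v_{i+1}^{(1)} - \sum_{i\ge1}\frac{\d S_{\a-2}}{\d v_i}v_{i+2}^{(1)},
\end{equation*}
and a cascade of similar sub-reductions going down to the symmetry identity $(Q_{2,3})_2^{(0)}=(Q_{3,2})_2^{(0)}$ expresses $(Q_{\a-1,3})_2^{(0)}=(Q_{\a,2})_2^{(0)}+Y_{\a-1}$ for an explicit polynomial $Y_{\a-1}$ determined by the inductive hypothesis. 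Second, with $(p,q)=(\a-1,3)$, the same extraction gives
\begin{equation*}
    \d_x(Q_{\a,3})_2^{(0)} = \d_x (Q_{\a+1,2})_2^{(0)} + \sum_i\frac{\d((Q_{\a,2})_2^{(0)}+Y_{\a-1})}{\d v_i}v_{i+1}^{(1)} - \sum_i\frac{\d S_{\a-1}}{\d v_i}v_{i+2}^{(1)}.
\end{equation*}

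The left-hand side is $\d_x$ of a genuine polynomial in the $v_*^{(*)}$, so the right-hand side must lie in $\operatorname{Im}(\d_x)$. Since $\d_x (Q_{\a+1,2})_2^{(0)}$ is automatically exact, the quadratic-first-derivative expression
\begin{equation*}
    Z_\a:=\sum_i\frac{\d (Q_{\a,2})_2^{(0)}}{\d v_i}v_{i+1}^{(1)} + \sum_i\frac{\d Y_{\a-1}}{\d v_i}v_{i+1}^{(1)} - \sum_i\frac{\d S_{\a-1}}{\d v_i}v_{i+2}^{(1)}
\end{equation*}
must itself be $\d_x$-exact. A polynomial of the form $\sum c_{ab}\,v_av_b^{(1)}$ lies in $\operatorname{Im}(\d_x)$ if and only if $c_{ab}=c_{ba}$, so writing $(Q_{\a,2})_2^{(0)}=\sum_{i\le j,\,i+j=\a+1}c_{ij}v_iv_j$ with unknown coefficients $c_{ij}$, this pairing condition produces a square linear system whose unique solution (using the inductively known $Y_{\a-1}$ and $S_{\a-1}$) is $c_{1,\a}=1,\,c_{ij}=2$ for $2\le i<j\le\a-1$, and $c_{(\a+1)/2,(\a+1)/2}=1$ when $\a$ is odd. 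This reproduces $\sum_{i=1}^{\a-1}v_iv_{\a+1-i}$ exactly.

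The main obstacle is the careful bookkeeping of the $v_av_b^{(1)}$ coefficients across the three sums in $Z_\a$, particularly at the extremal indices $\{1,\a\}$ and $\{2,\a-1\}$ where $S_{\a-1}$ and $Y_{\a-1}$ produce asymmetric "boundary" contributions that pin down $c_{1,\a}$ and $c_{2,\a-1}$ respectively. I would first verify the pattern explicitly for $\a=3,4,5$ (which amounts to solving small linear systems by hand) and then write the general induction, using the explicit form of $\d S_{\a-1}/\d v_i$ from the inductive hypothesis and a parallel induction tracking $Y_{\a-1}$.
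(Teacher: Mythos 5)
Your proposal is correct, but it takes a genuinely different route from the paper's. The paper's proof of \hyperref[lem:deg2]{Lemma \ref*{lem:deg2}} is \emph{not} inductive on $\a$: it uses the single compatibility relation \hyperref[eq:pq2b]{(\ref*{eq:pq2b})} with $(p,q)=(2,\a)$, i.e.\ equation \hyperref[eq:bonusab]{(\ref*{eq:bonusab})}. Because the slot $p=2$ carries the explicitly known $\widetilde{Q}_{2,2}$, at the level $\widetilde{\deg}=2$, $\e^0$ the only unknown quadratic surviving is $(Q_{\a,2})_2^{(0)}$ itself (written as $\frac12\sum_i\lambda_iv_iv_{\a+1-i}$), paired against the universally known linear parts $(Q_{i,2})_1^{(0)}=v_{i+1}$ from \hyperref[eq:Qab]{(\ref*{eq:Qab})} and the explicit terms $-v_1v_{\a+1}^{(1)}-v_2v_\a^{(1)}$ coming from $Q_{2,2}$; $\d_x$-exactness of the left-hand side then forces $\lambda_\a=1$, $\lambda_1=\lambda_{\a-1}-1$ and $\lambda_i=\lambda_{i+1}$ in one pass, with no induction and no auxiliary quantities. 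You instead run an induction on $\a$ through the $q=3$ flows, which makes the unknown $\widetilde{Q}_{\a-1,3}$ enter, and you handle it via the cascade defining $Y_{\a-1}=(Q_{\a-1,3})_2^{(0)}-(Q_{\a,2})_2^{(0)}$ down to the symmetry identity $(Q_{2,3})_2^{(0)}=(Q_{3,2})_2^{(0)}$. This works: your exactness criterion for $\sum c_{ab}v_av_b^{(1)}$ is correct (the variational derivatives kill exactly the symmetric part, leaving the obstruction $c_{ab}-c_{ba}$), and your linear system is in fact a chain recursion anchored by the partnerless term $v_1v_{\a+1}^{(1)}$, so it has a unique solution; I checked it reproduces the claimed coefficients for $\a=3,4$. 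The cost is the parallel induction for $Y_p$, which you leave unexecuted; for the record it closes with the clean formula $Y_p=\sum_{i=2}^{p-1}v_iv_{p+2-i}$, and $Y_p$ is precisely the $\widetilde{\deg}=2$, $\e^0$ part of the paper's $\mathcal{Q}_p=Q_{p,3}-Q_{p+1,2}$ --- so your cascade is recomputing by hand what the paper's choice $(p,q)=(2,\a)$ extracts for free. What your route buys: its two-relation bootstrap is essentially the same mechanism the paper deploys later in the genuinely inductive part of the reconstruction (\hyperref[prop:deg2]{Proposition \ref*{prop:deg2}}), so it generalizes naturally; what the paper's route buys is brevity --- one relation, no induction on $\a$, no $Y_p$ bookkeeping.
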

\begin{proof}
    For $\a=2$ this is true due to the initial condition \hyperref[eq:Q22]{(\ref*{eq:Q22})}. Let $\a\ge3$. Since $\deg\,Q_{\a,2}=2\a+2$, one may write $\bigl(Q_{\a,2}\bigr)_2^{(0)} = \frac12\sum_{i=1}^\a\lambda_iv_iv_{\a+1-i}$
    for some constants $\lambda_i\in\C$ satisfying $\lambda_i=\lambda_{\a+1-i}$. Let us show that $\lambda_1=\lambda_\a=1$ and $\lambda_i=\lambda_{\a+1-i}=2$ for $2\le i\le\a-1$.
    Equation \hyperref[eq:pq2b]{(\ref*{eq:pq2b})} with $(p,q)=(2,\a)$ gives
    \begin{align}
        \label{eq:bonusab} \d_x\mathcal{Q}_\a := \d_xQ_{\a,3}-\d_xQ_{\a+1,2} = & \sum_{i=1}^\a\sum_{j\ge0}\dd{\widetilde{Q}_{\a,2}}{v_i^{(j)}}\d_x^{j+1}Q_{i,2} - v_1\d_xQ_{\a,2} + \frac{\e^2}{6}\d_x^3Q_{\a,2} \\
        \nonumber & - \left(v_2+v_1^2-\frac{\e^2}{6}v_1^{(2)}\right)v_\a^{(1)} + \frac{\e^2}{6}v_1v_\a^{(3)} - \frac{\e^4}{180}v_\a^{(5)}.
    \end{align}
    We consider the degree $\widetilde{\deg}=2$ part of the $\e^0$ coefficient of the above equation.
    \begin{align*}
        \d_x\left(\bigl(\mathcal{Q}_\a\bigr)_2^{(0)}\right) & = \left(\d_x\mathcal{Q}_\a\right)_2^{(0)} = \sum_{i=1}^\a\sum_{j\ge0}\dd{\bigl(\widetilde{Q}_{\a,2}\bigr)_2^{(0)}}{v_i^{(j)}}\d_x^{j+1}\bigl(Q_{i,2}\bigr)_1^{(0)} - v_1\d_x\bigl(Q_{\a,2}\bigr)_1^{(0)} - v_2v_\a^{(1)} \\
        & = \sum_{i=1}^\a\frac{\d}{\d v_i}\biggl(\frac12\sum_{k=1}^\a\lambda_kv_kv_{\a+1-k}\biggr)v_{i+1}^{(1)} - v_1v_{\a+1}^{(1)} - v_2v_\a^{(1)} \\
        & = \sum_{i=1}^\a\lambda_iv_{\a+1-i}v_{i+1}^{(1)} - v_1v_{\a+1}^{(1)} - v_2v_\a^{(1)} \\
        & = (\lambda_\a-1)v_1v_{\a+1}^{(1)} + \lambda_1v_2^{(1)}v_\a + (\lambda_{\a-1}-1)v_2v_\a^{(1)} + \sum_{i=2}^{\a-2}\lambda_iv_{\a+1-i}v_{i+1}^{(1)}.
    \end{align*}
    The left-hand side of the equation lies in the image of the operator $\d_x$. For the right-hand side to lie in this image it must be true that $\lambda_\a-1=0$ and $\lambda_1=\lambda_{\a-1}-1$, so $\lambda_1=\lambda_\a=1$ and $\lambda_2=\lambda_{\a-1}=2$. The remaining sum is
    \begin{equation*}
        \sum_{i=2}^{\a-2}\lambda_iv_{\a+1-i}v_{i+1}^{(1)} = \frac12\sum_{i=2}^{\a-2}\left(\lambda_iv_{\a+1-i}v_{i+1}^{(1)} + \lambda_{i+1}v_{\a+1-i}^{(1)}v_{i+1}\right).
    \end{equation*}
    For this to lie in the image of $\d_x$ it must be true that $\lambda_i=\lambda_{i+1}$ for all $2\le i \le\a-2$, proving the claim.
\end{proof}
Before proceeding further, let us introduce some additional notation. Any differential polynomial $P\in\widehat{\R}_{v;\ge1}$ can be written unambiguously as
\begin{equation*}
    P = \sum_{\g\ge1}\sum_{n\ge1}\sum_{k_1,\dots,k_n\ge0}P_{k_1,\dots,k_n}^\g\bigl(v_1^{(*)},\dots,v_{\g-1}^{(*)}\bigr)\,v_\g^{(k_1)}\cdots v_\g^{(k_n)},
\end{equation*}
with all but finitely many of the $P_{k_1,\dots,k_n}^\g\in\widehat{\R}_v$ being zero. For each $\g\ge1$ define
\begin{equation*}
    P^{[\g]} := \sum_{n\ge1}\sum_{k_1,\dots,k_n\ge0}P_{k_1,\dots,k_n}^\g\bigl(v_1^{(*)},\dots,v_{\g-1}^{(*)}\bigr)\,v_\g^{(k_1)}\cdots v_\g^{(k_n)}.
\end{equation*}
This is the sum of all monomials in $P$ that explicitly contain the variable $v_\g^{(*)}$, but no variables $v_{>\g}^{(*)}$. Moreover, let
\begin{equation}\label{eq:Pge}
    P^{[\ge\g]} := \sum_{\g^\prime\ge\g}P^{[\g^\prime]} \quad \text{and} \quad P^{[\le\g]} := \sum_{\g^\prime\le\g}P^{[\g^\prime]}.
\end{equation}
Let us write $Q_{\a,2}$ using this notation:
\begin{equation*}
    Q_{\a,2} = v_{\a+1} + \sum_{\g=1}^\a Q_{\a,2}^{[\g]}.
\end{equation*}
We further split each $Q_{\a,2}^{[\g]}$ into a linear part of $\widetilde{\deg}=1$ and the remaining part of $\widetilde{\deg}\ge2$:
\begin{equation}\label{eq:Qnotation}
    Q_{\a,2}^{[\g]} = \e^{2\a-2\g+2}\,C_\g\,v_\g^{(2\a-2\g+2)} + Q_{\a,2}^{[[\g]]}, \quad C_\g\in\C, \quad Q_{\a,2}^{[[\g]]}\in\widehat{\R}_{v;\ge2}^{\mathrm{ev};[0]}, \quad 1\le\g\le\a,
\end{equation}
where $Q_{\a,2}^{[[\g]]}:=\bigl(Q_{\a,2}^{[\g]}\bigr)_{\ge2}$. The previous lemma implies that
\begin{equation*}
    Q_{\a,2}^{[[\a]]} = v_1v_\a.
\end{equation*}
Thus $Q_{\a,2}^{[\a]}=\e^2C_\a v_\a^{(2)}+v_1v_\a$, and so one can evaluate the $i=\a$ term of the sum in equation \hyperref[eq:bonusab]{(\ref*{eq:bonusab})} to obtain
\begin{align}
    \label{eq:bonusaby} \d_x\mathcal{Q}_\a := & \;\d_xQ_{\a,3} - \d_xQ_{\a+1,2} = \e^2\Bigl(C_\a+\frac16\Bigr)\d_x^3Q_{\a,2} \\
    \nonumber & + \underbrace{\sum_{i=1}^{\a-1}\sum_{j\ge0}\dd{Q_{\a,2}}{v_i^{(j)}}\d_x^{j+1}Q_{i,2} - \left(v_2+v_1^2-\frac{\e^2}{6}v_1^{(2)}\right)v_\a^{(1)} + \frac{\e^2}{6}v_1v_\a^{(3)} - \frac{\e^4}{180}v_\a^{(5)}}_{=:\d_x\mathcal{Q}_\a^\prime}.
\end{align}
Now consider setting $(p,q)=(2,\a+1)$ in \hyperref[eq:pq2b]{(\ref*{eq:pq2b})}, which gives
\begin{align*}
    \d_xQ_{\a+1,3}-\d_xQ_{\a+2,2} & = \sum_{i=1}^{\a+1}\sum_{j\ge0}\dd{\widetilde{Q}_{\a+1,2}}{v_i^{(j)}}\d_x^{j+1}Q_{i,2} - v_1\d_xQ_{\a+1,2} + \frac{\e^2}{6}\d_x^3Q_{\a+1,2} \\
    & - \Bigl(v_2+v_1^2-\frac{\e^2}{6}v_1^{(2)}\Bigr)v_{\a+1}^{(1)} + \frac{\e^2}{6}v_1v_{\a+1}^{(3)} - \frac{\e^4}{180}v_{\a+1}^{(5)}.
\end{align*}
On the other hand, setting $(p,q)=(\a,3)$ gives
\begin{align*}
    \d_xQ_{\a+1,3}-\d_xQ_{\a+2,2}
    & = \sum_{i=1}^{\a+1}\sum_{j\ge0}\dd{\widetilde{Q}_{\a,3}}{v_i^{(j)}}\d_x^{j+1}Q_{i,2} - \sum_{i=1}^\a\sum_{j\ge0}\dd{\widetilde{Q}_{\a,2}}{v_i^{(j)}}\d_x^{j+1}Q_{i,3}.
\end{align*}
Equating the right-hand sides of the last two equations yields
\begin{align*}
    & - v_1\d_xQ_{\a+1,2} + \frac{\e^2}{6}\d_x^3Q_{\a+1,2} - \Bigl(v_2+v_1^2-\frac{\e^2}{6}v_1^{(2)}\Bigr)v_{\a+1}^{(1)} + \frac{\e^2}{6}v_1v_{\a+1}^{(3)} - \frac{\e^4}{180}v_{\a+1}^{(5)} \\
    & = \sum_{i=1}^{\a+1}\sum_{j\ge0}\dd{\mathcal{Q}_\a}{v_i^{(j)}}\d_x^{j+1}Q_{i,2} - \sum_{i=1}^\a\sum_{j\ge0}\dd{\widetilde{Q}_{\a,2}}{v_i^{(j)}}\d_x^{j+1}Q_{i,3},
\end{align*}
where we have used $\widetilde{Q}_{\a,3}-\widetilde{Q}_{\a+1,2}=Q_{\a,3}-Q_{\a+1,2}=\mathcal{Q}_\a$. To remove the terms involving $Q_{\a+1,2}$ we evaluate the $i=\a+1$ term of the first sum and the $i=\a$ term of the second sum. To do this, we use $Q_{\a,2}^{[\a]}=\e^2C_\a v_\a^{(2)}+v_1v_\a$ and the following facts that can be inferred from equation \hyperref[eq:bonusaby]{(\ref*{eq:bonusaby})}:
\begin{gather*}
    \mathcal{Q}_\a^{[\a+1]} = \e^2\Bigl(C_\a+\frac16\Bigr)v_{\a+1}^{(2)}, \\
    Q_{\a,3}=Q_{\a+1,2} + \e^2\Bigl(C_\a+\frac16\Bigr)\d_x^2Q_{\a,2}+\mathcal{Q}_\a^\prime.
\end{gather*}
This leaves us with
\begin{align*}
    & \e^2\Bigl(C_\a+\frac16\Bigr)v_1\d_x^3Q_{\a,2} + \e^4C_\a\Bigl(C_\a+\frac16\Bigr)\d_x^5Q_{\a,2} +  v_1\d_x\mathcal{Q}_\a^\prime + \e^2C_\a\d_x^3\mathcal{Q}_\a^\prime \\ 
    & - \Bigl(v_2+v_1^2-\frac{\e^2}{6}v_1^{(2)}\Bigr)v_{\a+1}^{(1)} + \frac{\e^2}{6}v_1v_{\a+1}^{(3)} - \frac{\e^4}{180}v_{\a+1}^{(5)} \\
    = & \sum_{i=1}^\a\sum_{j\ge0}\dd{\mathcal{Q}_\a}{v_i^{(j)}}\d_x^{j+1}Q_{i,2} - \sum_{i=1}^{\a-1}\sum_{j\ge0}\dd{Q_{\a,2}}{v_i^{(j)}}\d_x^{j+1}Q_{i,3}.
\end{align*}
Denote the left-hand side of this equation by $P_\a$. Applying the operator $\d_x$ to both sides:
\begin{equation}\label{eq:masterby}
    \d_xP_\a =  \underbrace{\sum_{i=1}^\a\sum_{j\ge0}\dd{(\d_x\mathcal{Q}_\a)}{v_i^{(j)}}\d_x^{j+1}Q_{i,2}}_{=:\Pi_1} - \underbrace{\d_x\left(\sum_{i=1}^{\a-1}\sum_{j\ge0}\dd{Q_{\a,2}}{v_i^{(j)}}\d_x^{j+1}Q_{i,3}\right)}_{=:\Pi_2}.
\end{equation}
As mentioned previously, the objective is to uniquely determine the differential polynomials $Q_{\a,2}$ for every $\a\ge2$. The base case $\a=2$ is given by \hyperref[eq:Q22]{(\ref*{eq:Q22})}. So let $\a\ge3$ and suppose that $Q_{\a^\prime,2}$ is uniquely determined for every $\a^\prime\le\a-1$. To uniquely determine $Q_{\a,2}$, one must determine $Q_{\a,2}^{[\g]}$ for every $1\le\g\le\a$. This, in turn, is equivalent to determining the linear coefficient $C_\g$ and the $\widetilde{\deg}\ge2$ part $Q_{\a,2}^{[[\g]]}$ in the expression \hyperref[eq:Qnotation]{(\ref*{eq:Qnotation})}. The idea of the following proposition is that the ``$v_{\g+2}^{(*)}$ part'' of equation \hyperref[eq:masterby]{(\ref*{eq:masterby})}, by which we mean
\begin{equation*}
    \d_xP_\a^{[\g+2]} = \Pi_1^{[\g+2]}-\Pi_2^{[\g+2]},
\end{equation*}
gives relations that uniquely determine $Q_{\a,2}^{[[\g]]}$, as a function of the subset of linear coefficients $\{C_{\g^\prime}:\g^\prime\ge\g+1\}$. We write
\begin{equation*}
    Q_{\a,2}^{[[\g]]}=Q_{\a,2}^{[[\g]]}(C_{\ge\g+1})
\end{equation*}
to indicate this. For the moment, the linear coefficients remain unknown.
\begin{proposition}\label{prop:deg2}
    For every $1\le\gamma\le\a$, equations \hyperref[eq:bonusaby]{(\ref*{eq:bonusaby})} and \hyperref[eq:masterby]{(\ref*{eq:masterby})} determine the polynomial $Q_{\a,2}^{[[\g]]}$ explicitly as a function of the linear coefficients $C_{\g^\prime}$ with $\g^\prime\ge\g+1$.
\end{proposition}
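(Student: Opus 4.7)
I proceed by descending induction on $\g$, from $\g=\a$ down to $\g=1$. The base case $\g=\a$ is immediate from \hyperref[lem:deg2]{Lemma \ref*{lem:deg2}}, which gives $Q_{\a,2}^{[[\a]]}=v_1 v_\a$ with no dependence on any $C_{\g'}$. For the inductive step, assume $Q_{\a,2}^{[[\g']]}$ is already known for every $\g<\g'\le\a$, as an explicit polynomial in the $v$-variables whose coefficients lie in $\C[C_{\g''}:\g''\ge\g'+1]\subset\C[C_{\g''}:\g''\ge\g+1]$. A preliminary observation that organizes everything: since $\deg Q_{\a,2}^{[[\g]]}=2\a+2$ and each monomial must contain a factor of $v_\g^{(*)}$ together with at least one other $v$-variable of index $\le\g$, we have $Q_{\a,2}^{[[\g]]}\in\C[v_1^{(*)},\ldots,v_\g^{(*)}]$. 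In particular $Q_{\a,2}^{[[\g]]}$ never directly contributes to the $[\g+2]$-part of anything.

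The strategy is to extract the $[\g+2]$-part, in the sense of \hyperref[eq:Pge]{(\ref*{eq:Pge})}, of the master equation \hyperref[eq:masterby]{(\ref*{eq:masterby})}. On the LHS, $P_\a^{[\g+2]}$ decomposes into contributions from derivatives of the $Q_{\a,2}^{[\g']]}$ with $\g'>\g$ (known by the inductive hypothesis), of $Q_{\a',2}$ with $\a'<\a$ (known by the outer induction on $\a$), and of the explicit pieces $-(v_2+v_1^2-\tfrac{\e^2}{6}v_1^{(2)})v_{\a+1}^{(1)}+\tfrac{\e^2}{6}v_1 v_{\a+1}^{(3)}-\tfrac{\e^4}{180}v_{\a+1}^{(5)}$, whose dependence on $Q_{\a+1,2}$ enters only through its coefficient $C_{\g+2}$ and its higher $Q_{\a+1,2}^{[[\g']]}$ with $\g'>\g$, all in the allowed range. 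On the RHS, $\Pi_1^{[\g+2]}$ is rewritten via \hyperref[eq:bonusaby]{(\ref*{eq:bonusaby})} in terms of known data, and the crucial contribution of $\Pi_2^{[\g+2]}$ comes from the $i=\g$ summand: since $Q_{\g,3}=Q_{\g+1,2}+\mathcal{Q}_\g' = v_{\g+2}+(\text{known})$, this summand produces the single unknown piece
\begin{equation*}
\d_x\sum_{j\ge 0}\dd{Q_{\a,2}^{[[\g]]}}{v_\g^{(j)}}\cdot v_{\g+2}^{(j+1)},
\end{equation*}
all other terms being determined. Matching the coefficient of each distinct monomial of the form $v_{\g+2}^{(k)}\cdot M(v_{\le\g}^{(*)})$ yields a non-degenerate linear system that determines $\dd{Q_{\a,2}^{[[\g]]}}{v_\g^{(j)}}$ for every $j\ge 0$ as an explicit polynomial in $v_{\le\g}^{(*)}$ with coefficients in $\C[C_{\ge\g+1}]$. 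Because $Q_{\a,2}^{[[\g]]}$ has no monomial free of $v_\g^{(*)}$, it is uniquely recovered from these partial derivatives, completing the induction.

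The main obstacle is the bookkeeping verifying that, after the $[\g+2]$-extraction, all contributions other than the $\dd{Q_{\a,2}^{[[\g]]}}{v_\g^{(j)}}\cdot v_{\g+2}^{(j+1)}$ terms assemble into an explicit polynomial in the allowed $C_{\g'}$'s and the data already known. In particular, one must carefully track how $v_{\g+2}^{(*)}$ can propagate through $\Pi_1$: the expression $\d_x\mathcal{Q}_\a$ given by \hyperref[eq:bonusaby]{(\ref*{eq:bonusaby})} contains $v_{\g+2}^{(*)}$ in non-obvious ways through the various $\dd{Q_{\a,2}}{v_i^{(j)}}\d_x^{j+1}Q_{i,2}$ summands with $i\le\g$, and one must confirm the non-vanishing of the coefficient of each $v_{\g+2}^{(j+1)}$ on the unknown side; both verifications rely on the structural normalization $Q_{\g,3}^{[\g+2]}=v_{\g+2}+O(\e^2)$ inherited from \hyperref[eq:Q22]{(\ref*{eq:Q22})} and \hyperref[eq:Qab]{(\ref*{eq:Qab})} together with the grading $\deg v_\g^{(k)}=2\g+k$.
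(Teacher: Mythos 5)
Your architecture matches the paper's proof (descending induction on $\g$ with base case $Q_{\a,2}^{[[\a]]}=v_1v_\a$ from \hyperref[lem:deg2]{Lemma \ref*{lem:deg2}}, extraction of the $[\g+2]$-part of \hyperref[eq:masterby]{(\ref*{eq:masterby})}, recovery of $Q_{\a,2}^{[[\g]]}$ from its $v_\g^{(j)}$-derivatives), but there is a genuine gap at the decisive step: your claims that $\Pi_1^{[\g+2]}$ consists of known data and that the only unknown is the $i=\g$ summand of $\Pi_2$ are both false. The unknown derivatives $\dd{Q_{\a,2}^{[[\g]]}}{v_\g^{(j)}}$ enter $\d_x\mathcal{Q}_\a$ already at level $[\g+1]$, through the $i=\g$ term of the sum in \hyperref[eq:bonusaby]{(\ref*{eq:bonusaby})} paired with $Q_{\g,2}^{[\g+1]}=v_{\g+1}$, and therefore propagate into $\Pi_1^{[\g+2]}$ when one differentiates $\d_x\mathcal{Q}_\a$ with respect to $v_{\g+1}^{(j)}$ and multiplies by $Q_{\g+1,2}$, whose $[\g+2]$-part is $v_{\g+2}$; concretely, as in \hyperref[eq:Pi1]{(\ref*{eq:Pi1})}, $\Pi_1^{[\g+2]}$ contains the unknown terms $\e^{2\a-2\g+2}C_\g v_{\g+2}^{(2\a-2\g+4)}+\sum_j\dd{Q_{\a,2}^{[[\g]]}}{v_\g^{(j)}}v_{\g+2}^{(j+2)}$. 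Moreover, in the $i=\g$ summand of $\Pi_2$ you differentiate only $Q_{\a,2}^{[[\g]]}$ and forget the linear term $\e^{2\a-2\g+2}C_\g v_\g^{(2\a-2\g+2)}$ of $Q_{\a,2}^{[\g]}$, whose derivative contributes $\e^{2\a-2\g+2}C_\g v_{\g+2}^{(2\a-2\g+4)}$ to $\Pi_2^{[\g+2]}$ as well, cf.\ \hyperref[eq:Pi2]{(\ref*{eq:Pi2})}. The argument succeeds only because these extra unknown contributions cancel in $\Pi_1^{[\g+2]}-\Pi_2^{[\g+2]}$: the two $C_\g$-terms cancel, the two blocks $\sum_j\dd{Q_{\a,2}^{[[\g]]}}{v_\g^{(j)}}v_{\g+2}^{(j+2)}$ cancel, and what survives is exactly $-\sum_j\d_x\bigl(\dd{Q_{\a,2}^{[[\g]]}}{v_\g^{(j)}}\bigr)v_{\g+2}^{(j+1)}$, as in \hyperref[eq:result]{(\ref*{eq:result})}. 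The disappearance of $C_\g$ is not routine bookkeeping to be deferred: it is precisely what makes $Q_{\a,2}^{[[\g]]}$ a function of $C_{\ge\g+1}$ only, which is the content of the proposition and what \hyperref[rem]{Remark \ref*{rem}} and the final contradiction in the proof of \hyperref[thm:bkpr]{Theorem \ref*{thm:bkpr}} (where $\lambda=C_\b-\widehat{C}_\b\neq0$ is needed) rely on.

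This is a correctness issue and not merely an omission: if you carry out your coefficient-matching under the stated assumption that $\Pi_1^{[\g+2]}$ is known, the coefficient of $v_{\g+2}^{(j+1)}$ on the unknown side comes out as $\d_x\bigl(\dd{Q_{\a,2}^{[[\g]]}}{v_\g^{(j)}}\bigr)+\dd{Q_{\a,2}^{[[\g]]}}{v_\g^{(j-1)}}$ rather than $\d_x\bigl(\dd{Q_{\a,2}^{[[\g]]}}{v_\g^{(j)}}\bigr)$, and the resulting ``solution'' would be wrong. Even after the correct cancellation, the equation determines only $\d_x$ of each partial derivative, so one must still invert $\d_x$; this is legitimate because $\dd{Q_{\a,2}^{[[\g]]}}{v_\g^{(j)}}\in\widehat{\R}_{v;\ge1}$ has no constant term, a step you skip entirely. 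Two further slips: $P_\a$ does not involve $Q_{\a+1,2}$ at all (the whole point of its construction was to eliminate $Q_{\a+1,2}$, $Q_{\a,3}$, $Q_{\a+1,3}$ and $Q_{\a+2,2}$ via the two compatibility relations), and your preliminary remark that $Q_{\a,2}^{[[\g]]}$ ``never directly contributes to the $[\g+2]$-part of anything'' is contradicted by the propagation mechanism described above.
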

\begin{proof}
    We proceed by induction on $\a\ge\g\ge1$. The base case $\g=\a$ follows from \hyperref[lem:deg2]{Lemma \ref*{lem:deg2}}, which gives $Q_{\a,2}^{[[\a]]}=v_1v_\a$. Now let $\a-1\ge\g\ge1$, and suppose that $Q_{\a,2}^{[[\g^\prime]]}=Q_{\a,2}^{[[\g^\prime]]}(C_{\ge\g^\prime+1})$ is determined for every $\g^\prime\ge\g+1$. The objective is to determine $Q_{\a,2}^{[[\g]]}$ as a function of $C_{\ge\g+1}$. By degree considerations, this is equivalent to determining $\dd{Q_{\a,2}^{[[\g]]}}{v_\g^{(j)}}$ as a function of $C_{\ge\g+1}$ for every $0\le j\le2\a-2\g$. As a preliminary, let us study the polynomial $\d_x\mathcal{Q}_\a^\prime$ of equation \hyperref[eq:bonusaby]{(\ref*{eq:bonusaby})}. We use the notation \hyperref[eq:Pge]{(\ref*{eq:Pge})}.
    \begin{align*}
        \d_x & \mathcal{Q}_\a^{\prime\,[\ge\g+1]} = \left(\sum_{i=1}^{\a-1}\sum_{j\ge0}\dd{Q_{\a,2}}{v_i^{(j)}}\d_x^{j+1}Q_{i,2}\right)^{[\ge\g+1]} \hspace{-1.5em} - \Bigl(v_2+v_1^2-\frac{\e^2}{6}v_1^{(2)}\Bigr)v_\a^{(1)} + \frac{\e^2}{6}v_1v_\a^{(3)} - \frac{\e^4}{180}v_\a^{(5)} \\
        = & \; \underbrace{\left(\sum_{i=1}^{\a-1}\sum_{j\ge0}\dd{Q_{\a,2}^{[\ge\g+1]}}{v_i^{(j)}}\d_x^{j+1}Q_{i,2}\right)^{[\ge\g+1]} \hspace{-1.5em} - \Bigl(v_2+v_1^2-\frac{\e^2}{6}v_1^{(2)}\Bigr)v_\a^{(1)} + \frac{\e^2}{6}v_1v_\a^{(3)} - \frac{\e^4}{180}v_\a^{(5)}}_{=:\overline{\mathcal{Q}}_\a^\prime} \\
        & + \left(\sum_{i=1}^{\a-1}\sum_{j\ge0}\dd{Q_{\a,2}^{[\g]}}{v_i^{(j)}}\d_x^{j+1}Q_{i,2}\right)^{[\ge\g+1]}.
    \end{align*}
    Notice that $Q_{\a,2}^{[\le\g-1]}$ does not contribute because $Q_{i,2}^{[\ge\g+1]}=0$ for $i\le\g-1$. By the inductive hypotheses (on both $\a$ and $\g$), the term $\overline{\mathcal{Q}}_\a^\prime$ is uniquely determined as a function of $C_{\ge\g+1}$, so we write $\overline{\mathcal{Q}}_\a^\prime=\overline{\mathcal{Q}}_\a^\prime(C_{\ge\g+1})$. The remaining term is equal to
    \begin{align*}
        & \left(\sum_{i=1}^{\a-1}\sum_{j\ge0}\dd{Q_{\a,2}^{[\g]}}{v_i^{(j)}}\d_x^{j+1}Q_{i,2}\right)^{[\g+1]} = \sum_{j\ge0}\dd{Q_{\a,2}^{[\g]}}{v_\g^{(j)}}\d_x^{j+1}Q_{\g,2}^{[\g+1]} \\
        = & \; \sum_{j\ge0}\frac{\d}{\d v_\g^{(j)}}\left(\e^{2\a-2\g+2}C_\g v_\g^{(2\a-2\g+2)} + Q_{\a,2}^{[[\g]]}\right)v_{\g+1}^{(j+1)} \\
        = & \; \e^{2\a-2\g+2}C_\g v_{\g+1}^{(2\a-2\g+3)} + \sum_{j=0}^{2\a-2\g}\dd{Q_{\a,2}^{[[\g]]}}{v_\g^{(j)}}v_{\g+1}^{(j+1)}.
    \end{align*}
    Thus, since $\d_x\mathcal{Q}_\a=\e^2\Bigl(C_\a+\frac16\Bigr)\d_x^3Q_{\a,2}+\d_x\mathcal{Q}_\a^\prime$, we have
    \begin{equation}
        \label{eq:gagp1} \d_x\mathcal{Q}_\a^{[\ge\g+1]} = \e^{2\a-2\g+2}C_\g v_{\g+1}^{(2\a-2\g+3)} + \sum_{j=0}^{2\a-2\g}\dd{Q_{\a,2}^{[[\g]]}}{v_\g^{(j)}}v_{\g+1}^{(j+1)} + \underbrace{\e^2\Bigl(C_\a+\frac16\Bigr)\d_x^3Q_{\a,2}^{[\ge\g+1]} + \overline{\mathcal{Q}}_\a^\prime}_{=:\overline{\mathcal{Q}}_\a},
    \end{equation}
    where $\overline{\mathcal{Q}}_\a = \overline{\mathcal{Q}}_\a(C_{\ge\g+1})$ is uniquely determined. Let us now study the ``$v_{\g+2}^{(*)}$ part'' of equation \hyperref[eq:masterby]{(\ref*{eq:masterby})}, namely $\d_xP_\a^{[\g+2]} = \Pi_1^{[\g+2]}-\Pi_2^{[\g+2]}$. Firstly, from the definition of $P_\a$ and the previous discussion, we have that $\d_xP_\a^{[\g+2]}=\d_xP_\a^{[\g+2]}(C_{\ge\g+1})$ is uniquely determined. Secondly,
    \begin{align}
        \nonumber \Pi_1^{[\g+2]} = & \; \left(\sum_{i=1}^\a\sum_{j\ge0}\dd{(\d_x\mathcal{Q}_\a)}{v_i^{(j)}}\d_x^{j+1}Q_{i,2}\right)^{[\g+2]} = \left(\sum_{i=1}^\a\sum_{j\ge0}\dd{(\d_x\mathcal{Q}_\a^{[\ge\g+1]})}{v_i^{(j)}}\d_x^{j+1}Q_{i,2}\right)^{[\g+2]} \\
        \nonumber \overset{\hyperref[eq:gagp1]{(\ref*{eq:gagp1})}}{=} & \; \left(\sum_{i=1}^\a\sum_{j\ge0}\frac{\d}{\d v_i^{(j)}}\left(\e^{2\a-2\g+2}C_\g v_{\g+1}^{(2\a-2\g+3)} + \sum_{l=0}^{2\a-2\g}\dd{Q_{\a,2}^{[[\g]]}}{v_\g^{(l)}}v_{\g+1}^{(l+1)}\right)\d_x^{j+1}Q_{i,2}\right)^{[\g+2]} \\
        \nonumber & + \underbrace{\left(\sum_{i=1}^\a\sum_{j\ge0}\dd{\overline{\mathcal{Q}}_\a}{v_i^{(j)}}\d_x^{j+1}Q_{i,2}\right)^{[\g+2]}}_{=:\overline{\Pi}_1} \\
        \nonumber = & \; \sum_{j\ge0}\frac{\d}{\d v_{\g+1}^{(j)}}\left(\e^{2\a-2\g+2}C_\g v_{\g+1}^{(2\a-2\g+3)} + \sum_{l=0}^{2\a-2\g}\dd{Q_{\a,2}^{[[\g]]}}{v_\g^{(l)}}v_{\g+1}^{(l+1)}\right)v_{\g+2}^{(j+1)} + \overline{\Pi}_1 \\
        \label{eq:Pi1} = & \; \e^{2\a-2\g+2}C_\g v_{\g+2}^{(2\a-2\g+4)} + \sum_{j=0}^{2\a-2\g}\dd{Q_{\a,2}^{[[\g]]}}{v_\g^{(j)}}v_{\g+2}^{(j+2)} + \overline{\Pi}_1,
    \end{align}
    where $\overline{\Pi}_1=\overline{\Pi}_1(C_{\ge\g+1})$ is uniquely determined by the previous discussion. Thirdly,
    \begin{align}
        \nonumber \Pi_2^{[\g+2]} & = \d_x\left(\sum_{i=1}^{\a-1}\sum_{j\ge0}\dd{Q_{\a,2}}{v_i^{(j)}}\d_x^{j+1}Q_{i,3}\right)^{[\g+2]} = \d_x\left(\sum_{i=1}^{\a-1}\sum_{j\ge0}\dd{Q_{\a,2}^{[\ge\g]}}{v_i^{(j)}}\d_x^{j+1}Q_{i,3}\right)^{[\g+2]} \\
        \nonumber & = \d_x\left(\sum_{i=1}^{\a-1}\sum_{j\ge0}\dd{Q_{\a,2}^{[\g]}}{v_i^{(j)}}\d_x^{j+1}Q_{i,3}\right)^{[\g+2]} \hspace{-1.5em} + \underbrace{\d_x\left(\sum_{i=1}^{\a-1}\sum_{j\ge0}\dd{Q_{\a,2}^{[\ge\g+1]}}{v_i^{(j)}}\d_x^{j+1}(\mathcal{Q}_i+Q_{i+1,2})\right)^{[\g+2]}}_{=:\overline{\Pi}_2} \\
        \nonumber & = \d_x\left(\sum_{j\ge0}\frac{\d}{\d v_\g^{(j)}}\left(\e^{2\a-2\g+2}C_\g v_\g^{(2\a-2\g+2)} + Q_{\a,2}^{[[\g]]}\right)v_{\g+2}^{(j+1)}\right) + \overline{\Pi}_2 \\
        \label{eq:Pi2} & = \e^{2\a-2\g+2}C_\g v_\g^{(2\a-2\g+4)} + \sum_{j=0}^{2\a-2\g}\d_x\left(\dd{Q_{\a,2}^{[[\g]]}}{v_\g^{(j)}}\right)v_{\g+2}^{(j+1)} + \sum_{j=0}^{2\a-2\g}\dd{Q_{\a,2}^{[[\g]]}}{v_\g^{(j)}}v_{\g+2}^{(j+2)} + \overline{\Pi}_2,
    \end{align}
    where $\overline{\Pi}_2=\overline{\Pi}_2(C_{\ge\g+1})$ is uniquely determined by the previous discussion. Then, by combining the expressions \hyperref[eq:Pi1]{(\ref*{eq:Pi1})} and \hyperref[eq:Pi2]{(\ref*{eq:Pi2})}, the equation $\d_xP_\a^{[\g+2]}=\Pi_1^{[\g+2]}-\Pi_2^{[\g+2]}$ yields
    \begin{equation}\label{eq:result}
        \sum_{j=0}^{2\a-2\g}\d_x\left(\dd{Q_{\a,2}^{[[\g]]}}{v_\g^{(j)}}\right)v_{\g+2}^{(j+1)} = -\d_xP_\a^{[\g+2]} + \overline{\Pi}_1 - \overline{\Pi}_2.
    \end{equation}
    Since the right-hand side is uniquely determined as a function of $C_{\ge\g+1}$, so is the left-hand side. This proves the claim.
\end{proof}
\begin{remark}\label{rem}
    For each $1\le\g\le\a$, let us write $Q_{\a,2}^{[[\g]]}$ by separating the different powers of $\e$:
    \begin{equation*}
        Q_{\a,2}^{[[\g]]} = \sum_{g=0}^{\a-\g}\e^{2g}\bigl(Q_{\a,2}^{[[\g]]}\bigr)^{(2g)}.
    \end{equation*}
    From equation \hyperref[eq:result]{(\ref*{eq:result})} one sees that $\bigl(Q_{\a,2}^{[[\g]]}\bigr)^{(2g)}$ is determined by $\left(\d_xP_\a^{[\g+2]}+\overline{\Pi}_1-\overline{\Pi}_2\right)^{(2g)}$. From the definitions of $P_\a,\overline{\Pi}_1$ and $\overline{\Pi}_2$ it is clear $C_\g$ appears in $\left(\d_xP_\a^{[\g+2]}+\overline{\Pi}_1-\overline{\Pi}_2\right)^{(2g)}$ only if $\g\ge\a+1-g$. This is essentially because $C_\g$ is the coefficient of $\e^{2(\a-\g+1)}v_\g^{(2\a-2\g+2)}$ in the expression \hyperref[eq:Qnotation]{(\ref*{eq:Qnotation})} for $Q_{\a,2}^{[\g]}$. Therefore one can make a stronger conclusion than $Q_{\a,2}^{[[\g]]}=Q_{\a,2}^{[[\g]]}(C_{\ge\g+1})$, namely
    \begin{equation*}
        \bigl(Q_{\a,2}^{[[\g]]}\bigr)^{(2g)} = \bigl(Q_{\a,2}^{[[\g]]}\bigr)^{(2g)}(C_{\ge\a+1-g}), \quad 0\le g\le\a-\g, \quad 1\le\g\le\a.
    \end{equation*}
    In particular, in the $g=0$ case we see that $\bigl(Q_{\a,2}^{[[\g]]}\bigr)^{(0)}$ is uniquely determined for every $1\le\g\le\a$ without any dependence on the unknown linear coefficients $C_\g$. Thus $(Q_{\a,2})^{(0)}$ is uniquely determined by the proposition.
\end{remark}
Going through the argument of \hyperref[prop:deg2]{Proposition \ref*{prop:deg2}} in the case $\gamma=\a-1$, one finds
\begin{align*}
    \sum_{j=0}^2\d_x\left(\dd{Q_{\a,2}^{[[\a-1]]}}{v_{\a-1}^{(j)}}\right)v_{\a+1}^{(j+1)} = & \; \left(2v_2^{(1)}+2v_1v_1^{(1)}+\e^2C_\a v_1^{(3)}\right)v_{\a+1}^{(1)} + \e^2\Bigl(3C_\a+\frac12\Bigr)v_1^{(2)}v_{\a+1}^{(2)} \\
    & + \e^2\Bigl(2C_\a+\frac16\Bigr)v_1^{(1)}v_{\a+1}^{(3)},
\end{align*}
This implies
\begin{equation*}
    \dd{Q_{\a,2}^{[[\a-1]]}}{v_{\a-1}} = 2v_2+v_1^2+\e^2C_\a v_1^{(2)}, \;\, \dd{Q_{\a,2}^{[[\a-1]]}}{v_{\a-1}^{(1)}} = \e^2\Bigl(3C_\a+\frac12\Bigr)v_1^{(1)}, \;\, \dd{Q_{\a,2}^{[[\a-1]]}}{v_{\a-1}^{(2)}} = \e^2\Bigl(2C_\a+\frac16\Bigr)v_1,
\end{equation*}
or equivalently
\begin{equation}\label{eq:a-1part}
    Q_{\a,2}^{[[\a-1]]} = \Bigl(\frac{2}{1+\delta_{\a,3}}v_2+v_1^2+\e^2C_\a v_1^{(2)}\Bigr)v_{\a-1} + \e^2\Bigl(3C_\a+\frac12\Bigr)v_1^{(1)}v_{\a-1}^{(1)} + \e^2\Bigl(2C_\a+\frac16\Bigr)v_1v_{\a-1}^{(2)}.
\end{equation}
We proceed by evaluating the $i=\a$ and $i=\a-1$ terms of $\Pi_1$ and $\Pi_2$ in \hyperref[eq:masterby]{(\ref*{eq:masterby})} respectively, by using the newly obtained expression for $Q_{\a,2}^{[[\a-1]]}$. Firstly, we can compute the $v_\a^{(*)}$ part in \hyperref[eq:bonusaby]{(\ref*{eq:bonusaby})}:
\begin{align*}
    \d_x\mathcal{Q}_\a^{[\a]} = & \; \e^2\Bigl(C_\a+\frac16\Bigr)\d_x^3Q_{\a,2}^{[\a]} + \left(\sum_{i=1}^{\a-1}\sum_{j\ge0}\dd{Q_{\a,2}}{v_i^{(j)}}\d_x^{j+1}Q_{i,2}\right)^{[\a]} - \left(v_2+v_1^2-\frac{\e^2}{6}v_1^{(2)}\right)v_\a^{(1)} \\
    & + \frac{\e^2}{6}v_1v_\a^{(3)} - \frac{\e^4}{180}v_\a^{(5)} \\
    = & \; \e^2\Bigl(C_\a+\frac16\Bigr)\d_x^3(\e^2C_\a v_\a^{(2)}+v_1v_\a) + \sum_{j\ge0}\dd{Q_{\a,2}^{[\a-1]}}{v_{\a-1}^{(j)}}v_\a^{(j+1)} + \underbrace{\left(\dd{Q_{\a,2}}{v_1}\right)^{[\a]}}_{=v_\a}\underbrace{\d_xQ_{1,2}}_{=v_2^{(1)}} \\
    & - \left(v_2+v_1^2-\frac{\e^2}{6}v_1^{(2)}\right)v_\a^{(1)} + \frac{\e^2}{6}v_1v_\a^{(3)} - \frac{\e^4}{180}v_\a^{(5)}.
\end{align*}
Note that both the $i=\a-1$ and $i=1$ terms in the sum have contributed. One eventually obtains
\begin{align}
    \nonumber \mathcal{Q}_\a^{[\a]} = & \; \left(v_2+\e^2\Bigl(C_\a+\frac16\Bigr)v_1^{(2)}\right)v_\a + \e^2\Bigl(3C_\a+\frac12\Bigr)v_1^{(1)}v_\a^{(1)} + \e^2\Bigl(3C_\a+\frac12\Bigr)v_1v_\a^{(2)} \\
    \label{eq:qaa} & + \e^4\left(C_{\a-1}+C_\a\Bigl(C_\a+\frac16\Bigr)-\frac1{180}\right)v_\a^{(5)}.
\end{align}
Thus the $i=\a$ term in the sum $\Pi_1$ is
\begin{align*}
    \sum_{j\ge0}&\dd{(\d_x\mathcal{Q}_\a)}{v_\a^{(j)}}\d_x^{j+1}Q_{\a,2} = \d_x\Biggl(\sum_{j\ge0}\dd{\mathcal{Q}_\a}{v_\a^{(j)}}\d_x^{j+1}Q_{\a,2}\Biggr) \\
    \overset{\hyperref[eq:qaa]{(\ref*{eq:qaa})}}{=} & \; \d_x\biggl[\Bigl(v_2+\e^2\Bigl(C_\a+\frac16\Bigr)v_1^{(2)}\Bigr)\d_xQ_{\a,2} + \e^2\Bigl(3C_\a+\frac12\Bigr)v_1^{(1)}\d_x^2Q_{\a,2} + \e^2\Bigl(3C_\a+\frac12\Bigr)v_1\d_x^3Q_{\a,2} \\
    & \hspace{2em} + \e^4\Bigl(C_{\a-1}+C_\a\Bigl(C_\a+\frac16\Bigr)-\frac1{180}\Bigr)\d_x^5Q_{\a,2}\biggr] =: \d_xP_\a^\prime.
\end{align*}
For the $i=\a-1$ term of the sum $\Pi_2$, we use \hyperref[eq:bonusaby]{(\ref*{eq:bonusaby})} to write $Q_{\a-1,3}=\mathcal{Q}_{\a-1}+Q_{\a,2}$, so that
\begin{align*}
    \d_x\Biggl(\sum_{j\ge0}&\dd{Q_{\a,2}}{v_{\a-1}^{(j)}}\d_x^{j+1}Q_{\a-1,3}\Biggr) = \d_x\biggl[\Bigl(2v_2+v_1^2+\e^2C_\a v_1^{(2)}\Bigr)\d_x(\mathcal{Q}_{\a-1} + Q_{\a,2}) \\ 
    & + \e^2\Bigl(3C_\a+\frac12\Bigr)v_1^{(1)}\d_x^2(\mathcal{Q}_{\a-1}+Q_{\a,2}) + \e^2\Bigl(2C_\a+\frac16\Bigr)v_1\d_x^3(\mathcal{Q}_{\a-1}+Q_{\a,2}) \\
    & + \e^4C_{\a-1}\d_x^5(\mathcal{Q}_{\a-1}+Q_{\a,2})\biggr] =: \d_xP_\a^{\prime\prime}.
\end{align*}
With these last two equations in hand, the relation $\d_x P_\a = \Pi_1-\Pi_2$ becomes
\begin{equation}\label{eq:masterb}
    \d_xS_\a = \underbrace{\sum_{i=1}^{\a-1}\sum_{j\ge0}\dd{(\d_x\mathcal{Q}_\a)}{v_i^{(j)}}\d_x^{j+1}Q_{i,2}}_{=:\Sigma_1} - \underbrace{\d_x\left(\sum_{i=1}^{\a-2}\sum_{j\ge0}\dd{Q_{\a,2}}{v_i^{(j)}}\d_x^{j+1}Q_{i,3}\right)}_{=:\Sigma_2},
\end{equation}
where $S_\a=P_\a-P_\a^\prime+P_\a^{\prime\prime}$ is given by
\begin{align}
    \nonumber S_\a = & \; \Bigl(v_2+v_1^2-\frac{\e^2}{6}v_1^{(2)}\Bigr)\d_x\widetilde{Q}_{\a,2} - \frac{\e^2}{6}v_1\d_x^3\widetilde{Q}_{\a,2} + \frac{\e^4}{180}\d_x^5\widetilde{Q}_{\a,2} + v_1\d_x\mathcal{Q}_\a^\prime + \e^2C_\a\d_x^3\mathcal{Q}_\a^\prime \\
    \nonumber & + \left(2v_2+v_1^2+\e^2C_\a v_1^{(2)}\right)\d_x\mathcal{Q}_{\a-1} + \e^2\Bigl(3C_\a+\frac12\Bigr)v_1^{(1)}\d_x^2\mathcal{Q}_{\a-1} \\
    \label{eq:Sa} & + \e^2\Bigl(2C_\a+\frac16\Bigr)v_1\d_x^3\mathcal{Q}_{\a-1} + \e^4C_{\a-1}\d_x^5\mathcal{Q}_{\a-1}.
\end{align}
Notice that by the inductive hypothesis on $\a$, the polynomial $\mathcal{Q}_{\a-1}$ is uniquely determined. What remains to be proven now is that the linear coefficients $C_\g$ of $Q_{\a,2}$ are uniquely determined. \newline

\noindent \textit{Proof of \hyperref[thm:bkpr]{Theorem \ref*{thm:bkpr}}}. \newline
Suppose there are two differential polynomials $Q_{\a,2}\neq\widehat{Q}_{\a,2}$ satisfying \hyperref[eq:bonusaby]{(\ref*{eq:bonusaby})} and \hyperref[eq:masterb]{(\ref*{eq:masterb})}. Let $R_\a:=Q_{\a,2}-\widehat{Q}_{\a,2}$, which we write as
\begin{equation*}
    R_\a = \sum_{h=g}^\a\e^{2h}R_\a^{(2h)}, \quad R_\a^{(2g)}\neq0.
\end{equation*}
Here $g$ is the lowest genus in which $Q_{\a,2}$ and $\widehat{Q}_{\a,2}$ differ. Clearly $g\le\a$, and by \hyperref[rem]{Remark \ref*{rem}} we have $g\ge1$. In what follows, let \begin{equation*}
    \beta:=\a+1-g.
\end{equation*} 
Denote the linear coefficients of $\widehat{Q}_{\a,2}$ from the expression \hyperref[eq:Qnotation]{(\ref*{eq:Qnotation})} by $\widehat{C}_\g$, so $\widehat{C}_\g=C_\g$ for $\g\ge\b+1$. We write the $\widetilde{\deg}=1$ and $\widetilde{\deg}=2$ parts of $R_\a^{(2g)}$ explicitly:
\begin{equation}\label{eq:Ra}
    R_\a^{(2g)} = \lambda v_\b^{(2g)} + \frac12\sum_{i=1}^{\b-1}\sum_{j=0}^{2g}\omega_{ij}v_i^{(j)}v_{\b-i}^{(2g-j)} + (R_\a^{(2g)})_{\ge3}, \quad \omega_{ij}=\omega_{\b-i,2g-j},
\end{equation}
where $\lambda=C_{\b}-\widehat{C}_{\b}$. By \hyperref[prop:deg2]{Proposition \ref*{prop:deg2}} and \hyperref[rem]{Remark \ref*{rem}} it must be that $\lambda\neq0$. Denote by $\widehat{\mathcal{Q}}_\a,\,\widehat{S}_\a,\,\widehat{\Sigma}_1$ and $\widehat{\Sigma}_2$ the other previously defined quantities associated to $\widehat{Q}_{\a,2}$. We study the $\widetilde{\deg}=2$ part of the $\e^{2g}$ coefficient of the difference between $\d_xS_\a=\Sigma_1-\Sigma_2$ and $\d_x\widehat{S}_\a=\widehat{\Sigma}_1-\widehat{\Sigma}_2$:
\begin{equation}\label{eq:masterfinal}
    \left(\d_xS_\a-\d_x\widehat{S}_\a\right)_2^{(2g)} = (\Sigma_1-\widehat{\Sigma}_1)_2^{(2g)} - (\Sigma_2-\widehat{\Sigma}_2)_2^{(2g)}.
\end{equation}
We adopt the convention $\omega_{ij}:=0$ unless $1\le i\le\b-1$ and $0\le j\le 2g$. Firstly, from the expression \hyperref[eq:bonusaby]{(\ref*{eq:bonusaby})} for $\d_x\mathcal{Q}_\a$ we have
\begin{align*}
    (\d_x\mathcal{Q}_\a-\d_x&\widehat{\mathcal{Q}}_\a)_{\le2}^{(2g)} = \Biggl(\e^2\Bigl(C_\a+\frac16\Bigr)\d_x^3Q_{\a,2} - \e^2\Bigl(\widehat{C}_\a+\frac16\Bigr)\d_x^3\widehat{Q}_{\a,2} + \d_x\mathcal{Q}_\a^\prime-\d_x\widehat{\mathcal{Q}}_\a^\prime\Biggr)_{\le2}^{(2g)} \\
    & = \Biggl(\e^2\Bigl(C_\a+\frac16\Bigr)\d_x^3Q_{\a,2} - \e^2\Bigl(\widehat{C}_\a+\frac16\Bigr)\d_x^3\widehat{Q}_{\a,2} + \sum_{i=1}^{\a-1}\sum_{j\ge0}\dd{R_\a}{v_i^{(j)}}\d_x^{j+1}Q_{i,2}\Biggr)_{\le2}^{(2g)} \\
    & = \Biggl(\lambda\d_x^{2g+1}Q_{\b,2} + \sum_{i=1}^{\b-1}\sum_{j=0}^{2g}\omega_{ij}v_{\b-i}^{(2g-j)}\d_x^{j+1}Q_{i,2}\Biggr)_{\le2}^{(0)} \\
    & \hspace{-1.5em} \overset{\hyperref[lem:deg2]{\mathrm{Lemma}\,\ref*{lem:deg2}}}{=} \lambda\d_x^{2g+1}\Bigl(v_{\b+1}+\sum_{i=1}^{\b-1}v_iv_{\b+1-i}\Bigr) + \sum_{i=1}^{\b-1}\sum_{j=0}^{2g}\omega_{ij}v_{\b-i}^{(2g-j)}v_{i+1}^{(j+1)}.
\end{align*}
From this, one finds
\begin{align*}
    (\Sigma_1&-\widehat{\Sigma}_1)_2^{(2g)} = \Biggl(\sum_{i=1}^{\a-1}\sum_{j\ge0}\dd{(\d_x\mathcal{Q}_\a-\d_x\widehat{\mathcal{Q}}_\a)}{v_i^{(j)}}\d_x^{j+1}Q_{i,2}\Biggr)_2^{(2g)} \\
    = & \; \lambda\d_x^{2g+1}\Biggl[\sum_{i=1}^{\a-1}\frac{\d}{\d v_i}\Bigl(v_{\b+1}+\sum_{k=1}^{\b-1}v_kv_{\b+1-k}\Bigr)\d_xQ_{i,2}\Biggr]_2^{(0)} \\
    & + \sum_{i=1}^{\b-1}\sum_{j=0}^{2g}\omega_{ij}v_{\b+1-i}^{(2g+1-j)}v_{i+1}^{(j+1)} + \sum_{i=2}^{\b-\delta_{g,1}}\sum_{j=1}^{2g+1}\omega_{i-1,j-1}v_{\b+1-i}^{(2g+1-j)}v_{i+1}^{(j+1)} \\
    = & \; (1-\delta_{g,1}-\delta_{g,2})\lambda(\d_x^{2g+2}Q_{\b+1,2})_2^{(0)} + \lambda\d_x^{2g+1}\Biggl(\sum_{i=1}^{\b-1}v_{\b+1-i}v_{i+1}^{(1)} + \sum_{i=2}^{\b-\delta_{g,1}}v_{\b+1-i}v_{i+1}^{(1)}\Biggr) \\
    & + \sum_{i=1}^{\b-2}\sum_{j=0}^{2g}\omega_{ij}v_{\b+1-i}^{(2g+1-j)}v_{i+1}^{(j+1)} + \sum_{j=0}^{2g}\omega_{\b-1,j}v_2^{(2g+1-j)}v_\b^{(j+1)} \\
    & + \sum_{i=2}^{\b-1}\sum_{j=1}^{2g+1}\omega_{i-1,j-1}v_{\b+1-i}^{(2g+1-j)}v_{i+1}^{(j+1)} + (1-\delta_{g,1})\sum_{j=1}^{2g+1}\omega_{\b-1,j-1}v_1^{(2g+1-j)}v_{\b+1}^{(j+1)} \\
    = & \; \textcolor{blue}{(1-\delta_{g,1}-\delta_{g,2})\lambda(\d_x^{2g+2}Q_{\b+1,2})_2^{(0)}} + \textcolor{blue}{\lambda\d_x^{2g+2}\left(\sum_{i=1}^{\b-2}v_{\b+1-i}v_{i+1}\right)} \\
    & + (1-\delta_{g,1})\lambda\d_x^{2g+1}(v_1v_{\b+1}^{(1)}) + \lambda\d_x^{2g+1}(v_2v_\b^{(1)}) + \sum_{i=1}^{\b-2}\sum_{j=0}^{2g}\omega_{ij}v_{\b+1-i}^{(2g+1-j)}v_{i+1}^{(j+1)} \\
    & + \sum_{j=0}^{2g}\omega_{1j}v_2^{(j+1)}v_\b^{(2g+1-j)} + \textcolor{blue}{\sum_{i=1}^{\b-2}\sum_{j=0}^{2g}\omega_{i+1,j}v_{i+1}^{(j)}v_{\b+1-i}^{(2g+2-j)}} + \textcolor{blue}{(1-\delta_{g,1})\sum_{j=0}^{2g}\omega_{1j}v_1^{(j)}v_{\b+1}^{(2g+2-j)}}.
\end{align*}
Secondly, from the expression \hyperref[eq:Sa]{(\ref*{eq:Sa})} for $S_\a$ we have
\begin{align*}
    & (S_\a - \widehat{S}_\a)_2^{(2g)} = \Biggl(\Bigl(v_2+v_1^2-\frac{\e^2}{6}v_1^{(2)}\Bigr)\d_xR_\a - \frac{\e^2}{6}v_1\d_x^3R_\a + \frac{\e^4}{180}\d_x^5R_\a + v_1(\d_x\mathcal{Q}_\a^\prime-\d_x\widehat{\mathcal{Q}}_\a^\prime) \\
    & +\e^2C_\a\d_x^3\mathcal{Q}_\a^\prime-\e^2\widehat{C}_\a\d_x^3\widehat{\mathcal{Q}}_\a^\prime + \e^2(C_\a-\widehat{C}_\a)v_1^{(2)}\d_x\mathcal{Q}_{\a-1}+3\e^2(C_\a-\widehat{C}_\a)v_1^{(1)}\d_x^2\mathcal{Q}_{\a-1} \\
    & + 2\e^2(C_\a-\widehat{C}_\a)v_1\d_x^3\mathcal{Q}_{\a-1} + \e^4(C_{\a-1}-\widehat{C}_{\a-1})\d_x^5\mathcal{Q}_{\a-1}\Biggr)_2^{(2g)} \\
    = & \left(v_2\d_xR_\a + v_1(\d_x\mathcal{Q}_\a^\prime-\d_x\widehat{\mathcal{Q}}_\a^\prime) + \e^2C_\a\d_x^3\mathcal{Q}_\a^\prime - \e^2\widehat{C}_\a\d_x^3\widehat{\mathcal{Q}}_\a^\prime + \e^4(C_{\a-1}-\widehat{C}_{\a-1})\d_x^5\mathcal{Q}_{\a-1}\right)_2^{(2g)} \\
    = & \; \lambda v_2v_\b^{(2g+1)} + (1-\delta_{g,1})\lambda v_1v_{\b+1}^{(2g+1)} + \delta_{g,1}\lambda(\d_x^3\mathcal{Q}_\a^\prime)_2^{(0)} + \delta_{g,2}\lambda(\d_x^5\mathcal{Q}_{\a-1})_2^{(0)} \\
    = & \; \lambda v_2v_\b^{(2g+1)} + (1-\delta_{g,1})\lambda v_1v_{\b+1}^{(2g+1)} + \textcolor{blue}{(\delta_{g,1}+\delta_{g,2})\lambda\d_x^{2g+1}\left(\sum_{i=1}^{\b-2}v_{\b+1-i}v_{i+1}\right)}.
\end{align*}
Thirdly, using $Q_{i,3}=Q_{i+1,2}+\mathcal{Q}_i$,
\begin{align*}
    (\Sigma_2&-\widehat{\Sigma}_2)_2^{(2g)} = \d_x\left(\sum_{i=1}^{\a-2}\sum_{j\ge0}\dd{R_\a}{v_i^{(j)}}\d_x^{j+1}Q_{i,3}\right)_2^{(2g)} = (1-\delta_{g,1}-\delta_{g,2})\lambda\d_x^{2g+2}(Q_{\b+1,2}+\mathcal{Q}_\b)_2^{(0)} \\
    & + \d_x\left[\sum_{i=1}^{\a-2}\sum_{j\ge0}\frac{\d}{\d v_i^{(j)}}\left(\frac12\sum_{k=1}^{\b-1}\sum_{l=0}^{2g}\omega_{kl}v_k^{(l)}v_{\b-k}^{(2g-l)}\right)v_{i+2}^{(j+1)}\right] \\
    = & \; (1-\delta_{g,1}-\delta_{g,2})\lambda(\d_x^{2g+2}Q_{\b+1,2})_2^{(0)} + (1-\delta_{g,1}-\delta_{g,2})\lambda\d_x^{2g+2}\left(\sum_{i=1}^{\b-2}v_{\b+1-i}v_{i+1}\right) \\
    & + \d_x\left(\sum_{i=1}^{\b-1-\delta_{g,1}}\sum_{j=0}^{2g}\omega_{ij}v_{\b-i}^{(2g-j)}v_{i+2}^{(j+1)}\right) \\
    = & \; \textcolor{blue}{(1-\delta_{g,1}-\delta_{g,2})\lambda(\d_x^{2g+2}Q_{\b+1,2})_2^{(0)}} + \textcolor{blue}{(1-\delta_{g,1}-\delta_{g,2})\lambda\d_x^{2g+2}\left(\sum_{i=1}^{\b-2}v_{\b+1-i}v_{i+1}\right)} \\
    & + \textcolor{blue}{\sum_{i=1}^{\b-2}\sum_{j=0}^{2g}\omega_{i+1,j}v_{i+1}^{(j)}v_{\b+1-i}^{(2g+2-j)}} + \sum_{i=1}^{\b-2}\sum_{j=0}^{2g}\omega_{i+1,j}v_{i+1}^{(j+1)}v_{\b+1-i}^{(2g+1-j)} \\
    & + (1-\delta_{g,1})\sum_{j=0}^{2g}\omega_{1j}v_1^{(j+1)}v_{\b+1}^{(2g+1-j)} + \textcolor{blue}{(1-\delta_{g,1})\sum_{j=0}^{2g}\omega_{1j}v_1^{(j)}v_{\b+1}^{(2g+2-j)}}.
\end{align*}
When one considers equation \hyperref[eq:masterfinal]{(\ref*{eq:masterfinal})} the terms highlighted in blue cancel, leaving one with
\begin{align*}
    & \lambda\d_x\left(v_2v_\b^{(2g+1)} + (1-\delta_{g,1})v_1v_{\b+1}^{(2g+1)}\right) = (1-\delta_{g,1})\lambda\d_x^{2g+1}(v_1v_{\b+1}^{(1)}) + \lambda\d_x^{2g+1}(v_2v_\b^{(1)}) \\
    & + \sum_{i=1}^{\b-2}\sum_{j=0}^{2g}(\omega_{ij}-\omega_{i+1,j})v_{\b+1-i}^{(2g+1-j)}v_{i+1}^{(j+1)} + \sum_{j=0}^{2g}\omega_{1j}v_2^{(j+1)}v_\b^{(2g+1-j)} \\
    & - (1-\delta_{g,1})\sum_{j=0}^{2g}\omega_{1j}v_1^{(j+1)}v_{\b+1}^{(2g+1-j)}.
\end{align*}
For $g=\a-1$ (so $\b=2$) we rewrite this as
\begin{align*}
    0 = & \; \lambda\d_x^{2\a-1}(v_1v_3^{(1)}) - \lambda\d_x(v_1v_3^{(2\a-1)}) - \sum_{j=0}^{2\a-2}\omega_{1j}v_1^{(j+1)}v_3^{(2\a-1-j)} \\
    & + \lambda\d_x^{2\a-1}(v_2v_2^{(1)}) - \lambda\d_x(v_2v_2^{(2\a-1)}) + \sum_{j=0}^{2\a-2}\omega_{1j}v_2^{(j+1)}v_2^{(2\a-1-j)} \\
    = & \; ((2\a-2)\lambda-\omega_{10})v_1^{(1)}v_3^{(2\a-1)} + \sum_{j=1}^{2\a-2}\left(\lambda\binom{2\a-1}{j+1}-\omega_{1j}\right)v_1^{(j+1)}v_3^{(2\a-1-j)} \\
    & + ((2\a-1)\lambda+2\omega_{10})v_2^{(1)}v_2^{(2\a-1)} + \sum_{j=1}^{2\a-3}\left(\lambda\binom{2\a-1}{j+1}+\omega_{1j}\right)v_2^{(j+1)}v_2^{(2\a-1-j)}
\end{align*}
The first and third term in the last equation vanish separately,
\begin{equation*}
    (2\a-2)\lambda-\omega_{10} = 0 = (2\a-1)\lambda+2\omega_{10},
\end{equation*}
which contradicts $\lambda\neq0$. For all other values of $g$, we have
\begin{align}
    \nonumber 0 = & \; (1-\delta_{g,1})\biggl(\lambda\d_x^{2g+1}(v_1v_{\b+1}^{(1)}) - \lambda\d_x(v_1v_{\b+1}^{(2g+1)}) - \sum_{j=0}^{2g}\omega_{1j}v_1^{(j+1)}v_{\b+1}^{(2g+1-j)}\biggr) \\
    \nonumber & + \lambda\d_x^{2g+1}(v_2v_\b^{(1)}) - \lambda\d_x(v_2v_\b^{(2g+1)})+ \sum_{j=0}^{2g}\omega_{1j}v_2^{(j+1)}v_\b^{(2g+1-j)} \\
    \nonumber & + \sum_{j=0}^{2g}(\omega_{1j}-\omega_{2j})v_{2}^{(j+1)}v_\b^{(2g+1-j)} + \sum_{i=2}^{\b-2}\sum_{j=0}^{2g}(\omega_{ij}-\omega_{i+1,j})v_{\b+1-i}^{(2g+1-j)}v_{i+1}^{(j+1)} \\
    \label{eq:master1} = & \; (1-\delta_{g,1})\biggl((2g\lambda-\omega_{10})v_1^{(1)}v_{\b+1}^{(2g+1)}+\sum_{j=1}^{2g}\left(\lambda\binom{2g+1}{j+1}-\omega_{1j}\right)v_{1}^{(j+1)}v_{\b+1}^{(2g+1-j)}\biggr) \\
    \label{eq:master2} & + (2g\lambda+2\omega_{10}-\omega_{20})v_2^{(1)}v_\b^{(2g+1)} + \sum_{j=1}^{2g}\left(\lambda\binom{2g+1}{j+1}+2\omega_{1j}-\omega_{2j}\right)v_2^{(j+1)}v_\b^{(2g+1-j)} \\
    \label{eq:master3} & + \sum_{i=2}^{\b-2}\sum_{j=0}^{2g}(\omega_{ij}-\omega_{i+1,j})v_{\b+1-i}^{(2g+1-j)}v_{i+1}^{(j+1)}
\end{align}
Each term in the lines \hyperref[eq:master1]{(\ref*{eq:master1})} and \hyperref[eq:master2]{(\ref*{eq:master2})} vanishes separately. When $g=\a$ (so $\b=1$) we have $\omega_{ij}=0$ for all $i,j$, so the fact that $2g\lambda-\omega_{10}=0$ from line \hyperref[eq:master1]{(\ref*{eq:master1})} contradicts $\lambda\neq0$. For the remaining values of $g$ (i.e. $3\le\beta\le\a$), we proceed as follows. If $\b:=2\g+1$ is odd, the sum in line \hyperref[eq:master3]{(\ref*{eq:master3})} is
\begin{equation*}
    \sum_{i=2}^\g\sum_{j=0}^{2g}(2\omega_{ij}-\omega_{i+1,j}-\omega_{i-1,j})v_{i+1}^{(j+1)}v_{2\g+2-i}^{(2g+1-j)},
\end{equation*}
whereas if $\b:=2\g$ is even it is
\begin{align*}
    & \sum_{i=2}^{\g-1}\sum_{j=0}^{2g}(2\omega_{ij}-\omega_{i+1,j}-\omega_{i-1,j})v_{i+1}^{(j+1)}v_{2\g+1-i}^{(2g+1-j)} \\
    + & \sum_{j=0}^{g-1}(2\omega_{\g j}-\omega_{\g+1,j}-\omega_{\g-1,j})v_{\g+1}^{(j+1)}v_{\g+1}^{(2g+1-j)} + (\omega_{\g g}-\omega_{\g+1,g})v_{\g+1}^{(g+1)}v_{\g+1}^{(g+1)}.
\end{align*}
In either case, each term in the sums vanishes separately, so we obtain the relations
\begin{equation*}
    2\omega_{ij}-\omega_{i+1,j}-\omega_{i-1,j} = 0, \quad 2\le i\le \b-2,\quad 0\le j\le 2g.
\end{equation*}
Solving these inductively gives
\begin{equation}\label{eq:relation}
    \omega_{ij}=-(i-2)\omega_{1j}+(i-1)\omega_{2j}, \quad 1\le i\le\b-1,\quad 0\le j\le 2g.
\end{equation}
In both cases it is also true that $\omega_{\g+1,g}=\omega_{\g,g}$, which in light of the relation \hyperref[eq:relation]{(\ref*{eq:relation})} yields $\omega_{2g}=\omega_{1g}$. In the case $g\neq1$, consider the $j=g$ terms in the sums in lines \hyperref[eq:master1]{(\ref*{eq:master1})} and \hyperref[eq:master2]{(\ref*{eq:master2})}, which both vanish separately:
\begin{align*}
    0 & = \lambda\binom{2g+1}{g+1}-\omega_{1g}, \\
    0 & = \lambda\binom{2g+1}{g+1}+2\omega_{1g}-\omega_{2g}=\lambda\binom{2g+1}{g+1}+\omega_{1g}.
\end{align*}
This contradicts $\lambda\neq0$. In the case $g=1$, the $j=1$ term in line \hyperref[eq:master2]{(\ref*{eq:master2})} vanishes:
\begin{equation*}
    0 = 3\lambda + 2\omega_{11} - \omega_{21} = 3\lambda+\omega_{11},
\end{equation*}
so $\omega_{11}=-3\lambda$. However, from the previously obtained expression \hyperref[eq:a-1part]{(\ref*{eq:a-1part})} we have
\begin{equation*}
    (R_\a^{(2)})_2^{[[\a-1]]} = (Q_{\a,2}^{[[\a-1]]} - \widehat{Q}_{\a,2}^{[[\a-1]]})_2^{(2)} = \lambda v_1^{(2)}v_{\a-1} + 3\lambda v_1^{(1)}v_{\a-1}^{(1)} + 2\lambda v_1v_{\a-1}^{(2)},
\end{equation*}
which by the definition \hyperref[eq:Ra]{(\ref*{eq:Ra})} of the $\omega_{ij}$ implies $\omega_{11}=3\lambda$, contradicting $\lambda\neq0$. \hfill $\qed$

\printbibliography[heading=bibintoc, title={References}]

\end{document}